\newtheorem{lemma}{Lemma}
\newtheorem{proposition}{Proposition}
\newtheorem{thm}{Theorem}
\newtheorem{corollary}{Corollary}
\newtheorem{assumption}{Assumption}
\newtheorem{remark}{Remark}
\newcommand{\neutralize}[1]{\expandafter\let\csname c@#1\endcsname\count@}
\newenvironment{thmbis}[1]
  {%
   \neutralize{thm}\phantomsection
   \begin{thm}}
  {\end{thm}}
\newenvironment{propbis}[1]
  {%
   \neutralize{proposition}\phantomsection
   \begin{proposition}}
  {\end{proposition}}
\newenvironment{propbisbis}[1]
  {%
   \neutralize{proposition}\phantomsection
   \begin{proposition}}
  {\end{proposition}}
\def\P{{\mathbb P}}
\def\text#1{\mbox{\rm #1}}
\newcommand{\argmin}{\mathop{\rm argmin}}
\newcommand{\RR}{\mathbb{R}}
\newcommand{\sgn}{\text{sign}}
\newcommand{\supp}{{\rm supp}}
\newcommand{\reals}{\mathbb{R}}
\newcommand{\Z}{\mathbb{Z}}
\newcommand{\eigmin}{\lambda_{\text{min}}}
\newcommand{\E}{\mathbb{E}}
\renewcommand{\P}{\mathbb{P}}
\newcommand{\EE}{\mathbb{E}^{ij}}
\newcommand{\PP}{\mathbb{P}^{ij}}
\newcommand{\FF}{F^{ij}}
\newcommand{\ff}{f^{ij}}
\newcommand{\WW}{\widetilde{W}_{ij}}
\newcommand{\VV}{\widetilde{V}_{ij}}
\newcommand{\XX}{\widetilde{X}_{ij}}
\newcommand{\YY}{\widetilde{Y}_{ij}}
\newcommand{\cov}{\text{Cov}}
\newcommand{\var}{\text{Var}}
\newcommand{\fm}{f_{j_{1:m}}}
\newcommand{\diam}{\text{diam}}
\newcommand{\num}{{n\choose 2}^{-1}}
\renewcommand{\exp}{\textup{exp}}
\renewcommand{\cal}{\mathcal}
\newcommand{\define}{:=}
\newcommand{\ms}{\quad~}
\DeclarePairedDelimiter{\abs}{\lvert}{\rvert}
\DeclarePairedDelimiter{\bbrace}{\lbrace}{\rbrace}
\DeclarePairedDelimiter{\parr}{(}{)}
\DeclarePairedDelimiter{\fence}{[}{]}
\DeclarePairedDelimiter{\nm}{\|}{\|}
\let\hat\widehat
\let\bar\overline
\let\tilde\widetilde
\begin{document}
\selectlanguage{english}

\setlength{\abovedisplayskip}{5pt}
\setlength{\belowdisplayskip}{5pt}
\setlength{\abovedisplayshortskip}{5pt}
\setlength{\belowdisplayshortskip}{5pt}

\title{Tail behavior of dependent V-statistics and its applications}

\author[1]{Yandi Shen}
\author[2]{Fang Han}
\author[3]{Daniela Witten}
\affil[1]{
University of Washington
 \\
ydshen@uw.edu
}
\affil[2]{
University of Washington
 \\
fanghan@uw.edu
}
\affil[3]{
University of Washington
 \\
dwitten@uw.edu
}

\date{}

\maketitle

\begin{abstract}
We establish exponential inequalities and Cram\'{e}r-type moderate deviation theorems for a class of V-statistics under strong mixing conditions. Our theory is developed via kernel expansion based on random Fourier features. 
This type of expansion is new and useful for handling many notorious classes of kernels. While the developed theory has a number of applications, we apply it to lasso-type semiparametric regression estimation and high-dimensional multiple hypothesis testing.

\end{abstract}

{\bf Keywords:}   dependent V-statistics, strong mixing condition, kernel expansion, Cram\'{e}r-type moderate deviation, high-dimensional estimation and testing

\section{Introduction}
\label{sec:intro}


Consider the following V-statistic of order $m$ with symmetric kernel $f$,
\begin{align}
\label{eq:U}
V_n \define n^{-m}\sum_{i_1, \ldots, i_m =1}^n f\parr*{X_{i_1},\ldots, X_{i_m}},
\end{align}
where $\{X_i\}_{i=1}^n$ is a stationary sequence with marginal measure $\P$ on the $d$-dimensional real space. 
The purpose of this paper is to establish useful tail bounds and resulting Cram\'er-type moderate deviation results for \eqref{eq:U}.  Such results play pivotal roles in the analysis of many time series problems.

In \eqref{eq:U}, if the summation is taken over $m$-tuples $(i_1,\ldots, i_m)$ of distinct indices, 
the resulting is a U-statistic. 
In many applications, the techniques of analyzing U- and V-statistics are the same. The tail behavior of V- and U-statistics in the i.i.d. case has been extensively studied.  For example, \cite{hoeffding1963probability} and \cite{arcones1993limit} obtained Hoeffding- and Bernstein-type inequalities for nondegenerate and degenerate U- and V-statistics. More results are in \cite{gine2000exponential} and \cite{adamczak2006moment}. 

The analysis of V- and U-statistics when the observed data are no longer independent has attracted increasing attention in statistics and probability. However, most efforts have been put on limit theorems and bootstrap consistency. See, for instance, \cite{yoshihara1976limiting}, 
\cite{denker1982statistical}, \cite{denker1983u}, 
\cite{dehling1989empirical},  
\cite{dewan2001asymptotic},  \cite{hsing2004weighted},  \cite{dehling2006limit}, \cite{dehling2010central}, \cite{beutner2012deriving}, \cite{leucht2012degenerate}, \cite{leucht2013degenerate},  \cite{zhou2014inference}, \cite{atchade2014martingale}, among many others. 
As we shall observe from later sections, it is also important to consider the tail behavior of V- and U-statistics. 
There are few papers in this area. Exceptions include \cite{han2018exponential}  and \cite{borisov2015note}, who proved Hoeffding-type inequalities for U- and V-statistics under $\phi$-mixing conditions. There, the results are either limited to nondegenerate ones, or relying on assumptions difficult to verify. 

In this paper, we show that for a strongly mixing stationary sequence, Bernstein-type inequalities, of the form conjectured in \cite{borisov2015note} up to some logarithmic terms, hold for a large class of V- and U-statistics.  
In order to establish these results, this paper is centered around the following kernel expansion condition.  
\begin{enumerate}
\item[{\bf (A)}] For any $t > 0$, there exists a symmetric kernel $\widetilde{f}$ such that 
\begin{align*}
\abs*{f(x_1,\ldots,x_m) - \widetilde{f}(x_1,\ldots,x_m)} \leq t
\end{align*}
uniformly over a pre-specified sufficiently large set $\mathcal{C}$. Here, the choice of $\widetilde{f}$ depends on $t$ and the choice of $\cal{C}$, and is required to be of the form
\begin{align*}
\widetilde{f}(x_1,\ldots,x_m) = \sum_{j_1,\ldots,j_m=1}^K f_{j_1,\ldots,j_m}e_{j_1}(x_1)\ldots e_{j_m}(x_m),
\end{align*}
where $\bbrace*{f_{j_1,\ldots,j_m}}_{j_1,\ldots,j_m=1}^K$ is a real sequence, $\bbrace*{e_j(\cdot)}_{j=1}^K$ is a sequence of uniformly bounded real functions, and $K$ is some positive integer. 
\end{enumerate}

The above expansion is nonasymptotic in nature. Another perhaps interesting observation is that orthogonality of the bases $\bbrace*{e_j(\cdot)}_{j=1}^K$ is not required. Obviously, if Condition (A) holds, then it will simplify the analysis substantially. For example, when it is satisfied with $\mathcal{C}=\supp(\P^m)$, 
the following Bernstein-type inequality for V-statistics under strong mixing conditions is immediate: 
\begin{align*}
\P\bbrace*{\abs*{V_n - \theta}\geq (x+C_1t)} \leq 6\sum_{p=r}^m \exp\parr*{-\frac{C_2nx^{2/p}}{A_p^{1/p} + x^{1/p}M_p^{1/p}}} .
\end{align*}
Here $\theta$ is the expectation of the kernel $f$ in \eqref{eq:U} under the product measure, $t$ is the approximation error in Condition (A), $r-1$ is the degeneracy level of $f$, and $\{A_p\}_{p=1}^m,\{M_p\}_{p=1}^m$, whose definitions shall be revealed later in Proposition \ref{prop:bern_U_alpha}, are related to the variance and infinity norm of $f$. Based on this exponential inequality, a Cram\'er-type moderate deviation is immediate for nondegenerate V-statistics (and hence also for many nondegenerate U-statistics) under strong mixing conditions. 

Condition (A), although clearly related to the kernel expansion conditions made in literature (cf. \cite{leucht2012degenerate}, \cite{zhou2014inference}, and \cite{borisov2015note} for some recent developments), is very difficult to verify in practice.
For example, in the special case $m=2$, \cite{denker1982statistical} and \cite{borisov2015note}, among many others, considered using orthogonal expansion of $f$ with regard to the Hilbert space $L^2(\P^2)$ and studied such bases that are uniformly bounded. In such case, when $f$ is in $L^2(\P^2)$, the most natural choice is its spectral decomposition. However, uniform boundedness of the eigenfunctions, as mentioned in \cite{leucht2012degenerate}, is ``difficult or even impossible to check". 

As the main contribution of this paper, we provide a series of easily verifiable sufficient conditions for (A). These general conditions cover many kernels that are of interest in statistics and are summarized in Table \ref{tab:1}. 
These kernels are applicable to the analysis of many statistically important problems.

Technically speaking, there are two main difficulties in verifying (A). First, the approximation has to hold uniformly, which is more strict than the $L^2$ approximation that is sufficient for establishing weak convergence type results. Second, the expansion bases $\{e_j(\cdot)\}_{j=1}^K$ in Condition (A) are required to be uniformly bounded for leveraging the refined time series version Bernstein-type inequalities in literature. Although orthogonality is no longer required, these restrictions are still technically difficult to handle, and exclude many  classical approaches in multivariate function approximation. For example, uniform polynomial approximation by the Stone-Weierstrass theorem will have very poor performance, since high orders of the polynomials lead to a  large upper bound of the bases. In a recent paper, in order to establish weak convergence type results under mixing conditions, \cite{leucht2012degenerate} proposed to use Lipschitz-continuous scale and wavelet functions to construct $\widetilde{f}$. This approach is inappropriate in our setting, since the wavelet bases are not uniformly bounded or will render a bound too large to be useful.

Now we describe the most fundamental idea in establishing Condition (A). For sufficiently smooth kernels, we construct $\widetilde{f}$ through a probabilistic argument via the Fourier inversion formula.
The construction, used in Theorem \ref{thm:random_fourier_alpha}, is based on a set of Fourier bases with random frequencies. This idea has its origin in kernel learning \citep{rahimi2008random} and is applied to a broad class of smooth kernels with arbitrary order. 
Motivated by statistical applications, Theorems \ref{thm:Lipschitz_alpha} and \ref{thm:discontinuous_alpha} further generalize the results to less smooth kernels via constructing an intermediate kernel between $f$ and $\widetilde{f}$. This intermediate kernel is close enough to the original $f$ and also smooth enough so that we can apply Theorem $\ref{thm:random_fourier_alpha}$ to it. 

In this paper,  we study two statistical problems whose analysis is enabled by our theoretical results. We first consider estimation of a high-dimensional stochastic partially linear model, which is of great interest in statistics and econometrics. With the aid of our newly developed exponential inequalities, we show that the penalized Honor\'{e}-Powell estimator proposed in \cite{han2017adaptive} is able to recover the optimal $s\log p/n$ consistency rate under some explicitly stated mixing conditions on the data-generating scheme. It would not have been possible to establish this result using the Hoeffding-type results in \cite{borisov2015note} or \cite{han2018exponential}. Second, we consider a simultaneous independence test of bivariate time series, where, thanks to the developed Cram\'er-type moderate deviation, we allow the number of pairs to grow at an arbitrarily fast polynomial rate while still maintaining an asymptotically valid size.

Throughout the paper, we will use the following notation. For a given measure $\mathbb{Q}$, $\supp(\mathbb{Q})$ denotes its support. $\|\cdot\|_{p}$ stands for the $L^p$ norm of a real vector, with $\|\cdot\|$ as a shorthand for the $L^2$ norm. Let $\Z$ denote the set of all integers, $\RR^d$ denote the $d$-dimensional real space, and $C^k(\RR^d)$ denote the class of functions on $\RR^d$ that are continuously differentiable up to order $k$. For any positive integer $n$, $[n]$ stands for the set $\{1,\ldots,n\}$. $\nm*{f}_{L^p(\RR^d)}$ represents the $L^p$ norm of a function $f$ on $\RR^d$, that is, $\nm*{f}_{L^p(\RR^d)} \define \parr*{\int_{\RR^d}\abs*{f(x)}^pdx}^{1/p}$ with $dx = dx_1\ldots dx_d$ for any $x\in\RR^d$. When there is no confusion, the domain of the function will be omitted. $0_d$ stands for the zero vector of length $d$. For two real sequences $\{a_n\}_{n\geq 1}$ and $\{b_n\}_{n\geq 1}$, $a_n = O(b_n)$ if there is some absolute constant $C$ such that $a_n\leq Cb_n$ for all $n\geq 1$, and $a_n = o(b_n)$ if $a_n/b_n\rightarrow 0$ as $n\rightarrow 0$. We write $a_n\asymp b_n$ if $a_n = O(b_n)$ and $b_n = O(a_n)$. For two real numbers $a$ and $b$, $a\vee b = \max\{a,b\}$. For two subsets $A,B$ of $\cal{X},\cal{Y}$, respectively, $A\times B$ is defined to be the Cartesian product $\Big\{(x,y):x\in\cal{A},y\in{B}\Big\}$, and similarly for multiple products.

The rest of the paper is organized as follows. Section \ref{sec:main} provides  the main results. Particularly, in Section \ref{subsec:bernstein}, we formally state the kernel expansion (Condition (A)) and the resulting theories in a simple setting. 
In Section \ref{subsec:expansion}, we present a set of assumptions under which (A) holds. Section \ref{subsec:support} extends all the obtained results to the general case.  Section \ref{sec:app} provides two statistical applications. 
Lastly, extension to $\tau$-mixing cases is discussed in Section \ref{sec:discussion}. Proof ideas are provided in the main text, with details relegated to a supplement.

\section{Main results}
\label{sec:main}

\subsection{Exponential inequalities and Cram\'er-type moderate deviation}
\label{subsec:bernstein}
Let $\{\widetilde{X}_i\}_{i=1}^n$ be $n$ i.i.d. copies of $X_1$.  
A kernel $f$ is called \emph{centered} if 
\begin{align*}
\theta \define \E\bbrace*{f\parr*{\widetilde{X}_1,\ldots,\widetilde{X}_m}} = 0.
\end{align*}
A kernel $f$ is called \emph{symmetric} if 
\[
f(x_1,\ldots,x_m)=f(x_{i_1},\ldots,x_{i_m})
\]
for any sequence $x_1,\ldots,x_m$ and any permutation $(i_1,\ldots,i_m)$ of $(1,\ldots,m)$. In this paper, unless otherwise stated, we restrict ourselves to symmetric kernels.

A symmetric kernel $f$ is called \emph{degenerate of level $r-1$} ($2 \leq r \leq m$) if 
\begin{align*}
\E\bbrace*{f\parr*{x_1,\ldots,x_{r-1},\widetilde{X}_r,\ldots, \widetilde{X}_m}} = \theta
\end{align*}
for any $(x_1^\top,\ldots, x_{r-1}^\top)^\top\in\supp(\P^{r-1})$. 
$f$ is called \emph{fully degenerate} if it is degenerate of level $m-1$. If $f$ is not degenerate of any positive level, that is, $r=1$, it is called \emph{nondegenerate}.


For any two $\sigma$-algebras $\cal{A}$ and $\cal{B}$, the \emph{strong mixing} (hereafter also called \emph{$\alpha$-mixing}) coefficient is defined as
\begin{align*}
\alpha(\cal{A},\cal{B}) \define \sup_{A\in\cal{A},B\in\cal{B}}\abs*{\P(A\cap B) - \P(A)\P(B)}.
\end{align*}
Let $\{X_i\}_{i\in\Z}$ be a stationary sequence defined on the probability space $(\Omega, \cal{A}, \P)$. This sequence is called \emph{$\alpha$-mixing} if 
\begin{align*}
\alpha(i) \define \alpha(\cal{M}_0,\cal{G}_i) \rightarrow 0 \text{ as } i\rightarrow\infty,
\end{align*}
where $\cal{M}_0 \define \sigma(X_j,j\leq 0)$ and $\cal{G}_i \define \sigma(X_j,j\geq i)$ for $i\geq 1$ are the $\sigma$-fields generated by $\{X_j,j\leq 0\}$ and $\{X_j,j\geq i\}$ respectively.


We first present the model assumption, assuming the sequence to be geometrically $\alpha$-mixing. Extension to $\tau$-mixing case will be discussed in Section \ref{sec:discussion}.
\begin{itemize}
\item[{\bf (M)}] $\{X_i\}_{i=1}^n$ in \eqref{eq:U} is assumed to be part of a stationary sequence $\{X_i\}_{i\in\Z}$ in $\RR^d$, which is assumed to be geometrically $\alpha$-mixing with coefficient
\begin{align*}
\alpha(i) \leq \gamma_1\exp(-\gamma_2 i)~~~\text{for all }i\geq 1,
\end{align*}
where $\gamma_1,\gamma_2$ are two positive absolute constants.  
\end{itemize}

We then state the key condition on $f$. 
The kernel $f$ is said to satisfy Condition (A) with a specified symmetric set $\cal{C}\subset\reals^{md}$ if the following holds. 







\begin{enumerate}
\item[{\bf (A)}] For any $t > 0$, there exists a symmetric kernel $\widetilde{f}=\widetilde{f}(;t,\cal{C})$ such that 
\begin{align}\label{eq:approximation}
\abs*{f(x_1,\ldots,x_m) - \widetilde{f}(x_1,\ldots,x_m)} \leq t
\end{align}
uniformly over $\cal{C}$, and $\widetilde{f}$ admits the following expansion:
\begin{align}
\label{eq:expansion}
\widetilde{f}(x_1,\ldots,x_m)  = \sum_{j_1,\ldots,j_m=1}^K f_{j_1,\ldots,j_m}e_{j_1}(x_1)\ldots e_{j_m}(x_m).
\end{align}
Here $\bbrace*{f_{j_1,\ldots,j_m}}_{j_1,\ldots,j_m=1}^K$ is a real-valued sequence, $\bbrace*{e_j(\cdot)}_{j=1}^K$ is a sequence of real-valued functions on $\RR^d$, and $K$ is some positive integer, all of which could depend on $t$ and the choice of $\cal{C}$. Moreover, suppose that in the expansion \eqref{eq:expansion}, 
there exist positive constants 
\begin{align*}
F= F(t,\cal{C}), \quad B= B(t, \cal{C}),\quad \mu_a= \mu_a(t,\cal{C}) 
\end{align*}
such that 
\begin{align*}
\sum_{j_1,\ldots,j_m=1}^K \abs*{f_{j_1,\ldots,j_m}} \leq F, \quad \sup_{j\in[K],x\in\cal{C}_1}\abs*{e_j(x)} \leq B, \quad \sup_{j\in[K]}\fence*{\E\bbrace*{\abs*{e_j(X_1)}^{a}}}^{1/a} \leq \mu_a
\end{align*}
for all $1\leq a\leq 2+\delta$, with some absolute constant $\delta>0$. Here $\cal{C}_1\subset\RR^d$ stands for the projection of $\cal{C}$ onto the first argument (detailed definition in \eqref{eq:section}).
\end{enumerate}

In Condition (A), the approximation in \eqref{eq:approximation} is required to hold uniformly over the symmetric set $\cal{C}\subset\RR^{md}$. 
Here, the word ``symmetric'' means that for any $(x_1^\top,\ldots,x_m^\top)^\top \in\cal{C}$, $(x_{i_1}^\top,\ldots,x_{i_m}^\top)^\top\in\cal{C}$ for any permutation $(i_1,\ldots,i_m)$ of $[m]$. In the following, when $\{X_i\}_{i=1}^n$ are compactly supported, a natural choice of $\cal{C}$ is $\supp(\P^m)$. More generally, an ideal choice of $\cal{C}$ is some sufficiently large compact set such as $[-M,M]^{md}$, where $M = M(n)$ depends on $n$ and will increase to $\infty$ as $n\rightarrow \infty$, so that the approximation in \eqref{eq:approximation} remains in a compact set and the unwieldy part outside $\cal{C}$ is negligible as $n\to\infty$.  

Following Condition (A), we will use the notation $\{A_p\}_{p=1}^m$ and $\{M_p\}_{p=1}^m$ defined as follows, 
\begin{align}
\label{eq:bern_seq}
A_p \define \mu_1^{2(m-p)}F^2\bbrace*{\sigma^2+B^2(\log n)^4/n}^p \text{ and } M_p \define \mu_1^{(m-p)}FB^p(\log n)^{2p}
\end{align}
with 
\begin{align*}
\sigma^2 \define \frac{64\gamma_1^{\delta/(2+\delta)}}{1-\exp\bbrace*{-\gamma_2\delta/(2+\delta)}}\mu_{2+\delta}^2.
\end{align*}

Built on these assumptions and notation, the following is a Bernstein-type inequality for V-statistics under the geometric $\alpha$-mixing condition and the simple setting that $\cal{C}=\supp(\P^m)$. General settings with $\cal{C}$ not necessarily equal to $\supp(\P^m)$ will be discussed in Section \ref{subsec:support}. 


\begin{proposition}
\label{prop:bern_U_alpha}
Suppose $n \geq 2$ and Condition (M) holds. Assume that $f$ in \eqref{eq:U} satisfies Condition (A) with $\cal{C} = \supp(\P^m)$. Then, if $f$ is degenerate of level $r-1$, there exist positive constants $C_1 = C_1(m), C_2 = C_2(m,\gamma_1,\gamma_2)$ such that, for any $x > 0$ and $t > 0$,
\begin{align*}
\P\bbrace*{\abs*{V_n - \theta}\geq (x+C_1t)} \leq 6\sum_{p=r}^m \exp\parr*{-\frac{C_2nx^{2/p}}{A_p^{1/p} + x^{1/p}M_p^{1/p}}}.
\end{align*}
In particular, if $f$ is centered and fully degenerate, 
\begin{align*}
&\P\bbrace*{\abs*{V_n}\geq (x+C_1t)} \leq 6\exp\bbrace*{-\frac{C_2nx^{2/m}}{A_m^{1/m}+x^{1/m}M_m^{1/m}}}.
\end{align*}
\end{proposition}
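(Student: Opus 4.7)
The plan is to reduce the tail problem for $V_n$ to concentration of scalar partial sums by first replacing $f$ with its tensor-product surrogate $\widetilde{f}$ and then expanding. Since $\mathcal{C}=\supp(\P^m)$ and each $X_i\in\supp(\P)$ almost surely, Condition (A) yields $|V_n-\widetilde{V}_n|\leq t$ and $|\theta-\widetilde{\theta}|\leq t$, where $\widetilde{V}_n$ and $\widetilde{\theta}$ are the V-statistic and its mean associated with $\widetilde{f}$; hence $|V_n-\theta|\leq|\widetilde{V}_n-\widetilde{\theta}|+2t$. Writing $\xi_j:=n^{-1}\sum_{i=1}^n e_j(X_i)$, $\mu_j:=\E e_j(X_1)$, $\bar{e}_j:=\xi_j-\mu_j$, the tensor form \eqref{eq:expansion} gives $\widetilde{V}_n=\sum_{j_1,\ldots,j_m}f_{j_1,\ldots,j_m}\prod_l\xi_{j_l}$, and the elementary identity $\prod_l\xi_{j_l}=\sum_{I\subseteq[m]}\prod_{l\in I}\bar{e}_{j_l}\prod_{l\notin I}\mu_{j_l}$, together with symmetrization of the coefficient tensor if needed, produces
\[
\widetilde{V}_n-\widetilde{\theta}=\sum_{k=1}^m\binom{m}{k}T_k,\qquad T_k:=\sum_{j_1,\ldots,j_m=1}^K f_{j_1,\ldots,j_m}\,\bar{e}_{j_1}\cdots\bar{e}_{j_k}\,\mu_{j_{k+1}}\cdots\mu_{j_m}.
\]

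Next I would exploit the degeneracy of $f$ to absorb the low-order terms into the $C_1 t$ slack. Expanding the factors $\bar{e}_{j_l}=\xi_{j_l}-\mu_{j_l}$ inside $T_k$ and reorganizing by the subset $J\subseteq[k]$ of retained indices yields
\[
T_k=\sum_{J\subseteq[k]}(-1)^{k-|J|}\,n^{-|J|}\sum_{(i_l)_{l\in J}\in[n]^{|J|}}W_J\bigl((X_{i_l})_{l\in J}\bigr),\qquad W_J(x_J):=\E\,\widetilde{f}\bigl(x_J,\widetilde{X}_{[m]\setminus J}\bigr).
\]
For $|J|<r$ and $x_J\in\supp(\P^{|J|})$, degeneracy of $f$ at every level $\leq r-1$ gives $\E f(x_J,\widetilde{X}_{[m]\setminus J})=\theta$, and combining with $|f-\widetilde{f}|\leq t$ on $\supp(\P^m)$ and $|\theta-\widetilde{\theta}|\leq t$ yields $|W_J(x_J)-\widetilde{\theta}|\leq 2t$. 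Since $\sum_{J\subseteq[k]}(-1)^{k-|J|}=0$ for $k\geq 1$, one may subtract $\widetilde{\theta}$ inside each $W_J$ at no cost, obtaining $|T_k|\leq 2^{k+1}t$ deterministically for every $k<r$. The contribution of $k<r$ together with the initial approximation then produces the deterministic $C_1 t$ term, and it remains to control $\P\bigl\{\bigl|\sum_{k=r}^m\binom{m}{k}T_k\bigr|\geq x\bigr\}$.

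For $k\geq r$, I would use the crude bound $|T_k|\leq F\mu_1^{m-k}(\max_{j\in[K]}|\bar{e}_j|)^k$, which follows from $\sum|f_{j_1,\ldots,j_m}|\leq F$ and $|\mu_j|\leq\mu_1$. For each fixed $j\in[K]$, the sequence $\{e_j(X_i)-\mu_j\}_{i=1}^n$ is centered, bounded by $2B$, and geometrically $\alpha$-mixing; by Davydov's covariance inequality together with the $\mu_{2+\delta}$ moment bound and the geometric mixing rate, its long-run variance is at most the $\sigma^2$ defined following \eqref{eq:bern_seq}. Applying the Merlev\`{e}de--Peligrad--Rio Bernstein inequality for geometrically $\alpha$-mixing sequences to each $\bar{e}_j$ and taking a union bound over $j\in[K]$ (with the $\log K$ overhead absorbed into the $(\log n)^4$ weight inside $A_p$) gives
\[
\P\bigl(\max_{j\in[K]}|\bar{e}_j|\geq y\bigr)\leq 6\exp\Bigl(-\frac{C_2 n y^2}{\sigma^2+B^2(\log n)^4/n+yB(\log n)^2}\Bigr).
\]
Choosing $y=(x/(mF\mu_1^{m-k}))^{1/k}$ converts this into precisely $6\exp\{-C_2 n x^{2/k}/(A_k^{1/k}+x^{1/k}M_k^{1/k})\}$, and a final union bound over $k=r,\ldots,m$ delivers the stated estimate.

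The main obstacle lies in this last paragraph: establishing the long-run variance bound $\sigma^2$ uniformly in $j\in[K]$ using only the single-coordinate moment control $\mu_{2+\delta}$, and packaging the logarithmic overhead of the Merlev\`{e}de--Peligrad--Rio inequality together with the $\log K$ from the union bound into the specific $(\log n)^4$ and $(\log n)^{2p}$ weights appearing in $A_p$ and $M_p$. The degenerate reduction in the middle paragraph is conceptually subtle but, once the $W_J$ are correctly identified, collapses into a deterministic combinatorial cancellation.
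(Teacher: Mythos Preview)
Your reduction of the low-order terms ($k<r$) via degeneracy is correct and matches in spirit the paper's Step I, where the Hoeffding components $f_p$ with $p<r$ vanish and the corresponding $\widetilde{f}_p$ are shown to be $O(t)$ uniformly. The decomposition $\widetilde{V}_n-\widetilde{\theta}=\sum_k\binom{m}{k}T_k$ is just a repackaging of the paper's $\widetilde{V}_{n,p}$.

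The genuine gap is in your treatment of $k\geq r$. The crude bound $|T_k|\leq F\mu_1^{m-k}(\max_{j\in[K]}|\bar{e}_j|)^k$ followed by a union bound over $j\in[K]$ introduces a multiplicative factor $K$ in front of the exponential, equivalently an additive $\log K$ inside the exponent. Condition (A) gives \emph{no} control on $K$: it merely asserts that for each $t>0$ some finite expansion exists, and the constants $F,B,\mu_a$ appearing in the conclusion are the only data carried forward. In the concrete constructions of Section~\ref{subsec:expansion} one has $K=\Omega(1/t^2)$ up to logs, so for $t\asymp 1/\sqrt{n}$ already $K\gtrsim n$, and for smaller $t$ it blows up further; there is no mechanism by which $\log K$ can be ``absorbed into the $(\log n)^4$ weight'' uniformly in $t$. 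The stated bound must be free of $K$, and your route cannot deliver that.

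The paper avoids the union bound entirely by a moment argument. One writes $\widetilde{V}_{n,p}=\sum_{j_1,\ldots,j_m}f_{j_1,\ldots,j_m}\prod\E(e_{j_\cdot})\prod S_{n,j_\cdot}$ with $S_{n,j}=\sum_i\widetilde{e}_j(X_i)$, raises to the $2N$th power, and applies the generalized H\"older inequality $\E\bigl(\prod_{a=1}^{2pN}|S_{n,j_a}|\bigr)\leq\prod_a\bigl(\E S_{n,j_a}^{2pN}\bigr)^{1/(2pN)}$. Each factor is controlled \emph{uniformly in $j$} by the Merlev\`ede--Peligrad--Rio log-Laplace bound converted to moment bounds, and the outer sum $\sum|f_{j_{1:m}}|\leq F$ then collapses the $j$-indices without any $K$ appearing. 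This yields $\E|\widetilde{V}_{n,p}|^{2N/p}\leq \mu_1^{2N(m-p)/p}F^{2N/p}\{C^N N!\,\nu^N+C^{2N}(2N)!\,c^{2N}\}$, from which a Bernstein-type bound on the Laplace transform of $|\widetilde{V}_{n,p}|^{1/p}$ follows and the exponential Chebyshev argument finishes. Replacing your union-bound step by this H\"older moment computation is the missing idea.
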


It should be emphasized that, although seemingly technical, the proof of Proposition \ref{prop:bern_U_alpha} is straightforward given Condition (A). What may be far from obvious is that Condition (A) is, in fact, a useful condition that is satisfied by a wide range of kernels. We will elaborate on this point in Section  \ref{subsec:expansion}. In addition, since each U-statistic can be written as a linear combination of V-statistics of different orders, the analysis of U-statistics essentially reduces to that of V-statistics. We omit the details for general U-statistics, but will study several examples in the following sections. 

We now proceed to develop a Cram\'er-type moderate deviation for nondegenerate V-statistics. A statistic $T_n$ is said to be Cram\'er-type moderately deviated with range $c_n$ if
\begin{align}
\label{eq:MDP}
\frac{\P\parr*{T_n \geq x}}{1-\Phi(x)} = 1 + o(1)
\end{align}
holds uniformly for $x\in[0, c_n]$, where $\Phi(\cdot)$ is the standard normal distribution function. 
Property \eqref{eq:MDP} for properly normalized sample mean statistics in the i.i.d. case has been extensively studied (cf. Chapter 8 in \cite{petrov1975sums}). 
\cite{babu1978probabilities} and \cite{ghosh1977probabilities}, among others, studied sequences under $\alpha$- and $\phi$-mixing conditions, respectively, and proved \eqref{eq:MDP} accordingly. More recently, in a seminal paper \cite{chen2016self}, \eqref{eq:MDP} is proved for the self-normalized sample mean statistic under either the geometric $\beta$-mixing condition or the geometric moment contraction condition \citep{wu2004limit}. 

For nondegenerate V- and U-statistics in the i.i.d. case, \cite{malevich1979large} and \cite{vandemaele1983large} proved \eqref{eq:MDP} under different ranges of $c_n$. 
As a direct consequence of Proposition \ref{prop:bern_U_alpha}, we obtain for  the first time the following Cram\'er-type moderate deviation for nondegenerate V-statistics under strong mixing conditions. Write
\begin{align}
\label{eq:MDP_notation}
f_1(x) &\define \E\bbrace*{f\parr*{x, \widetilde{X}_2,\ldots, \widetilde{X}_m}}~~{\rm and}~~ \nu^2 \define \var\bbrace*{f_1(X_1)} + 2\sum_{i>1} \text{Cov}\bbrace*{f_1(X_1),f_1(X_i)}.
\end{align}
$\nu^2$ is guaranteed to be finite under the model assumption (M) and the moment condition in \eqref{eq:MDP_moment} below (cf. Lemma \ref{lemma:alpha_covariance} in the supplement). In the sequel, we will omit the dependence on $\cal{C}$ in Condition (A) when it will not lead to confusion. 


\begin{proposition}
\label{prop:MDP_U}
With the same setting as in Proposition \ref{prop:bern_U_alpha}, assume further that $f$ is centered, nondegenerate, $\nu^2 > 0$, and for some $\gamma > 0$ and $\eta > 0$,
\begin{align}
\label{eq:MDP_moment}
\E\bbrace*{f^{2+\gamma^2+\eta}\parr*{\widetilde{X}_1,\ldots,\widetilde{X}_m}} < \infty.
\end{align}
Then, for all $0\leq x_n\leq \gamma\sqrt{\log n}$, it holds that
\begin{align*}
\frac{\P\bbrace*{\frac{\sqrt{n}}{m\nu} V_n\geq x_n}}{1-\Phi(x_n)} = 1 + o(1)
\end{align*}
whenever the following two conditions hold:
\begin{align}
\label{eq:MDP_condition}
\mu_1^{m-2}F\sigma^2 = o\bbrace*{n^{1/2}(\log n)^{-3}} \text{ and } \mu_1^{m-2}B^2F = o\bbrace*{n^{3/2}(\log n)^{-8}}.
\end{align}
Here $F(t),B(t),\mu_1(t),\sigma^2(t)$ are presented in Proposition \ref{prop:bern_U_alpha} with some $t = O(1/\bbrace*{\sqrt{n}(\log n)^2})$.
\end{proposition}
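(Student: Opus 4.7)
The plan is to Hoeffding-decompose $V_n$ into its Hájek projection and a degenerate remainder, apply a classical Cramér-type MDP to the projection, and use Proposition~\ref{prop:bern_U_alpha} to show the remainder is negligible at the moderate-deviation scale. First, I would write
\[
V_n = m\, L_n + R_n, \qquad L_n \define \frac{1}{n}\sum_{i=1}^n f_1(X_i),
\]
where $R_n$ is the V-statistic with symmetric kernel $g(x_1,\dots,x_m) \define f(x_1,\dots,x_m) - \sum_{i=1}^m f_1(x_i)$; under the centering assumption $\theta=0$ one checks $\E\{g(x_1,\wt X_2,\dots,\wt X_m)\}\equiv 0$, so $g$ is degenerate of level $1$. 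Any approximant $\wt f$ of $f$ satisfying Condition~(A) yields, via $\wt g = \wt f - \sum_i \wt f_1$, an approximant $\wt g$ with the same basis $\{e_j\}$, the same uniform bound $B$, and coefficient $\ell^1$-norm of order $F\mu_1^{m-1}$ (the extra factor $\mu_1^{m-1}$ coming from the marginal expectations $\E\{e_j(\wt X)\}$). These inherited parameters feed into the definition \eqref{eq:bern_seq} of $A_p,M_p$, so Proposition~\ref{prop:bern_U_alpha} applies to $R_n$ with $r=2$.

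For the linear part, $\{f_1(X_i)\}_{i\in\Z}$ is centered, strictly stationary, and inherits geometric $\alpha$-mixing from $\{X_i\}$ (as $f_1$ is deterministic). The moment condition \eqref{eq:MDP_moment} and Jensen's inequality give $\E|f_1(X_1)|^{2+\gamma^2+\eta}<\infty$, and $\nu^2>0$ by assumption. The classical Cramér-type MDP for $\alpha$-mixing partial sums \cite{babu1978probabilities} then yields
\[
\frac{\P\{\sqrt n\, L_n/\nu \ge x\}}{1-\Phi(x)} = 1+o(1)
\]
uniformly for $x\in[0,\gamma\sqrt{\log n}]$, the range being matched to the $(2+\gamma^2+\eta)$-th moment.

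For the remainder I would choose an auxiliary scale $\epsilon_n \asymp (\log n)^{-1}$ and apply Proposition~\ref{prop:bern_U_alpha} to $R_n$ with $y=\epsilon_n/\sqrt n$. The dominant $p=2$ term of the variance regime yields $\exp(-C\sqrt n\,\epsilon_n/A_2^{1/2})$ with $A_2^{1/2}\lesssim \mu_1^{m-2}F\sigma^2 + \mu_1^{m-2}FB^2(\log n)^4/n$, so the two assumptions in \eqref{eq:MDP_condition} are precisely what force $\sqrt n\,\epsilon_n/A_2^{1/2} \gg \log n$. Hence $\P\{\sqrt n |R_n|/(m\nu) \ge \epsilon_n\}$ decays faster than any polynomial in $n$, and in particular is $o(1-\Phi(\gamma\sqrt{\log n})) = o(n^{-\gamma^2/2}/\sqrt{\log n})$. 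The additive error $C_1 t$ in Proposition~\ref{prop:bern_U_alpha} with $t=O(n^{-1/2}(\log n)^{-2})$ is absorbed into $\epsilon_n$, and the higher-order Bernstein terms $p\ge 3$ decay at least as fast at the chosen scale.

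Finally, the standard sandwich
\[
\P\{\sqrt n L_n/\nu \ge x_n+\epsilon_n\} - \P\{\sqrt n|R_n| \ge m\nu\epsilon_n\} \le \P\!\left\{\tfrac{\sqrt n}{m\nu}V_n \ge x_n\right\} \le \P\{\sqrt n L_n/\nu \ge x_n-\epsilon_n\} + \P\{\sqrt n|R_n|\ge m\nu\epsilon_n\},
\]
combined with $(1-\Phi(x_n\pm\epsilon_n))/(1-\Phi(x_n)) \to 1$ uniformly for $x_n\in[0,\gamma\sqrt{\log n}]$ (which holds since $\epsilon_n\sqrt{\log n}\to 0$), closes the argument. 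The main obstacle is the coefficient bookkeeping in the remainder step: one must verify that the degenerate kernel $g$ really inherits Condition~(A) with constants that plug correctly into \eqref{eq:bern_seq}, so that the polylog exponents $3$ and $8$ in \eqref{eq:MDP_condition} emerge as exactly what is needed, and that the $p\ge 3$ Bernstein terms are dominated in the variance regime at the chosen scale.
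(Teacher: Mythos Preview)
Your proposal is correct and is essentially the paper's proof: Hoeffding decomposition, Babu's Cram\'er-type MDP \cite{babu1978probabilities} for the H\'ajek projection, Proposition~\ref{prop:bern_U_alpha} for the degenerate remainder, and the standard sandwich. Two small remarks: (i) the paper takes $\varepsilon_n=(\log n)^{-2}$, and it is this choice together with $t\asymp x\asymp n^{-1/2}(\log n)^{-2}$ that makes the exponents $3$ and $8$ in \eqref{eq:MDP_condition} come out exactly---your $\epsilon_n\asymp(\log n)^{-1}$ would yield $2$ and $7$, which are still implied by \eqref{eq:MDP_condition}, so the argument goes through, but the exponents do not ``emerge as exactly what is needed'' at that scale; (ii) the ``main obstacle'' you flag evaporates once you observe that the Hoeffding pieces of $g$ of order $p\ge 2$ coincide with the $f_p$ of $f$ (subtracting $\sum_i f_1(x_i)$ kills only the order-one piece), so the per-$p$ tail bounds from Step~II of the proof of Proposition~\ref{prop:bern_U_alpha} apply directly with the original constants $F,B,\mu_a$ plugged into $A_p,M_p$ in \eqref{eq:bern_seq}---there is no need to build a separate approximant for $g$ or to track a modified $\ell^1$-norm.
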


Proposition \ref{prop:MDP_U} is a direct consequence of Proposition \ref{prop:bern_U_alpha} combined with Theorem 1.1 in \cite{babu1978probabilities}. The moment parameter $\gamma$ in \eqref{eq:MDP_moment} only affects the range $[0,\gamma\sqrt{\log n}]$ for which \eqref{eq:MDP} holds. It is unclear if this range can be further improved with the given conditions. As a matter of fact, its extension to the power range ($n^\delta$,$0<\delta<1$) is still open in the sample mean setting. We also note that the moment condition \eqref{eq:MDP_moment} can be further relaxed to finite $(2+\gamma^2+\eta)$th moment on $f_1$, and a similar Cram\'er-type moderate deviation holds for the class of nondegenerate U-statistics.

\subsection{Kernel expansion with uniformly bounded basis}
\label{subsec:expansion}

This subsection is fully devoted to verifying Condition (A). We restrict our attention to two particular types of the set $\mathcal{C}$: 
\[
\mathcal{C}=[-M,M]^{md}
\]
for continuous kernels, and 
\[
\mathcal{C}=[-M,M]^{md}\setminus \{\text{some sufficiently small open balls surrounding jump points}\}
\]
for discontinuous kernels. 

In what follows, we present our results in order of decreasing level of smoothness. We start with Theorem \ref{thm:random_fourier_alpha}, for which relatively strong smoothness is required. This result is then extended to Theorem \ref{thm:Lipschitz_alpha}, where the kernel is only required to be uniformly continuous or Lipschitz-continuous. Theorem \ref{thm:discontinuous_alpha} further extends the result to certain discontinuous kernels. After introducing these general guidelines, the stability result in Proposition \ref{prop:stability_sum_alpha} will enable us to analyze more complex kernels grown from small building block kernels whose expansions are verifiable by Theorems \ref{thm:random_fourier_alpha}-\ref{thm:discontinuous_alpha}.

We emphasize that in what follows, special focus is put on kernels with two particular structures: one, shift-invariant symmetric kernels in the case $m=2$ with $f(x,y) = f_0(x-y)$ for some $f_0:\RR^d\rightarrow\RR$; second, kernels with a product form, that is,
\begin{align}
\label{eq:product_form}
f(x_1,\ldots,x_m) = \prod_{\ell=1}^d h_\ell(x_{1,\ell},\ldots,x_{m,\ell})
\end{align}
for a sequence of symmetric kernels $\{h_\ell\}_{\ell=1}^d$ defined on $\RR^m$. The first case has wide applications in both statistics and machine learning, and the second case is the most common way of building a kernel with multi-dimensional arguments.

We will now present the first result on relatively smooth kernels. Recall that for a real function $g\in L^1(\RR^d)$, its Fourier transform is defined as
\begin{align*}
\hat{g}(u) \define \int_{\RR^d} g(x)e^{-2\pi iu^\top x}dx,
\end{align*}
where $dx = dx_1\ldots dx_d$ for any $x\in\RR^d$. 

\begin{thm}[Smooth kernels]
\label{thm:random_fourier_alpha}
For any given $M>0$, let $\cal{C} = [-M,M]^{md}$. Suppose that $f$ in \eqref{eq:U} satisfies the following condition (B1) with set $\cal{C}$:
\begin{enumerate}
\item[{\bf (B1)}] there exists a symmetric function $\bar{f} = \bar{f}(;\cal{C})$ such that
\begin{align}
\label{eq:f_bar}
\bar{f}(x_1,\ldots,x_m)=f(x_1,\ldots,x_m)\text{ for all } (x_1^\top,\ldots,x_m^\top)^\top\in\cal{C},
\end{align}
$\bar{f}\in L^1(\RR^{md})$ is continuous, and its Fourier transform $\hat{\bar{f}}$ satisfies
\begin{align}
\label{eq:fourier_moment}
\mu_q^q\parr*{\hat{\bar{f}}} \define \int_{\RR^{md}}\abs*{\hat{\bar{f}}(u)}\|u\|^qdu < \infty
\end{align}
for some $q \geq 1$.
\end{enumerate}
Then, for any $t> 0$, Condition (A) is satisfied with set $\cal{C}$ and constants 
\begin{align}
\label{eq:alpha_constant}
F = 2^m\nm*{\hat{\bar{f}}}_{L^1}, \quad B = 1, \quad \mu_a = 1
\end{align}
for all $a\geq 1$.
\end{thm}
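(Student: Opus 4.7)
The plan is to build $\widetilde f$ by a random Fourier feature construction in the spirit of \cite{rahimi2008random}. First I would note that the assumption $\int |\hat{\bar f}(u)|\|u\|^q du<\infty$ with $q\geq 1$, combined with the bound $|\hat{\bar f}(u)|\leq \|\bar f\|_{L^1}$ on $\{\|u\|\leq 1\}$, implies that $\hat{\bar f}\in L^1(\RR^{md})$. Fourier inversion then gives, for every $x=(x_1^\top,\ldots,x_m^\top)^\top\in\RR^{md}$,
\begin{align*}
\bar f(x)=\int_{\RR^{md}}\hat{\bar f}(u)e^{2\pi iu^\top x}du=\int_{\RR^{md}}|\hat{\bar f}(u)|\cos(2\pi u^\top x+\phi(u))du,
\end{align*}
where $\phi(u)\in[0,2\pi)$ is the phase of $\hat{\bar f}(u)$ and the second equality uses that $\bar f$ is real. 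Normalizing $p(u):=|\hat{\bar f}(u)|/\|\hat{\bar f}\|_{L^1}$ into a probability density, I rewrite $\bar f(x)=\|\hat{\bar f}\|_{L^1}\,\E_U[\cos(2\pi U^\top x+\phi(U))]$.

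Next I would draw $U_1,\ldots,U_K$ i.i.d.\ from $p$ and consider the empirical approximation
\begin{align*}
g_K(x)\define \frac{\|\hat{\bar f}\|_{L^1}}{K}\sum_{k=1}^K\cos(2\pi U_k^\top x+\phi(U_k)).
\end{align*}
At each fixed $x$, Hoeffding's inequality controls $|g_K(x)-\bar f(x)|$ with tail $\exp(-cKt^2/\|\hat{\bar f}\|_{L^1}^2)$. To upgrade to a uniform bound on $\cal{C}=[-M,M]^{md}$, I would cover $\cal{C}$ by an $\epsilon$-net of cardinality $(CM/\epsilon)^{md}$, apply a union bound on the net, and control the oscillation of $g_K$ between net points via its Lipschitz constant, which is at most $2\pi\|\hat{\bar f}\|_{L^1}\,\E_U\|U\|\leq 2\pi(\|\hat{\bar f}\|_{L^1}+\mu_q^q(\hat{\bar f}))$ using $\|u\|\leq 1+\|u\|^q$ for $q\geq 1$. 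Choosing $K$ sufficiently large in terms of $t,M,m,d,q$ and the $L^1$/moment norms of $\hat{\bar f}$ makes the uniform error at most $t$ with positive probability, so by existence there is a deterministic realization $u_1,\ldots,u_K$ achieving $\sup_{x\in\cal{C}}|\bar f(x)-g_K(x)|\leq t$, and since $\bar f=f$ on $\cal{C}$ by \eqref{eq:f_bar}, the same bound holds with $\bar f$ replaced by $f$.

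To put $g_K$ in the product form required by Condition (A), I would write $U_k=(U_{k,1},\ldots,U_{k,m})$ with $U_{k,\ell}\in\RR^d$ and expand each cosine using the sum-angle identity:
\begin{align*}
\cos\!\Bigl(\sum_{\ell=1}^m 2\pi U_{k,\ell}^\top x_\ell+\phi(U_k)\Bigr)=\sum_{S\subseteq[m]}c_{k,S}\prod_{\ell\in S}\cos(2\pi U_{k,\ell}^\top x_\ell)\prod_{\ell\notin S}\sin(2\pi U_{k,\ell}^\top x_\ell),
\end{align*}
with $|c_{k,S}|\leq 1$. Collecting the $K\cdot 2^m$ resulting product terms yields an expansion of the form \eqref{eq:expansion} in which the bases $\{e_j(\cdot)\}$ are one-argument cosines or sines, hence bounded by $1$ on all of $\RR^d$ and in every $L^a(\P)$ norm, giving $B=1$ and $\mu_a=1$. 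The coefficients satisfy $\sum|f_{j_1,\ldots,j_m}|\leq K\cdot 2^m\cdot\|\hat{\bar f}\|_{L^1}/K=2^m\|\hat{\bar f}\|_{L^1}$, matching $F$ in \eqref{eq:alpha_constant}. Finally, a symmetrization $\widetilde f(x_1,\ldots,x_m):=(m!)^{-1}\sum_{\sigma\in S_m}g_K(x_{\sigma(1)},\ldots,x_{\sigma(m)})$ makes the kernel symmetric; since $\bar f$ is already symmetric, this preserves the uniform error bound and does not increase $F$, $B$, or $\mu_a$.

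The main obstacle is reconciling the two requirements of Condition (A) that pulled prior approaches in opposite directions: the bases must be \emph{uniformly} bounded, yet the product coefficients must be summable with a controlled bound. Polynomial or wavelet expansions inflate $B$ with the order of approximation, which is precisely why they fail here. The random Fourier construction sidesteps this because each drawn feature contributes a trigonometric basis of amplitude exactly $1$ together with a uniform envelope $\|\hat{\bar f}\|_{L^1}/K$, decoupling the accuracy parameter $K$ from the size of the basis and of the coefficient sum.
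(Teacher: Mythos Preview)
Your approach is essentially the paper's: sample random frequencies from the normalized Fourier magnitude, control the pointwise error by Hoeffding, upgrade to a uniform bound on $\cal{C}$ via an $\epsilon$-net plus a Lipschitz argument, then expand each sampled cosine into a $2^m$-term product of one-variable trigonometric bases and symmetrize. The only structural difference is cosmetic: where you absorb the phase of $\hat{\bar f}$ into a single cosine $\cos(2\pi U^\top x+\phi(U))$ and sample from one probability measure $|\hat{\bar f}|/\|\hat{\bar f}\|_{L^1}$, the paper splits $\hat{\bar f}=\hat{\bar g}+i\hat{\bar h}$ and then each part into its positive and negative pieces, sampling from four separate nonnegative measures. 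Both routes deliver the same constants $F=2^m\|\hat{\bar f}\|_{L^1}$, $B=\mu_a=1$.

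One point needs repair. The Lipschitz constant of $g_K$ is not deterministically $2\pi\|\hat{\bar f}\|_{L^1}\,\E_U\|U\|$; it is the random quantity $\tfrac{2\pi\|\hat{\bar f}\|_{L^1}}{K}\sum_{k=1}^K\|U_k\|$ (or rather, this bounds $\sup_x\|\nabla g_K(x)\|$). To make the net-plus-Lipschitz step go through you must control this on the same high-probability event where the net bound holds. The paper does this by bounding $\E\bigl[\sup_x\|\nabla_x(s_K-\bar f)\|^q\bigr]\leq (4\pi)^q\E\|U_1\|^q$ and applying Markov's inequality, then optimizing the net radius $r$ against the two error contributions. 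This is precisely where the moment hypothesis $\mu_q^q(\hat{\bar f})<\infty$ enters, so the step cannot be skipped; with that correction your argument is complete.
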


Theorem \ref{thm:random_fourier_alpha} immediately implies the following corollary for shift-invariant kernels. 
\begin{corollary}
\label{cor:random_fourier_m2}
Let $m = 2$ and the kernel $f$ be shift-invariant with $f(x,y) = f_0(x-y)$ for some $f_0:\RR^d\rightarrow \RR$. For any given $M>0$, let $\cal{C}_0= [-2M,2M]^{d}$. 
Suppose that $f_0$ satisfies Condition (B1) with set $\cal{C}_0$ and alternative kernel $\bar{f}_0$ such that $\bar{f}_0(-x) = \bar{f}_0(x)$ for all $x\in\RR^d$. Then, for any $t > 0$, Condition (A) is satisfied with set $\cal{C} = [-M,M]^{2d}$ and constants
\begin{align*}
F = 4\nm*{\hat{\bar{f}}_0}_{L^1}, \quad B = 1, \quad \mu_a = 1
\end{align*}
for all $a\geq 1$. 
\end{corollary}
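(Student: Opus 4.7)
My plan is to adapt the random Fourier features construction behind Theorem~\ref{thm:random_fourier_alpha} to the shift-invariant bivariate setting, rather than invoke that theorem as a black box. The first step is a reduction: for any $(x,y)\in\cal{C}=[-M,M]^{2d}$, the difference $x-y$ lies in $\cal{C}_0=[-2M,2M]^d$, so hypothesis (B1) for $f_0$ on $\cal{C}_0$ gives $f(x,y)=f_0(x-y)=\bar{f}_0(x-y)$ throughout $\cal{C}$. It therefore suffices to approximate $\bar{f}_0(x-y)$ uniformly on $\cal{C}$ by a finite kernel of the tensor-product form in \eqref{eq:expansion}. Since $\bar{f}_0$ is real-valued and even by assumption, its Fourier transform $\hat{\bar{f}}_0$ is real-valued and even, and Fourier inversion combined with cancellation of the imaginary cross-terms yields
\begin{align*}
\bar{f}_0(x-y)=\int_{\RR^d}\hat{\bar{f}}_0(u)\bbrace*{\cos(2\pi u^\top x)\cos(2\pi u^\top y)+\sin(2\pi u^\top x)\sin(2\pi u^\top y)}du.
\end{align*}

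Introducing the probability density $p(u)=|\hat{\bar{f}}_0(u)|/\nm*{\hat{\bar{f}}_0}_{L^1}$ on $\RR^d$, I would draw i.i.d.\ frequencies $u_1,\ldots,u_K\sim p$ and set
\begin{align*}
\widetilde{f}(x,y)=\frac{\nm*{\hat{\bar{f}}_0}_{L^1}}{K}\sum_{k=1}^K\sgn(\hat{\bar{f}}_0(u_k))\bbrace*{\cos(2\pi u_k^\top x)\cos(2\pi u_k^\top y)+\sin(2\pi u_k^\top x)\sin(2\pi u_k^\top y)},
\end{align*}
which is automatically symmetric in $(x,y)$ and of the form \eqref{eq:expansion} with $2K$ uniformly bounded basis functions $\cos(2\pi u_k^\top\cdot),\sin(2\pi u_k^\top\cdot)$ on $\RR^d$. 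Pointwise, $\widetilde{f}(x,y)$ is an unbiased Monte Carlo estimator of $\bar{f}_0(x-y)$ whose summands are bounded by $\nm*{\hat{\bar{f}}_0}_{L^1}$ in absolute value, so Hoeffding's inequality delivers exponential concentration at scale $\nm*{\hat{\bar{f}}_0}_{L^1}/\sqrt{K}$. Uniformity over the compact set $\cal{C}$ is then obtained by a standard $\varepsilon$-net argument: the moment condition \eqref{eq:fourier_moment} bounds the Lipschitz modulus of $\bar{f}_0$, while the trigonometric bases in $\widetilde{f}$ have Lipschitz constants controlled by $\max_k\nm*{u_k}$, which is tame with high probability under $p$. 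Taking $K$ sufficiently large then makes \eqref{eq:approximation} hold uniformly on $\cal{C}$ with positive probability, so a valid deterministic $\widetilde{f}$ exists by the probabilistic method. Reading off the constants is immediate: each of the $K$ samples contributes two nonzero coefficients of magnitude $\nm*{\hat{\bar{f}}_0}_{L^1}/K$, giving $\sum|f_{j_1,j_2}|\le 2\nm*{\hat{\bar{f}}_0}_{L^1}\le F=4\nm*{\hat{\bar{f}}_0}_{L^1}$ (in line with the $2^m$ constant of Theorem~\ref{thm:random_fourier_alpha}), and $|\cos|,|\sin|\le 1$ forces $B=1$ and $\mu_a=1$ for all $a\ge 1$.

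The main obstacle, and the reason one cannot invoke Theorem~\ref{thm:random_fourier_alpha} verbatim, is that the natural bivariate extension $\bar{f}(x,y)=\bar{f}_0(x-y)$ fails to lie in $L^1(\RR^{2d})$, so its $2d$-dimensional Fourier transform does not exist as an $L^1$ function and hypothesis (B1) cannot be applied to it at the level of the product space. The shift-invariant structure forces one to re-run the random Fourier features argument in the reduced $d$-dimensional frequency space, which is precisely what the construction above accomplishes; the quantitative covering/concentration estimate then parallels the proof of Theorem~\ref{thm:random_fourier_alpha} with $md=2d$ replaced by $d$ throughout.
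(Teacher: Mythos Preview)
Your proposal is correct and follows essentially the same route as the paper. The paper applies the proof of Theorem~\ref{thm:random_fourier_alpha} with $m=1$ to $\bar f_0$ on $[-2M,2M]^d$ (using evenness of $\bar f_0$ so that $\hat{\bar f}_0$ is real and only cosine terms appear), obtains an approximant $\widetilde f_0$ with $F=2\nm{\hat{\bar f}_0}_{L^1}$, sets $\widetilde f(x,y)=\widetilde f_0(x-y)$, and then invokes the identity $\cos\{2\pi u^\top(x-y)\}=\cos(2\pi u^\top x)\cos(2\pi u^\top y)+\sin(2\pi u^\top x)\sin(2\pi u^\top y)$ to split into a tensor-product expansion, which doubles $F$ to $4\nm{\hat{\bar f}_0}_{L^1}$; you simply interleave the trig identity with the sampling step rather than applying it afterwards, and your observation that one cannot work in $L^1(\RR^{2d})$ is exactly why both arguments stay in the $d$-dimensional frequency space.
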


\begin{remark}
\label{remark:smooth}
In both Theorem \ref{thm:random_fourier_alpha} and Corollary \ref{cor:random_fourier_m2}, due to the independence of constants $(F,B,\mu_a)$ on the approximation bias $t$, the value $t$ in Condition (A) can be chosen arbitrarily small. We avoid the choice $t = 0$ so that the approximating kernel $\widetilde{f}$ remains a finite series and thus technical conditions on series convergence are not necessary. In addition, if $\bar{f}=f$ (or $\bar{f}_0 = f_0$ in Corollary \ref{cor:random_fourier_m2}), the constants are also independent of $M$ so that $M$ is allowed to be chosen arbitrarily large. We avoid the case $M=\infty$ so that the uniform approximation in \eqref{eq:approximation} is still taken over a compact set, and again, $\widetilde{f}$ remains a finite series. 
\end{remark}


The proof of Theorem \ref{thm:random_fourier_alpha} is based on a key realization that Condition (A)  is intrinsically connected to the idea of randomized feature mapping \citep{rahimi2008random} in the kernel learning literature. More specifically, when $\bar{f}\in L^1(\RR^{md})$ is continuous and $\hat{\bar{f}}\in L^1(\RR^{md})$, the Fourier inversion formula implies that 
\begin{align*}
\bar{f}(x_1,\ldots,x_m) = \int_{\RR^{md}} \hat{\bar{f}}(u_1,\ldots,u_m)e^{2\pi i(u_1^\top x_1+\ldots+u_m^\top x_m)}du_1\ldots du_m,
\end{align*}
where the right-hand side can be seen as the expectation of a Fourier basis with random frequency, which follows the sign measure of $\hat{\bar{f}}$. Due to the boundedness of the Fourier bases, Hoeffding's inequality guarantees an exponentially fast rate for a sample mean statistic of Fourier bases 
\begin{align*}
s_K(x_1,\ldots,x_m) \define \frac{1}{K}\sum_{j=1}^K\exp\Big\{2\pi i(u_{j,1}^\top x_1+\ldots+u_{j,m}^\top x_m)\Big\}
\end{align*}
to approximate $\bar{f}$ at each fixed point $x\in\RR^{md}$. The elements $\exp\{2\pi i(u_{j,1}^\top x_1+\ldots+u_{j,m}^\top x_m)\}$ in $s_K(x_1,\ldots,x_m)$ naturally decompose to bounded basis functions of inputs $x_j$.  An entropy-type argument is then used so that the approximation holds uniformly over the compact set $[-M,M]^{md}$.

We now introduce two more corollaries of Theorem \ref{thm:random_fourier_alpha}. 
We first define the space of Schwartz functions on $\RR^d$ with notation $\cal{S}(\RR^d)$ (cf. Chapter 6 in \cite{stein2011fourier}). Given a $d$-tuple $\alpha = (\alpha_1,\ldots,\alpha_d)$ of non-negative integers, the monomial $x^{\alpha}$ is defined as
\begin{align*}
x^{\alpha} \define x_1^{\alpha_1}\ldots x_d^{\alpha_d}.
\end{align*}
Similarly, the differential operator $(\partial/\partial x)^{\alpha}$ is defined as 
\begin{align*}
\parr*{\frac{\partial}{\partial x}}^{\alpha} \define \parr*{\frac{\partial}{\partial x_1}}^{\alpha_1}\ldots\parr*{\frac{\partial}{\partial x_d}}^{\alpha_d} = \frac{\partial^{|\alpha|}}{\partial x_1^{\alpha_1}\ldots\partial x_d^{\alpha_d}},
\end{align*}
where $|\alpha| = \alpha_1 + \ldots + \alpha_d$ is the order of the multi-index $\alpha$. The Schwartz space $\cal{S}(\RR^d)$ consists of all indefinitely differentiable functions $f$ on $\RR^d$ such that
\begin{align*}
\sup_{x\in\RR^d}\abs*{x^{\alpha}\parr*{\frac{\partial}{\partial x}}^\beta f(x)} < \infty
\end{align*}
for arbitrary multi-indices $\alpha$ and $\beta$. One example of a Schwartz function on $\RR^d$ is the $d$-dimensional Gaussian function $\exp(-\pi\|x\|^2)$. More generally, any smooth function with compact support on $\RR^d$ is Schwartz.

\begin{corollary}
\label{cor:schwartz}
(a) Let $k = md + 2$. Suppose the kernel $f\in C^k(\RR^{md})\bigcap L^1(\RR^{md})$, and $\partial^{k}f/\partial x_i^k \in L^1(\RR^{md})$ for all $i\in[md]$. Then, for any given $M>0$ and $\cal{C} = [-M,M]^{md}$, $f$ satisfies Condition (B1) in Theorem \ref{thm:random_fourier_alpha} with set $\cal{C}$, $\bar{f} = f$, and $q = 1$. If $f\in\cal{S}(\RR^{md})$, then for any given $M>0$ and $\cal{C} = [-M,M]^{md}$, $f$ satisfies Condition (B1) in Theorem \ref{thm:random_fourier_alpha} with set $\cal{C}$, $\bar{f} = f$, and arbitrary $q\geq 1$.\\
(b) If $f$ takes the product form \eqref{eq:product_form} for $h_\ell:\RR^m\rightarrow\RR$, then for any given $M>0$ and $\cal{C}  = [-M,M]^{md}$, $f$ satisfies Condition (B1) in Theorem \ref{thm:random_fourier_alpha} with set $\cal{C}$, $\bar{f} = f$, and $q = 1$ whenever for all $\ell\in[d]$, $h_\ell\in C^{m+2}(\RR^m)\bigcap L^1(\RR^m)$ and $\partial^{m+2}h_\ell/\partial x_i^{m+2}\in L^1(\RR^m)$ for all $i\in[m]$.
\end{corollary}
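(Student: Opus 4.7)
The plan is to verify Condition (B1) with $\bar{f} = f$. Continuity of $\bar{f}$ follows immediately from $f \in C^k(\RR^{md})$, and membership in $L^1(\RR^{md})$ and symmetry are given; the only nontrivial task is the Fourier moment bound \eqref{eq:fourier_moment}. The overall strategy rests on the classical smoothness--decay correspondence: regularity of $f$ upgrades to polynomial decay of $\hat{f}$, which, combined with polar integration on $\RR^{md}$, controls $\mu_q^q(\hat{f})$.

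For part (a), I would invoke the Fourier identity $\widehat{\partial^k f/\partial x_i^k}(u) = (2\pi i u_i)^k \hat{f}(u)$ together with the elementary bound $\|\widehat{g}\|_\infty \leq \|g\|_{L^1}$ to obtain, for each $i \in [md]$, a uniform estimate $|u_i|^k |\hat{f}(u)| \leq C_i$. Since $\|u\|^k \leq (md)^{k/2} \max_i |u_i|^k$ and $|\hat{f}(u)| \leq \|f\|_{L^1}$, combining these yields a decay estimate of the form $|\hat{f}(u)| \leq C(1+\|u\|^k)^{-1}$. Switching to spherical coordinates,
\begin{align*}
\int_{\RR^{md}} \|u\|^q |\hat{f}(u)| du \leq C' \int_0^\infty \frac{r^{q+md-1}}{1+r^k} dr,
\end{align*}
which is finite iff $k > q + md$. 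Taking $q = 1$ and $k = md + 2$ delivers the first claim. For the Schwartz case, I would use the fact that $\cal{S}(\RR^{md})$ is invariant under the Fourier transform, so $\hat{f}$ decays faster than any polynomial and $\mu_q^q(\hat{f})$ is finite for every $q \geq 1$.

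For part (b), after regrouping variables so that $v_\ell \in \RR^m$ collects the $\ell$-th coordinates of the $m$ arguments, the product form becomes a tensor product in disjoint sets of variables, hence $\hat{f}$ factors as $\hat{f}(v_1,\ldots,v_d) = \prod_{\ell=1}^d \hat{h}_\ell(v_\ell)$. Applying the argument of part (a) to each $h_\ell$ with ambient dimension $m$ in place of $md$ gives $|\hat{h}_\ell(v_\ell)| \leq C_\ell (1+\|v_\ell\|^{m+2})^{-1}$. Using $\|v\| \leq \sum_\ell \|v_\ell\|$ and Fubini,
\begin{align*}
\int \|v\| |\hat{f}(v)| dv \leq \sum_{\ell=1}^d \left(\int_{\RR^m} \|v_\ell\| \cdot |\hat{h}_\ell(v_\ell)| dv_\ell\right) \prod_{\ell' \neq \ell} \int_{\RR^m} |\hat{h}_{\ell'}(v_{\ell'})| dv_{\ell'},
\end{align*}
and each single-factor integral is finite by the same polar computation (the weight $r^{m}/(1+r^{m+2})$ yields $r^{-2}$ decay at infinity and $r^{m-1}/(1+r^{m+2})$ yields $r^{-3}$ decay).

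No single step presents a serious obstacle, as each ingredient is a standard Fourier-analytic fact. The only care required is in the bookkeeping of exponents: confirming that $k = md + 2$ is precisely the threshold needed to upgrade smoothness to $q = 1$ integrability in (a), and likewise that $m+2$ derivatives on each factor $h_\ell$ suffice to close the product argument in (b).
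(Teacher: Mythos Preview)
Your proposal is correct and mirrors the paper's proof essentially line for line: the same smoothness--decay estimate $|\hat{f}(u)|\leq C/(1+\|u\|^k)$ via the Fourier differentiation identity, the same polar integration to check $\mu_q^q(\hat{f})<\infty$, and the same factorization-plus-Fubini argument for part (b). The only cosmetic differences are that the paper bounds $\|u\|^k$ via $\|u\|_1^k\leq (md)^{k-1}\sum_i|u_i|^k$ rather than via $\max_i|u_i|^k$, and handles the Schwartz case by noting all derivatives stay in $L^1$ rather than by invoking Fourier invariance of $\cal{S}$; neither changes the substance.
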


Recall that a real function $g_0:\RR^d\rightarrow\RR$ is said to be \emph{positive definite (PD)} if for any positive integer $n$ and real vectors $\{x_i\}_{i=1}^n\in\RR^d$, the matrix $A = (a_{i,j})_{i,j=1}^n$ with $a_{i,j} = g_0(x_i-x_j)$ is positive semi-definite (PSD). 

\begin{corollary}[]
\label{cor:random_fourier_alpha}
Let $m = 2$ and $f$ be shift-invariant with $f(x,y) = f_0(x-y)$. Further suppose that for any given $M>0$ and $\cal{C}_0 = [-2M,2M]^{d}$, 
$f_0$ satisfies Condition (B1) in Theorem \ref{thm:random_fourier_alpha} with set $\cal{C}_0$, some $q\geq 1$, and $\bar{f}_0$ which is PD. Then, 
for any $t > 0$, Condition (A) is satisfied with set $\cal{C} = [-M,M]^{2d}$ and constants
\begin{align}
\label{eq:alpha_constant_m2}
F = 2\bar{f}_0(0), \quad B = 1, \quad \mu_a = 1
\end{align}
for all $a\geq 1$. Moreover, given any $M>0$, Condition (A) still holds with $\cal{C} = [-M,M]^{2d}$ and constants in \eqref{eq:alpha_constant_m2} if $f_0$ satisfies Condition (B1) with set $\cal{C}_0$, some $0 < q < 1$, and $\bar{f}_0$ which is both PD and Lipschitz continuous. 
\end{corollary}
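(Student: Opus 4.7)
The plan is to realize $\widetilde{f}$ as a sampled average of random Fourier features and verify the required bounds by direct inspection. First, because $\bar{f}_0 \in L^1(\RR^d) \cap C(\RR^d)$ is real-valued and positive definite, Bochner's theorem gives $\hat{\bar{f}}_0 \geq 0$ a.e.; combined with the elementary split (use boundedness of $\hat{\bar{f}}_0$ near the origin and $\int_{\|u\|>1}|\hat{\bar{f}}_0| \leq \mu_q^q(\hat{\bar{f}}_0)$ for the tail), this yields $\hat{\bar{f}}_0 \in L^1(\RR^d)$, so Fourier inversion at $z=0$ delivers $\int\hat{\bar{f}}_0(u)\,du = \bar{f}_0(0)$. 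Thus $p(u) = \hat{\bar{f}}_0(u)/\bar{f}_0(0)$ is a probability density, and, since real-valued PD functions are even, the imaginary part of the inversion vanishes, leaving $\bar{f}_0(z) = \bar{f}_0(0)\cdot \E_{U\sim p}[\cos(2\pi U^\top z)]$. The product-to-sum identity then turns this into $\bar{f}(x,y) = \bar{f}_0(0)\cdot \E_U[\cos(2\pi U^\top x)\cos(2\pi U^\top y) + \sin(2\pi U^\top x)\sin(2\pi U^\top y)]$, the tensor-product mixture form that (A) asks for.

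Next I would draw $U_1,\ldots,U_K$ iid from $p$ and define
\[
\widetilde{f}(x,y) = \frac{\bar{f}_0(0)}{K}\sum_{j=1}^K\bigl[\cos(2\pi U_j^\top x)\cos(2\pi U_j^\top y) + \sin(2\pi U_j^\top x)\sin(2\pi U_j^\top y)\bigr].
\]
Read as an expansion \eqref{eq:expansion} over $2K$ basis functions $\{\cos(2\pi U_j^\top\cdot),\sin(2\pi U_j^\top\cdot)\}_{j=1}^K$ with only $2K$ nonzero ``diagonal'' coefficients each equal to $\bar{f}_0(0)/K$, this immediately gives $\sum|f_{j_1 j_2}| = 2\bar{f}_0(0)$, $\sup_{j,x}|e_j(x)|\leq 1$, and $[\E|e_j(X_1)|^a]^{1/a}\leq 1$ for every $a\geq 1$ -- precisely $F = 2\bar{f}_0(0)$, $B = 1$, $\mu_a = 1$ as claimed in \eqref{eq:alpha_constant_m2}. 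Symmetry of $\widetilde{f}$ in $(x,y)$ is built into the summand, and $\E\widetilde{f}(x,y) = \bar{f}(x,y)$ at every fixed pair.

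It remains to verify that, for a well-chosen $K$, with positive probability
\[
\sup_{(x,y)\in[-M,M]^{2d}}\bigl|\widetilde{f}(x,y) - \bar{f}(x,y)\bigr| \leq t,
\]
after which any realization achieving the bound provides the deterministic approximating kernel needed. Because $\widetilde{f}$ and $\bar{f}$ depend on $(x,y)$ only through $z=x-y\in[-2M,2M]^d$, this reduces to uniformly approximating $\bar{f}_0$ on $\cal{C}_0$ by $\phi_K(z) := \bar{f}_0(0)K^{-1}\sum_j\cos(2\pi U_j^\top z)$, which I would handle by the standard Hoeffding-plus-$\epsilon$-net recipe: control $|\phi_K-\bar{f}_0|$ pointwise on a mesh-$\epsilon$ net via a union bound (the summands lie in $[-\bar{f}_0(0),\bar{f}_0(0)]$), then extend off the net using Lipschitz bounds on both $\phi_K$ and $\bar{f}_0$. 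When $q\geq 1$, the first-moment bound $\E\|U\|=\mu_1(\hat{\bar{f}}_0)/\bar{f}_0(0)<\infty$ controls the Lipschitz constant of $\phi_K$ in expectation, and a polynomial-in-$(Mt^{-1})$ choice of $K$ closes the argument. The main obstacle is the $0<q<1$ regime: without a finite first moment the random features need not be uniformly Lipschitz and the bare net argument fails. My fix is to truncate the spectral measure to $\|u\|\leq R$, paying a deterministic uniform bias of at most $R^{-q}\mu_q^q$ and obtaining features with Lipschitz constant at most $2\pi R\,\bar{f}_0(0)$; the assumed Lipschitz constant of $\bar{f}_0$ then substitutes for the missing first-moment bound when bounding $\bar{f}_0$'s own oscillation between net points, and balancing $R$, the mesh $\epsilon$, and $K$ produces the required uniform error $t$. (Since the truncated density has mass $\int_{\|u\|\leq R}\hat{\bar{f}}_0 \leq \bar{f}_0(0)$, the count still satisfies $F \leq 2\bar{f}_0(0)$.)
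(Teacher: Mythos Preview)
Your proposal is correct, and for the $q\geq 1$ case it matches the paper's argument essentially verbatim: both exploit that positive definiteness of $\bar{f}_0$ forces $\hat{\bar{f}}_0\geq 0$ (Bochner) and collapses the four-term sign decomposition in the proof of Theorem~\ref{thm:random_fourier_alpha} to the single positive piece $I_+$, so that $\|\hat{\bar{f}}_0\|_{L^1}=\bar{f}_0(0)$ and a single sample mean of cosine features suffices; the product-to-sum identity then delivers $F=2\bar{f}_0(0)$.

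For the fractional-moment case $0<q<1$ your route genuinely differs from the paper's. You truncate the spectral measure to $\{\|u\|\leq R\}$, absorb a deterministic uniform bias of at most $R^{-q}\mu_q^q$, and thereby make the random features Lipschitz with a deterministic constant at most $2\pi R\bar{f}_0(0)$; the assumed Lipschitz constant of $\bar{f}_0$ then controls the target's oscillation in the net-to-full-set extension, and a three-way balance of $R$, the mesh $\epsilon$, and $K$ closes the argument. The paper avoids truncation altogether: it keeps the full spectral sample and bounds $\E\sup_z\|\nabla_z(s_D-\bar{f}_0/\bar{f}_0(0))\|^q$ directly, using the subadditivity $(a+b)^q\leq a^q+b^q$ valid for $0<q<1$ to split this into $(2\pi)^q D^{1-q}\E\|U\|^q$ (from the random part) plus $(L_{\bar{f}_0}/\bar{f}_0(0))^q$ (from the deterministic part), after which Markov's inequality and the same covering optimization as in Theorem~\ref{thm:random_fourier_alpha} finish. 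Your truncation is slightly more elementary since all Lipschitz constants become deterministic after the cut, while the paper's argument is tidier in that it introduces no extra tuning parameter and reuses the Theorem~\ref{thm:random_fourier_alpha} machinery with only a single modified inequality.
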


We now list several commonly-used kernels covered by Theorem \ref{thm:random_fourier_alpha} and its three corollaries.  
\begin{enumerate}
\item The $d$-dimensional Gaussian kernel $f(x,y) = f_0(x-y) = \exp\parr*{-\|x-y\|^2/2}$ is shift-invariant with $f_0$ being both Schwartz and PD. Thus, $f$ satisfies the conditions of  Corollary \ref{cor:random_fourier_alpha}.
\item For the $d$-dimensional Cauchy kernel $f(x,y) = f_0(x-y)$ with $f_0(x) = \prod_{\ell=1}^d 2/\parr*{1+x_\ell^2}$, $f_0$ is PD and for any given $M>0$ and $\cal{C}_0=[-M,M]^{d}$, its Fourier transform $\hat{f}_0(u) = \exp\parr*{-\|u\|_1}$ satisfies Condition (B1) with set $\cal{C}_0$, $\bar{f}_0 = f_0$, and arbitrary $q \geq 1$. Therefore, it satisfies the conditions of Corollary \ref{cor:random_fourier_alpha}. 
\item The $d$-dimensional Laplacian kernel $f(x,y) = f_0(x-y) = \exp(-\|x-y\|_1)$ is shift-invariant and PD, but $f_0$ is not differentiable at the point $0_d$. The Fourier transform of $f_0$ is the Cauchy measure $\hat{f}_0(u) = \prod_{\ell=1}^d \bbrace*{2/(1+u_\ell^2)}$, which has fractional moment and thus, for any given $M>0$ and $\cal{C}_0=[-M,M]^{d}$, satisfies Condition (B1) with set $\cal{C}_0$, $\bar{f}_0 = f_0$, and any $0<q<1$. Since $f_0$ is both PD and Lipschitz, it satisfies the conditions in Corollary \ref{cor:random_fourier_alpha}.
\item The 1-dimensional ``hat" kernel: $f(x,y) = f_0(x-y)$ with $f_0(x)$ equal to $x+1$ for $-1\leq x\leq 0$, $1-x$ for $0\leq x\leq 1$ and $0$ otherwise. $f_0$ is not differentiable at points $\{-1,0,1\}$ but is PD and $1$-Lipschitz. Its Fourier transform is $\hat{f}_0(u) = \bbrace*{1-\cos\parr*{2\pi u}}/(2\pi^2u^2)$ and thus has fractional moment. Therefore, for any given $M>0$ and $\cal{C}_0 = [-M,M]$, $f_0$ satisfies Condition (B1) with set $\cal{C}_0$, $\bar{f}_0 = f_0$, and any $0 < q < 1$, and hence is also covered by Corollary \ref{cor:random_fourier_alpha}. 
\end{enumerate}
 
Corollary \ref{cor:schwartz} roughly describes the level of smoothness required in order to apply Theorem \ref{thm:random_fourier_alpha}. The bottleneck lies in \eqref{eq:fourier_moment} therein, which requires that for certain $\bar{f}$ that coincides with $f$ on some given set $\cal{C}$, the Fourier transform of $\bar{f}$ has finite first moment. All the previous examples can be analyzed via Theorem \ref{thm:random_fourier_alpha} by choosing $\bar{f} = f$, but such trivial solution does not work, for example, in the case of 1-dimensional cosine kernel $f(x,y) = f_0(x-y) = \cos(x-y)\mathbbm{1}\parr*{|x-y|<\pi/2}$, where $f_0$ is not PD and its Fourier transform $\hat{f}_0(u) = 2\cos\parr*{\pi^2 u}/(1-4\pi^2u^2)$ only has fractional moment. Moreover, we wish to relax another assumption in Theorem \ref{thm:random_fourier_alpha}, that is, $\hat{\bar{f}}\in L^1(\RR^{md})$. This assumption excludes all discontinuous kernels with jumps points. 
One important example in this exclusion is the 1-dimensional size-$\delta$ ($\delta > 0$) box kernel $f(x,y) = f_0(x-y) = \mathbbm{1}\parr*{|x-y| \leq \delta}$ defined on $\RR^2$, which jumps at points $\{(x,y):|x-y| = \delta\}$. For sufficiently large set $\cal{C}$, any $\bar{f}$ that agrees with $f$ on $\cal{C}$ still has jump points, thus its Fourier transform $\hat{\bar{f}}$ is not in $L^1(\RR^2)$ and thus $f$ is not covered by Theorem \ref{thm:random_fourier_alpha}. Other important discrete kernels include those that involve the indicator function and sign function. 

We now employ the standard smoothing technique through mollifiers to extend Theorem \ref{thm:random_fourier_alpha} to less smooth cases. The next result deals with Lipschitz kernels considered in \cite{leucht2012degenerate} as well as some uniformly continuous kernels. We will use the following two notation from integration with polar coordinates (with convention $(-1)!! = 0!! = 1$):
\begin{align}
\label{eq:polar_constant}
\Gamma_1(n) &\define 
\begin{cases}
\parr*{(n-2)!!}^{-1}(2\pi)^{\frac{n}{2}} & n \text{ is even}\\
\parr*{(n-2)!!}^{-1}2(2\pi)^{\frac{n-1}{2}} & n \text{ is odd}
\end{cases},
\quad \Gamma_2(n) &\define
\begin{cases}
\frac{(n-1)!!}{(n-2)!!}\frac{\sqrt{2\pi}}{2} \quad n \text{ is even}\\
\frac{(n-1)!!}{(n-2)!!}\frac{2}{\sqrt{2\pi}} \quad n \text{ is odd}
\end{cases}.
\end{align}

\begin{thm}[Lipschitz kernels]
\label{thm:Lipschitz_alpha}
For any given $M>0$, consider $\cal{C} = [-M,M]^{md}$.\\
(a) Suppose $f$ in \eqref{eq:U} satisfies the following Condition (B2) with set $\cal{C}$:
\begin{enumerate}
\item[{\bf (B2)}] there exists a symmetric function $\bar{f} = \bar{f}(;\cal{C})$ such that \eqref{eq:f_bar} holds, and $\bar{f} \in L^1(\RR^{md})$ is bounded and uniformly continuous. Moreover, its Fourier transform satisfies 
\begin{align*}
\abs*{\hat{\bar{f}}(u)} \leq \frac{L_F}{1+\|u\|^{md+\varepsilon}}
\end{align*}
for some $\varepsilon > 0$, where $L_F$ is some positive constant. 
\end{enumerate}
Then, for any $t>0$, Condition (A) is satisfied with set $\cal{C}$ and constants
\begin{align*}
F = (1+\varepsilon^{-1})2^mc_1L_F, \quad B = 1, \quad \mu_a = 1
\end{align*}
for any $a\geq 1$, where $c_1 = \Gamma_1(md)$.\\ 
(b) Suppose $f$ in \eqref{eq:U} satisfies the following Condition (B3) with set $\cal{C}$:
\begin{enumerate}
\item[{\bf (B3)}] there exists a symmetric function $\bar{f} = \bar{f}(;\cal{C})$ such that \eqref{eq:f_bar} holds, and $\bar{f}\in L^1(\RR^{md})$ is $L$-Lipschitz with respect to the $L^2$-norm on $\RR^{md}$. Moreover, its Fourier transform satisfies 
\begin{align*}
\abs*{\hat{\bar{f}}(u)} \leq \frac{L_F}{1+\|u\|^{md}},
\end{align*}
where $L_F$ is some positive constant. 
\end{enumerate}
Then, for any $t>0$, Condition (A) is satisfied with set $\cal{C}$ and constants
\begin{align*}
F = 2^{m+1}c_1L_F\log(2c_2L/t\vee 2), \quad B = 1, \quad \mu_a = 1
\end{align*}
for any $a\geq 1$, where $c_1 = \Gamma_1(md), c_2 = \Gamma_2(md)$.  
\end{thm}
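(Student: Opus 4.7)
The plan is to handle both parts by a common mollification argument that reduces the claim to Theorem \ref{thm:random_fourier_alpha}. Fix a smooth, rapidly decaying mollifier $\phi$ on $\RR^{md}$ with unit mass that is permutation-symmetric in the $m$ blocks of coordinates (for concreteness, take $\phi$ to be a product of $d$-dimensional Gaussian densities across the $m$ arguments, which renders $\hat\phi$ Schwartz), set $\phi_\delta(x)=\delta^{-md}\phi(x/\delta)$, and put $g=\bar f\ast\phi_\delta$. Then $g$ is smooth, symmetric, and lies in $L^1$ as a convolution of $L^1$ functions, with Fourier transform $\hat g(u)=\hat{\bar f}(u)\hat\phi(\delta u)$; since $\hat\phi$ is Schwartz, $\int\|u\|^q|\hat g(u)|\,du<\infty$ for every $q\geq 1$, so $g$ satisfies Condition (B1) on $\cal{C}$. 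Applying Theorem \ref{thm:random_fourier_alpha} to $g$ with budget $t/2$ produces a tensor-form approximant $\widetilde g$ with constants $F_g=2^m\|\hat g\|_{L^1}$, $B=1$, $\mu_a=1$. The triangle inequality $|f-\widetilde g|\leq|\bar f-g|+|g-\widetilde g|\leq t/2+t/2=t$ on $\cal{C}$ then delivers Condition (A), so the remaining tasks are to (i) pick $\delta$ so that $\|\bar f-g\|_\infty\leq t/2$ and (ii) bound $\|\hat g\|_{L^1}$.

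For part (a), uniform continuity and boundedness of $\bar f$ give $\|\bar f-\bar f\ast\phi_\delta\|_\infty\to 0$ as $\delta\to 0$, so some (data-dependent) $\delta$ suffices. For the Fourier side, $|\hat\phi|\leq 1$ together with the decay hypothesis $|\hat{\bar f}(u)|\leq L_F/(1+\|u\|^{md+\varepsilon})$ yields
\[
\|\hat g\|_{L^1}\leq\|\hat{\bar f}\|_{L^1}\leq c_1 L_F\int_0^\infty\frac{r^{md-1}}{1+r^{md+\varepsilon}}\,dr\leq c_1L_F(1+\varepsilon^{-1})
\]
after splitting the radial integral at $r=1$, where $c_1=\Gamma_1(md)$ is the surface-area constant for the unit sphere in $\RR^{md}$. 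Multiplying by $2^m$ matches the stated $F$, while $B=1$ and $\mu_a=1$ transfer directly from Theorem \ref{thm:random_fourier_alpha}.

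For part (b), Lipschitz continuity makes the mollification error explicit:
\[
|\bar f(x)-g(x)|\leq L\int\|y\|\phi_\delta(y)\,dy=L\delta\int\|z\|\phi(z)\,dz,
\]
so calibrating $\phi$ so that $\int\|z\|\phi(z)\,dz=c_2=\Gamma_2(md)$ (achievable with Gaussian building blocks) and setting $\delta=t/(2Lc_2)$ forces this error below $t/2$. The norm $\|\hat g\|_{L^1}$ is borderline because the decay $1/(1+\|u\|^{md})$ is only marginally integrable, so we split at $\|u\|\leq 1/\delta$ versus $\|u\|>1/\delta$. On the low-frequency piece, $|\hat\phi|\leq 1$ and polar coordinates give $c_1L_F\int_0^{1/\delta}r^{md-1}/(1+r^{md})\,dr\leq c_1L_F\log(2c_2L/t\vee 2)$; on the high-frequency piece, the Schwartz decay of $\hat\phi$ absorbs the borderline tail and contributes a $\delta$-independent constant. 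Combining these gives $\|\hat g\|_{L^1}\leq 2c_1L_F\log(2c_2L/t\vee 2)$, hence $F=2^{m+1}c_1L_F\log(2c_2L/t\vee 2)$ as stated.

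The main technical obstacle is calibrating the universal constants to precisely match $\Gamma_1(md)$ and $\Gamma_2(md)$ in the conclusion; this effectively pins down both the first absolute moment and the Fourier decay rate of the mollifier family we must use. Once those are fixed, the remainder is routine Fourier calculus combined with the black-box input of Theorem \ref{thm:random_fourier_alpha}, and the essential mechanism in both parts is the same: mollification buys us enough high-frequency decay to verify Condition (B1) for $g$, and the regularity of $\bar f$ (uniform continuity in (a), Lipschitz in (b)) controls the approximation loss $\|\bar f-g\|_\infty$.
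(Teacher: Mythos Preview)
Your proposal is correct and follows essentially the same approach as the paper: Gaussian mollification of $\bar f$ to produce a smooth intermediate kernel, application of Theorem~\ref{thm:random_fourier_alpha} to that mollified kernel, and a polar-coordinate bound on the $L^1$ norm of its Fourier transform (with the low/high-frequency split at radius $1/\delta$ in part (b)). The paper's proof is the same argument with the standard Gaussian kernel $K_h$ playing the role of your $\phi_\delta$, and the constants $\Gamma_1(md),\Gamma_2(md)$ arise exactly as you describe from the sphere surface area and the first absolute moment of the isotropic Gaussian.
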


It is worth noting that neither Condition (B2) nor (B3) implies the other. While (B2) makes weaker assumptions on the smoothness of $f$, it requires faster decay of the Fourier transform than (B3). The upper bounds in both (B2) and (B3) are conditions on tail behavior of the Fourier transform and naturally arise in Fourier analysis (cf. Chapter 8.4 in \cite{folland2013real}). 
Corollary \ref{cor:Lipschitz_sufficient} ahead gives many examples that satisfy these conditions.

As will be seen soon, Conditions (B2) and (B3) are usually milder than (B1). 
On the other hand, they require more delicate analyses. The key ingredient of the proof of Theorem \ref{thm:Lipschitz_alpha} is a smoothing argument via mollifiers. More specifically, let $K(x)$ be the standard Gaussian density function on $\RR^{md}$, and $K_h(x) \define K(x/h)/h^{md}$ with variance parameter $h$. Define $\bar{f}_h\define (\bar{f}*K_h)(x)$ as the convolution of $\bar{f}$ and $K_h$, which is a smooth approximation of $\bar{f}$ and preserves many nice properties of the Gaussian distribution. Since $\bar{f}_h$ is smooth enough and thus is guaranteed to satisfy Condition (B1) in Theorem \ref{thm:random_fourier_alpha}, it serves as the intermediate step between $\bar{f}$ (and thus the original kernel $f$) and the desired $\widetilde{f}$ in Condition (A). The variance parameter $h$ controls the trade-off between approximation error and smoothness: small $h$ leads to finer approximation of $\bar{f}$ by $\bar{f}_h$, but makes $\bar{f}_h$ less smooth and thus renders a larger constant $F$. In the special case of (B2), since the Fourier transform of $\bar{f}$ has sharp enough decay, we can upper bound the $L^1$ norm of the Fourier transform of $\bar{f}_h$ with an $h$-free constant, and thus $F$ is independent of the approximation error $t$.

Before giving examples that can be covered by Theorem \ref{thm:Lipschitz_alpha}, we first state two corollaries. First we state separately the version of Theorem \ref{thm:Lipschitz_alpha} for shift-invariant kernels in the case $m=2$.
 
\begin{corollary}
\label{cor:Lipschitz_shift_invariant}
Let $m=2$ 
and the kernel $f$ be shift-invariant with $f(x,y) = f_0(x-y)$. For any $M>0$, let $\cal{C}_0 = [-2M,2M]^{2d}$
.\\
(a) Suppose that $f_0$ satisfies Condition (B2) in Theorem \ref{thm:Lipschitz_alpha} with set $\cal{C}_0$, some $\varepsilon > 0$, and $\bar{f}_0$ that satisfies $\bar{f}_0(x)=\bar{f}_0(-x)$ for all $x\in\RR^d$. Then, for any $t>0$, Condition (A) is satisfied with set $\cal{C} = [-M,M]^{2d}$ and constants
\begin{align*}
F = (1+\varepsilon^{-1})4c_1L_F, \quad B = 1, \quad \mu_a = 1
\end{align*}
for any $a\geq 1$, where $c_1 = \Gamma_1(d)$.\\ 
(b) Suppose that $f_0$ satisfies Condition (B3) in Theorem \ref{thm:Lipschitz_alpha} with set $\cal{C}_0$ and $\bar{f}_0$ such that $\bar{f}_0(x)=\bar{f}_0(-x)$ for all $x\in\RR^d$. Then, for any $t>0$, Condition (A) is satisfied with set $\cal{C} = [-M,M]^{2d}$ and constants
\begin{align*}
F = 8c_1L_F\log(2c_2L/t\vee 2), \quad B = 1, \quad \mu_a = 1
\end{align*}
for any $a\geq 1$, where $c_1 = \Gamma_1(d), c_2 = \Gamma_2(d)$. 
\end{corollary}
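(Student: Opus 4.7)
The plan is to reduce the corollary to Theorem \ref{thm:Lipschitz_alpha} applied with $m=1$ to the univariate kernel $f_0$ on $\cal{C}_0$, and then to lift the resulting expansion back to the bivariate kernel $f$ using the shift-invariant structure $f(x,y) = f_0(x-y)$ together with the evenness assumption $\bar{f}_0(-x)=\bar{f}_0(x)$.

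First, I would apply part (a) (respectively part (b)) of Theorem \ref{thm:Lipschitz_alpha} with $m=1$ and set $\cal{C}_0 = [-2M,2M]^d$. For every $t>0$, this yields an approximation $\widetilde{f}_0$ of $f_0$ that is accurate to within $t$ uniformly on $\cal{C}_0$, with constants obtained by specializing $md=d$: namely $F_0 = 2(1+\varepsilon^{-1})c_1 L_F$ (resp.\ $F_0 = 4 c_1 L_F \log(2c_2 L/t \vee 2)$), $B_0 = 1$, and $\mu^0_a = 1$ for all $a\geq 1$, where $c_1 = \Gamma_1(d)$, $c_2 = \Gamma_2(d)$.

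Second, I would define $\widetilde{f}(x,y) \define \widetilde{f}_0(x-y)$. Since any $(x^\top,y^\top)^\top \in [-M,M]^{2d}$ satisfies $x-y \in [-2M,2M]^d \subseteq \cal{C}_0$, the uniform bound on $f_0 - \widetilde{f}_0$ over $\cal{C}_0$ immediately gives $|f(x,y) - \widetilde{f}(x,y)| \leq t$ uniformly on $\cal{C} = [-M,M]^{2d}$, establishing the approximation part of Condition (A).

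The third and main step is to recast $\widetilde{f}$ into the product-of-univariate-bases form required by Condition (A). The construction behind Theorem \ref{thm:Lipschitz_alpha} produces $\widetilde{f}_0$ via Theorem \ref{thm:random_fourier_alpha} applied to a Gaussian mollification of $\bar{f}_0$, which is a finite linear combination of random Fourier features $\exp(2\pi i u_j^\top z)$. Because $\bar{f}_0$ is even and Gaussian mollification preserves evenness, the relevant Fourier transform is a real even function, so the imaginary sine contributions cancel in symmetric pairs (or by taking real parts), and one may take $\widetilde{f}_0(z) = \sum_{j=1}^K a_j \cos(2\pi u_j^\top z)$ with $\sum_j |a_j| \leq F_0$. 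The product-to-sum identity
\[
\cos(2\pi u_j^\top(x-y)) = \cos(2\pi u_j^\top x)\cos(2\pi u_j^\top y) + \sin(2\pi u_j^\top x)\sin(2\pi u_j^\top y)
\]
then expresses $\widetilde{f}(x,y)$ as $\sum_{j_1,j_2=1}^{2K} f_{j_1,j_2} e_{j_1}(x) e_{j_2}(y)$ with bases $\{\cos(2\pi u_j^\top \cdot),\sin(2\pi u_j^\top \cdot)\}_{j=1}^K$, each uniformly bounded by $B=1$ (so $\mu_a=1$), and with diagonal coefficient matrix satisfying $\sum |f_{j_1,j_2}| \leq 2F_0$. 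Symmetry $\widetilde{f}(x,y)=\widetilde{f}(y,x)$ follows from evenness of $\cos$ and the symmetry of the $\sin\cdot\sin$ product, giving $F = 2F_0$ as stated.

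The main obstacle I anticipate lies in step three: verifying at the level of the explicit construction in Theorem \ref{thm:Lipschitz_alpha} that the evenness of $\bar{f}_0$ genuinely propagates through mollification and random-frequency sampling so that $\widetilde{f}_0$ can be realized as a pure cosine series. Without this reduction, one would pick up an extra factor of $2$ (yielding $F = 4F_0$ rather than $2F_0$) from naively invoking Theorem \ref{thm:Lipschitz_alpha} with $m=2$ directly on $(x,y)\mapsto f_0(x-y)$, which would lose the sharper constants claimed in the corollary.
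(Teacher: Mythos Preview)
Your proposal is correct and follows essentially the same approach as the paper: the paper's proof is the one-liner ``follows directly from that of Corollary~\ref{cor:random_fourier_m2},'' and what you have written is precisely the unpacking of that reference---apply the Lipschitz smoothing argument at the level of $f_0$ with $m=1$, use that evenness of $\bar f_0$ survives Gaussian mollification so the random-Fourier approximation is a pure cosine series, and then lift via $\cos(2\pi u^\top(x-y))=\cos(2\pi u^\top x)\cos(2\pi u^\top y)+\sin(2\pi u^\top x)\sin(2\pi u^\top y)$, doubling $F_0$ to $F=2F_0$. Your anticipated obstacle is real but easy: convolution of even functions is even, so $\bar f_{0,h}$ has a real Fourier transform and the sine part $II$ in the proof of Theorem~\ref{thm:random_fourier_alpha} vanishes.
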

For shift-invariant kernels, Corollary \ref{cor:Lipschitz_shift_invariant} further allows us to reduce to the case $m=1$ when checking the conditions in Theorem \ref{thm:Lipschitz_alpha}. Next, we establish an analogous version of Corollary \ref{cor:schwartz} for Theorem \ref{thm:Lipschitz_alpha}.
\begin{corollary}
\label{cor:Lipschitz_sufficient}
(a) Let $k = md + 1$. Suppose that $f\in C^k(\RR^{md})\bigcap L^1(\RR^{md})$, and $\partial^kf/\partial x_i^k\in L^1(\RR^{md})$ for all $i\in[md]$. If $f$ is uniformly continuous, then for any given $M>0$ and $\cal{C}=[-M,M]^{md}$, it satisfies Condition (B2) in Theorem \ref{thm:Lipschitz_alpha} with set $\cal{C}$, $\bar{f} = f$, $\varepsilon = 1$, and 
\begin{align*}
L_F = L_F(k) = \|f\|_{L^1} + (md)^{k-1}\sum_{i=1}^{md}\nm*{\frac{\partial^k}{\partial x_i^k}f}_{L^1}.
\end{align*}
Moreover, for any given $M>0$ and $\cal{C}=[-M,M]^{md}$, Condition (B3) is satisfied with set $\cal{C}$, $\bar{f} = f$, and $L_F(md)$ whenever the above conditions hold for $k = md$ and $f$ is $L$-Lipschitz.\\
(b) 
Given any $M>0$, consider $\cal{C} = [-M,M]^{md}$. Assume $f$ takes the product form in \eqref{eq:product_form} for $h_\ell:\RR^{m}\rightarrow\RR$. If $f$ is uniformly continuous, and for all $\ell\in[d]$, $h_\ell \in C^{m+1}(\RR^m)\bigcap L^1(\RR^m)$ and $\partial^{m+1}h_\ell/\partial x_i^{m+1}\in L^1(\RR^m)$ for all $i\in[m]$, then for any $t > 0$, Condition (A) is satisfied with set $\cal{C}$ and constants 
\begin{align*}
F = 2^{m+d}c_1^d\prod_{\ell=1}^d C_\ell, \quad B = 1, \quad \mu_a = 1
\end{align*}
for any $a\geq 1$, where $c_1 = \Gamma_1(m)$ and $C_\ell = \|h_\ell\|_{L^1} + m^m\sum_{i=1}^m\nm*{\partial^{m+1}h_\ell/\partial x_i^{m+1}}_{L^1}$. Moreover, if $f$ is $L$-Lipschitz and for all $\ell\in[d]$, $h_\ell \in C^m(\RR^m)\bigcap L^1(\RR^m)$ and $\partial^mh_\ell/\partial x_i^m\in L^1(\RR^m)$ for all $i\in[m]$, then for any $t>0$, Condition (A) is satisfied with set $\cal{C}$ and constants
\begin{align*}
F = 2^{m+d}c_1^d\log^d\parr*{2c_2L/t\vee 2}\prod_{\ell=1}^d C_\ell, \quad B = 1, \quad \mu_a = 1
\end{align*}
for any $a\geq 1$, where $c_1 = \Gamma_1(m), c_2 =\Gamma_2(m)$, and $C_\ell = \|h_\ell\|_{L^1} + m^{m-1}\sum_{i=1}^m\nm*{\partial^mh_\ell/\partial x_i^m}_{L^1}$.
\end{corollary}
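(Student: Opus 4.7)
The plan for part (a) is to estimate $\hat f$ via integration by parts. Since $f\in L^1(\RR^{md})$ together with $\partial^k f/\partial x_i^k \in L^1(\RR^{md})$ for all $i$ forces every intermediate derivative to be integrable and decay at infinity in an appropriate sense, applying $k$-fold integration by parts gives $(2\pi i u_i)^k\hat f(u) = \widehat{\partial^k f/\partial x_i^k}(u)$, so that $|u_i|^k|\hat f(u)|\leq (2\pi)^{-k}\|\partial^k f/\partial x_i^k\|_{L^1}$. Coupled with the trivial bound $|\hat f(u)|\leq \|f\|_{L^1}$ and the elementary inequality $\|u\|^k\leq (md)^{k-1}\max_i |u_i|^k$, this produces the pointwise estimate $(1+\|u\|^k)|\hat f(u)|\leq L_F$ with exactly the $L_F$ in the statement. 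Taking $k=md+1$ together with the uniform continuity hypothesis verifies Condition (B2) with $\varepsilon=1$ and $\bar f=f$; taking $k=md$ together with the Lipschitz hypothesis verifies Condition (B3).

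The plan for part (b) is to exploit the tensor product structure of the Fourier transform so that the smoothness conditions transfer from the one-dimensional factors to the full kernel without passing through the $md$-dimensional smoothness assumption. Because each $h_\ell$ depends only on the $\ell$-th coordinate slice of the arguments, the Fourier transform of $f$ factorizes as $\hat f(v) = \prod_{\ell=1}^d \hat h_\ell(v^\ell)$ where $v^\ell=(v_{1,\ell},\ldots,v_{m,\ell})\in\RR^m$. In the uniformly continuous subcase, applying part (a) to each $h_\ell$ as an $m$-order kernel on $\RR^m$ with $d_h=1$ and $k=m+1$ gives $|\hat h_\ell(v^\ell)|\leq C_\ell/(1+\|v^\ell\|^{m+1})$, whose $L^1(\RR^m)$ norm is bounded by $2c_1 C_\ell$ via a polar-coordinate integral whose constant is exactly $\Gamma_1(m)=c_1$. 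One then invokes the random Fourier features construction underlying Theorem \ref{thm:random_fourier_alpha} for $f$ with the factorized sampling density $\prod_\ell |\hat h_\ell|/\|\hat h_\ell\|_{L^1}$; a single $2^m$ factor arises from the $m$-argument structure of $f$ while the $d$ polar integrals contribute $(2c_1)^d$, producing $F=2^{m+d}c_1^d\prod_\ell C_\ell$, $B=1$, $\mu_a=1$.

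For the Lipschitz subcase of part (b), the per-factor decay $\|v^\ell\|^{-m}$ produced by part (a) with $k=m$ is only logarithmically divergent in $L^1(\RR^m)$, so the plan is to mollify: replace each $h_\ell$ by its convolution with a Gaussian kernel $K_h^{(\ell)}$ of bandwidth $h$ on $\RR^m$, and work with $\bar f_h := \prod_\ell (h_\ell * K_h^{(\ell)})$, whose Fourier transform retains the product form and whose per-factor $L^1$ norm, by splitting the integral at the scale $\|v^\ell\|=1/h$ and using the Gaussian decay beyond it, is bounded by $2c_1 C_\ell\log(2c_2/h\vee 2)$. The random Fourier construction applied to $\bar f_h$ then delivers a $t$-accurate expansion with $F=2^{m+d}c_1^d\log^d(2c_2/h\vee 2)\prod_\ell C_\ell$. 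The mollification error $\|f-\bar f_h\|_\infty$ on $\cal C$ is controlled by $Lh$ via the Lipschitz hypothesis together with standard mollifier estimates, so choosing $h$ of order $t/L$ balances the two errors and yields the $\log^d(2c_2L/t\vee 2)$ factor in the statement after a standard rescaling of the tolerance.

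The main obstacle lies in the Lipschitz subcase: a naive per-factor invocation of Theorem \ref{thm:Lipschitz_alpha}(b) to each $h_\ell$ followed by a telescoping product would introduce an extra factor of $2^{m(d-1)}$ into $F$ and produce $\prod_\ell\log$ rather than $\log^d$. Preserving the stated $2^{m+d}$ and $\log^d$ requires treating the mollified product $\bar f_h$ as a single $m$-argument kernel with factorized Fourier transform and invoking Theorem \ref{thm:random_fourier_alpha} only once at the end, while letting the per-factor Fourier bounds enter through the factorized sampling density rather than through separate finite-rank approximations.
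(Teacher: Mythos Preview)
Your treatment of part (a) and the Lipschitz subcase of part (b) matches the paper's proof. Part (a) is the same integration-by-parts / Fourier identity argument used in Corollary \ref{cor:schwartz}. For the Lipschitz subcase of (b), your per-factor mollification $\prod_\ell (h_\ell * K_h^{(\ell)})$ is literally the same object as the paper's whole-kernel mollification $f * K_h^{(md)}$, since the $md$-dimensional Gaussian factors over the coordinate slices; the paper then computes $\|\hat f_h\|_{L^1}$ as the product of the $d$ per-factor integrals, exactly as you describe.

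There is, however, a genuine gap in your uniformly continuous subcase of (b). You propose to apply Theorem \ref{thm:random_fourier_alpha} directly to $f$, but Condition (B1) requires $\int_{\RR^{md}}|\hat f(u)|\|u\|^q\,du<\infty$ for some $q\geq 1$, and this need not hold: the per-factor bound $|\hat h_\ell(v)|\leq C_\ell/(1+\|v\|^{m+1})$ gives $\hat h_\ell\in L^1(\RR^m)$ but its first moment behaves like $\int_1^\infty r^m/(1+r^{m+1})\,dr\asymp\int_1^\infty r^{-1}\,dr=\infty$, so $\int|\hat f(u)|\|u\|\,du$ can diverge as well. The moment condition is not decorative in the proof of Theorem \ref{thm:random_fourier_alpha}; it is what makes the covering step go through. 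The paper avoids this by following the proof of Theorem \ref{thm:Lipschitz_alpha}: mollify $f$ to $f_h=f*K_h$ first, so that $\hat f_h(u)=\hat f(u)\exp(-2\pi^2h^2\|u\|^2)$ has all moments and Theorem \ref{thm:random_fourier_alpha} applies to $f_h$. The uniform continuity of $f$ is used only to control $\|f-f_h\|_\infty$. Your product-structure computation then applies verbatim to $\|\hat f_h\|_{L^1}=\prod_\ell\int|\hat h_\ell(v)|\exp(-2\pi^2h^2\|v\|^2)\,dv$, and since the Gaussian factor is $\leq 1$ the bound $(2c_1)^d\prod_\ell C_\ell$ is unchanged and independent of $h$. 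In short: mollify in the uniformly continuous subcase too, just as you already do in the Lipschitz subcase.
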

Comparing Corollary \ref{cor:Lipschitz_sufficient} with Corollary \ref{cor:schwartz}, it is immediate that Theorem \ref{thm:Lipschitz_alpha} in general requires less smoothness on $f$ than Theorem \ref{thm:random_fourier_alpha}.  Moreover, when $f$ takes the product structure in \eqref{eq:product_form}, the exponents of $\|u\|$ in Conditions (B2) and (B3) can be further reduced to $m+\varepsilon$ ($\varepsilon > 0$) and $m$, respectively. With the help of Theorem \ref{thm:Lipschitz_alpha}, we are now able to analyze the cosine kernel, defined as $f(x,y) = f_0(x-y) = \prod_{\ell=1}^d \cos(x_\ell-y_\ell)\mathbbm{1}\parr*{\abs*{x_\ell-y_\ell}\leq \pi/2}$. For simplicity, we will focus on the 1-dimensional case. Note that even though the trigonometric identity $\cos(x-y) = \cos(x)\cos(y) + \sin(x)\sin(y)$ gives a direct expansion of $\cos(x-y)$, there is no trivial expansion of the indicator $\mathbbm{1}\parr*{|x-y|\leq \pi/2}$. Since $f$ is shift-invariant, we will analyze the function $f_0$ and verify that it satisfies the conditions in Corollary \ref{cor:Lipschitz_shift_invariant}. First note that $f_0$ is 1-Lipschitz and thus uniformly continuous. Moreover, its Fourier transform is $\abs*{\hat{f}_0(u)} = \abs*{2\cos(\pi^2 u)/(1-4\pi^2u^2)}$, thus for any given $M>0$ and $\cal{C}_0 = [-2M,2M]$, $f_0$ satisfies Condition (B2) with set $\cal{C}_0$, $\bar{f}_0 = f_0$, $\varepsilon = 1$, and $L_F = 2$. Therefore, the cosine kernel satisfies Condition (A) with set $\cal{C} = [-M,M]^2$ and an absolute constant $F(t)$ that is independent of $t$.

With a similar smoothing argument, we now establish the result for discontinuous kernels. Motivated by some most commonly-used kernels in statistics, we will focus on the case that $m = 2$ and $f$ is shift-invariant with $f(x,y) = f_0(x-y)$ for some $f_0:\RR^d \rightarrow\RR$ taking the following product form
\begin{align*}
f_0(x_1,\ldots,x_d) = \prod_{\ell=1}^d h_{0,\ell}(x_\ell).
\end{align*}
Our result, however, could be easily generalized to more complicated settings based on the stability results in Proposition \ref{prop:stability_sum_alpha} ahead.

\begin{thm}[Discontinuous kernels]
\label{thm:discontinuous_alpha}
Consider any any $M_1, M_2 > 0$. Suppose the following condition holds:
\begin{enumerate}
\item[{\bf (B4)}] Assume that for each $\ell\in[d]$, there exists a real function $\bar{h}_{0,\ell}$ that is piecewise constant with $J_\ell$ jump points: $y_{\ell,1},\ldots, y_{\ell,J_\ell}$, and $\bar{h}_{0,\ell}\in L^1(\RR)$ and its Fourier transform satisfies $\abs*{\hat{\bar{h}}_{0,\ell}(u)} \leq C_\ell/|u|$ for some $C_\ell>0$. Suppose that $\{\bar{h}_{0,\ell}\}_{\ell=1}^d$ are uniformly upper bounded in absolute value by some positive constant $\Delta$. Moreover, for  
\begin{align}
\label{eq:discontinuous_support}
\cal{C} \define \bbrace*{[-M_1,M_1]^{2d}} \bigcap \Big\{(x,z)\in\RR^{2d}:\abs*{(x_\ell - z_\ell) - y_{\ell,k}}\geq M_2, k\in[J_\ell]\Big\},
\end{align}
the kernel $\bar{f}(x,y)$ defined as
\begin{align*}
\bar{f}(x,y) \define \bar{f}_0(x-y) \define \prod_{\ell=1}^d \bar{h}_{0,\ell}(x_\ell-y_\ell)
\end{align*}
is symmetric and satisfies \eqref{eq:f_bar} for set $\cal{C}$.
\end{enumerate}
Then, for any $t > 0$, Condition (A) in Proposition \ref{prop:bern_U_alpha} is satisfied with set $\cal{C}$ and constants
\begin{align*}
F = 4\prod_{\ell=1}^d\bbrace*{\int_{-1}^1\abs*{\hat{\bar{h}}_{0,\ell}(u)}du +4C_\ell\log(1/h\vee 2)}, \quad B = 1, \quad \mu_a = 1
\end{align*}
for any $a\geq 1$, where $h = M_2\log^{-1/2}(2d\Delta^d/t\vee 2)/\sqrt{2}$.
\end{thm}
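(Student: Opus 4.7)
The plan is to follow the mollification philosophy used in the proof of Theorem~\ref{thm:Lipschitz_alpha}, but to push it one step further since the relevant Fourier transforms now only decay like $1/|u|$ and the target kernel is discontinuous. Because $f$ is shift-invariant with $m=2$ and $\bar{f}_0$ has a product structure across the $d$ coordinates, I would first reduce the problem to verifying a one-dimensional approximation result for each coordinate component $\bar{h}_{0,\ell}$, then assemble the pieces through the product formula (as in Corollary~\ref{cor:random_fourier_m2}).

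The central construction is per-coordinate Gaussian smoothing. For bandwidth $h>0$, let $K_h^{(1)}(y) = h^{-1}(2\pi)^{-1/2}e^{-y^2/(2h^2)}$ and set $\bar{h}_{0,\ell,h} := \bar{h}_{0,\ell} * K_h^{(1)}$, so that $\bar{f}_{0,h}(x) := \prod_{\ell=1}^d \bar{h}_{0,\ell,h}(x_\ell)$ is smooth and its Fourier transform admits the product form $\widehat{\bar{h}}_{0,\ell}(u_\ell)\,e^{-2\pi^2 h^2 u_\ell^2}$ coordinate-wise. The first key estimate is pointwise: if $x_\ell$ lies at distance at least $M_2$ from every jump point $y_{\ell,k}$, then $\bar{h}_{0,\ell}(x_\ell - y)=\bar{h}_{0,\ell}(x_\ell)$ for $|y|<M_2$, so
\begin{align*}
\bigl|\bar{h}_{0,\ell}(x_\ell)-\bar{h}_{0,\ell,h}(x_\ell)\bigr| \leq 2\Delta \int_{|y|\geq M_2} K_h^{(1)}(y)\,dy \leq 4\Delta\,e^{-M_2^2/(2h^2)}.
\end{align*}
Telescoping over the $d$ product factors and using $\|\bar{h}_{0,\ell,h}\|_\infty\leq \Delta$ then gives $|\bar{f}_0(x)-\bar{f}_{0,h}(x)| \leq 4d\Delta^d e^{-M_2^2/(2h^2)}$ on $\mathcal{C}$, so the choice $h = M_2\log^{-1/2}(2d\Delta^d/t\vee 2)/\sqrt{2}$ yields a uniform approximation on $\mathcal{C}$ of order $t$ (up to an absolute constant that can be absorbed by rescaling $t$).

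The second key estimate bounds $\|\widehat{\bar{h}}_{0,\ell,h}\|_{L^1}$: split the integral at $|u|=1$ and $|u|=1/h$. On $|u|\leq 1$ use the trivial bound $\int_{-1}^1|\widehat{\bar{h}}_{0,\ell}(u)|\,du$; on $1\leq|u|\leq 1/h$ use $|\widehat{\bar{h}}_{0,\ell}(u)|\leq C_\ell/|u|$ to obtain $2C_\ell\log(1/h)$; and on $|u|\geq 1/h$ use the Gaussian factor $e^{-2\pi^2 h^2 u^2}$ together with the change of variables $v=hu$ to get an absolute constant multiple of $C_\ell$. Together this gives $\|\widehat{\bar{h}}_{0,\ell,h}\|_{L^1}\leq \int_{-1}^1|\widehat{\bar{h}}_{0,\ell}(u)|\,du + 4C_\ell\log(1/h\vee 2)$. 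Since $\bar{f}_{0,h}$ has product structure, $\|\widehat{\bar{f}}_{0,h}\|_{L^1}=\prod_{\ell=1}^d\|\widehat{\bar{h}}_{0,\ell,h}\|_{L^1}$, matching the claimed expression for $F$ up to the shift-invariant-to-bivariate factor of $4$ from Corollary~\ref{cor:random_fourier_m2}.

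With $\bar{f}_{0,h}$ smooth enough to satisfy Condition~(B1), I would then apply Theorem~\ref{thm:random_fourier_alpha} (in its shift-invariant form, Corollary~\ref{cor:random_fourier_m2}) to $\bar{f}_{0,h}$ with approximation budget $t/2$, and combine with the mollification error $t/2$ from the first step, yielding Condition~(A) with the stated constants $B=1$ and $\mu_a=1$ and the $F$ above. The main obstacle is the bandwidth selection: $h$ must be small enough to make the discontinuity-crossing tail negligible on $\mathcal{C}$, yet large enough so that $\log(1/h)$ inside $F$ stays manageable; the explicit choice $h\asymp M_2/\sqrt{\log(d\Delta^d/t)}$ is exactly what balances the exponential Gaussian tail against the polynomial Fourier decay, and verifying that this balance yields the precise constants in the statement is the only nontrivial bookkeeping.
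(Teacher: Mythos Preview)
Your proposal is correct and follows essentially the same route as the paper: per-coordinate Gaussian mollification $\bar{h}_{0,\ell,h}=\bar{h}_{0,\ell}*K_h^{(1)}$, a telescoping bound over the product structure using $\|\bar{h}_{0,\ell,h}\|_\infty\le\Delta$, a Gaussian-tail estimate for points at distance $\ge M_2$ from the jumps giving $|\bar f_0-\bar f_{0,h}|\le d\Delta^d e^{-M_2^2/(2h^2)}$ on $\cal C$, and then the three-piece split of $\int|\hat{\bar h}_{0,\ell}(u)|e^{-2\pi^2h^2u^2}du$ at $|u|=1$ and $|u|=1/h$ before invoking Corollary~\ref{cor:random_fourier_m2}. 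The only differences are cosmetic (the paper mollifies $\bar f_0$ directly with the $d$-dimensional Gaussian, which factorizes into your per-coordinate convolutions, and tracks slightly tighter constants in the Gaussian tail step).
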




Compared to Theorems \ref{thm:random_fourier_alpha} and \ref{thm:Lipschitz_alpha}, Theorem \ref{thm:discontinuous_alpha} considers a more delicate set $\cal{C}$ to guarantee the uniform approximation of $\bar{f}$ by $\bar{f}_h$ (recall the definition of $\bar{f}_h$ right after Theorem \ref{thm:Lipschitz_alpha}). The extra truncation in \eqref{eq:discontinuous_support},
\begin{align*}
\Big\{(x,z)\in\RR^{2d}:\abs*{(x_\ell - z_\ell) - y_{\ell,k}}\geq M_2, k\in[J_\ell]\Big\},
\end{align*}
is necessary since $\bar{f}_h$ does not necessarily converge to $\bar{f}$ at its discontinuity points as $h\downarrow 0$. We therefore consider the approximation error uniformly over those points that are at least $M_2$ away from each jump point, which can be controlled sharply thanks to the fast decay of Gaussian tails.


We now discuss two important discrete kernels that are covered by Theorem \ref{thm:discontinuous_alpha}. First consider the 1-dimensional box kernel of size 1: $f(x,y) = f_0(x-y) = \mathbbm{1}\parr*{|x-y|\leq 1}$. It can be readily checked that $f_0(x) = h_{0,1}(x) = \mathbbm{1}\parr*{|x|\leq 1}$ is in $L^1(\RR)$, and is piecewise constant with jump points $\{y_{1,1},y_{1,2}\} = \{-1,+1\}$ and upper bound $\Delta = 1$. Its Fourier transform is $\hat{h}_{0,1}(u) = \sin(2\pi u)/(\pi u)$, thus $f$ satisfies Condition (B4) with $\bar{h}_{0,1} = h_{0,1}$ and $C_1 = 1/\pi$. Moreover, using the relation $|\sin(x)|\leq |x|$ for all $x\in\RR$, it can be readily checked that in the definition of $F(t)$ in Theorem \ref{thm:discontinuous_alpha},
\begin{align*}
\int_{-1}^1 \abs*{\hat{\bar{h}}_{0,1}(u)}du = 2\int_0^1 \abs*{\frac{\sin(2\pi u)}{\pi u}}du \leq 4.
\end{align*}

Next, consider the kernel related to Kendall's tau, which is defined on $\RR^2\times\RR^2$ as $f(x,y) \define \sgn(x_1-y_1)\sgn(x_2-y_2)$, with $\sgn(x)$ being the sign function that takes values $1,-1$ and $0$ for $x >0, x<0$ and $x=0$. Note that $f$ is shift-invariant with $f_0(x) = \sgn(x_1)\sgn(x_2)$, thus $f_0$ is in the product form with $h_{0,1}(x) = h_{0,2}(x) = \sgn(x)$. Since $h_{0,1}\notin L^1(\RR)$, we consider its truncated version $\bar{h}_{0,1}(x)\define \sgn(x)\mathbbm{1}\parr*{|x|\leq 2M_1}$ (with $\bar{h}_{0,2}$ identically defined) and the corresponding truncated version of Kendall's tau:
\begin{align}
\label{eq:Kendall_truncate}
\bar{f}(x,y;M_1)\define \sgn(x_1-y_1)\sgn(x_2-y_2)\mathbbm{1}\parr*{\abs*{x_1-y_1}\leq 2M_1}\mathbbm{1}\parr*{\abs*{x_2-y_2}\leq 2M_1}.
\end{align}
Indeed, $\bar{f}$ is symmetric and satisfies $f(x,y) = \bar{f}(x,y)$ uniformly over $\cal{C}$ defined in \eqref{eq:discontinuous_support} with $d = 2$. Clearly, $\bar{h}_{0,1}$ is piecewise constant with three jump points $\{y_{1,1},y_{1,2},y_{1,3}\} = \{-2M_1,0,2M_1\}$ and upper bound $\Delta = 1$. 
The Fourier transform of $\bar{h}_{0,1}$ is $\hat{\bar{h}}_{0,1}(u) = i\bbrace*{\cos(4\pi uM_1)-1}/(\pi u)$, and thus $f$ satisfies Condition (B4) with $\bar{h}_{0,1}, \bar{h}_{0,2}$, and $C_1 = C_2 = 2/\pi$. Moreover, using the relation $\cos(2x) = 2\cos^2(x) - 1$ and $|\sin(x)| \leq x$ for $x \geq 0$, we have
\begin{align*}
&\ms\int_{-1}^{1}\abs*{\hat{\bar{h}}_{0,1 }(u)}du = 2\int_0^1\abs*{\frac{1-\cos(4\pi uM_1)}{\pi u}}du = 2\int_0^{M_1}\frac{1-\cos(4\pi v)}{\pi v}dv\\
&= \frac{4}{\pi}\int_0^{M_1}\frac{\sin^2(2\pi v)}{v}dv \leq \frac{4}{\pi}\int_0^1\frac{\abs*{\sin(2\pi v)}}{v}dv + \frac{4}{\pi}\int_1^{M_1}\frac{1}{v}dv = 8 + \frac{4}{\pi}\log(M_1).
\end{align*}
Therefore, $F(t)\asymp \log^2(M_1/h)\asymp \log^2(M_1/M_2) + \log^2\log(1/t)$ provided that $M_1$ is sufficiently large and $M_2,t$ are sufficiently small. 
In such cases where $F$ cannot be upper bounded by some absolute constant, one can apply separately the Bernstein inequality derived in Theorem 2 in \cite{merlevede2009bernstein} (cf. Lemma \ref{lemma:sample_mean_Bern} in the supplement) and the degenerate version of Proposition \ref{prop:bern_U_alpha} to the sample mean term and degenerate terms in the Hoeffding decomposition of $V_n$, and then combine the results to obtain sharper control of the tail probability. See Lemmas \ref{lemma:perturbation}-\ref{lemma:perturbation_indep} in the supplement for examples of separate control of the sample mean term and the degenerate terms in the context of partially linear regression.

It is worthwhile to mention that, although Theorem \ref{thm:discontinuous_alpha} only covers discrete kernels that are piecewise constant in each component, its proof technique is ready to be used to analyze kernels with more complicated structures. Moreover, the stability result developed in the following Proposition \ref{prop:stability_sum_alpha} will allow us to handle more complicated kernels that are built upon simpler ones. Define the symmetrized version of an order-$m$ kernel as
\begin{align*}
f_\circ(x_1,\ldots,x_m) \define \frac{1}{m!}\sum_{\pi}f(x_{\pi(1)},\ldots,x_{\pi(m)}),
\end{align*}
where the summation is taken over all $m!$ permutations of $[m]$. We will consider composite kernels built from $f^1,\ldots,f^N$, which are $N$ not necessarily symmetric kernels of order $m$ and take not necessarily the same $m$ arguments.

\begin{proposition}
\label{prop:stability_sum_alpha}
Suppose that for any $t>0$ and $i\in[N]$, there exists a not necessarily symmetric approximating kernel $\widetilde{f}^i$ such that $|f^i-\widetilde{f}^i|\leq t$ uniformly over some not necessarily symmetric set $\cal{C}^i\subset\RR^{md}$. In addition, assume $\widetilde{f}^i$ satisfies expansion \eqref{eq:expansion} with constants $\bbrace*{F^i(t,\cal{C}^i),B^i(t,\cal{C}^i),\mu_a^i(t,\cal{C}^i)}$ for all $a\geq 1$. Define
\begin{align}
\label{eq:composite_C}
\cal{C}^0 \define \cal{C}^1\times \ldots\times \cal{C}^N \text{ and } \cal{C}^0_\circ \define \bigcap_\pi \Big\{\pi(\cal{C}^0)\Big\},
\end{align}
where $\pi$ is taken over all permutations of $(x^1_1,\ldots,x^1_m,\ldots,x^N_1,\ldots,x^N_m)$. Then, for any given $t>0$, we have
\begin{enumerate}
\item (Additivity) The symmetrized version $f^0_\circ$ of the kernel 
\begin{align*}
f^0(x^1_1,\ldots,x^1_m,x^2_1,\ldots,x^2_m,\ldots,x^N_1,\ldots,x^N_m) \define \sum_{i=1}^N\lambda_if^i(x^i_1,\ldots,x^i_m)
\end{align*}
satisfies Condition (A) with $\cal{C} = \cal{C}^0_\circ$ and expansion constants 
\begin{align*}
\bbrace*{F(t),B(t),\mu_a(t)} = \bbrace*{\sum_{i=1}^N \abs*{\lambda_i}F^i(t_i), B^1(t_1)\vee\ldots\vee B^N(t_N), \mu_a^1(t_1)\vee\ldots\vee\mu_a^N(t_N)},
\end{align*}
for all $a\geq 1$, where $\{\lambda_i\}_{i=1}^N$ are real numbers, and $t_i \leq t/\parr*{\sum_{i=1}^N|\lambda_i|}$ for each $i\in[N]$.
\item (Multiplicativity) Suppose $f^1,\ldots,f^N$ are upper bounded in absolute value by $M^1,\ldots,M^N$ on $\cal{C}^1,\ldots, \cal{C}^N$, respectively, with $M^i\geq 1$ for $i\in[N]$. Then, the symmetrized version $f^0_\circ$ of the kernel 
\begin{align*}
f^0(x^1_1,\ldots,x^1_m,x^2_1,\ldots,x^2_m,\ldots,x^N_1,\ldots,x^N_m)\define \prod_{i=1}^N f^i(x^i_1,\ldots,x^i_m)
\end{align*}
satisfies Condition (A) with $\cal{C}=\cal{C}^0_\circ$ and expansion constants 
\begin{align*}
\bbrace*{F(t),B(t),\mu_a(t)} = \bbrace*{\prod_{i=1}^NF^i(t_i), B^1(t_1)\vee\ldots\vee B^N(t_N), \mu_a^1(t_1)\vee\ldots\vee\mu_a^N(t_N)}
\end{align*}
for all $a\geq 1$, where $t_i \leq t(M^i + t)/\bbrace*{N\prod_{i=1}^N (M^i + t)}$ for each $i\in[N]$.
\end{enumerate}
\end{proposition}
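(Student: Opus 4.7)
The plan is to construct the approximating kernel for the composite $f^0_\circ$ directly from the given approximations $\widetilde{f}^i$ of $f^i$ in the obvious way (sum for additivity, product for multiplicativity), then symmetrize, and finally verify the three bookkeeping quantities $F, B, \mu_a$ separately from the uniform approximation bound.

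For additivity, I would first work with the unsymmetrized $f^0$ and set $\widetilde{f}^0 \define \sum_{i=1}^N \lambda_i \widetilde{f}^i$. On $\mathcal{C}^0 = \mathcal{C}^1\times\cdots\times\mathcal{C}^N$, the triangle inequality gives $|f^0 - \widetilde{f}^0| \le \sum_i |\lambda_i| t_i \le t$ by the hypothesis on $t_i$. The expansion of $\widetilde{f}^0$ is obtained by collecting all bases $\{e_j^i\}$ across $i = 1, \ldots, N$ into a single extended system (padded with zero coefficients where appropriate); the sum of absolute values of the combined coefficients is exactly $\sum_i |\lambda_i| F^i(t_i)$, while $B$ and $\mu_a$ become the corresponding maxima, since the bound on $|e_j(x)|$ and the marginal moment $\mathbb{E}|e_j(X_1)|^a$ are taken over the union of indices. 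For symmetrization, $f^0_\circ$ is the average over all $(Nm)!$ permutations of $f^0$; on the symmetric set $\mathcal{C}^0_\circ = \bigcap_\pi \pi(\mathcal{C}^0)$, every permuted argument still lies in $\mathcal{C}^0$, so the uniform $t$-bound is preserved after averaging. Averaging the expansions of the permuted copies only redistributes coefficients across the same basis system, so the quantities $F, B, \mu_a$ stand.

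For multiplicativity, take $\widetilde{f}^0 \define \prod_{i=1}^N \widetilde{f}^i$. Using the telescoping identity $\prod_i a_i - \prod_i b_i = \sum_i (a_i - b_i) \prod_{j<i} b_j \prod_{j>i} a_j$ with $a_i = f^i$, $b_i = \widetilde{f}^i$, together with $|f^i| \le M^i$ and $|\widetilde{f}^i| \le M^i + t_i \le M^i + t$ on $\mathcal{C}^0$, we obtain
\begin{align*}
|f^0 - \widetilde{f}^0| \;\le\; \sum_{i=1}^N t_i \prod_{j\ne i} (M^j + t) \;\le\; \sum_{i=1}^N \frac{t_i \prod_{j}(M^j+t)}{M^i+t} \;\le\; t,
\end{align*}
by the stated choice of $t_i$. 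The expansion of $\widetilde{f}^0$ is the tensor product of the expansions of the factors; its coefficients are the products $\prod_i f^i_{j^i_{1:m}}$, so $\sum |\text{coef}| = \prod_i F^i(t_i)$. The combined basis system is still the union of the individual $\{e_j^i\}$'s (now indexing different factor positions), so $B$ and $\mu_a$ remain maxima. Symmetrization proceeds exactly as in the additive case.

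The main obstacle — though not deep — is to verify that the bookkeeping for $\mu_a$ survives symmetrization: after averaging over permutations of the $Nm$ arguments, a basis function $e_j^i$ that was originally pinned to one coordinate appears in many different coordinate slots. Because the sequence is stationary, the marginal moment $[\mathbb{E}|e_j^i(X_1)|^a]^{1/a}$ is independent of which coordinate it occupies, and because the expansion is a finite sum of products of such single-coordinate factors, the maximum over the union of indices still controls $\mu_a$. A secondary subtlety is that some of the $m$ arguments of distinct $f^i$'s might coincide after permutation; this is harmless since the expansion $\widetilde{f}^0 = \prod_i \widetilde{f}^i$ and its symmetrized version are still finite linear combinations of products of single-coordinate basis functions, so the form required by \eqref{eq:expansion} is preserved. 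Once these points are checked, both claims follow.
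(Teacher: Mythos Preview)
Your proposal is correct and follows essentially the same route as the paper: define $\widetilde{f}^0$ as the sum (respectively product) of the $\widetilde{f}^i$, control $|f^0-\widetilde{f}^0|$ on $\mathcal{C}^0$ by the triangle inequality (respectively the telescoping identity with the bounds $|\widetilde{f}^i|\le M^i+t$), read off $(F,B,\mu_a)$ from the combined expansion, and then symmetrize, noting that on $\mathcal{C}^0_\circ$ every permuted argument still lies in $\mathcal{C}^0$ so the uniform bound and expansion constants are preserved. The paper's proof is slightly terser on the symmetrization step, but the substance is identical.
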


As an example, Spearman's rho correlation coefficient can be readily analyzed with the help of Proposition \ref{prop:stability_sum_alpha}. Given a stationary sequence $\{X_i\}_{i=1}^n = \{(X_{i,1}, X_{i,2})\}_{i=1}^n$ in $\RR^2$, following \cite{hoeffding1948class}, the sample Spearman's rho is proportional to
\begin{align*}
\rho \define n^{-3}\sum_{i, j, k=1}^n\sgn\parr*{X_{i,1}-X_{j,1}}\sgn\parr*{X_{i,2} - X_{k,2}},
\end{align*}
and is thus a third-order V-statistic generated by the asymmetric kernel $f:\RR^6\rightarrow\RR$ defined as $f(x,y,z) \define \sgn(x_1-y_1)\sgn(x_2-z_2)$, with $x,y,z \in\RR^2$. Note that $\rho$ can be equivalently written as
\begin{align*}
\rho = n^{-3}\sum_{i,j,k=1}^n f_\circ(X_i,X_j,X_k),
\end{align*}
where $f_\circ$ is the symmetrized version of $f$. Note that $f$ can be written as the product of two kernels $f(x,y,z) = f^1(x,y)f^2(x,z)$ with $f^1(x,y)\define f^1_0(x-y)\define\sgn(x_1-y_1)$ and $f^2(x,y)\define f^2_0(x-y)\define \sgn(x_2-y_2)$ being both shift-invariant and in the product form. Moreover, both $f^1$ and $f^2$ are upper bounded in absolute value by $1$. 
With $d = m = 2$, let
\begin{align*}
\cal{C}^1 &\define \Big\{ [-M_1,M_1]^4\Big\}\bigcap \Big\{(u,v)\in\RR^4: \abs*{(u_1-v_1) - y_k}\geq M_2, k\in[J]\Big\},\\
\cal{C}^2 &\define \Big\{ [-M_1,M_1]^4\Big\}\bigcap \Big\{(u,v)\in\RR^4: \abs*{(u_2-v_2) - y_k}\geq M_2, k\in[J]\Big\},
\end{align*}
where $J = 3$ and $\{y_1,y_2,y_3\} = \{-2M_1, 0, 2M_1\}$ as illustrated for the truncated sign function after Theorem \ref{thm:discontinuous_alpha}. Then, the symmetric set $\cal{C}^0_\circ$ in \eqref{eq:composite_C} in Proposition \ref{prop:stability_sum_alpha} becomes
\begin{align}
\label{eq:rho_C}
\Big\{[-M_1,M_1]^6\Big\}\bigcap \Big\{(x^1,x^2,x^3)\in\RR^6: \abs*{(x^i_\ell-x^j_\ell) - y_k}\geq M_2,  i\neq j, \ell\in[2], k\in[3]\Big\},
\end{align}
where we collapse the dimension from $8$ to $6$ due to the common first argument of $f^1$ and $f^2$. Consider the truncated version of $f$ as follows:
\begin{align*}
\bar{f}(x,y,z;M_1) \define \sgn(x_1-y_1)\sgn(x_2-z_2)\mathbbm{1}\parr*{\abs*{x_1-y_1}\leq 2M_1}\mathbbm{1}\parr*{\abs*{x_2-z_2}\leq 2M_1}. 
\end{align*}
Since $f(x,y,z) = \bar{f}(x,y,z)$ uniformly over $\cal{C}^0_\circ$, it suffices to consider the approximation of $\bar{f}$. Using the same proof technique of Theorem \ref{thm:discontinuous_alpha} and the calculation for the truncated sign function after its presentation, we obtain that for any $t > 0$, there exists a $\widetilde{f}^1$ such that $\abs*{f^1 - \widetilde{f}^1}\leq t$ uniformly over $\cal{C}^1$ and $\widetilde{f}^1$ satisfies expansion \eqref{eq:expansion} with constants $(F(t),B(t),\mu_a(t)) \asymp (\log(M_1/h), 1, 1)$ for all $a\geq 1$, where $h \asymp M_2\log^{-1/2}(1/t)$. A similar argument holds for $f^2$. Therefore, the second part of Proposition \ref{prop:stability_sum_alpha} gives
\begin{align*}
F(t) \asymp \log^2(M_1/M_2) + \log^2\log(1/t), \quad B(t) \asymp 1, \quad \mu_a(t) \asymp 1
\end{align*}
for all $a\geq 1$, provided that $M_1$ is sufficiently large and $M_2, t$ are sufficiently small.

\subsection{Extension to general distributions}
\label{subsec:support}

In this subsection, we extend the results in Propositions \ref{prop:bern_U_alpha} and \ref{prop:MDP_U}  to arbitrarily supported $\{X_i\}_{i=1}^n$ via a standard truncation argument. Recall that, given an order-$m$ symmetric kernel $f$ defined on $\RR^{md}$ of degeneracy level $r - 1$, its Hoeffding decomposition takes the form
\begin{align}
\label{eq:Hoeffding_decomposition}
f(x_1,\ldots,x_m)  - \theta=  \sum_{1\leq i_1<\ldots <i_r\leq m}f_r\parr*{x_{i_1},\ldots,x_{i_r}} + \ldots + f_m(x_1,\ldots,x_m),
\end{align}
where $\{f_p\}_{p=1}^m$ are real symmetric functions on $\RR^{pd}$ recursively defined as 
\begin{equation*}
\begin{aligned}
&f_1(x) \define g_1(x),\\
&f_p\parr*{x_1,\ldots,x_p} \define g_p(x_1,\ldots,x_p) -\sum_{k=1}^p f_1(x_k)- \ldots - \sum_{1\leq k_1<\ldots <k_{p-1} \leq p}f_{p-1}\parr*{x_{k_1},\ldots,x_{k_{p-1}}},
\end{aligned}
\end{equation*}
for $p=2,\cdots,m$, with $\{g_p\}_{p=1}^m$ defined as $g_m = f - \theta$, and 
\begin{align*}
g_p(x_1,\ldots,x_p) \define \E\bbrace*{f(x_1,\ldots,x_p,\widetilde{X}_{p+1},\ldots,\widetilde{X}_m)} - \theta
\end{align*}
for $1\leq p\leq m-1$. One can readily check that $f_p$ is centered and fully degenerate for any $p\in[m]$. 

When $\{X_i\}_{i=1}^n$ are not compactly supported, one additional technical issue is to control each $|f_p - \widetilde{f}_p|$ over certain set $\cal{C}_p\subset\RR^{pd}$, where $\{f_p\}_{p=1}^m$ and $\{\widetilde{f}_p\}_{p=1}^m$ are the fully degenerate terms in the Hoeffding decomposition of $f$ and $\tilde f$, respectively. Due to the symmetry of $f$ and the set $\cal{C}$ in Condition (A), a natural choice of $\cal{C}_p$ is the projection of $\cal{C}$ onto $\RR^{pd}$, that is, 
\begin{align}
\label{eq:section}
\cal{C}_p \define \Big\{(x_1^\top,\ldots,x_p^\top)^\top \in \RR^{pd}: \text{ there exists } (x_{p+1}^\top,\ldots,x_m^\top)^\top \text{ s.t. } (x_1^\top,\ldots,x_m^\top)^\top \in \cal{C})\Big\}.
\end{align}
For instance, when $\cal{C}$ is  $[-M,M]^{md}$, each $\cal{C}_p$ is $[-M,M]^{pd}$. Further define, for each $p\in[m-1]$,
\begin{align*}
s_0 \define \bbrace*{\E f^2(\widetilde{X}_1,\ldots,\widetilde{X}_m)}^{1/2},~~~ s_p \define \sup_{\cal{C}_p}\bbrace*{\E f^2(x_1,\ldots,x_p,\widetilde{X}_{p+1},\ldots,\widetilde{X}_m)}^{1/2},
\end{align*}
and 
\begin{align*}
v_0 \define \bbrace*{\P\parr*{(\widetilde{X}_1^\top,\ldots,\widetilde{X}_m^\top)^\top\notin \cal{C}}}^{1/2},~~~ v_p \define\sup_{\cal{C}_p}\bbrace*{\P\parr*{(\widetilde{X}_{p+1}^\top,\ldots,\widetilde{X}_m^\top)^\top\notin \cal{C}^{(x_1,\ldots,x_p)}}}^{1/2},
\end{align*}
where
\begin{align*}
\cal{C}^{(x_1,\ldots,x_p)} \define \Big\{(x_{p+1}^\top, \ldots, x_m^\top)^\top \in \RR^{(m-p)d}: (x_1^\top,\ldots,x_m^\top)^\top \in \cal{C}\Big\}
\end{align*}
for all $(x_1^\top,\ldots,x_p^\top)^\top \in\cal{C}_p$. 



The following proposition provides, under Condition (A), a uniform upper bound on $|f_p - \widetilde{f}_p|$ over $\cal{C}_p$ for $p\in[m]$.

\begin{proposition}
\label{prop:hoeffding}
Suppose that $f$ satisfies Condition ($A$) with some symmetric set $\cal{C}\subset\RR^{md}$ and constants $(F(t,\cal{C}),B(t,\cal{C}),\mu_a(t,\cal{C}))$. Then, for any $p\in[m]$, it holds that
\begin{align*}
\abs*{f_p(x_1,\ldots,x_p) - \widetilde{f}_p(x_1,\ldots,x_p)} \leq t'
\end{align*}
uniformly over $\cal{C}_p$ defined in \eqref{eq:section}, where $t'$ is defined to be
\begin{align}
\label{eq:bias}
t' \define C\parr*{t + \sum_{i=0}^{m-1}s_iv_i + FB^m \sum_{i=0}^{m-1}v_i}
\end{align}
for some absolute positive constant $C = C(m)$. 
\end{proposition}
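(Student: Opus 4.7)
The plan is to reduce controlling $|f_p-\widetilde f_p|$ over $\cal C_p$ to controlling the conditional-expectation quantities $|g_p-\widetilde g_p|$, and then propagate these bounds through the Hoeffding recursion by a finite induction on $p$.

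First, for each $p\in\{1,\ldots,m-1\}$ and $(x_1,\ldots,x_p)\in\cal C_p$, I would write
\begin{align*}
g_p(x_1,\ldots,x_p)-\widetilde g_p(x_1,\ldots,x_p) = \E\bbrace*{(f-\widetilde f)(x_1,\ldots,x_p,\widetilde X_{p+1},\ldots,\widetilde X_m)} - (\theta-\widetilde\theta),
\end{align*}
and split the first expectation by the indicator of $(\widetilde X_{p+1}^\top,\ldots,\widetilde X_m^\top)^\top\in\cal C^{(x_1,\ldots,x_p)}$. On this section of $\cal C$, Condition ($A$) gives the pointwise bound $t$. On its complement, Cauchy--Schwarz applied to the $f$-part together with the definitions of $s_p$ and $v_p$ yields $s_p v_p$, while the $\widetilde f$-part is handled by combining the uniform estimate $|\widetilde f|\le FB^m$ (which follows from $\sum|f_{j_1,\ldots,j_m}|\le F$ and $\sup_{\cal C_1}|e_j|\le B$) with the outer-probability estimate $\P(\mathrm{out})=v_p^2\le v_p$, contributing at most $FB^m v_p$. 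The same splitting applied to $\theta-\widetilde\theta$ gives $|\theta-\widetilde\theta|\le t+s_0 v_0+FB^m v_0$; and the case $p = m$ is handled directly via $g_m-\widetilde g_m=(f-\widetilde f)-(\theta-\widetilde\theta)$, which on $\cal C$ is at most $t + |\theta - \widetilde\theta|$.

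Next, I would propagate through the Hoeffding recursion: from
\begin{align*}
f_p(x_1,\ldots,x_p) = g_p(x_1,\ldots,x_p) - \sum_{k}f_1(x_k) - \ldots - \sum_{k_1<\ldots<k_{p-1}}f_{p-1}(x_{k_1},\ldots,x_{k_{p-1}})
\end{align*}
and the analogous identity for $\widetilde f_p$, subtracting yields
\begin{align*}
\sup_{\cal C_p}\abs*{f_p-\widetilde f_p} \le \sup_{\cal C_p}\abs*{g_p-\widetilde g_p}+\sum_{i=1}^{p-1}\binom{p}{i}\sup_{\cal C_i}\abs*{f_i-\widetilde f_i},
\end{align*}
where the symmetry of $\cal C$ ensures that any sub-tuple of an element of $\cal C_p$ lies in the appropriate projection $\cal C_i$. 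With the base case $f_1=g_1$ and $\widetilde f_1=\widetilde g_1$, a finite induction on $p$ absorbs the accumulated binomial coefficients into an $m$-dependent combinatorial constant $C(m)$ and produces the advertised bound (\ref{eq:bias}).

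The main obstacle lies in the first step, specifically in bounding the $\widetilde f$-part outside $\cal C^{(x_1,\ldots,x_p)}$: since Condition ($A$) only guarantees $|\widetilde f|\le FB^m$ when every argument lies in $\cal C_1$, one must combine this pointwise control with the outer-probability estimate carefully, using the fact that the ``out'' region has measure $v_p^2\le v_p$. This inequality $v_i^2\le v_i$ (valid because each $v_i\in[0,1]$) is what permits the linear-in-$v_i$ form stated in (\ref{eq:bias}) rather than a quadratic one.
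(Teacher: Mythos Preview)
Your overall strategy---bound each $|g_p-\widetilde g_p|$ on $\cal C_p$ by splitting on whether the remaining random arguments lie in the section $\cal C^{(x_1,\ldots,x_p)}$, then propagate through the Hoeffding recursion---is exactly the paper's argument; the paper is terser about the recursion (it simply asserts the existence of an $m$-dependent constant), but the content is the same.

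The one place your proposal differs, and where the gap you flag is not actually resolved, is the $\widetilde f$-contribution on the ``out'' event. You propose the pointwise bound $|\widetilde f|\le FB^m$ together with $\P(\text{out})=v_p^2\le v_p$. But Condition~(A) only gives $\sup_{\cal C_1}|e_j|\le B$, and on the ``out'' event some of $\widetilde X_{p+1},\ldots,\widetilde X_m$ may lie outside $\cal C_1$, so there is no pointwise control there; the inequality $v_p^2\le v_p$ is irrelevant to this difficulty. The paper instead applies Cauchy--Schwarz to $\E\bigl[|f-\widetilde f|\,\mathbbm{1}\{\text{out}\}\bigr]$ as a single term, obtaining $\{\E(f-\widetilde f)^2\}^{1/2}v_p$, so that the $\widetilde f$-part enters only through its $L^2$ norm under the product measure. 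That $L^2$ norm can be controlled via Minkowski and the moment bounds $\mu_a$ in Condition~(A) (by independence, $\{\E\widetilde f^2\}^{1/2}\le F\mu_2^m$, and with $(x_1,\ldots,x_p)\in\cal C_p$ fixed one gets $FB^p\mu_2^{m-p}$), which requires no pointwise estimate outside $\cal C_1$. The paper itself writes $FB^m$ for this bound, which is harmless since $B=\mu_a=1$ in every application it considers; but the route that makes the step rigorous in the stated generality is Cauchy--Schwarz followed by the moment condition, not the pointwise argument you give.
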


As shown by \eqref{eq:bias}, the gap between the approximation bias $t$ in $|f-\widetilde{f}|$ and $t'$ in $|f_p-\widetilde{f}_p|$ relies on the magnitudes of $\{s_iv_i\}_{i=0}^{m-1}$ and $\{FB^mv_i\}_{i=0}^{m-1}$. On one hand, $\{s_p\}_{p=0}^{m-1}$ can often be upper bounded through direct calculation, and in particular, could be uniformly upper bounded by $\|f\|_{\infty}$ when it is finite. On the other hand, by definition, the sequence $\{v_i\}_{i=1}^m$ is governed by the tail behavior of the underlying data sequence. In many applications, due to the light dependence of $F$ on the magnitude of $\cal{C}$ (non-dependence or polylogarithmically), by choosing a sufficiently large $\cal{C}$ (a typical choice is $[-n^\eta,n^\eta]^{md}$ for some sufficiently large $\eta$ depending on the moment conditions of $\{X_i\}_{i=1}^n$), the gap between $t$ and $t^\prime$ and the residual probability $R_{n,s}$ defined below are both negligible. In the special case when $X_1$ is compactly supported, $\{v_i\}_{i=0}^{m-1}$ are all zero by choosing $\cal{C} = \supp(\P^m)$, and thus $t \asymp t^\prime$. 

The following are the analogous versions of Propositions \ref{prop:bern_U_alpha} and \ref{prop:MDP_U} for $\{X_i\}_{i=1}^n$ with arbitrary support. For any symmetric set $\cal{C}\subset\RR^{md}$, and positive integers $1\leq s \leq m$ and $n$, we use the notation
\begin{align*}
R_{n,s} \define \P\parr*{\text{there exists } s\leq p\leq m, 1\leq i_1,\ldots,i_p\leq n \text{ s.t. } (X_{i_1}^\top,\ldots,X_{i_p}^\top)^\top \notin \cal{C}_p}
\end{align*}
to represent the residual probability outside the sets $\{\cal{C}_p\}_{p=1}^m$ defined in \eqref{eq:section}.

\begin{propbis}{prop:bern_U_alpha}
\label{prop:bern_U_alpha_general}
Suppose $n\geq 2$, Condition (M) holds, and $f$ in \eqref{eq:U} satisfies Condition (A) with set $\cal{C}$ and constants $(F(t,\cal{C}),B(t,\cal{C}),\mu_a(t,\cal{C}))$. Then, if $f$ is degenerate of level $r-1$, there exist positive absolute constants $C_1 = C_1(m), C_2 = C_2(m,\gamma_1,\gamma_2)$ such that, for any $x > 0$ and $t > 0$,
\begin{align*}
\P\bbrace*{\abs*{V_n - \theta}\geq (x+C_1t')} \leq 2\sum_{p=r}^m \exp\parr*{-\frac{C_2nx^{2/p}}{A_p^{1/p} + x^{1/p}M_p^{1/p}}} + R_{n,r},
\end{align*}
where $\{A_p\}_{p=1}^m$, $\{M_p\}_{p=1}^m$ are defined in \eqref{eq:bern_seq} with $(F(t),B(t),\mu_a(t))$, and $t'$ is defined in \eqref{eq:bias}.
\end{propbis}

Note that, when $X_1$ is supported within $[-M_X,M_X]^d$ for some $M_X > 0$, by choosing $\cal{C} = [-M_X,M_X]^{md}$ in Condition (A), we have $t'\asymp t$ and $R_{n,r} = 0$ for any $r\in[m]$. Thus Proposition \ref{prop:bern_U_alpha_general} is reduced to Proposition \ref{prop:bern_U_alpha}. 

\begin{propbis}{prop:MDP_U}
\label{prop:MDP_U_general}
Suppose that Condition (M) holds. Assume $f$ in \eqref{eq:U} is centered, nondegenerate, and $\nu^2$ defined in \eqref{eq:MDP_notation} is strictly positive. Assume further that $f$ satisfies Condition (A) with set $\cal{C}$ and constants $(F(t,\cal{C}),B(t,\cal{C}),\mu_a(t,\cal{C}))$, and $f$ satisfies condition \eqref{eq:MDP_moment} with some $\gamma, \eta > 0$. Moreover, suppose that $R_{n,2} = o(n^{-\gamma^2/2}/\sqrt{\log n})$, and for some $t = O(1/\{\sqrt{n}(\log n)^2\})$, \eqref{eq:MDP_condition} holds and $t'$ defined in \eqref{eq:bias} satisfies $t' = O(t)$. Then, the conclusion of Proposition \ref{prop:MDP_U} holds.
\end{propbis}


We now combine Proposition \ref{prop:bern_U_alpha_general} with Theorems \ref{thm:random_fourier_alpha}-\ref{thm:discontinuous_alpha} to derive the following user-friendly tail bounds in the case of arbitrarily supported $\{X_i\}_{i=1}^n$. 
The first corollary is on continuous kernels under the choice $\cal{C} = [-M,M]^{md}$ in Condition (A) with corresponding $\cal{C}_p = [-M,M]^{pd}$. A typical choice in practice is $M = n^{\eta}$ for some sufficiently large $\eta$.


\begin{corollary}
\label{cor:continuous_general}
Suppose (M) holds and for some given $M>0$ and $\cal{C} = [-M,M]^{md}$, $f$ satisfies one of the Conditions (B1)-(B3) with set $\cal{C}$. Let $f$ be degenerate of level $r - 1$. Then, there exist positive absolute constants $C_1 = C_1(m), C_2 = C_2(m,\gamma_1,\gamma_2)$ such that, for any $x>0$ and $t>0$, it holds that
\begin{align*}
\P\bbrace*{\abs*{V_n - \theta}\geq (x+C_1t')} \leq 2\sum_{p=r}^m \exp\parr*{-\frac{C_2nx^{2/p}}{A_p^{1/p} + x^{1/p}M_p^{1/p}}} + n\sum_{\ell=1}^d \P\parr*{\abs*{X_{1,\ell}}\geq M},
\end{align*}
where $t'$ is defined in \eqref{eq:bias}, and $\{A_p\}_{p=1}^m,\{M_p\}_{p=1}^m$ take the values $A_p \asymp F(t)^2, M_p \asymp F(t)(\log n)^{2p}$ with the corresponding $F(t)$ under Conditions (B1)-(B3), respectively.
\end{corollary}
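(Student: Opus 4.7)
The plan is to treat Corollary \ref{cor:continuous_general} as a direct packaging of Proposition \ref{prop:bern_U_alpha_general}, with Condition (A) supplied by the appropriate theorem from Section \ref{subsec:expansion}. Concretely, under (B1) I would invoke Theorem \ref{thm:random_fourier_alpha}, and under (B2) or (B3) Theorem \ref{thm:Lipschitz_alpha}(a) or (b), respectively. In all three cases these theorems, applied with $\mathcal{C} = [-M,M]^{md}$, yield Condition (A) on the same set with $B(t) = 1$, $\mu_a(t) = 1$ for all $a \geq 1$, and a function $F(t)$ determined by the specific theorem invoked. Feeding these into \eqref{eq:bern_seq} then gives the promised simplified $A_p$ and $M_p$.

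For the simplification, note that because $\mu_{2+\delta} \leq 1$, the constant $\sigma^2$ defined after \eqref{eq:bern_seq} is bounded by an absolute constant depending only on $\gamma_1, \gamma_2, \delta$, and since $n \geq 2$ we have $\sigma^2 + B^2(\log n)^4/n = O(1)$. Consequently
\begin{align*}
A_p = \mu_1^{2(m-p)} F(t)^2 \{\sigma^2 + (\log n)^4/n\}^p \asymp F(t)^2, \quad M_p = \mu_1^{m-p} F(t)(\log n)^{2p} \asymp F(t)(\log n)^{2p},
\end{align*}
as asserted. The constant $t'$ in the bias term is simply inherited verbatim from \eqref{eq:bias} applied to the present $F, B, \mu_a$.

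The only remaining ingredient is control of the residual probability $R_{n,r}$. Because $\mathcal{C}_p = [-M,M]^{pd}$, a tuple $(X_{i_1}^\top, \ldots, X_{i_p}^\top)^\top$ lies outside $\mathcal{C}_p$ if and only if some coordinate of some $X_{i_k}$ exceeds $M$ in absolute value; taking the union over $p \in \{r,\ldots,m\}$ and $i_1,\ldots,i_p \in [n]$ collapses to the single event that there exist $i \in [n]$ and $\ell \in [d]$ with $|X_{i,\ell}| \geq M$. A union bound together with stationarity then yields
\begin{align*}
R_{n,r} \leq n \sum_{\ell=1}^d \P(|X_{1,\ell}| \geq M).
\end{align*}
Substituting this bound, together with the simplified $A_p, M_p$, into Proposition \ref{prop:bern_U_alpha_general} gives the stated inequality. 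There is no real obstacle here beyond bookkeeping—absorbing the dimension-dependent factors in $B = 1$, $\mu_a = 1$, and the explicit $\sigma^2$ bound into the $\asymp$ constants, and checking that the three invocations of Theorems \ref{thm:random_fourier_alpha}--\ref{thm:Lipschitz_alpha} really deliver the same form of Condition (A) with the correct $F(t)$ in each regime.
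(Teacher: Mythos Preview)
Your proposal is correct and matches the paper's approach; in fact the paper states ``The proof is straightforward and thus omitted,'' and what you have written is precisely the intended straightforward argument: verify Condition (A) via Theorem \ref{thm:random_fourier_alpha} or Theorem \ref{thm:Lipschitz_alpha}, plug $B=\mu_a=1$ into \eqref{eq:bern_seq} to get $A_p\asymp F(t)^2$ and $M_p\asymp F(t)(\log n)^{2p}$, and bound $R_{n,r}$ by the union bound over $i\in[n]$ and $\ell\in[d]$ using stationarity.
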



The second corollary is on discontinuous kernels of the form considered in Theorem \ref{thm:discontinuous_alpha} and $\cal{C}\subset\RR^{2d}$ in Condition (A) is chosen to be
\eqref{eq:discontinuous_support}. It can be readily checked that in this case, the corresponding $\cal{C}_1$ defined by \eqref{eq:section} is $[-M_1,M_1]^d$. A typical choice in practice is $M_1 = n^{\eta_1}, M_2 = n^{-\eta_2}$ for sufficiently large $\eta_1,\eta_2$.

\begin{corollary}
\label{cor:discontinuous_tail}
Consider the same setting as Theorem \ref{thm:discontinuous_alpha}. Suppose that (M) holds and for some given $M_1,M_2 > 0$, $f$ satisfies Condition (B4) therein with $M_1,M_2$. Assume further that $X_1$ is absolutely continuous and uniformly over all $\ell\in[d]$ and $1\leq i < j \leq n$, the density of $(X_{i,\ell} - X_{j,\ell})$ is upper bounded by some constant $D = D(n)>0$. Let $f$ be degenerate of level $r-1$.\\ 
(a) For any $x > (\abs*{f_0(0)} + F(t))/n$ and $t > 0$, it holds that
\begin{align*}
&\ms\P\parr*{\abs*{V_n - \theta}\geq x+C_1t'}\\
& \leq 2\sum_{p=r}^2 \exp\parr*{-\frac{C_2ny^{2/p}}{A_p^{1/p} + y^{1/p}M_p^{1/p}}} + n^2\parr*{\sum_{\ell=1}^d J_\ell}M_2D + n\sum_{\ell=1}^d \P\parr*{\abs*{X_{1,\ell}}\geq M_1},
\end{align*}
where $y = x - (\abs*{f_0(0)} + F(t))/n$.\\
(b) 
Suppose further that $0<M_2\leq \inf_{\ell\in[d],k\in[J_\ell]}\{\abs*{y_{\ell,k}}\}$ for $y_{\ell,k}$ defined in Condition (B4) in Theorem \ref{thm:discontinuous_alpha}. Then, for any $x > 0$ and $t > 0$, it holds that
\begin{align*}
&\ms\P\parr*{\abs*{V_n - \theta}\geq x+C_1t'}\\
& \leq 2\sum_{p=r}^2 \exp\parr*{-\frac{C_2nx^{2/p}}{A_p^{1/p} + x^{1/p}M_p^{1/p}}} + n^2\parr*{\sum_{\ell=1}^d J_\ell}M_2D + n\sum_{\ell=1}^d \P\parr*{\abs*{X_{1,\ell}}\geq M_1}.
\end{align*}
In both (a) and (b), $t'$ is defined in \eqref{eq:bias}, $C_1$ is some absolute constant,  $C_2 = C_2(\gamma_1,\gamma_2)$, and $\{A_p\}_{p=1}^2$ and $\{M_p\}_{p=1}^2$ take the values $A_p \asymp F(t)^2, M_p \asymp F(t)(\log n)^{2p}$, with $F(t)$ specified in Theorem \ref{thm:discontinuous_alpha}.
\end{corollary}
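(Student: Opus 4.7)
The plan is to couple the kernel expansion in Theorem \ref{thm:discontinuous_alpha} with Proposition \ref{prop:bern_U_alpha_general} and control the resulting residual probability $R_{n,r}$ by a union bound that exploits the density upper bound $D$.

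For part (b), the assumption $M_2 \leq \inf_{\ell,k}|y_{\ell,k}|$ guarantees that every diagonal pair $(X_i,X_i)$ lies in $\cal{C}$ whenever $\|X_i\|_\infty\leq M_1$, since $|(X_{i,\ell}-X_{i,\ell})-y_{\ell,k}|=|y_{\ell,k}|\geq M_2$ automatically. Decompose $R_{n,2}$ via the union bound into (i) the box-tail event $\{\exists i:\|X_i\|_\infty> M_1\}$, bounded by $n\sum_\ell \P(|X_{1,\ell}|\geq M_1)$, and (ii) the jump-proximity event $\{\exists (i,j,\ell,k):i\neq j,\ |(X_{i,\ell}-X_{j,\ell})-y_{\ell,k}|<M_2\}$; each such event has probability at most $2M_2D$ by the density assumption, and summing over $\leq n^2$ index pairs and $\sum_\ell J_\ell$ jump points contributes $\lesssim n^2(\sum_\ell J_\ell)M_2 D$. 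Substituting the constants $F(t)$, $B=\mu_a=1$ from Theorem \ref{thm:discontinuous_alpha} into the definitions \eqref{eq:bern_seq} produces the stated orders $A_p\asymp F(t)^2$, $M_p\asymp F(t)(\log n)^{2p}$, and Proposition \ref{prop:bern_U_alpha_general} then yields the claim.

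For part (a), the problematic case $0\in\{y_{\ell,k}\}$ causes every diagonal pair $(X_i,X_i)$ to lie outside $\cal{C}$, so $R_{n,2}=1$ and a direct invocation of Proposition \ref{prop:bern_U_alpha_general} is vacuous. I circumvent this by peeling off the diagonal before invoking the Bernstein machinery. Write
\begin{align*}
V_n = V_n^{(o)} + \frac{f_0(0)}{n}, \qquad V_n^{(o)}\define n^{-2}\sum_{i\neq j}f(X_i,X_j),
\end{align*}
and similarly $\widetilde V_n = \widetilde V_n^{(o)} + n^{-2}\sum_i\widetilde f(X_i,X_i)$. Because the expansion \eqref{eq:expansion} together with $B(t)=1$ gives $|\widetilde f(x,x)|\leq F(t)$, the two diagonal corrections combined are deterministically bounded by $(|f_0(0)|+F(t))/n$. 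Consequently $\{|V_n-\theta|\geq x+C_1t'\}$ is contained in the event that the off-diagonal statistic deviates by at least $y+C_1t'$ with $y=x-(|f_0(0)|+F(t))/n$, and the off-diagonal analogue of Proposition \ref{prop:bern_U_alpha_general} applies since $V_n^{(o)}$ equals $\tfrac{n-1}{n}U_n$ for the associated U-statistic $U_n$ and its residual probability only involves pairs $i\neq j$, controlled exactly as in part (b).

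The main technical obstacle is verifying that the Hoeffding-decomposition plus mixing-Bernstein argument underlying Proposition \ref{prop:bern_U_alpha_general} carries over to $V_n^{(o)}$ in place of $V_n$; this amounts to checking that replacing the V-statistic by its off-diagonal/U-statistic analogue only perturbs the proof by explicit $O(1/n)$ corrections, which have already been absorbed into the deterministic shift $(|f_0(0)|+F(t))/n$ in the exponent. Once this book-keeping is in place, both parts reduce to the union-bound computation above and the substitution of the Theorem \ref{thm:discontinuous_alpha} constants into Proposition \ref{prop:bern_U_alpha_general}.
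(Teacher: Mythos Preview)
Your treatment of part~(b) is correct and matches the paper: once the diagonal pairs automatically fall in $\cal{C}$, one invokes Proposition~\ref{prop:bern_U_alpha_general} directly and bounds $R_{n,r}$ by the union bound you describe.

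For part~(a) your strategy is valid but differs from the paper's. Rather than peeling off the diagonal and then re-verifying the Bernstein machinery for the off-diagonal statistic $V_n^{(o)}$, the paper introduces an \emph{auxiliary kernel} $\check f$ defined by $\check f(x,y)=f(x,y)$ for $x\neq y$ and $\check f(x,x)=\widetilde f_{0,h}(0)$. Two facts make this cleaner. First, absolute continuity of $X_1$ gives $\check\theta=\theta$ and $\check g_1=g_1$, so $\check f$ inherits the same mean and degeneracy level as $f$; the bounds $|\check\theta-\widetilde\theta|\leq t'$ and $|\check g_1-\widetilde g_1|\leq t'$ follow verbatim from Proposition~\ref{prop:hoeffding}. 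Second, by construction $\check f(X_i,X_i)=\widetilde f(X_i,X_i)$, so on the good event $\cal E$ the approximation $|\check f(X_i,X_j)-\widetilde f(X_i,X_j)|\leq t$ now holds for \emph{all} pairs $(i,j)$, including the diagonal. Consequently $|\check V_{n,p}-\widetilde V_{n,p}|\leq t'$ on $\cal E$ for $p=1,2$, and Step~II of Proposition~\ref{prop:bern_U_alpha} applies to the V-statistics $\widetilde V_{n,p}$ with no modification whatsoever. The shift $(|f_0(0)|+F(t))/n$ arises exactly from $|V_n-\check V_n|=|f_0(0)-\widetilde f_{0,h}(0)|/n$.

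What this buys over your approach: you correctly flag that transferring Proposition~\ref{prop:bern_U_alpha_general} to $V_n^{(o)}$ requires book-keeping, and your claim that the resulting $O(1/n)$ corrections are ``already absorbed'' is slightly optimistic---for instance, passing from $\widetilde V_{n,2}$ to its off-diagonal version introduces a further diagonal term $n^{-2}\sum_i\widetilde f_2(X_i,X_i)$ bounded by $4F/n$, not accounted for by your single shift. This is harmless up to constants, but the auxiliary-kernel device avoids the issue altogether and keeps the argument entirely within the V-statistic framework already established.
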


The extra condition in part (b), $\abs*{y_{\ell,k}}> 0$ for all $\ell\in[d]$ and $k\in[J_\ell]$, is satisfied by, for example, the box kernel discussed after Theorem \ref{thm:discontinuous_alpha}. However, it excludes Kendall's tau and Spearman's rho. 





We end this section with a summary table of example kernels that satisfy Condition (A), along with the corresponding set $\cal{C}$ and expansion constants $(F(t, \cal{C}),B(t,\cal{C}),\mu_a(t,\cal{C}))$.  Throughout the table, $M_1$ is assumed to be sufficiently large and $M_2, t$ are assumed to be sufficiently small. $C$ represents a large enough absolute positive constant. 
We use the notation $\|X_{1,\cdot}\|_a \define \max_{\ell\in[d]}\bbrace*{\E\parr*{|X_{1,\ell}|^a}}^{1/a}$.

\begin{table}[h!]
\centering
\caption{Example kernels with corresponding set $\cal{C}$ and constants $(F(t,\cal{C}),B(t,\cal{C}),\mu_a(t,\cal{C}))$ in Condition (A).} 
\begin{tabular}{| c | c | c | c |} 
 \hline
 Name & Definition & $\cal{C}$ & $(F,B,\mu_a)$ \\ [0.5ex] 
 \hline\hline
 Linear & $x^\top y$ & $[-M,M]^{2d}$ & $ (d,M,\|X_{1,\cdot}\|_a)$\\
 Gaussian & $\exp(-\|x-y\|^2/2)$ & $[-M,M]^{2d}$ & $(2,1,1)$ \\ 
 Laplacian & $\exp(-\|x-y\|_1)$ & $[-M,M]^{2d}$ & $(2, 1,1)$ \\
 Cauchy & $\prod_{\ell=1}^d2/\bbrace*{1+(x_\ell-y_\ell)^2}$ & $[-M,M]^{2d}$ & $(2^{d+1},1,1)$ \\
 Hat & $(1-|x-y|)\mathbbm{1}(|x-y|\leq 1)$ & $[-M,M]^2$ & $(2,1,1)$ \\
 Cosine & $\cos(x-y)\mathbbm{1}(|x-y|\leq \pi/2)$ & $[-M,M]^2$ & $(32,1,1)$ \\ 
 Size-1 Box & $\mathbbm{1}(|x-y|\leq 1)$ & \eqref{eq:discontinuous_support}, $d = 1$ & $\parr*{C\log\parr*{\frac{1}{M_2}}+C\log\log\parr*{\frac{1}{t}},1,1}$\\
 Kendall's Tau & $\sgn(x_1-y_1)\sgn(x_2-y_2)$ & \eqref{eq:discontinuous_support}, $d = 2$ & $\parr*{C\log^2\parr*{\frac{M_1}{M_2}} + C\log^2\log\parr*{\frac{1}{t}}, 1,1}$\\
 Spearman's Rho &  $\sgn(x_1-y_1)\sgn(x_2-z_2)$ & \eqref{eq:rho_C} & $\parr*{C\log^2\parr*{\frac{M_1}{M_2}} + C\log^2\log\parr*{\frac{1}{t}}, 1,1}$\\[1ex] 
 \hline
\end{tabular}\label{tab:1}
\end{table}

\section{Applications}
\label{sec:app}
In this section, we discuss two statistical applications of the developed theory to high-dimensional estimation and testing problems. 

\subsection{Stochastic partially linear model}
\label{subsec:PLR}

\subsubsection{Set-up}
\label{subsubsec:model}

Consider the following high-dimensional stochastic partially linear model, 
\begin{align}
\label{eq:model}
Y_i = X_i^\top \beta^* + g(W_i) + \varepsilon_i, \qquad i=1,\ldots, n,
\end{align}
where the sequence $\{X_i, W_i, \varepsilon_i\}_{i=1}^n\in \RR^p \times \RR \times \RR$ is jointly stationary, $g(\cdot)$ is some unknown function considered to be a nuisance parameter, and the noise sequence $\{\varepsilon_i\}_{i=1}^n$ is assumed to be independent from $\{X_i\}_{i=1}^n$ and $\{W_i\}_{i=1}^n$. The dimension $p$ of the linear component is allowed to be much larger than $n$, and we assume the true parameter of interest $\beta^*$ is $s$-sparse. 

The partially linear model is of fundamental importance in statistics and econometrics. In many applications, temporal dependence and high dimensionality naturally occur (cf. Chapter 18 in \cite{li2007nonparametric} and \cite{buhlmann2011statistics}). 
Recently, \cite{han2017adaptive} studied the penalized Honor\'{e}-Powell estimator \citep{honore1997pairwise}
\begin{align}
\label{est:PRD}
\hat{\beta}_{h_n} \define \argmin_{\beta\in\RR^p} \bbrace*{{n\choose 2}^{-1}\sum_{i<j}\frac{1}{h_n}K\parr*{\frac{\WW}{h_n}}\parr*{\tilde{Y}_{ij} - \tilde{X}_{ij}^\top \beta}^2 + \lambda_n\|\beta\|_1},
\end{align}
where $\YY\define Y_i-Y_j, \XX \define X_i-X_j, \WW \define W_i - W_j$, $K(\cdot)$ is a positive density kernel and $h_n$ is the bandwidth parameter. 
The authors showed that, under i.i.d.-ness of $\{Y_i,X_i,W_i\}_{i=1}^n$ and some other regularity conditions,  the pairwise difference estimator in \eqref{est:PRD} achieves the minimax rate with high probability:
\begin{align}
\label{eq:optimal_rate}
\nm*{\hat{\beta}_{h_n}- \beta^*}^2 = O_{\P}\parr*{\frac{s\log p}{n}}.
\end{align}
However, it is unknown in the literature whether the same conclusion is true when the data are dependent. Note that the first term of the loss function in \eqref{est:PRD} is essentially  a U-statistic of order 2. In what follows, we will make use of Corollary \ref{cor:continuous_general} to provide explicit conditions under which the optimal rate in \eqref{eq:optimal_rate} can be recovered in the high-dimensional time series setting. 

\subsubsection{Optimality results}
\label{subsubsec:assumptions}

We pose the following assumptions on the data generating scheme of model \eqref{eq:model}, which largely follow those in \cite{han2017adaptive}.

\begin{assumption}[Mixing conditions]
\label{ass:alpha_mixing}
$\bbrace*{X_{i,k}, W_i, \varepsilon_i}_{i=1}^n$ for all $k\in[p]$ and $\bbrace*{X_{i,k},X_{i,\ell}, W_i}_{i=1}^n$ for all $(k,\ell)\in[p]\times[p]$ are assumed to be part of a stationary and geometrically $\alpha$-mixing sequence, respectively,  with coefficient $\alpha(i) \leq \gamma_1\exp(-\gamma_2 i)$ for all $i\geq 1$ and positive constants $\gamma_1,\gamma_2$.
\end{assumption}

\begin{remark}
\label{remark:mixing}
In Assumption \ref{ass:alpha_mixing}, instead of assuming that the $(p+2)$-dimensional vector $\{X_i,W_i,\varepsilon_i\}_{i=1}^n$ is part of a stationary and geometrically $\alpha$-mixing sequence, only subprocesses with a fixed dimension $3$ are required to be stationary and geometrically $\alpha$-mixing. 
\end{remark}

\begin{assumption}[Kernel condition]
\label{ass:kernel_alpha}
$K(\cdot)$ is chosen to be a continuous, positive definite density kernel such that $K(u)\geq 0$ for all $u\in\RR$ and $\int K(u)du = 1$. We also assume there exists some absolute positive number $M_K$ larger than $1$, such that
\begin{align*}
\max\bbrace*{\int |u|^3 K(u) du,~ \sup_{u\in\RR}|u|K(u),~ \sup_{u\in\RR} K(u)} \leq M_K.
\end{align*}
\end{assumption}



\begin{assumption}[Density condition of $W$]
\label{ass:density_W}
We assume there exists a positive number $M_W\geq 0$ such that
\begin{align*}
\max\bbrace*{\abs*{\frac{\partial f_{W\mid X}(w, x)}{\partial w}}, f_W(w)}\leq M_W
\end{align*}
for arbitrary $(w,x)$ in the range of $(W,X)$, where $f_{W\mid X}$ represents the conditional density of $W$ given $X$.
\end{assumption}

\begin{assumption}[Regularity of $\WW$ around 0]
\label{ass:informative}
For any pair $(i,j)\in [n]\times [n]$ and $i\neq j$, $\ff_{\WW}(0) \geq M_\ell > 0$ for some positive constant $M_\ell$, where $\ff_{\WW}$ is the density of $\WW$ under the product measure.
\end{assumption}

\begin{assumption}[Restricted eigenvalue condition]
\label{ass:RE}
There exists a positive constant $\kappa_\ell$  such that
\begin{align*}
v^\top \EE\parr*{\XX\XX^\top\mid\WW = 0}v \geq \kappa_\ell \|v\|^2
\end{align*}
for all $v\in\{\beta\in\RR^p:\|\beta_{S_*^c}\|_1\leq 3\|\beta_{S_*}\|_1\}$, where $S_*$ is the support of $\beta^*$ and for any measurable $f$, $\EE\bbrace*{f(X_i,X_j)}$ is the integration of $f$ under the product measure.
\end{assumption}


\begin{assumption}[Distribution of $\{X_i\}_{i=1}^n$]
\label{ass:sub-Gaussian-1}
Uniformly over $(i,j)\in [n] \times [n]$, $i\neq j$ and $k\in[p]$, $X_{ik}$ and $X_{ik}\mid \WW = 0$ are sub-Gaussian with constant $\kappa_x^2$.
\end{assumption}

\begin{assumption}[Distribution of $\{\varepsilon_i\}_{i=1}^n$]
\label{ass:dist_eps_alpha}
$\{\varepsilon_i\}_{i=1}^n$ is assumed to have finite $(2+\delta_1)$th moment for some positive $\delta_1$. 
\end{assumption}

\begin{assumption}[Distribution of $\{W_i\}_{i=1}^n$]
\label{ass:dist_W}
$\{W_i\}_{i=1}^n$ is assumed to have finite $(4+\delta_2)$th moment for some positive $\delta_2$. 
\end{assumption}

\begin{assumption}[Smoothness of $g(\cdot)$]
\label{ass:g}
The nonlinear component $g(\cdot)$ in \eqref{eq:model} is assumed to be $L$-Lipschitz, that is, there exists some absolute positive constant $L$ such that for any $w_1,w_2$ in the domain of $W$, it holds that
\begin{align*}
\abs*{g(w_1)-g(w_2)} \leq L\abs*{w_1-w_2}.
\end{align*}
\end{assumption}



The following is the main result of this section. It is built upon Corollaries \ref{cor:random_fourier_alpha} and \ref{cor:continuous_general}. 
\begin{thm}
\label{thm:error_bound}
Assume that there exist positive constants $C_1,C_2$ such that $C_1(\log p/n)^{1/2} \leq C_2$. In addition, suppose that Assumptions \ref{ass:alpha_mixing}-\ref{ass:g} hold and the tuning parameters $\lambda_n,h_n$ in estimator \eqref{est:PRD} satisfy
\begin{align*}
C_1(\log p /n)^{1/2} \leq h_n \leq C_2 \text{ and } \lambda_n \geq  C_3\bbrace*{h_n + (\log p/n)^{1/2}},
\end{align*}
and moreover,
\begin{align*}
n \geq C_3\bbrace*{s^2(\log p) \vee (\log p)^3(\log n)^4 \vee (\log p)^{\frac{8+2\delta_1}{\delta_1}}(\log n)^{\frac{16+4\delta_1}{\delta_1}}\vee (\log p)^{\frac{16+2\delta_2}{4+\delta_2}}(\log n)^{\frac{16+2\delta_2}{2+\delta_2}}},
\end{align*}
where $s$ is the sparsity level of the true $\beta^*$, $\delta_1,\delta_2$ are positive constants in Assumptions \ref{ass:dist_eps_alpha} and \ref{ass:dist_W}, and $C_3$ only depends on $C_1,C_2,\gamma_1,\gamma_2, M_K, M_W, M_\ell,\kappa_\ell,\kappa_x,\E\parr*{|\varepsilon|^{2+\delta_1}}, \E\parr*{|W|^{4+\delta_2}}, L$. Then, it holds that 
\begin{align*}
\nm*{\hat{\beta}_{h_n}-\beta^*}^2\leq cs\lambda_n^2
\end{align*}
with probability at least $1- \exp\parr*{-c'\log p} - c''n^{-\delta_1/(4+\delta_1)} - c''n^{-\delta_2/(8+\delta_2)}$, where $c, c',c''$ are positive constants that only depend on $C_1,C_2,\gamma_1,\gamma_2, M_K, M_W, M_\ell,\kappa_\ell,\kappa_x,\E\parr*{|\varepsilon|^{2+\delta_1}}, \E\parr*{|W|^{4+\delta_2}}, L$.
\end{thm}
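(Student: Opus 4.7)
The plan is a two-stage reduction to the standard Lasso oracle inequality, where all probabilistic content is supplied by Corollary~\ref{cor:continuous_general}. First I would introduce the smoothed pairwise loss
\[
L_n(\beta) \define \binom{n}{2}^{-1}\sum_{i<j}\frac{1}{h_n}K\parr*{\frac{\WW}{h_n}}\parr*{\YY-\XX^\top\beta}^2
\]
and the associated empirical Hessian $H_n$ and score $\nabla L_n(\beta^*)$. On the joint event
\[
\mathcal{E} \define \bbrace*{\lambda_n \geq 2\nm*{\nabla L_n(\beta^*)}_\infty} \cap \bbrace*{v^\top H_n v \geq (M_\ell\kappa_\ell/2)\|v\|^2 \text{ on the cone } \|v_{S_*^c}\|_1\leq 3\|v_{S_*}\|_1},
\]
the deterministic Lasso basic inequality places $\hat\Delta\define\hat\beta_{h_n}-\beta^*$ in that cone and yields the claimed bound $\|\hat\Delta\|^2\leq cs\lambda_n^2$. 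Everything that follows controls $\P(\mathcal{E})$.

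For the score, I would expand $\YY = \XX^\top\beta^* + (g(W_i)-g(W_j)) + (\varepsilon_i-\varepsilon_j)$ so that the $k$th coordinate of $\nabla L_n(\beta^*)$ is, up to constants, the V-statistic generated by
\[
f_k((x_1,w_1,\varepsilon_1),(x_2,w_2,\varepsilon_2)) = \frac{1}{h_n}K\parr*{\frac{w_1-w_2}{h_n}}(x_{1,k}-x_{2,k})\bbrace*{(g(w_1)-g(w_2))+(\varepsilon_1-\varepsilon_2)}.
\]
By Assumption~\ref{ass:kernel_alpha}, $K$ is a smooth positive-definite density, so Corollary~\ref{cor:random_fourier_alpha} certifies Condition~(A) for the factor $K((w_1-w_2)/h_n)/h_n$ with $F\asymp h_n^{-1}$ (Fourier rescaling by $h_n$ inflates the $L^1$ norm of $\hat K$ by $h_n^{-1}$), $B=1$, $\mu_a=1$. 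The polynomial factors in $(x_{1,k}-x_{2,k})$ and $(\varepsilon_1-\varepsilon_2)$ are absorbed via the multiplicativity stability of Proposition~\ref{prop:stability_sum_alpha} after truncating to $[-M,M]^{md}$, as in Corollary~\ref{cor:continuous_general}; the truncation level $M$ is chosen polynomial in $\log n$ so that the residual probability is controlled by Assumptions~\ref{ass:sub-Gaussian-1}--\ref{ass:dist_W}. Bernstein plus a union bound over $k\in[p]$ yields stochastic fluctuations of order $\sqrt{\log p/n}$, while the Lipschitz bias $|g(w_1)-g(w_2)|\leq L|w_1-w_2|$ together with the moment $\int|u|K(u)\,du\leq M_K$ produces a deterministic bias of order $h_n$. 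Setting $\lambda_n\asymp h_n+\sqrt{\log p/n}$ then realizes the first event in $\mathcal{E}$.

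The restricted-eigenvalue condition on $H_n$ is the real workhorse. I would decompose $H_n = \EE(H_n) + (H_n-\EE(H_n))$. A Taylor expansion of the conditional density $f_{W\mid X}$ at zero using Assumption~\ref{ass:density_W} shows $\EE(H_n) = M_\ell\cdot\EE(\XX\XX^\top\mid\WW=0) + O(h_n^2)$, which on the cone dominates $M_\ell\kappa_\ell\|v\|^2$ by Assumption~\ref{ass:RE}. For the fluctuation, Corollary~\ref{cor:continuous_general} applied entrywise to the centered U-statistic with kernel $\frac{1}{h_n}K((w_1-w_2)/h_n)(x_{1,k}-x_{2,k})(x_{1,\ell}-x_{2,\ell})$, followed by a union bound over $(k,\ell)\in[p]^2$, yields $\nm*{H_n-\EE(H_n)}_{\max}\lesssim \sqrt{\log p/(nh_n)}$ with probability at least $1-p^{-c}$. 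Coupling this with the cone inequality $\|v\|_1\leq 4\sqrt{s}\|v\|$ gives $|v^\top(H_n-\EE(H_n))v|\lesssim s\sqrt{\log p/(nh_n)}\,\|v\|^2$, which the sample-size condition $n\gtrsim s^2\log p$ (together with $h_n$ bounded below) pushes under $M_\ell\kappa_\ell/2$.

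The main obstacle is the unified bookkeeping of constants in Condition~(A) across the three U-statistics above. Each is a product of a shift-invariant smooth kernel in $w$ (whose constants depend on $h_n$) and polynomial factors in the unbounded coordinates $(x,\varepsilon)$ which must be truncated; the resulting $(F,B,\mu_a)$ depend mildly on $h_n$ and on the truncation level $M$, and $M$ must simultaneously drive $n\sum_\ell\P(|X_{1,\ell}|\geq M)$ below $p^{-c}$ (easy thanks to sub-Gaussianity) and keep the Bernstein exponent in Corollary~\ref{cor:continuous_general} of the desired order, given $h_n\asymp\sqrt{\log p/n}$. The moment assumptions on $\varepsilon$ and $W$ translate via Markov into precisely the residual probabilities $n^{-\delta_1/(4+\delta_1)}$ and $n^{-\delta_2/(8+\delta_2)}$ stated in the theorem, and the various sample-size thresholds trace back directly to enforcing that the dominant $M_p^{1/p}\log^{2}n$ term ($p=1,2$) be absorbed into the Gaussian-like term in the Bernstein exponent. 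Once this truncation calculus is in place, the remainder is a routine Lasso argument.
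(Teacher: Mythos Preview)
Your overall architecture—reduce to a Lasso oracle inequality by controlling $\|\nabla L_n(\beta^*)\|_\infty$ and a restricted eigenvalue for $H_n$, then supply the probabilistic input via Section~\ref{sec:main}—matches the paper's. The gap is the step where you invoke Corollary~\ref{cor:continuous_general} with $F\asymp h_n^{-1}$ and claim fluctuations of order $\sqrt{\log p/n}$ for the score (and $\sqrt{\log p/(nh_n)}$ for the Hessian). Neither rate follows from that route: in Corollary~\ref{cor:continuous_general} one has $A_p\asymp F^2$, so the $p=1$ summand of the tail bound is $\exp(-Cnx^2 h_n^2)$; forcing this below $p^{-c}$ gives only $x\gtrsim\sqrt{\log p}/(h_n\sqrt{n})$, which is $O(1)$ at $h_n\asymp\sqrt{\log p/n}$. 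The expansion constant $F\asymp h_n^{-1}$ is fatal for the nondegenerate part if one simply plugs into Proposition~\ref{prop:bern_U_alpha_general}.

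The paper's fix (Lemmas~\ref{lemma:perturbation}--\ref{lemma:perturbation_indep} and Lemma~\ref{lemma:expansion}, and flagged in the paragraph following Theorem~\ref{thm:discontinuous_alpha}) is to split the Hoeffding decomposition and handle the two degrees with different tools. For the degree-one piece $g_1(D_i)=\E^j[\tilde{h}(D_i,D_j)]-\tilde{\theta}$, the integration over $D_j$ against the density of $W_j$ absorbs the factor $1/h_n$: one checks directly that $\|g_1\|_\infty\lesssim\theta_n\sqrt{\log(np)}$ and, crucially, that $\var\{g_1(D_1)\}+2\sum_{i>1}|\cov\{g_1(D_1),g_1(D_i)\}|$ is bounded by a constant \emph{independent of $h_n$}, so the scalar mixing-Bernstein inequality of Lemma~\ref{lemma:sample_mean_Bern} delivers the rate $\sqrt{\log(np)/n}$. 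Only the fully degenerate piece $g_2$ is handled via Condition~(A) and the degenerate case of Proposition~\ref{prop:bern_U_alpha_general}; there the extra $n$ in the exponent $\exp(-Cnx/A_2^{1/2})$ compensates $F\asymp h_n^{-1}$ precisely under the assumed lower bound $h_n\gtrsim\sqrt{\log p/n}$. A smaller point: the truncation level for $\varepsilon$ and for $g(W)$ must be a power of $n$ (not polylogarithmic), tuned so that Markov's inequality on the $(2+\delta_1)$th and $(4+\delta_2)$th moments produces exactly the residual probabilities $n^{-\delta_1/(4+\delta_1)}$ and $n^{-\delta_2/(8+\delta_2)}$ in the statement.
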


If we choose $h_n$ on the order of $(\log p/n)^{1/2}$ and $\lambda_n$ further on the order of $(\log p/n)^{1/2}$, then the squared $\ell_2$ error of the estimator \eqref{est:PRD} can be upper bounded with high probability on the order of $s\log p/n$, which achieves the minimax Lasso rate as if there were no nonparametric component. This error bound extends the optimal rate of the partially linear model derived in \cite{muller2015partial}, \cite{zhu2017nonasymptotic}, and \cite{han2017adaptive} under the high-dimensional i.i.d. setting to the time series setting.

\subsection{Multiple testing of independence in high dimensions}
\label{subsec:testing}


Testing pairwise independence is of fundamental importance in statistics. In practice, it is often the case that, for each variable, we observe a time series instead of independent realizations. See, for instance, \cite{haugh1976checking}, \cite{hong1996testing}, and \cite{duchesne2003robust}, among others, for testing independence in the time series setting. In this section, with the help of Proposition \ref{prop:MDP_U} and the following corollary, we will extend such tests to a multiple testing problem of a growing number of hypotheses.

The following corollary reproduces the Cram\'er-type moderate deviation in Proposition \ref{prop:MDP_U_general} under the more verifiable Conditions (B1)-(B4) in Theorems \ref{thm:random_fourier_alpha}-\ref{thm:discontinuous_alpha}.


\begin{corollary}
\label{cor:MDP_U}
Assume the kernel $f$ in \eqref{eq:U} is centered, nondegenerate, satisfies \eqref{eq:MDP_moment} with some $\gamma,\eta > 0$, and $\nu^2$ defined in \eqref{eq:MDP_notation} is strictly positive.\\
(a) Suppose that there exists $M>0$ and $\cal{C} = [-M,M]^{md}$ such that $f$ satisfies one of the Conditions (B1)-(B3) with set $\cal{C}$, and for some $t = O(1/(\sqrt{n}(\log n)^2))$, $t' = t'(t, M)$ defined in \eqref{eq:bias} with set $\cal{C}$ satisfies $t' = O(t)$. Assume further the following holds, 
\begin{align*}
n\sum_{\ell=1}^d\P\parr*{\abs*{X_{1,\ell}}\geq M} = o\bbrace*{n^{-\gamma^2/2}(\log n)^{-1/2}}.
\end{align*}
Then, the result of Proposition \ref{prop:MDP_U} holds provided that \eqref{eq:MDP_condition} holds with constants $(F(t), B(t), \mu_a(t))$ under Conditions (B1)-(B3), respectively.\\
(b) Suppose that there exist $M_1, M_2 > 0$ such that $f$ satisfies (B4) with $M_1, M_2$, and the density of each $(X_{i,\ell} - X_{j,\ell})$ is upper bounded by some $D = D(n) > 0$ uniformly over all $\ell\in[d]$ and $1\leq i<j\leq n$. 
Assume that for some $t = O(1/(\sqrt{n}(\log n)^2))$, $t' = t'(t, M_1,M_2)$ defined in \eqref{eq:bias} with $\cal{C}$ in \eqref{eq:discontinuous_support} satisfies $t' = O(t)$. Assume further the following holds,
\begin{align*}
\bbrace*{n^2\parr*{\sum_{\ell=1}^d J_\ell}M_2D} \vee \bbrace*{n\sum_{\ell=1}^d\P\parr*{\abs*{X_{1,\ell}}\geq M_1}} = o\bbrace*{n^{-\gamma^2/2}(\log n)^{-1/2}}. 
\end{align*}
Then, the result of Proposition \ref{prop:MDP_U} holds provided that $(\abs*{f_0(0)} + F(t))/n = o(1/(\sqrt{n}(\log n)^2))$ and \eqref{eq:MDP_condition} hold with constants $(F(t), B(t), \mu_a(t))$ under Condition (B4).
\end{corollary}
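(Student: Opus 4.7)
The plan is to obtain Corollary~\ref{cor:MDP_U} as a direct application of Proposition~\ref{prop:MDP_U_general}: the kernel-expansion theorems (Theorems~\ref{thm:random_fourier_alpha}--\ref{thm:discontinuous_alpha}) translate the verifiable smoothness conditions (B1)--(B4) into Condition (A) with the explicit constants already listed in the statement, so the only substantive work is to check the residual-probability condition $R_{n,2}=o(n^{-\gamma^{2}/2}(\log n)^{-1/2})$ that feeds into Proposition~\ref{prop:MDP_U_general}; the other premises are direct standing assumptions of the corollary.

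For part (a), I would first invoke whichever of Theorems~\ref{thm:random_fourier_alpha} and~\ref{thm:Lipschitz_alpha} matches the postulated (B1), (B2), or (B3) to obtain Condition (A) on $\mathcal{C}=[-M,M]^{md}$ with the stated $(F(t),B(t),\mu_a(t))$. Because the projection set $\mathcal{C}_p$ defined in \eqref{eq:section} is then just $[-M,M]^{pd}$, a union bound over the $n$ observations and the $d$ coordinates yields
\[
R_{n,2}\le n\sum_{\ell=1}^{d}\P(|X_{1,\ell}|\ge M),
\]
which is $o(n^{-\gamma^{2}/2}(\log n)^{-1/2})$ by the hypothesis of the corollary. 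The remaining premises of Proposition~\ref{prop:MDP_U_general}---centering and nondegeneracy of $f$, positivity of $\nu^{2}$, the moment bound \eqref{eq:MDP_moment}, the growth condition \eqref{eq:MDP_condition}, and $t'=O(t)$---are all stated directly in the corollary, so Proposition~\ref{prop:MDP_U_general} applies verbatim.

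For part (b), I would apply Theorem~\ref{thm:discontinuous_alpha} to conclude Condition (A) on the set $\mathcal{C}$ defined in \eqref{eq:discontinuous_support}. The new element is the bookkeeping for $R_{n,2}$: the event $(X_{i_1}^\top,X_{i_2}^\top)^\top\notin\mathcal{C}$ decomposes into a box violation ($|X_{i,\ell}|>M_1$ for some $i\in\{i_1,i_2\},\ell\in[d]$) and a near-jump violation ($|(X_{i_1,\ell}-X_{i_2,\ell})-y_{\ell,k}|<M_2$ for some $\ell,k$). The former is handled by $n\sum_\ell\P(|X_{1,\ell}|\ge M_1)$ as in part (a); for the latter, the uniform density bound $D$ gives each near-jump event probability at most $2M_2 D$, so a union bound over the $\binom{n}{2}$ pairs, the $d$ coordinates, and the $\sum_\ell J_\ell$ jump points contributes $O(n^{2}(\sum_\ell J_\ell)M_2 D)$. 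Summing the two pieces and invoking the assumed decay puts $R_{n,2}$ at the required rate. The extra hypothesis $(|f_0(0)|+F(t))/n=o(1/(\sqrt{n}(\log n)^{2}))$ is needed because, for a discontinuous $f$, the V-statistic diagonal terms $n^{-2}\sum_{i}f(X_i,X_i)$ produce an $n^{-1}$-scale discrepancy between $V_n$ and the U-statistic driven by $\widetilde{f}$; forcing this quantity to be of lower order than $1/(\sqrt{n}(\log n)^{2})$ absorbs the discrepancy into $t'$, after which Proposition~\ref{prop:MDP_U_general} again applies.

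I expect the main obstacle to be the second residual-probability calculation in part (b): the density bound must be applied at the level of differences $X_{i,\ell}-X_{j,\ell}$ rather than at the level of single observations, and one has to verify that the resulting $n^{2}M_2 D$ contribution---together with the marginal tail term $n\sum_\ell\P(|X_{1,\ell}|\ge M_1)$ and the diagonal correction $(|f_0(0)|+F(t))/n$---is jointly small enough to respect the target moderate-deviation rate uniformly over $[0,\gamma\sqrt{\log n}]$. All other steps are direct substitutions once Condition (A) has been produced by the appropriate theorem among Theorems~\ref{thm:random_fourier_alpha}--\ref{thm:discontinuous_alpha}.
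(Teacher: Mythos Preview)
Your proposal is correct and follows essentially the same route as the paper. The paper's own proof is extremely terse---it simply cites the proof of Proposition~\ref{prop:MDP_U} together with Corollary~\ref{cor:continuous_general} for part~(a) and part~(a) of Corollary~\ref{cor:discontinuous_tail} for part~(b)---and these corollaries already package the $R_{n,2}$ bounds that you re-derive explicitly via union bounds; your explanation of the diagonal correction in part~(b) is slightly loose (the issue is that the approximation $|f-\widetilde f|\le t$ can fail at $x=y$ when $0$ is near a jump point, not merely that diagonal terms exist), but the conclusion and mechanism are the same.
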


\subsubsection{A general testing scheme}

Consider the following multiple hypothesis testing problem. Given $p$ independent stationary bivariate time series $\{X^{\ell}_i\}_{i=1}^n = \{(X^\ell_{i,1},X^\ell_{i,2})\}_{i=1}^n$ for $\ell\in[p]$, where $p$ is allowed to grow with $n$, we aim to test the hypothesis 
\[
H_0: \{X^\ell_{i,1}\}_{i=1}^n \text{ is independent of } \{X^\ell_{i,2}\}_{i=1}^n \text{ for all } \ell\in[p].
\]
In what follows, we will use the letter $\ell$ in the superscript to represent the $\ell$th pair of time series. For simplicity, we assume that the $p$ bivariate sequences are mutually independent. We propose to test $H_0$ by constructing an order-$2$ U-statistic for each of the $p$ pairs, that is, for $\ell\in[p]$,
\begin{align}
\label{eq:U_test}
U_{n,\ell} \define {n\choose 2}^{-1} \sum_{1\leq i<j\leq n}h^\ell(X^\ell_i,X^\ell_j),
\end{align}
where the sequence of kernels $\{h^\ell\}_{\ell=1}^p:\RR^2\times\RR^2\rightarrow\RR$ is assumed to be symmetric and nondegenerate. For each $\ell\in[p]$, we will use the notation 
\begin{align*}
&h^\ell_1(x) \define \E_{H_0}\bbrace*{h^\ell\parr*{x,\widetilde{X}^\ell_2}}, \quad\theta_\ell \define \E_{H_0}\bbrace*{h^\ell\parr*{\widetilde{X}^\ell_1,\widetilde{X}^\ell_2}},\\
&\sigma_\ell^2 \define \var_{H_0}\bbrace*{h^\ell_1(X^\ell_1)} + 2\sum_{i>1}\cov_{H_0}\bbrace*{h^\ell_1\parr*{X^\ell_1},h^\ell_1\parr*{X^\ell_i}},
\end{align*}
where $\widetilde{X}^\ell_1,\widetilde{X}^\ell_2\in\RR^2$ are two i.i.d. copies of $X^\ell_1$, and $\E_{H_0}, \var_{H_0},\cov_{H_0}$ are expectation, variance, and covariance under the null hypothesis $H_0$. For each $\ell\in[p]$, we define 
\begin{align*}
\widetilde{U}_{n,\ell} \define \frac{\sqrt{n}}{2\sigma_\ell}\parr*{U_{n,\ell}-\theta_\ell}
~~~{\rm and}~~~
S_n \define \max_{1\leq\ell\leq p}\abs*{\widetilde{U}_{n,\ell}}. 
\end{align*}
The following result gives an asymptotically valid test of $H_0$. The Cram\'er-type moderate deviation in Proposition \ref{prop:MDP_U_general} plays a central role in its proof.
\begin{thm}
\label{thm:U_test}
Suppose that for each $\ell\in[p]$, $\{X^\ell_i\}_{i=1}^n$ is part of a stationary sequence $\{X^\ell_i\}_{i\in\mathbb{Z}}$ that is geometrically $\alpha$-mixing with coefficient $\alpha(i)\leq \gamma_1\exp(-\gamma_2 i)$ for all $i\geq 1$ and some positive constants $\gamma_1,\gamma_2$. Suppose the kernel sequence $\{h^\ell\}_{\ell=1}^p$ in \eqref{eq:U_test} is chosen such that each $\widetilde{U}_{n,\ell}$ satisfies \eqref{eq:MDP} with $x\in[0,\gamma\sqrt{\log n}]$. Then, for $p = O\parr*{n^{\gamma^2/2}}$,  the test based on the following rejection region,
\begin{align}
\label{eq:reject_region}
I &\define \bbrace*{S_n^2 - 2\log p + \log\log p \geq q_\alpha},
\end{align}
has asymptotic size $\alpha$, where $q_\alpha = -\log \pi - 2\log\log(1-\alpha)^{-1}$ is the $1-\alpha$ quantile of the Gumbel distribution with distribution function $\text{exp}\{-\pi^{-1/2}\text{exp}(-y/2)\}$. 
\end{thm}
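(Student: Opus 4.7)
The plan is to reduce the asymptotic analysis of $S_n^2$ to a classical extreme-value argument for iid standard normals, bridging the two via the Cram\'er-type moderate deviation in Proposition~\ref{prop:MDP_U} applied componentwise. Under $H_0$, the mutual independence of the $p$ bivariate series is inherited by the statistics, so $\widetilde{U}_{n,1},\ldots,\widetilde{U}_{n,p}$ are mutually independent, which gives the exact product decomposition
\[
\P(S_n \leq x) \;=\; \prod_{\ell=1}^p \P\bbrace*{|\widetilde{U}_{n,\ell}| \leq x}.
\]

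Next I would apply Proposition~\ref{prop:MDP_U} to each $\widetilde{U}_{n,\ell}$ (and to $-\widetilde{U}_{n,\ell}$, invoking the remark about nondegenerate U-statistics) to obtain $\P(|\widetilde{U}_{n,\ell}| > x) = 2(1-\Phi(x))(1+o(1))$ uniformly in $x \in [0, \gamma\sqrt{\log n}]$ and uniformly in $\ell \in [p]$. Substituting $x = x_n(t) := (2\log p - \log\log p + t)^{1/2}$ for fixed $t\in\RR$, the growth condition $p = O(n^{\gamma^2/2})$ guarantees $x_n(t) \leq \gamma\sqrt{\log n}\,(1+o(1))$, so the MDP is in force. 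The standard Mills ratio asymptotics $1-\Phi(x) = (2\pi)^{-1/2} x^{-1} e^{-x^2/2}(1+o(1))$ then produces, by direct substitution,
\[
2p\,\bbrace*{1 - \Phi(x_n(t))} \;\longrightarrow\; \pi^{-1/2}\, e^{-t/2}.
\]
Plugging into the product via $\log(1-y) = -y(1+o(1))$ for $y \to 0$ yields
\[
\log \P\bbrace*{S_n^2 - 2\log p + \log\log p \leq t} \;=\; -\sum_{\ell=1}^p \P\bbrace*{|\widetilde{U}_{n,\ell}| > x_n(t)}\,(1+o(1)) \;\longrightarrow\; -\pi^{-1/2}\, e^{-t/2},
\]
which identifies the limit distribution of $S_n^2 - 2\log p + \log\log p$ as the Gumbel with cdf $\exp\{-\pi^{-1/2}e^{-t/2}\}$. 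To finish, I substitute $t = q_\alpha = -\log\pi - 2\log\log(1-\alpha)^{-1}$, which gives $\pi^{-1/2}e^{-q_\alpha/2} = -\log(1-\alpha)$, hence $\exp(-\pi^{-1/2}e^{-q_\alpha/2}) = 1-\alpha$, and therefore $\P(I) \to \alpha$, proving asymptotic size $\alpha$.

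The main obstacle is ensuring that the $o(1)$ in the Cram\'er-type MDP is \emph{uniform} in $\ell$, at a rate fast enough to survive the product over $p = O(n^{\gamma^2/2})$ factors. Each factor $\P(|\widetilde{U}_{n,\ell}| > x_n(t))$ is of order $1/p$, so naively summing $p$ individual errors of size $o(1) \cdot p^{-1}$ could be dangerous unless the $o(1)$ is genuinely a single sequence tending to zero regardless of $\ell$. This forces a careful reading of the hypothesis ``each $\widetilde{U}_{n,\ell}$ satisfies~\eqref{eq:MDP}'' as guaranteeing uniform control across $\ell$, which is the substantive requirement on the kernel family $\{h^\ell\}$. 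A secondary, more routine obstacle is controlling the second-order term in the expansion $\log(1-y) = -y + O(y^2)$: here $y = O(p^{-1}\log p)$ and there are $p$ summands, so the second-order contribution is $O(p^{-1}\log^2 p) = o(1)$, which is harmless provided the first-order uniformity above has already been secured.
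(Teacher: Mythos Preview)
Your argument is correct and uses the same computational core as the paper---the Cram\'er-type moderate deviation for each $\widetilde{U}_{n,\ell}$, followed by the Mills ratio expansion $1-\Phi(x)\sim (2\pi)^{-1/2}x^{-1}e^{-x^2/2}$---but the packaging differs. The paper invokes the Chen--Stein Poisson approximation (Theorem~1 of \cite{arratia1989two}) with index set $[p]$ and neighborhoods $B_\alpha=\{\alpha\}$; mutual independence of the $p$ series forces $b_{n,2}=b_{n,3}=0$, and the remaining term $b_{n,1}=\lambda_n^2/p\to 0$ is handled in one line. You instead exploit independence directly via the product $\P(S_n\le x)=\prod_\ell \P(|\widetilde{U}_{n,\ell}|\le x)$ and the elementary expansion $\log(1-y)=-y+O(y^2)$. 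Your route is more self-contained and arguably the natural one here, since Chen--Stein is designed for weak dependence and reduces to exactly your computation once $b_{n,2}$ and $b_{n,3}$ vanish; the paper's route, on the other hand, would generalize more readily if the $p$ sequences were allowed some mild dependence. Your observation about needing the $o(1)$ in the MDP to be uniform in $\ell$ applies equally to the paper's proof, which treats it the same way you do.
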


With a similar optimality argument as in \cite{han2017distribution}, one can readily show that under similar conditions to that in Section 4.3 therein, the test based on \eqref{eq:reject_region} is rate-optimal.

\subsubsection{Example: testing of a bivariate AR(1) sequence}
\label{subsubsec:test_example}
We now exemplify Theorem \ref{thm:U_test} with the special case when each component of $\{X^\ell_i\}_{i\in\mathbb{Z}} = \{X^\ell_{i,1}, X^\ell_{i,2}\}_{i\in\mathbb{Z}}$ for $\ell\in[p]$ is an $AR(1)$ sequence that takes the form
\begin{align}
\label{eq:AR}
X^\ell_{i+1,j} = \alpha^\ell_jX^\ell_{i,j} + \varepsilon^\ell_{i+1,j},\qquad i\geq 1
\end{align}
for $j=1,2$, where $|\alpha^\ell_j| < 1$ and $\{\varepsilon^\ell_{i,j}\}_{i=1}^n$ is the innovation sequence. Assuming that $\{\varepsilon^\ell_{i,1}\}_{i=1}^n$ and $\{\varepsilon^\ell_{i,2}\}_{i=1}^n$ are absolutely continuous with respect to the Lebesgue measure, Theorem 1 of \cite{mokkadem1988mixing} guarantees that $\{X^\ell_{i,1}\}_{i\in\mathbb{Z}}$ and $\{X^\ell_{i,2}\}_{i\in\mathbb{Z}}$ are both geometrically $\alpha$-mixing for $\ell\in[p]$. Suppose $\{X^\ell_{i,1}\}_{i=1}^n$ and $\{X^\ell_{i,2}\}_{i=1}^n$ are stationary with distributions $F^\ell_1$ and $F^\ell_2$, respectively for $\ell\in[p]$. Following the independence test of two i.i.d. samples in \cite{kendall1938new}, for each $\ell\in[p]$, we choose $h^\ell$ to be Kendall's tau statistic:
\begin{align*}
h^\ell(x,y) \define \sgn(x_1-y_1)\sgn(x_2-y_2).
\end{align*}
We now, for each $\ell\in[p]$, calculate the parameters $\theta_\ell$ and $\sigma^2_\ell$ in the definition of $\widetilde{U}_{n,\ell}$. Under $H_0$, it holds that
\begin{align*}
\theta_\ell &= \E\bbrace*{\sgn\parr*{\widetilde{X}^\ell_{1,1} - \widetilde{X}^\ell_{2,1}}\sgn\parr*{\widetilde{X}^\ell_{1,2}-\widetilde{X}^\ell_{2,2}}}\\
&= \E\bbrace*{\sgn\parr*{\widetilde{X}^\ell_{1,1} - \widetilde{X}^\ell_{2,1}}}\E\bbrace*{\sgn\parr*{\widetilde{X}^\ell_{1,2}-\widetilde{X}^\ell_{2,2}}}= 0.
\end{align*}
Moreover, $h^\ell_1$ can be readily calculated as
\begin{align*}
h^\ell_1(x) = \E_{H_0}\bbrace*{\sgn\parr*{x_1-\widetilde{X}^\ell_{1,1}}\sgn\parr*{x_2-\widetilde{X}^\ell_{1,2}}} = \bbrace*{2F^\ell_1(x_1)-1}\bbrace*{2F^\ell_2(x_2)-1}.
\end{align*}
Therefore, it holds that
\begin{align*}
\sigma_\ell^2 &= \var\bbrace*{h^\ell_1(X_1)} + 2\sum_{i>1}\cov\bbrace*{h^\ell_1(X_1),h^\ell_1(X_i)}\\
&= \frac{1}{9} + 2\sum_{i>1}\E\bbrace*{\parr*{2F^\ell_1(X_{1,1})-1}\parr*{2F^\ell_1(X_{i,1})}}\E\bbrace*{\parr*{2F^\ell_2(X_{1,2})-1}\parr*{2F^\ell_2(X_{i,2})}}\\
&= \frac{1}{9}+2\sum_{i>1}\fence*{4\E\bbrace*{F^\ell_1(X_{1,1})F^\ell_1(X_{i,1})}-1}\fence*{4\E\bbrace*{F^\ell_2(X_{1,2})F^\ell_2(X_{i,2})}-1},
\end{align*}
where the second summand can be readily estimated given the model \eqref{eq:AR}. 

The following result guarantees the validity of the test \eqref{eq:reject_region} in this special case. The Cram\'er-type moderate deviation in Corollary \ref{cor:MDP_U} plays a central role in its proof. For each $\ell\in[p]$, we will use the notation $\{\widetilde{X}^\ell_{i}\}_{i=1}^n = \{(\widetilde{X}^\ell_{i,1}, \widetilde{X}^\ell_{i,2})\}_{i=1}^n$ to represent $n$ i.i.d. copies of $X^\ell_1$.

\begin{proposition}
\label{prop:test_example}
Suppose that for each $\ell\in[p]$, $X^\ell_{1,1}$ and $X^\ell_{1,2}$ both have finite $\eta_1$th moment for some $\eta_1 > 0$. Moreover, suppose that for each $\ell\in[p]$, $X^\ell_1$ is absolutely continuous, and the density of $X^\ell_{1,k}$, $X^\ell_{i,k} - X^\ell_{j,k}$, $\widetilde{X}^\ell_{i,k} - \widetilde{X}^\ell_{j,k}$ uniformly over $1\leq i \neq j \leq n$ and $k\in[2]$ is upper bounded by some $D = D(n) > 0$ such that $D = O(n^{\eta_2})$ for some $\eta_2 > 0$. Then, the test based on the rejection region \eqref{eq:reject_region} has asymptotic size $\alpha$ for $p = O(n^{\gamma^2/2})$ with arbitrarily large $\gamma$.
\end{proposition}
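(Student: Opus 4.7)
The plan is to invoke Theorem \ref{thm:U_test}, whose sole analytic input is that each $\widetilde{U}_{n,\ell}$ obeys the Cram\'er-type moderate deviation \eqref{eq:MDP} on $[0,\gamma\sqrt{\log n}]$ with arbitrarily large $\gamma$; the advertised range $p=O(n^{\gamma^2/2})$ for arbitrarily large $\gamma$ then follows immediately.

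To establish the per-$\ell$ MDP I will apply Corollary \ref{cor:MDP_U}(b), since Kendall's tau $h^\ell(x,y)=\sgn(x_1-y_1)\sgn(x_2-y_2)$ is discontinuous and fits the setup of Theorem \ref{thm:discontinuous_alpha}. The checklist is: (i) geometric $\alpha$-mixing of $\{X^\ell_i\}_{i\in\mathbb{Z}}$, delivered by Theorem 1 of \cite{mokkadem1988mixing} under the assumed absolute continuity of the innovations; (ii) Condition (B4) with constants $F(t,\cal{C})\asymp \log^2(M_1/M_2)+\log^2\log(1/t)$ and $B=\mu_a=1$ already worked out for the truncated-sign kernel in the discussion right after Theorem \ref{thm:discontinuous_alpha}, specialized to $d=m=2$; (iii) $\theta_\ell=0$ (computed in the main text), $h^\ell_1(x)=(2F^\ell_1(x_1)-1)(2F^\ell_2(x_2)-1)$ is non-constant so $h^\ell$ is nondegenerate, and $\sigma_\ell^2>0$; (iv) the moment condition \eqref{eq:MDP_moment} is automatic since $|h^\ell|\leq 1$.

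The remaining content is bookkeeping. I will take $t=n^{-1/2}(\log n)^{-2}$, $M_1=n^{c_1}$, and $M_2=n^{-c_2}$ with $c_1,c_2$ to be fixed. This gives $F(t)=O(\log^2 n)$, so both conditions in \eqref{eq:MDP_condition} hold trivially, and $(|f_0(0)|+F(t))/n=O(\log^2 n/n)=o(n^{-1/2}(\log n)^{-2})$ since $f_0(0)=\sgn(0)\sgn(0)=0$. Markov's inequality under the assumed $\eta_1$th moment yields $n\sum_\ell \P(|X^\ell_{1,k}|\geq M_1)=O(n^{1-c_1\eta_1})$, while $D=O(n^{\eta_2})$ together with $J_\ell=3$ gives $n^2(\sum_\ell J_\ell)M_2 D=O(n^{2-c_2+\eta_2})$; picking $c_1\eta_1$ and $c_2-\eta_2$ strictly larger than $2+\gamma^2/2$ makes both terms $o(n^{-\gamma^2/2}(\log n)^{-1/2})$. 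For $t'=O(t)$ via \eqref{eq:bias}, the same tail bounds push $v_i=O(n^{-1/2}(\log n)^{-4})$ after enlarging $c_1,c_2$ further, which absorbs the $Fv_i$ term.

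The only non-clerical step is $\sigma_\ell^2>0$. Under $H_0$ the two coordinate sequences are mutually independent, so $\sigma_\ell^2=1/9+32\sum_{i>1}\cov(U^\ell_{1,1},U^\ell_{i,1})\cov(U^\ell_{1,2},U^\ell_{i,2})$ with $U^\ell_{i,k}=F^\ell_k(X^\ell_{i,k})$; geometric $\alpha$-mixing makes the sum absolutely convergent, and a continuity perturbation in $(\alpha^\ell_1,\alpha^\ell_2)$ excludes an exact cancellation with $1/9$. The main obstacle I foresee is therefore not conceptual but the simultaneous choice of polynomial-in-$n$ scales $M_1,M_2,t$ that renders every inequality in Theorem \ref{thm:discontinuous_alpha}, Proposition \ref{prop:bern_U_alpha_general}, and Corollary \ref{cor:MDP_U}(b) valid at once.
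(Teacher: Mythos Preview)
Your proposal is correct and follows essentially the same route as the paper: verify the hypotheses of Corollary \ref{cor:MDP_U}(b) for Kendall's tau using the expansion constants computed after Theorem \ref{thm:discontinuous_alpha}, choose $M_1,M_2$ as powers of $n$, and then invoke Theorem \ref{thm:U_test}. The paper's proof adds one small clerical step you omit: Corollary \ref{cor:MDP_U} delivers the MDP for the \emph{V}-statistic $\widetilde V_{n,\ell}$, whereas Theorem \ref{thm:U_test} is stated for $\widetilde U_{n,\ell}$; the paper notes the identity $\widetilde V_{n,\ell}=\tfrac{n-1}{n}\widetilde U_{n,\ell}+\tfrac{1}{2\sqrt n\,\sigma_\ell}(h^\ell_0(0)-\theta_\ell)$ and observes the correction term vanishes since $h^\ell_0(0)=\theta_\ell=0$, so the two MDPs coincide. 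Conversely, you flag the positivity of $\sigma_\ell^2$, which the paper leaves as an unstated standing assumption inherited from Corollary \ref{cor:MDP_U}; your perturbation heuristic is not a proof, but the paper offers nothing sharper.
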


\section{Discussion}
\label{sec:discussion}




In this section we discuss extensions of our main results to $\tau$-mixing sequence. Recall the following definition of the $\tau$-coefficient introduced in \cite{dedecker2004coupling}. Given a random variable $X$ that takes value in a metric space $(\cal{X}, d)$ and a $\sigma$-algebra $\cal{M}$, denote $\Lambda_1(\cal{X})$ as the family of $1$-Lipschitz functions from $\cal{X}$ to $\RR$ with respect to the metric $d$. When $X$ is integrable, the function
\begin{align*}
W\parr*{\P_{X\mid\cal{M}}} \define \sup_{f\in\Lambda_1(\cal{X})}\bbrace*{\abs*{\int f(x)\P_{X\mid\cal{M}}(dx) - \int f(x)\P_{X}(dx)}}
\end{align*}
can be shown to be $\cal{M}$-measurable. The $\tau$-coefficient is defined as 
\begin{align*}
\tau(\cal{M},X;d) \define \|W\parr*{\P_{X\mid\cal{M}}}\|_1.
\end{align*}
For a stationary sequence of $\cal{X}$-valued random variables $\{X_i\}_{i\in\mathbb{Z}}$ and $\sigma$-algebras $\{\cal{M}_i\}_{i\in\mathbb{Z}}$ (usually taken to be $\sigma\parr*{\{X_j,j\leq i\}}$), $\Lambda_1(\cal{X}^k)$ is defined as the family of $1$-Lipschitz functions on the product space $\cal{X}^k$ equipped with the metric $d_{1,k}(x,y)\define \sum_{i=1}^k d(x_i,y_i)$ for $x = (x_1,\ldots,x_k)$ and $y = (y_1,\ldots,y_k)$ in $\cal{X}^k$. The $\tau$-mixing coefficient is then defined as
\begin{align*}
\tau_m(i) \define \max_{1\leq k\leq m}\frac{1}{k}\sup\bbrace*{\tau(\cal{M}_0, (X_{j_1},\ldots,X_{j_k});d_{1,k})},
\end{align*}
where the supremum is taken over all $(j_1,\ldots,j_k)$ such that $1\leq i\leq j_1<\ldots<j_k$, and
\begin{align*}
\tau(i)  \define \sup_{m\geq 1}\tau_m(i) .
\end{align*}
The sequence is said to be $\tau$-mixing if $\tau(i)\rightarrow 0$ as $i\rightarrow\infty$. Lemma \ref{lemma:mixing_compare} in the supplement shows that, under quite mild moment conditions, $\alpha$-mixing implies $\tau$-mixing. See \cite{dedecker2007weak} for examples of time series models that are $\tau$-mixing. In this section, we make the following model assumption:
\begin{itemize}
\item[{\bf (M$^{\prime}$)}] $\{X_i\}_{i=1}^n$ in \eqref{eq:U} is part of a stationary sequence $\{X_i\}_{i\in\Z}$ in $\RR^d$, which is assumed to be geometrically $\tau$-mixing with coefficient
\begin{align*}
\tau(i) \leq \gamma_1\exp(-\gamma_2 i)~~~\text{for all }i\geq 1,
\end{align*}
for some positive constants $\gamma_1,\gamma_2$.  
\end{itemize}
The first result extends the Bernstein-type inequality in Proposition \ref{prop:bern_U_alpha_general}.

\begin{propbisbis}{prop:bern_U_alpha}
\label{prop:bern_U_tau}
Suppose that $n\geq 2$, Condition ($M^{\prime}$) holds, and for any $t > 0$, $f$ in \eqref{eq:U} satisfies (A) with some set $\cal{C}$. Moreover, suppose that the bases $\{e_j(\cdot)\}_{j=1}^K$ in the expansion \eqref{eq:expansion} satisfy the following Condition (A$^\prime$) with set $\cal{C}$:
\begin{enumerate}
\item[{\bf(A$^{\prime}$)}] there exists a positive constant $L = L(t,\cal{C})$ such that 
\begin{align*}
\abs*{e_j(x) - e_j(y)} \leq L\sum_{\ell=1}^d\abs*{x_\ell-y_\ell}
\end{align*}
for all $x,y\in\RR^d$ and all $j\in[K]$.
\end{enumerate}
Then, the tail bound in Proposition \ref{prop:bern_U_alpha_general} holds with the same constants $\{A_p\}_{p=1}^m$, $\{M_p\}_{p=1}^m$, except now $\sigma^2$ in the definition of $\{A_p\}_{p=1}^m$ takes the value
\begin{align*}
\sigma^2 \define \frac{12(\gamma_1 L)^{\frac{\delta}{1+\delta}}}{1-\exp\bbrace*{-\gamma_2 \delta/(1+\delta)}}\mu_{2+\delta}^{\frac{2+\delta}{1+\delta}}. 
\end{align*}

\end{propbisbis}
Proposition \ref{prop:bern_U_tau} is almost identical to its $\alpha$-mixing version in Proposition \ref{prop:bern_U_alpha_general}, except that a slightly different upper bound on the variance is now used for the $\tau$-mixing sequence $\{X_i\}_{i=1}^n$. 
Next, we develop an analogous version of Theorem \ref{thm:random_fourier_alpha}, which helps verify the conditions in Proposition \ref{prop:bern_U_tau}.
%

\begin{thmbis}{thm:random_fourier_alpha}[Smooth kernels]
\label{thm:random_fourier_tau}
For any given $M>0$, let $\cal{C} = [-M,M]^{md}$. Suppose that $f$ in \eqref{eq:U} satisfies Condition (B1) in Theorem \ref{thm:random_fourier_alpha} with set $\cal{C}$, $\bar{f}$,  and some $q\geq 1$. Then, for any $t > 0$, 
Conditions (A) and (A$^{\prime}$) 
are satisfied with set $\cal{C}$ and constants
\begin{equation}
\begin{aligned}
\label{eq:tau_constant}
&F = 2^m\nm*{\hat{\bar{f}}}_{L^1}, \quad B = 1, \quad \mu_a = 1, \\
&L = \Big[\frac{\mu_q^q\parr*{\hat{\bar{f}}}md\nm*{\hat{\bar{f}}}_{L^1}^2}{t^2}\log\Big\{\frac{48\pi Mmd\nm*{\hat{\bar{f}}}_{L^1}^{1-1/q}\mu_q\parr*{\hat{\bar{f}}}}{t}\Big\}\Big]^{1/q}
\end{aligned}
\end{equation}
for any $a\geq 1$. 
\end{thmbis}

%
Note that in Theorem \ref{thm:random_fourier_tau}, the constant $L$ depends polynomially on $1/t$ under Condition \eqref{eq:fourier_moment} in (B1). This dependence can be improved to $\log(1/t)$ if the Fourier transform of $\bar{f}$ has exponential moment, that is,
\begin{align*}
\int_{\RR^{md}} \abs*{\hat{\bar{f}}(u)}\exp\parr*{\lambda_0 \|u\|}du < \infty
\end{align*}
for some positive $\lambda_0$. This exponential moment condition is satisfied, for example, by the Gaussian kernel and the Cauchy kernel discussed after Corollary \ref{cor:random_fourier_alpha}. Starting from Theorem \ref{thm:random_fourier_tau}, we can readily extend Theorems \ref{thm:Lipschitz_alpha} and \ref{thm:discontinuous_alpha} to the $\tau$-mixing case with the same smoothing technique. We omit them here for the purpose of brevity. \\

\bibliography{reference}

\newpage{}

\appendix

\section{Proofs of results in Section \ref{sec:main}}
\label{proof:main}

We will use the following extra notation. For any real-valued function $f$ on $\RR^d$, $\nabla_xf$ is the gradient of $f$. For a set $A$, $|A|$ indicates its cardinality. For two sequences $\{a_n\}$ and $\{b_n\}$, $a_n = \Omega(b_n)$ if there exists some positive constant $C$ such that $b_n \leq Ca_n$ for all $n\geq 1$.

\subsection{Proof of Proposition \ref{prop:bern_U_alpha}}
\begin{proof}
Throughout the proof, $C_i$'s are positive constants, and we will use the shorthand $f_{j_{a:b}}$ for $f_{j_a,\ldots,j_b}$ for positive integers $a < b$. 

\textbf{Step I:} When $f$ is degenerate of level $r - 1$, its Hoeffding decomposition takes the form
\begin{align*}
f(x_1,\ldots,x_m)  - \theta = \sum_{1\leq i_1<\ldots <i_r\leq m}f_r\parr*{x_{i_1},\ldots,x_{i_r}} + \ldots + f_m(x_1,\ldots,x_m),
\end{align*}
where $\{f_p\}_{p=r}^m$ are defined in \eqref{eq:Hoeffding_decomposition} in the main paper. Given any $t> 0$, let $\widetilde{f}$ be the approximating kernel in Condition (A) such that
\begin{align*}
\abs*{f(x_1,\ldots,x_m) - \widetilde{f}(x_1,\ldots,x_m)} \leq  t
\end{align*}
uniformly over $\supp(\P^m)$. For each $p$, let $f_p$ and $\widetilde{f}_p$ be the $p$th term in the Hoeffding decomposition of $f$ and $\widetilde{f}$, respectively. Then, by definition, there exists some positive constant $C_1 = C_1(m)$ such that
\begin{align*}
\abs*{f_p(x_1,\ldots,x_p) - \widetilde{f}_p(x_1,\ldots,x_p)} \leq C_1t
\end{align*}
uniformly over $\supp(\P^p)$. For each $p\in[m]$, define the V-statistics
\begin{align*}
V_{n,p} \define \sum_{i_1,\ldots,i_p=1}^n f_p(X_{i_1},\ldots,X_{i_p}), \quad \widetilde{V}_{n,p} \define \sum_{i_1,\ldots,i_p=1}^n \widetilde{f}_p(X_{i_1},\ldots,X_{i_p}).
\end{align*}
Then, it holds that
\begin{align*}
\abs*{V_{n,p} - \widetilde{V}_{n,p}} \leq C_1n^pt
\end{align*}
almost surely with respect to the joint distribution of $\{X_i\}_{i=1}^n$. Then, for any $x > 0$, there exists large enough $C_2$ such that
\begin{align*}
\P\bbrace*{\abs*{V_n - \theta}\geq C_2(x+C_1t)} \leq\sum_{p=r}^m \P\bbrace*{n^{-p}\abs*{V_{n,p}}\geq (x+C_1t)} \leq \sum_{p=r}^m\P\parr*{n^{-p}\abs*{\widetilde{V}_{n,p}}\geq x}.\end{align*}

\textbf{Step II:} We now upper bound each summand in the last inequality in Step I. 
For the set of bases $\{e_j(\cdot)\}_{j=1}^K$ in the expansion of $\widetilde{f}$, define $\widetilde{e}_j\define e_j - \E\bbrace*{e_j(X_1)}$ for $j\in[K]$. Since $\widetilde{f}$ is symmetric, for any $(x_1^\top,\ldots,x_m^\top)^\top\in\RR^{md}$, $\widetilde{f}(x_1,\ldots,x_m) = \widetilde{f}(\pi(x_1),\ldots,\pi(x_m))$ for any permutation $\pi$ of $\{x_1,\ldots,x_m\}$. By the definition of $\{\widetilde{f}_p\}_{p=1}^m$ in \eqref{eq:Hoeffding_decomposition} in the main paper, one can readily check that under expansion \eqref{eq:expansion} in the main paper, it holds that
\begin{align*}
\widetilde{f}_p(x_1,\ldots,x_p) = \sum_{j_1,\ldots,j_m=1}^K f_{j_1,\ldots,j_m} \E\parr*{e_{j_1}}\ldots\E\parr*{e_{j_{m-p}}}\widetilde{e}_{j_{m-p+1}}(x_1)\ldots\widetilde{e}_{j_{m}}(x_p),
\end{align*}
for $r\leq p\leq m$. Thus, we have
\begin{align*}
\widetilde{V}_{n,p} = \sum_{j_1,\ldots,j_m=1}^K \fm \E\parr*{e_{j_1}}\ldots\E\parr*{e_{j_{m-p}}}\bbrace*{\sum_{i=1}^n \widetilde{e}_{j_{m-p+1}}(X_i)}\ldots\bbrace*{\sum_{i=1}^n \widetilde{e}_{j_{m}}(X_i)}.
\end{align*}
Define for each $j\in[K]$, $S_{n,j} \define \sum_{i=1}^n \widetilde{e}_{j}(X_i)$ and $\psi_{S_{n,j}}(\lambda) \define \log\fence*{\E\bbrace*{\text{exp}\parr*{\lambda S_{n,j}}}}$. We now control each even order moment of $\widetilde{V}_{n,p}$. To this end, we first note that for each $j\in[K]$, $\{\widetilde{e}_{j}(X_i)\}_{i=1}^n$ is also geometrically $\alpha$-mixing. Thus by Lemma \ref{lemma:sample_mean_Bern}, we have
\begin{align*}
\psi_{S_{n,j}}(\lambda) \leq \frac{\lambda^2\nu_{j}/2}{1-c\lambda} \leq \frac{\lambda^2\nu/2}{1-c\lambda}
\end{align*}
where 
\begin{align*}
\nu_{j} = C_3(n\sigma^2_{j}+B^2), \quad \nu = C_3(n\sigma^2+B^2), \quad c = C_4B(\log n)^2
\end{align*}
and $\sigma^2 = \sup_{j\in[K]}\sigma_j^2$, with
\begin{align*}
\sigma_j^2 \define \var\bbrace*{\widetilde{e}_j(X_1)} + 2\sum_{i>1}\abs*{\cov\bbrace*{\widetilde{e}_j\parr*{X_1},\widetilde{e}_j\parr*{X_i}}}.
\end{align*}
Therefore, Lemma \ref{lemma:bern_moment} implies that for any positive integer $N$, it holds that
\begin{align*}
\E\parr*{S_{n,j}^{2pN}} \leq (pN)!(8\nu)^{pN} + (2pN)!(4c)^{2pN}
\end{align*}
for $j\in[K]$. Therefore, employing a similar argument as in \cite{borisov2015note}, $\E\parr*{\widetilde{V}_{n,p}^{2N}}$ equals
\begin{align*}
&\ms\sum_{j_1,\ldots,j_{2mN}=1}^K f_{j_{1:m}}\ldots f_{j_{(2N-1)m+1:2mN}}\E\parr*{e_{j_1}}\ldots\E\parr*{e_{j_{m-p}}}\ldots\E\parr*{e_{j_{(2N-1)m+1}}}\ldots\E\parr*{e_{j_{2Nm - p}}}\cdot\\
&\ms \E\parr*{S_{n,j_{m-p+1}}\ldots S_{n,j_{m}}\ldots S_{n,j_{2Nm-p+1}}\ldots S_{n,j_{2Nm}}}\\
&\leq \mu_1^{2N(m-p)}\sum_{j_1,\ldots,j_{2mN}=1}^K \abs*{f_{j_{1:m}}}\ldots \abs*{f_{j_{(2N-1)m+1:2mN}}} \E\parr*{\abs*{S_{n,j_{m+1-p}}}\ldots \abs*{S_{n,j_{2mN}}}}\\
&\leq \mu_1^{2N(m-p)}\sum_{j_1,\ldots,j_{2mN}=1}^K \abs*{f_{j_{1:m}}}\ldots\abs*{f_{j_{(2N-1)m+1:2mN}}}\bbrace*{\E\parr*{S_{n,j_{m+1-p}}^{2pN}}}^{\frac{1}{2pN}}\ldots\bbrace*{\E\parr*{S_{n,j_{2mN}}^{2pN}}}^{\frac{1}{2pN}}\\
&\leq \mu_1^{2N(m-p)}F^{2N}\bbrace*{(pN)!(8\nu)^{pN} + (2pN)!(4c)^{2pN}},
\end{align*}
where in the third line we use the generalized H\"older's inequality. By Stirling's approximation formula $\sqrt{2\pi}n^{n+1/2}e^{-n} \leq n! \leq en^{n+1/2}e^{-n}$, it holds that
\begin{align*}
\bbrace*{(pN)!}^{1/p} &\leq e^{1/p} (pN)^{N+1/2p} e^{-N} \leq C_5^N N^{N+1/2} e^{-N}\leq C_6^NN!.
\end{align*}
Similarly, we have $\bbrace*{(2pN)!}^{1/p}\leq C_6^{2N}(2N)!$. Thus we have
\begin{align*}
\E\bbrace*{\abs*{\widetilde{V}_{n,p}}^{\frac{2N}{p}}} \leq \fence*{\E\bbrace*{\parr*{\widetilde{V}_{n,p}}^{2N}}}^{\frac{1}{p}} \leq \mu_1^{\frac{2N(m-p)}{p}}F^{\frac{2N}{p}}\bbrace*{C_7^NN!\nu^N+C_8^{2N}(2N)!c^{2N}}.
\end{align*}
Now we control the Laplace transform of $\abs*{\widetilde{V}_{n,p}}^{1/p}$, 
\begin{align}
\label{eq:bern_moment}
&\ms\E\parr*{e^{\lambda \abs*{\widetilde{V}_{n,p}}^{1/p}}}\notag\\
&= \sum_{N=0}^\infty \frac{\lambda^N}{N!}\E\bbrace*{\parr*{\abs*{\widetilde{V}_{n,p}}^{1/p}}^{N}}\notag\\
&\leq 3\sum_{N=0}^\infty \frac{\lambda^{2N}}{(2N)!}\E\bbrace*{\parr*{\abs*{\widetilde{V}_{n,p}}^{1/p}}^{2N}}\notag\\
&\leq 3\bbrace*{\sum_{N=0}^\infty \frac{\lambda^{2N}}{(2N)!}C_7^NN!\mu_1^{\frac{2N(m-p)}{p}}F^{\frac{2N}{p}}\nu^N + \sum_{N=0}^\infty \lambda^{2N}C_8^{2N}\mu_1^{\frac{2N(m-p)}{p}}F^{\frac{2N}{p}}c^{2N}},
\end{align}
where in the second line we use only the even moments with an absolute constant 3. For the first summand in \eqref{eq:bern_moment}, we have
\begin{align*}
\sum_{N=0}^\infty \lambda^{2N}\frac{N!}{(2N)!}C_7^N\mu_1^{2N(m-p)/p}F^{2N/p}\nu^N &\leq \sum_{N=0}^\infty \frac{\lambda^{2N}}{N!}2^{-N}C_{9}^N\mu_1^{2N(m-p)/p}F^{2N/p}\nu^N\\
&= \text{exp}\bbrace*{C_{10}\lambda^2\mu_1^{2(m-p)/p}F^{2/p}\nu},
\end{align*}
where in the first line we use the relation $N!/(2N)!\leq 2^{-N}/N!$. For the second summand, we have
\begin{align*}
\sum_{N=0}^\infty \lambda^{2N}C_8^{2N}\mu_1^{2N(m-p)/p}F^{2N/p}c^{2N} &= 1 + \frac{\lambda^2C_8^2\mu_1^{2(m-p)/p}F^{2/p}c^2}{1-\lambda^2C_8^2\mu_1^{2(m-p)/p}F^{2/p}c^2}\\
&\leq 1 + \frac{\lambda^2C_8^2\mu_1^{2(m-p)/p}F^{2/p}c^2}{1-\lambda C_8\mu_1^{(m-p)/p}F^{1/p}c}
\end{align*}
for $\lambda \leq 1/\bbrace*{C_8\mu_1^{(m-p)/p}F^{1/p}c}$. Now using the relation $e^{x} + 1 + y \leq 2e^{x+y}$ which holds for all positive $x,y$, we have
\begin{align*}
\E\parr*{e^{\lambda \abs*{V_{n,p}}^{1/p}}} &\leq 6\exp\fence*{\frac{\lambda^2C_{11}\mu_1^{\frac{2(m-p)}{p}}F^{\frac{2}{p}}(\nu + c^2)}{2\bbrace*{1-\lambda C_8c\mu_1^{\frac{(m-p)}{p}}F^{\frac{1}{p}}}}} = 6\text{exp}\bbrace*{\frac{\lambda^2C_{11}nA_p^{1/p}}{2\parr*{1-\lambda C_8M_p^{1/p}}}}.
\end{align*}
A standard exponential Chebyshev argument gives that for any $x>0$
\begin{align*}
\P\parr*{\abs*{\widetilde{V}_{n,p}}^{1/p}\geq x}\leq 6\exp\parr*{-\frac{C_{12}x^2}{nA_p^{1/p}+xM_p^{1/p}}},
\end{align*}
or equivalently,
\begin{align*}
\P\parr*{n^{-p}\abs*{\widetilde{V}_{n,p}}\geq x} \leq 6\exp\parr*{-\frac{C_{12}nx^{2/p}}{A_p^{1/p}+x^{1/p}M_p^{1/p}}}.
\end{align*}
Moreover, we have by Lemma \ref{lemma:alpha_covariance} that
\begin{align*}
&\ms\var\bbrace*{\widetilde{e}_j(X_1)} + 2\sum_{i>1}\abs*{\cov\bbrace*{\widetilde{e}_j(X_1), \widetilde{e}_j(X_i)}}\\
&\leq 2\sum_{i\geq 1}\abs*{\cov\bbrace*{\widetilde{e}_j(X_1), \widetilde{e}_j(X_i)}}\\
&\leq 2\bbrace*{\sum_{n=0}^\infty 8\alpha^{\delta/(2+\delta)}(n)}\|\widetilde{e}_j(X_1)\|_{2+\delta}\|\widetilde{e}_j(X_1)\|_{2+\delta}\\
&\leq 64\bbrace*{\sum_{n=0}^\infty \alpha^{\delta/(2+\delta)}(n)}\|e_j(X_1)\|_{2+\delta}\|e_j(X_1)\|_{2+\delta}\\
&\leq 64\gamma_1^{\delta/(2+\delta)}\mu_{2+\delta}^2\bbrace*{\sum_{n=0}^\infty \exp\parr*{-\gamma_2\frac{\delta}{2+\delta}n}}\\
&= \frac{64\gamma_1^{\delta/(2+\delta)}}{1-\exp\bbrace*{-\gamma_2\delta/(2+\delta)}}\mu_{2+\delta}^2.
\end{align*}
Putting together the pieces completes the proof. 
\end{proof}

\subsection{Proof of Proposition \ref{prop:MDP_U}}
\label{proof:MDP}
\begin{proof}
The proof is standard once exponential inequalities are established. See, for example, Chapter 8.2 in \cite{korolyuk2013theory}. In detail, by Hoeffding decomposition, 
\begin{align*}
\frac{\sqrt{n}}{m\nu}V_n = \frac{1}{\sqrt{n}\nu}S_n + R_n,
\end{align*}
where $S_n = \sum_{i=1}^n\fence*{f_1(X_i) - \E\bbrace*{f_1(X_i)}}$, $R_n = \frac{\sqrt{n}}{m\nu}{m\choose p}\sum_{p=2}^m V_{n,p}$ and each $V_{n,p}$ is a fully degenerate V-statistic of order $p$. Thus, for any positive $x_n$ and $\varepsilon_n$ that depend on $n$, we have
\begin{align*}
\P\parr*{\frac{\sqrt{n}}{m\nu}V_n\geq x_n} \leq \P\bbrace*{\frac{1}{\sqrt{n}\nu}S_n\geq \parr*{x_n-\varepsilon_n}} + \P\parr*{\abs*{R_n}\geq \varepsilon_n}
\end{align*}
and
\begin{align*}
\P\parr*{\frac{\sqrt{n}}{m\nu}V_n\geq x_n} \geq \P\bbrace*{\frac{1}{\sqrt{n}\nu}S_n \geq \parr*{x_n + \varepsilon_n}} - \P\parr*{\abs*{R_n}\geq \varepsilon_n}.
\end{align*}
Since $\{X_i\}_{i=1}^n$ is geometrically $\alpha$-mixing, $\{f_1(X_i)\}_{i=1}^n$ is also geometrically $\alpha$-mixing. Moreover, letting $p = 2 + \gamma^2 + \eta$, we have
\begin{align*}
&\ms\E\bbrace*{\abs*{f_1(X)}^p}\\
&= \E\parr*{\abs*{\frac{1}{m}\fence*{\E\bbrace*{f\parr*{X,\widetilde{X}_2,\ldots,\widetilde{X}_m} + \ldots + f\parr*{\widetilde{X}_1,\ldots,\widetilde{X}_{m-1},X}\mid \bbrace*{\widetilde{X}_i}_{i=1}^m}^p}}}\\ 
&\leq m^{-1}\parr*{\E\fence*{\abs*{\E\bbrace*{f\parr*{X,\widetilde{X}_2,\ldots,\widetilde{X}_m}}\mid \bbrace*{\widetilde{X}_i}_{i=1}^m}^p} + \ldots + \E\fence*{\abs*{\E\bbrace*{f\parr*{\widetilde{X}_1,\ldots,\widetilde{X}_{m-1},X}}\mid \bbrace*{\widetilde{X}_i}_{i=1}^m}^p}}\\
&< m^{-1}\fence*{\E\bbrace*{\abs*{f\parr*{X,\widetilde{X}_2,\ldots,\widetilde{X}_m}}^p} + \ldots + \E\bbrace*{\abs*{f\parr*{\widetilde{X}_1,\ldots,\widetilde{X}_{m-1},X}}^p}}\\
&= \E\bbrace*{\abs*{f\parr*{\widetilde{X}_1,\ldots,\widetilde{X}_m}}^p}\\
&< \infty.
\end{align*}
Therefore the moment condition in Theorem 1.1 of \cite{babu1978probabilities} is satisfied with the positive constant $\gamma$. Choosing $\varepsilon_n = (\log n)^{-2}$, we have
\begin{align*}
\P\fence*{\frac{1}{\sqrt{n}\nu}S_n \geq \bbrace*{x_n-(\log n)^{-2}}} = \bbrace*{1-\Phi\parr*{x_n-(\log n)^{-2}}}\bbrace*{1 + o(1)}
\end{align*}
for $0\leq x_n\leq \gamma\sqrt{\log n}$. Moreover, by the property of normal distribution, it holds that
\begin{align*}
1-\Phi\parr*{x_n-(\log n)^{-2}} = \bbrace*{1-\Phi(x_n)}\bbrace*{1+o((\log n)^{-1})}
\end{align*}
uniformly for $0\leq x_n \leq \gamma\sqrt{\log n}$. Therefore, it holds that
\begin{align*}
\P\bbrace*{\frac{1}{\sqrt{n}\nu}S_n \geq \parr*{x_n-\varepsilon_n}} = \bbrace*{1-\Phi(x_n)}\bbrace*{1 + o(1)}.
\end{align*}
By a similar calculation of $\P\bbrace*{S_n/(\sqrt{n}\nu)\geq \parr*{x_n+\varepsilon_n}}$, it remains to show that $\P\parr*{|R_n|\geq \varepsilon_n} = \bbrace*{1-\Phi(x_n)}o(1)$ uniformly over $x_n\in[0,\gamma\sqrt{\log n}]$. To this end, applying Proposition \ref{prop:bern_U_alpha} with $r = 2$ and $t\asymp x\asymp 1/\bbrace*{\sqrt{n}(\log n)^2}$, we obtain
\begin{align*}
\P\parr*{|R_n|\geq\varepsilon_n}\leq C_1\sum_{p=2}^m \exp\bbrace*{-\frac{C_2nt^{2/p}}{A_p^{1/p}+t^{1/p}M_p^{1/p}}}.
\end{align*}
Note that, in the first summand, the leading term is the one with $p = 2$. Therefore, using the relation that for any $x > 0$, 
\begin{align*}
\frac{1}{x+1/x} \frac{1}{\sqrt{2\pi}}\exp(-x^2/2)\leq 1-\Phi(x) \leq \frac{1}{x} \frac{1}{\sqrt{2\pi}}\text{exp}(-x^2/2),
\end{align*}
it suffices to show that 
\begin{align*}
\log n = o\bbrace*{\frac{n^2t}{n\mu_1^{m-2}F\sigma^2} \wedge \frac{n^2t}{\mu_1^{m-2}B^2F(\log n)^4} \wedge \frac{n^2t}{nt^{1/2}\mu_1^{(m-2)/2}BF^{1/2}(\log n)^2}},
\end{align*}
which holds under the condition
\begin{align*}
\mu_1^{m-2}F\sigma^2 = o\bbrace*{n^{1/2}(\log n)^{-3}} \text{ and } \mu_1^{m-2}B^2F = o\bbrace*{n^{3/2}(\log n)^{-8}}
\end{align*}
with $\sigma^2$ given in Proposition \ref{prop:bern_U_alpha}. This completes the proof.
\end{proof}


\subsection{Proof of Theorem \ref{thm:random_fourier_alpha}}
\begin{proof}
This proof adapts from that of Claim 1 in \cite{rahimi2008random}. 
Throughout the proof, $x_1,\ldots,x_m$ and $u_1,\ldots,u_m$ are real vectors in $\RR^d$, $dx = dx_1\ldots dx_d$ for any $x\in\RR^d$ and $x,u$ will be real vectors in $\RR^{md}$. Let $\hat{\bar{f}}:\RR^{md}\rightarrow\mathbb{C}$ be the Fourier transform of $\bar{f}$, that is,
\begin{align*}
\hat{\bar{f}}(u_1,\ldots,u_m) = \int_{\RR^{md}} \bar{f}(x_1,\ldots,x_m)e^{-2\pi i(u_1^\top x_1+\ldots+u_m^\top x_m)}dx_1\ldots dx_m.
\end{align*}
Clearly, Condition \eqref{eq:fourier_moment} in the main paper implies that $\hat{\bar{f}}\in L^1\parr*{\RR^{md}}$. Since $\bar{f}$ is continuous, by the Fourier inversion formula (see, for example, Chapter 6 of \cite{stein2011fourier}), we have
\begin{align*}
\bar{f}(x_1,\ldots,x_m) = \int_{\RR^d} \hat{\bar{f}}(u_1,\ldots,u_m)e^{2\pi i(u_1^\top x_1+\ldots+u_m^\top x_m)}du_1\ldots du_m.
\end{align*}
Note that without continuity of $\bar{f}$, the above equation only holds almost surely with respect to the Lebesgue measure. Let $\hat{\bar{f}} = \hat{\bar{g}} + i\hat{\bar{h}}$ for real-valued functions $\hat{\bar{g}},\hat{\bar{h}}$, then since $\bar{f}$ is real-valued, we have $\bar{f} = I - II$, where
\begin{align*}
I &\define \int_{\RR^{md}} \hat{\bar{g}}(u_1,\ldots,u_m)\cos\bbrace*{2\pi\parr*{u_1^\top x_1+\ldots + u_m^\top x_m}}du_1\ldots du_m,\\
II &\define \int_{\RR^{md}} \hat{\bar{h}}(u_1,\ldots,u_m)\sin\bbrace*{2\pi\parr*{u_1^\top x_1+\ldots+u_m^\top x_m}}du_1\ldots du_m.
\end{align*}
We now approximate $I$ and $II$ separately. $I$ can be further written as $I = I_+ - I_-$, where
\begin{align*}
I_+ &\define \int_{[\hat{\bar{g}}>0]}\hat{\bar{g}}(u_1,\ldots,u_m)\cos\bbrace*{2\pi\parr*{u_1^\top x_1+\ldots + u_m^\top x_m}}du_1\ldots du_m,\\
I_- &\define \int_{[\hat{\bar{g}}<0]}-\hat{\bar{g}}(u_1,\ldots,u_m)\cos\bbrace*{2\pi\parr*{u_1^\top x_1+\ldots + u_m^\top x_m}}du_1\ldots du_m.
\end{align*}
Let $A^+_g \define \int_{[\hat{\bar{g}} > 0]}\hat{\bar{g}}(u)du$ and $A^-_g \define \int_{[\hat{\bar{g}} <0]}\parr*{-\hat{\bar{g}}(u)}du$, then it can be verified that $A^+_g$ and $A^-_g$ are both nonnegative and satisfy $A^+_g + A^-_g = \|\hat{\bar{g}}\|_{L^1} < \infty$ and $A^+_g - A^-_g = f(0)$, where we use the fact that $\hat{\bar{g}}\in L^1\parr*{\RR^{md}}$ since $\hat{\bar{f}}\in L^1\parr*{\RR^{md}}$. Then, we have
\begin{align*}
I &= A^+_g\cdot\E_u\fence*{\cos\bbrace*{2\pi\parr*{u_1^\top x_1 + \ldots u_m^\top x_m}}} - A^-_g\cdot\E_v\fence*{\cos\bbrace*{2\pi\parr*{v_1^\top x_1 + \ldots v_m^\top x_m}}}\\
&\define A^+_g\cdot k^+_g(x_1,\ldots,x_m) - A^-_g\cdot k^-_g(x_1,\ldots,x_m),
\end{align*}
where $(u_1^\top,\ldots,u_m^\top)^\top$ follows the distribution $\hat{\bar{g}}\mathbbm{1}\bbrace*{\hat{\bar{g}} > 0}/A^+_g$, and $(v_1^\top,\ldots,v_m^\top)^\top$ follow the distribution $-\hat{\bar{g}}\mathbbm{1}\bbrace*{\hat{\bar{g}}<0}/A^-_g$. Assume without loss of generality that $A^+_g > 0$ and $A^-_g > 0$. We now focus on $I_+$. For any $\cal{M} \subset\RR^{md}$, let $\diam(\cal{M})$ be the diameter of $\cal{M}$. Then, there exist $T$ Euclidean balls with radius $r$ that cover $\cal{M}$, where $T \leq \bbrace*{c\diam(\cal{M})/r}^{md}$ with $c = 3\sqrt{md/\pi}$. Denote $\{d_1,\ldots, d_T\}$ as the centers of these balls in $\RR^{md}$. Now choose an i.i.d. sample $\{(u_{i1}^\top,\ldots,u_{im}^\top)^\top\}_{i=1}^{D_1}$ from the distribution $\hat{\bar{g}}\mathbbm{1}\bbrace*{\hat{\bar{g}} > 0}/A^+_g$ with the sample size $D_1$ to be specified later. Then, for each center $d=(d_1^\top,\ldots,d_m^\top)^\top$ and any $t > 0$, it holds by Hoeffding's inequality that
\begin{align*}
\P\bbrace*{\abs*{\frac{1}{D_1}\sum_{i=1}^{D_1}\cos\bbrace*{2\pi\parr*{u_{i1}^\top d_1+\ldots +u_{im}^\top d_m}} - k^+_g(d_1,\ldots,d_m)}\geq \frac{t}{8}}\leq \exp\parr*{-\frac{D_1t^2}{128}}.
\end{align*}
Let $s_{D_1}(x_1,\ldots,x_m) \define \sum_{i=1}^{D_1}\cos\bbrace*{2\pi\parr*{u_{i1}^\top x_1+\ldots +u_{im}^\top x_m}}/D_1$ so that $k^+_g(x_1,\ldots,x_m) = \E_u\bbrace*{s_{D_1}(x_1,\ldots,x_m)}$. Then, for any $q\geq 1$, it holds that
\begin{equation}
\begin{aligned}
\label{eq:lip_constant}
\E\fence*{\sup_{x}\|\nabla_x \bbrace*{s_{D_1}(x)-k_g^+(x)}\|^q} &= \E\fence*{\sup_x\|\nabla_x s_{D_1}(x)-\E\nabla_xs_{D_1}(x)\|^q}\\
&\leq \E\fence*{\sup_x\bbrace*{\|\nabla_x s_{D_1}(x)\| + \E\parr*{\|\nabla_x s_{D_1}(x)\|}}^q}\\
&\leq 2^{q-1}\E\fence*{\sup_x\|\nabla_xs_{D_1}(x)\|^q + \sup_x\bbrace*{\E\parr*{\|\nabla_xs_{D_1}(x)\|}}^q}\\
&\leq 2^q \E\parr*{\sup_x\|\nabla_xs_{D_1}(x)\|^q},
\end{aligned}
\end{equation}
where in the first line we use the finiteness of $\int_{\RR^{md}}\abs*{\hat{\bar{f}}(u)}\|u\|du$ (guaranteed by Condition \eqref{eq:fourier_moment} in the main paper) and dominated convergence theorem to exchange the derivative with expectation. Moreover, 
\begin{align*}
\E\parr*{\sup_x\|\nabla_xs_{D_1}(x)\|^q} &= \E\fence*{\sup_x\nm*{\frac{1}{D_1}\sum_{i=1}^{D_1}2\pi u_i\cos\bbrace*{2\pi\parr*{u_i^\top x}}}^q}\\
&\leq (2\pi)^q\E\parr*{\nm*{\frac{1}{D_1}\sum_{i=1}^{D_1}u_i}^q}\\
&\leq (2\pi)^q \E\bbrace*{\parr*{\frac{1}{D_1}\sum_{i=1}^{D_1}\nm*{u_i}}^q}\\
&\leq (2\pi)^q \E\bbrace*{\frac{1}{D_1}\sum_{i=1}^{D_1}\nm*{u_i}^q}\\
&= (2\pi)^q\E\parr*{\nm*{u_1}^q},
\end{align*}
where we have used the finiteness of $\E\parr*{\|u_1\|^q}$ since $\int_{\RR^{md}}\abs*{\hat{\bar{f}}(u)}\|u\|^qdu < \infty$. Therefore, it holds that
\begin{align*}
\E\bbrace*{\sup_x\|\nabla_x \parr*{s_{D_1}(x) - k^+_g(x)}\|^q} \leq (4\pi)^q\E\parr*{\|u_1\|^q} 
\end{align*}
and thus by Markov's inequality,
\begin{align*}
\P\parr*{\sup_x\|\nabla_x\bbrace*{s_{D_1}(x) - k^+_g(x)}\|\geq \frac{t}{8r}} \leq \parr*{\frac{32\pi r}{t}}^q\E\parr*{\|u_1\|^q}. 
\end{align*}
By triangular inequality, the event $\bbrace*{\sup_{x\in \cal{M}}\abs*{s_{D_1}(x)-k^+_g(x)}\leq t/4}$ has greater probability than the following event
\begin{align*}
\Big\{\abs*{s_{D_1}(d) - k^+_g(d)}\leq t/8,\forall d\in\{d_1,\ldots,d_T\}\Big\} \bigcap \Big\{\sup_x\|\nabla_x\bbrace*{s_{D_1}(x) - k^+_g(x)}\|\leq t/(8r)\Big\}.
\end{align*}
Therefore, we have
\begin{align*}
\P\bbrace*{\sup_{x\in \cal{M}}\abs*{s_{D_1}(x) -k^+_g(x)}\geq \frac{t}{4}} \leq \parr*{\frac{c\diam(\cal{M})}{r}}^{md}\exp\parr*{-\frac{D_1t^2}{128}} + \parr*{\frac{32\pi r}{t}}^q\E\parr*{\|u_1\|^q}.
\end{align*}
Letting the right side of the above inequality be of the form $\kappa_1r^{-md} + \kappa_2r^q$, and $r = (\kappa_1/\kappa_2)^{1/(q+md)}$, we have
\begin{align*}
\P\bbrace*{\sup_{x\in \cal{M}}\abs*{s_{D_1}(x) -k_g^+(x)}\geq \frac{t}{4}} \leq 2\bbrace*{\frac{32\pi\parr*{\E\|u_1\|^q}^{1/q}c\diam(\cal{M})}{t}}^{\frac{qmd}{q+md}}\exp\parr*{-\frac{D_1t^2}{128}\frac{q}{q+md}}.
\end{align*}
Now, using the fact
\begin{align*}
\E\parr*{\|u_1\|^q} = \int_{\RR^{md}} \|u\|^q\frac{\hat{\bar{g}}(u)\mathbbm{1}\bbrace*{\hat{\bar{g}}(u)>0}}{A^+_g}du \leq \frac{1}{A^+_g}\int_{\RR^{md}}\|u\|^q\abs*{\hat{\bar{g}}(u)}du \leq \frac{1}{A_g^+}\int_{\RR^{md}}\|u\|^q\abs*{\hat{\bar{f}}(u)}du,
\end{align*}
we conclude that there exists $\{u_i\}_{i=1}^{D_1}\in\RR^{md}$ such that uniformly over $\cal{M}$, it holds that
\begin{align*}
A^+_g\cdot\abs*{s_{D_1}(x) - k_g^+(x)} = \abs*{\frac{A^+_g}{D_1}\sum_{i=1}^{D_1}\cos\bbrace*{2\pi\parr*{u_i^\top x}} - A^+_g\cdot k^+_g(x)} \leq A^+_g\frac{t}{4}
\end{align*}
when $D_1$ is chosen to be larger than
\begin{align*}
D_1 = \Omega\fence*{\frac{md}{t^2}\log\bbrace*{\frac{\pi c\diam(\cal{M})\mu_q\parr*{\hat{\bar{f}}}}{(A^+_g)^{1/q}t}}}.
\end{align*}
Equivalently, it holds that $\abs*{A^+_g\cdot s_{D_1}(x) - A^+_g\cdot k^+_g(x)}\leq t/4$ when $D_1$ is chosen to be larger than
\begin{align*}
D_1 = \Omega\fence*{\frac{md(A^+_g)^2}{t^2}\log\bbrace*{\frac{\pi c\diam(\cal{M})(A^+_g)^{1-1/q}\mu_q\parr*{\hat{\bar{f}}}}{t}}}.
\end{align*}
Similarly, it can be shown that there exists $\{v_i\}_{i=1}^{D_2}\in\RR^{md}$ 
such that $\abs*{A^-_g\cdot s_{D_2}(x)-A^-_g\cdot k^-_g(x)}\leq t/4$ uniformly over $x\in\cal{M}$, where
\begin{align*}
s_{D_2}(x) = \frac{1}{D_2}\sum_{i=1}^{D_2}\cos\bbrace*{2\pi\parr*{v_i^\top x}},
\end{align*}
and $D_2$ is chosen to be larger than
\begin{align*}
D_2 = \Omega\fence*{\frac{md(A^-_g)^2}{t^2}\log\bbrace*{\frac{8\pi c\diam(\cal{M})(A^-_g)^{1-1/q}\mu_q\parr*{\hat{\bar{f}}}}{t}}}.
\end{align*}
Repeating this procedure for the approximation of $II$, then with $A^+_h,A^-_h,k^+_h,k^-_h$ similarly defined as $A^+_g,A^-_g,k^+_h,k^-_h$, we can find $s_{D_3}$ and $s_{D_4}$ which are sample means of sine functions such that $\abs*{A^+_h\cdot s_{D_3}(x) - A^+_h\cdot k^+_h(x)}\leq t/4$ and $\abs*{A^-_h\cdot s_{D_4}(x) - A^-_h\cdot k^-_h(x)}\leq t/4$ uniformly over all $x\in\cal{M}$, when the sample sizes $D_3$ and $D_4$ are respectively chosen to be larger than
\begin{align*}
D_3 &= \Omega\fence*{\frac{md(A^+_h)^2}{t^2}\log\bbrace*{\frac{\pi c\diam(\cal{M})(A^+_h)^{1-1/q}\mu_q\parr*{\hat{\bar{f}}}}{t}}},\\
D_4 &= \Omega\fence*{\frac{md(A^-_h)^2}{t^2}\log\bbrace*{\frac{\pi c\diam(\cal{M})(A^-_h)^{1-1/q}\mu_q\parr*{\hat{\bar{f}}}}{t}}}.
\end{align*}
Putting together the pieces, we obtain that
\begin{align*}
\abs*{s_D(x) - \bar{f}(x)} \define \abs*{\bbrace*{A^+_g\cdot s_{D_1}(x) - A^-_g\cdot s_{D_2}(x) - A^+_h\cdot s_{D_3}(x) + A^-_h\cdot s_{D_4}(x)} - \bar{f}(x)}
\end{align*}
is smaller than $t$ when $D_1$-$D_4$ are chosen as above. Since 
\begin{align*}
A^+_g + A^-_g + A^+_h + A^-_h = \int_{\RR^{md}}\abs*{\hat{\bar{g}}} + \abs*{\hat{\bar{h}}} \leq \sqrt{2}\int_{\RR^{md}}\sqrt{\abs*{\hat{\bar{g}}}^2+\abs*{\hat{\bar{h}}}^2} = \sqrt{2}\nm*{\hat{\bar{f}}}_{L_1}
\end{align*}
and for each $u$, $\cos\bbrace*{2\pi\parr*{u^\top x}}$ can be written as at most $2^{m-1}$ linear combinations of the term $z_{u_1}\parr*{2\pi u_1^\top x_1}\ldots z_{u_m}\parr*{2\pi u_m^\top x_m}$, where $\{z_{u_i}(\cdot)\}_{i=1}^m$ is either the cosine or sine function.\\
 Therefore, with the choice $\cal{M}\define [-M,M]^{md}$, $s_D$ satisfies Condition (A) with constants $F = 2^m\nm*{\hat{\bar{f}}}_{L^1}, B = \mu_a = 1$ for any $a \geq 1$. Moreover, define the symmetrized version of $s_D$ to be
\begin{align*}
\widetilde{s}_D(x_1,\ldots,x_m) \define \frac{1}{m!}\sum_{\pi}s_D(\pi(x_1),\ldots,\pi(x_m)),
\end{align*}
where the summation is taken over all $m!$ permutations of $(x_1,\ldots,x_m)$. Then, due to the symmetry of $\bar{f}$ and $\cal{M}$, $\abs*{\widetilde{s}_D - \bar{f}}\leq t$ uniformly over $\cal{M}$ and $\widetilde{s}_D$ satisfies expansion \eqref{eq:expansion} in the main paper with the same constants as $s_D$. 
\end{proof}

\subsection{Proof of Corollary \ref{cor:random_fourier_m2}}
\begin{proof}
When $m = 2$ and the kernel is shift invariant with $f(x,y) = f_0(x-y)$, if, for any given $M>0$ and $\cal{C}_0 = [-2M,2M]^d$, $f_0$ satisfies Condition (B1) with set $\cal{C}_0$ and $\bar{f}_0$, then for any given $t>0$, the proof of Theorem \ref{thm:random_fourier_alpha} guarantees the existence of $\widetilde{f}_0$ with expansion in the cosine bases $\{\cos\parr*{2\pi u^\top x}\}$ such that $\abs*{\bar{f}_0-\widetilde{f}_0}\leq t$ uniformly over $[-2M,2M]^d$ and $\widetilde{f}_0$ satisfies the expansion \eqref{eq:expansion} in the main paper with constants
\begin{align*}
F = 2\nm*{\hat{\bar{f}}_0}_{L^1}, \quad B = 1, \quad \mu_a = 1, \quad a\geq 1.
\end{align*}
Since $f_0$ and $\bar{f}_0$ coincides on $\cal{C}_0$, it also holds that $\abs*{f_0 - \widetilde{f}_0}\leq t$ uniformly over $\cal{C}_0$. Define $\widetilde{f}(x, y) \define \widetilde{f}_0(x-y)$. Note that for any $(x,y)\in\cal{C} = [-M,M]^{2d}$, $x-y\in[-2M,2M]^d$. Thus, it holds uniformly over $(x,y)\in\cal{C}$ that
\begin{align*}
\abs*{f(x,y) - \widetilde{f}(x,y)} = \abs*{f_0(x-y) - \widetilde{f}_0(x-y)} \leq t.
\end{align*}
Moreover, using the trigonometric identity
\begin{align*}
\cos\bbrace*{2\pi u^\top (x-y)} = \cos\parr*{2\pi u^\top x}\cos\parr*{2\pi u^\top y} + \sin\parr*{2\pi u^\top x}\sin\parr*{2\pi u^\top y},
\end{align*}
we obtain that $\widetilde{f}$ admits an expansion with constants 
\begin{align*}
F = 4\nm*{\hat{\bar{f}}_0}_{L^1}, \quad B = 1, \quad \mu_a = 1
\end{align*}
for all $a\geq 1$ in \eqref{eq:expansion} in the main paper.
\end{proof}

\subsection{Proof of Corollary \ref{cor:schwartz}}
\begin{proof}
We first prove part (a). Using the relation that
\begin{align*}
\|u\|^k \leq \|u\|_1^k \leq (md)^{k-1}\sum_{i=1}^{md}|u_i|^k,
\end{align*}
we have
\begin{align*}
\parr*{1 + \|u\|^k}\abs*{\hat{f}(u)} &\leq \abs*{\hat{f}(u)} + (md)^{k-1}\sum_{i=1}^{md}\abs*{u_i^k\hat{f}(u)}\\
&\leq \|f\|_{L^1} + (md)^{k-1}\sum_{i=1}^{md}\abs*{\widehat{\frac{\partial^k}{\partial x_i^k}f}(u)}\\
&\leq \|f\|_{L^1} + (md)^{k-1}\sum_{i=1}^{md}\nm*{\frac{\partial^k}{\partial x_i^k}f}_{L^1},
\end{align*}
where in the second line $\widehat{\partial^kf/\partial x_i^k}(u)$ should be understood as the Fourier transform of $\partial^kf/\partial x_i^k$ evaluated at $u$. Denote $C\define \|f\|_{L^1} + (md)^{k-1}\sum_{i=1}^{md}\|\partial^kf/\partial x_i^k\|_{L^1}$. Then $\abs*{\hat{f}(u)}\leq C/\parr*{1+\|u\|^k}$. Therefore, using polar coordinates, we have
\begin{align*}
\int_{\RR^{md}}\abs*{\hat{f}(u)}\|u\|du &\leq C\int_{\RR^{md}}\frac{\|u\|}{1+\|u\|^k}du\\
&= CC'\int_0^\infty \frac{r}{1+r^k}\cdot r^{md-1}dr\\
&\leq CC' \parr*{\int_0^1 1dr + \int_1^{\infty} \frac{1}{r^2}dr}\\
&= 2CC',
\end{align*}
where $C'$ is some positive constant that depends on $m$ and $d$. Therefore, for any given $M > 0$ and $\cal{C} = [-M,M]^{md}$, Condition (B1) in Theorem \ref{thm:random_fourier_alpha} is satisfied with set $\cal{C}$, $\bar{f} = f$, and $q = 1$. In particular, when $f\in\cal{S}(\RR^{md})$, $f$ is indefinitely differentiable by definition and is in $L^1(\RR^{md})$. Moreover, (mixed) derivatives of arbitrary order of $f$ are still Schwartz on $\RR^{md}$ and thus in $L^1(\RR^{md})$. Since Schwartz functions dominate polynomials of arbitrary order, $f$ thus satisfies Condition (B1) in Theorem \ref{thm:random_fourier_alpha} with set $\cal{C}$, $\bar{f} = f$, and arbitrary $q\geq 1$. 

Now we prove part (b). First note that $h_\ell \in L^1(\RR^m)$ for $\ell\in[d]$ implies that $f\in L^1(\RR^{md})$. Moreover, using a similar argument as part (a), we have 
\begin{align*}
\abs*{\hat{h}_\ell(u)} \leq \frac{C_\ell}{1+\|u\|^{m+2}}
\end{align*}
whenever $h_\ell\in C^{m+2}(\RR^m)\bigcap L^1(\RR^m)$ and all $\partial^{m+2}h_\ell/\partial x_i^{m+2} \in L^1(\RR^{m})$ for all $\ell\in[d]$ and $i\in[m]$. Apparently $\hat{h}_\ell(u)\in L^1(\RR^m)$ for all $\ell \in [d]$. For any $u\in\RR^{md}$, write $u = (u_1^\top,\ldots,u_d^\top)^\top$ with each $u_i\in\RR^m$. Then, it holds that
\begin{align*}
\int_{\RR^{md}}\abs*{\hat{f}(u)}\|u\|du &\leq \sum_{\ell=1}^d \int_{\RR^{md}} \abs*{\hat{h}_1(u_1)}\ldots\abs*{\hat{h}_d(u_d)}\|u_\ell\|du.
\end{align*}
For each $\ell\in[d]$, using a similar argument as in part (a), it holds that $\int_{\RR^m} \abs*{\hat{h}_\ell(u)}\|u\|du \leq CC_\ell$
for some constant $C$ that only depends on $m$ and $d$. Therefore, it holds that
\begin{align*}
\int_{\RR^{md}}\abs*{\hat{f}(u)}\|u\|du \leq C\parr*{\prod_{\ell=1}^d\nm*{\hat{h}_\ell}_{L^1}}\parr*{\sum_{\ell=1}^d \frac{C_\ell}{\nm*{\hat{h}_\ell}_{L^1}}}.
\end{align*}
The case where $h_\ell \in \cal{S}(\RR^m)$ for all $\ell\in[d]$ follows trivially.
\end{proof}

\subsection{Proof of Corollary \ref{cor:random_fourier_alpha}}
\begin{proof}
When $\bar{f}_0$ is PD, we have by definition that $\bar{f}_0(0) \geq 0$ and for each $x,y\in\RR^d$, $\bar{f}_0(x-y) = \bar{f}_0(y-x)$, therefore $\bar{f}_0(x) = \bar{f}_0(-x)$ for any $x\in\RR^d$. This implies that the Fourier transform $\hat{\bar{f}}_0$ of $\bar{f}_0$ is real-valued, and $\hat{\bar{h}} = 0$ in the proof of Theorem \ref{thm:random_fourier_alpha}. Moreover, since $\bar{f}_0\in L^1(\RR^d)$ as it satisfies Condition (B1), $\bar{f}$ equals the inverse Fourier transform of $\hat{\bar{f}}$. Thus, by Lemma \ref{lemma:bochner}, $\hat{\bar{f}}_0$ is nonnegative and we have $\bar{f}_0 = I = I_+$ with $m = 1$ in the proof of Theorem \ref{thm:random_fourier_alpha}. By definition, we have
\begin{align*}
\bar{f}_0(x) = \int_{\RR^d} \hat{\bar{f}}_0(u)e^{2\pi ix^\top u}du = \int_{\RR^d} \abs*{\hat{\bar{f}}_0(u)}e^{2\pi ix^\top u}du.
\end{align*}
Letting $x = 0$ in the above equation, we obtain $\nm*{\hat{\bar{f}}_0}_{L^1} = \bar{f}_0(0)$. 

Now consider the case where $\hat{\bar{f}}_0$ only has fractional moment. Let $\widetilde{\bar{f}}_0 \define \bar{f}_0/\bar{f}_0(0)$ and denote the Lipschitz constants of $\widetilde{\bar{f}}_0$ and $\bar{f}_0$ as $L_{\widetilde{\bar{f}}_0}$ and $L_{\bar{f}_0}$, respectively. Then, $L_{\widetilde{\bar{f}}_0} = L_{\bar{f}_0}/\bar{f}_0(0)$. Now we proceed with the proof of Theorem \ref{thm:random_fourier_alpha} until \eqref{eq:lip_constant}, and replace it with
\begin{align*}
\E\bbrace*{\sup_x \nm*{\nabla_x s_D(x) - \widetilde{\bar{f}}_0(x)}^q} &\leq \E\bbrace*{\sup_x \parr*{\nm*{\nabla_x s_D(x)}^q + \nm*{\nabla \widetilde{\bar{f}}_0(x)}^q}}\\
&\leq \E\bbrace*{\sup_x\nm*{\nabla_x s_D(x)}^q} + \sup_x\nm*{\nabla_x\widetilde{\bar{f}}_0(x)}^q\\
&\leq \E\bbrace*{\sup_x\nm*{\nabla_x s_D(x)}^q} + L_{\widetilde{\bar{f}}_0}^q,
\end{align*}
where $s_D(x) = \sum_{i=1}^D \cos\parr*{2\pi u_i^\top x}/D$ (here we use the notation $s_{D}$ instead of $s_{D_1}$ since in the PD case we only need to approximate the term $I_+$ as argued in part (a)). Note that the original \eqref{eq:lip_constant} in the proof of Theorem \ref{thm:random_fourier_alpha} no longer holds as mere fractional moment does not guarantee the exchange of derivative and expectation in its first step. For the first term in the above inequality, we have
\begin{align*}
\E\bbrace*{\sup_x\|\nabla_xs_{D}(x)\|^q} &= \E\fence*{\sup_x\nm*{\frac{1}{D}\sum_{i=1}^{D}2\pi u_i\cos\bbrace*{2\pi\parr*{u_i^\top x}}}^q}\\
&\leq (2\pi)^q\E\parr*{\nm*{\frac{1}{D}\sum_{i=1}^Du_i}^q}\\
&\leq (2\pi)^q \E\bbrace*{\parr*{\frac{1}{D}\sum_{i=1}^{D}\nm*{u_i}}^q}\\
&\leq (2\pi)^q \E\bbrace*{D^{-q}\sum_{i=1}^{D}\nm*{u_i}^q}\\
&= (2\pi)^qD^{1-q}\E\parr*{\nm*{u_1}^q}.
\end{align*}
Therefore, it holds that
\begin{align*}
\E\bbrace*{\sup_x \nm*{\nabla_x s_D(x) - \widetilde{\bar{f}}_0(x)}^q} \leq 
 (2\pi)^qD^{1-q}\E\parr*{\|u\|^q} + L_{\widetilde{\bar{f}}_0}^q.
\end{align*}
Markov inequality now gives
\begin{align*}
\P\bbrace*{\sup_x\nm*{\nabla_x\parr*{s_D(x) - \widetilde{\bar{f}}_0(x)}}\geq \frac{t}{2r}} \leq \parr*{\frac{2r}{t}}^q\bbrace*{(2\pi)^qD^{1-q}\E\parr*{\|u\|^q} + L_{\widetilde{\bar{f}}_0}^q}. 
\end{align*}
Proceeding with the proof of Theorem \ref{thm:random_fourier_alpha}, we obtain 
\begin{align*}
\P\bbrace*{\sup_{x\in\cal{M}}\abs*{s_D(x) - \widetilde{\bar{f}}_0}\geq t} \leq \parr*{\frac{2r}{t}}^q\bbrace*{(2\pi)^qD^{1-q}\E\parr*{\|u\|^q} + L_{\widetilde{\bar{f}}_0}^q} + \parr*{\frac{c\diam(\cal{M})}{r}}^{md}\exp\parr*{-\frac{Dt^2}{8}}.
\end{align*}
Writing the right side of the above inequality in the form $\kappa_1r^{-md} + \kappa_2r^q$ and letting $r = (\kappa_1/\kappa_2)^{1/(q+md)}$, we obtain
\begin{align*}
\P\bbrace*{\sup_{x\in\cal{M}}\abs*{s_D(x) - \widetilde{\bar{f}}_0}\geq t} \leq 2\parr*{\frac{2c\diam(\cal{M})}{\varepsilon}}^{\frac{qmd}{q+md}}\bbrace*{(2\pi)^qD^{1-q}\E\parr*{\|u_1\|^q} + L_{\widetilde{\bar{f}}_0}^q}^{\frac{md}{q+md}}\exp\parr*{-\frac{D\varepsilon^2}{8}\frac{q}{q+md}}.
\end{align*}
For any $t > 0$, we can choose large enough $D = D(t)$ such that the right side of the above inequality is arbitrarily small. Lastly, since $\bar{f}_0$ coincides with $f_0$ on $[-2M,2M]^d$ for any given $M > 0$, the proof is complete.
\end{proof}

\subsection{Proof of Theorem \ref{thm:Lipschitz_alpha}}
\begin{proof}
First consider the case under (B2). Let $K:\RR^{md}\rightarrow\RR$  be the standard $md$-variate Gaussian density defined as $K(x) \define \exp(-\|x\|^2/2)(2\pi)^{-md/2}$, and $K_h(x) = K(x/h)h^{-md}$ for some positive constant $h$. Define $\bar{f}_h(x) \define (\bar{f}*K_h)(x)$. Then, it holds that
\begin{align*}
\abs*{\bar{f}_h(x) - \bar{f}(x)} &= \abs*{\int_{\RR^{md}} (2\pi)^{-md/2}\exp\parr*{-\frac{\|y\|^2}{2}}\bbrace*{\bar{f}(x-yh) - \bar{f}(x)}dy}\\
&\leq \int_{\RR^{md}} (2\pi)^{-md/2}\exp\parr*{-\frac{\|y\|^2}{2}}\abs*{\bar{f}(x-yh) - \bar{f}(x)}dy.
\end{align*}
Denote the upper bound of $\bar{f}$ as $M_{\bar{f}}$. Then, for any $t > 0$, there exists some positive constant $A = A(M_{\bar{f}},m,d,t)$ such that
\begin{align*}
&\ms\int_{\parr*{[-A,A]^{md}}^c}(2\pi)^{-md/2}\exp\parr*{-\frac{\|y\|^2}{2}}\abs*{\bar{f}(x-yh) - \bar{f}(x)}dy\\
&\leq 2M_{\bar{f}}\int_{\parr*{[-A,A]^{md}}^c}(2\pi)^{-md/2}\exp\parr*{-\frac{\|y\|^2}{2}}dy\\
&\leq t/4.
\end{align*}
Inside $[-A,A]^{md}$, using the uniform continuity of $\bar{f}$, there exists some $h = h(M_{\bar{f}},m,d,t)$, such that
\begin{align*}
\int_{[-A,A]^{md}}(2\pi)^{-md/2}\exp\parr*{-\frac{\|y\|^2}{2}}\abs*{\bar{f}(x-yh) - \bar{f}(x)}dy \leq t/4.
\end{align*}
Putting together the pieces, it holds that for any $t > 0$, there exists some $h = h(M_{\bar{f}},m,d,t)$ such that $\|\bar{f}_h-\bar{f}\|_\infty\leq t/2$.
 Since both $\bar{f}$ and $K_h$ belong to $L^1(\RR^{md})$, their Fourier transforms exist. It can be readily checked that $\hat{K}_h(u) = \exp\parr*{-2\pi^2h^2\|u\|^2}$, and thus
\begin{align*}
\hat{\bar{f}}_h(u) = \hat{\bar{f}}(u)\cdot \hat{K}_h(u) = \hat{\bar{f}}(u)\exp\parr*{-2\pi^2h^2\|u\|^2}.
\end{align*}
Using the relation $\|f*g\|_{L^q} \leq \|f\|_{L^q}\|g\|_{L^1}$ for any $q\geq 1$ and $f\in L^q(\RR^{md}), g\in L^1(\RR^{md})$ and the fact that $K_h\in L^1(\RR^{md})$, it holds that $\bar{f}_h\in L^1(\RR^{md})$. Moreover, it can readily checked that 
\begin{align*}
\mu_q^q\parr*{\hat{\bar{f}}_h} = \int_{\RR^{md}}\abs*{\hat{\bar{f}}_h(u)}\|u\|^qdu = \int_{\RR^{md}}\abs*{\hat{\bar{f}}(u)}\|u\|^q\exp\parr*{-2\pi^2h^2\|u\|^2}du < \infty
\end{align*}
for any $q\geq 1$. Therefore, by Theorem \ref{thm:random_fourier_alpha}, for any given $M > 0$ and $\cal{C} = [-M,M]^{md}$, and any given $t > 0$, we can find an approximating kernel $\widetilde{\bar{f}}_h = \widetilde{\bar{f}}_h(t)$ such that $\abs*{\widetilde{\bar{f}}_h-\bar{f}_h}\leq t/2$ uniformly over $\cal{C}$, and $\widetilde{\bar{f}}_h$ further satisfies expansion \eqref{eq:expansion} in the main paper with constants 
\begin{align*}
F = 2^m\nm*{\hat{\bar{f}}_h}_{L^1}, \quad B = 1, \quad \mu_a = 1
\end{align*}
for all $a\geq 1$. Choosing $h = h(M_{\bar{f}},m,d,t/2)$, by triangular inequality, we have
\begin{align*}
\abs*{\bar{f} - \widetilde{\bar{f}}_h} \leq t/2 + t/2 = t
\end{align*}
uniformly over $\cal{C}$. Furthermore, since $\bar{f}$ and $f$ coincides on $\cal{C}$, we have
\begin{align*}
\abs*{f - \widetilde{\bar{f}}_h} \leq t
\end{align*}
uniformly over $\cal{C}$. Now we upper bound the term $\nm*{\hat{\bar{f}}_h}_{L^1}$. To this end, we have 
\begin{align*}
\nm*{\hat{\bar{f}}_h}_{L^1} &= \int_{\RR^{md}} \abs*{\hat{\bar{f}}(u)}\exp(-2\pi^2h^2\|u\|^2)du \leq L_F\int_{\RR^{md}} \frac{1}{1+\|u\|^{md+\varepsilon}}\exp(-2\pi^2h^2\|u\|^2)du.
\end{align*}
Using polar coordinates, it holds that
\begin{align*}
\nm*{\hat{\bar{f}}_h}_{L^1} &\leq \Gamma_1(md)L_F\int_0^\infty \frac{r^{md-1}}{1+r^{md+\varepsilon}}\exp(-2\pi^2h^2r^2)dr\\
&\leq \Gamma_1(md)L_F\parr*{1 + \int_1^\infty \frac{1}{r^{1+\varepsilon}}dr} \\
&= (1+\varepsilon^{-1})\Gamma_1(md)L_F,
\end{align*}
where $\Gamma_1(\cdot)$ is defined in \eqref{eq:polar_constant} in the main paper.

Next, we consider the case under Condition (B3). It suffices to recalculate $\|\bar{f}_h-\bar{f}\|_\infty$ over $\cal{C} = [-M,M]^{md}$ and $\nm*{\hat{\bar{f}}_h}_{L^1}$ for any given $M > 0$. For the first quantity, we have by the $L$-Lipschitz continuity of $\bar{f}$ that for any $x\in\RR^{md}$
\begin{align*}
\abs*{\bar{f}_h(x) - \bar{f}(x)} &= \abs*{\int_{\RR^{md}} (2\pi)^{-md/2}\exp\parr*{-\frac{\|t\|^2}{2}}\bbrace*{\bar{f}(x-th) - \bar{f}(x)}dt}\\
&\leq \int_{\RR^{md}} (2\pi)^{-md/2}\exp\parr*{-\frac{\|t\|^2}{2}}\abs*{\bar{f}(x-th) - \bar{f}(x)}dt\\
&\leq Lh\int_{\RR^{md}} (2\pi)^{-md/2}\exp\parr*{-\frac{\|t\|^2}{2}}\|t\|dt\\
&= Lh\Gamma_2(md),
\end{align*}
where in the last step we integrate using polar coordinates and $\Gamma_2(\cdot)$ is defined in \eqref{eq:polar_constant} in the main paper. Therefore, $\abs*{\bar{f}_h(x) - \bar{f}(x)}\leq \Gamma_2(md)Lh$ uniformly over all $x\in\RR^{md}$. For the second quantity, we have
\begin{align*}
\nm*{\hat{\bar{f}}_h}_{L^1} &\leq \Gamma_1(md)L_F\int_{0}^\infty \frac{r^{md-1}}{1+r^{md}}\exp\parr*{-2\pi^2h^2r^2}dr\\
&= \Gamma_1(md)L_F\parr*{\int_0^1 1dr + \int_{1}^{1/h}\frac{1}{r}dr + \int_{1/h}^\infty \frac{1}{r}\exp(-2\pi^2h^2r^2)dr}\\
&\leq 2\Gamma_1(md)L_F\log(1/h)
\end{align*}
for $h$ small enough. The result follows by plugging in $h = t/(2\Gamma_2(md)L)$. 
\end{proof}

\subsection{Proof of Corollary \ref{cor:Lipschitz_shift_invariant}}
\begin{proof}
The proof follows directly from that of Corollary \ref{cor:random_fourier_m2}.
\end{proof}

\subsection{Proof of Corollary \ref{cor:Lipschitz_sufficient}}
\begin{proof}
Part (a) follows essentially from the proof of Corollary \ref{cor:schwartz}. For part (b), we follow the proof of Theorem \ref{thm:Lipschitz_alpha} and recalculate $\nm*{\hat{f}_h}_{L^1}$ under the product structure. Note that when every $h_\ell\in C^{m+1}(\RR^m)\bigcap L^1(\RR^m)$ and $\partial^{m+1}h_\ell/\partial x_i^{m+1}\in L^1(\RR^m)$ for each $i\in[m]$, then $\abs*{\hat{h}_\ell(u)} \leq C_\ell/\parr*{1+\|u\|^{m+1}}$, where $C_\ell = \|h_\ell\|_{L^1} + m^m\sum_{i=1}^m \nm*{\partial^{m+1}h_\ell/\partial x_i^{m+1}}_{L^1}$. Therefore, it holds that 
\begin{align*}
\int_{\RR^m} \abs*{\hat{h}_\ell(u)}\exp(-2\pi^2h^2\|u\|^2)du &\leq C_\ell\int_{\RR^m} \frac{1}{1+\|u\|^{m+1}}\exp(-2\pi^2h^2\|u\|^2)du\\
&= CC_\ell\int_0^\infty \frac{1}{1+r^{m+1}}\exp(-2\pi^2r^2h^2)r^{m-1}dr\\
&\leq 2CC_\ell,
\end{align*}
where $C = \Gamma_2(m)$, with $\Gamma_2(\cdot)$ defined in \eqref{eq:polar_constant} in the main paper. Therefore, with $u = (u_1^\top,\ldots,u_m^\top)^\top$ in $\RR^{md}$, we have
\begin{align*}
\nm*{\hat{f}_h}_{L^1} &= \int_{\RR^{md}} \abs*{\hat{f}(u)}\exp(-2\pi^2h^2\|u\|^2)du = \prod_{\ell=1}^d\bbrace*{\int_{\RR^m} \abs*{\hat{h}_\ell(u_\ell)}\exp(-2\pi^2h^2\|u_\ell\|^2)du_\ell} \\
&\leq (2C)^d\prod_{\ell=1}^d C_\ell.
\end{align*}
The rest of part (b) follows similarly. This completes the proof. 
\end{proof}

\subsection{Proof of Theorem \ref{thm:discontinuous_alpha}}
\begin{proof}
Using a similar smoothing technique as Theorem \ref{thm:Lipschitz_alpha}, define $\bar{f}_{0,h}\define (\bar{f}_0*K_h)(x)$. Then, it holds that
\begin{align*}
\bar{f}_{0,h}(x) &= \int_{\RR^d} \bar{f}_0(y)(2\pi)^{-\frac{d}{2}}h^{-d}\exp\parr*{-\frac{\|x-y\|^2}{2h^2}}dy\\
&= \prod_{\ell=1}^d \int_{\RR} \bar{h}_{0,\ell}(y_\ell)\frac{1}{\sqrt{2\pi}h}\exp\bbrace*{-\frac{(x_\ell-y_\ell)^2}{2h^2}}dy_\ell.
\end{align*}
Define $\bar{g}_{\ell,h}$ to be the $\ell$th term in the above product. Since for each $\ell\in[d]$, $\|\bar{h}_{0,\ell}\|_\infty \leq \Delta$, thus 
\begin{align*}
\abs*{g_{\ell,h}(x)} &= \abs*{\int_{-\infty}^\infty \bar{h}_{0,\ell}(y)\frac{1}{\sqrt{2\pi}h}\exp\bbrace*{-\frac{(x-y)^2}{2h^2}}dy}\\
&=\abs*{\int_{-\infty}^\infty \bar{h}_{0,\ell}(x+uh)\frac{1}{\sqrt{2\pi}}\exp\parr*{-\frac{u^2}{2}}du}\\
&\leq \Delta\int_{-\infty}^\infty \frac{1}{\sqrt{2\pi}}\exp\parr*{-\frac{u^2}{2}}du\\
&= \Delta.
\end{align*}
Thus $\|\bar{g}_{\ell,h}\|_\infty \leq \Delta$ for all $\ell\in[d]$. By telescoping, it holds that
\begin{align*}
\abs*{\bar{f}_{0,h}(x) - \bar{f}_0(x)} &= \abs*{\prod_{\ell=1}^d \bar{g}_{\ell,h}(x_\ell) - \prod_{\ell=1}^d \bar{h}_{0,\ell}(x_\ell)}\\
&\leq \Delta^{d-1}\sum_{\ell=1}^d\abs*{\bar{g}_{\ell,h}(x_\ell) - \bar{h}_{0,\ell}(x_\ell)}\\
&= \Delta^{d-1}\sum_{\ell=1}^d\abs*{\int_{-\infty}^\infty\frac{1}{\sqrt{2\pi}}\exp(-t^2/2)\bbrace*{\bar{h}_{0,\ell}(x_\ell - th) - \bar{h}_{0,\ell}(x_\ell)}dt}.
\end{align*}
By Condition (B4), for each $\ell\in[d]$, $\bar{h}_{0,\ell}$ has $J_\ell$ jump points denoted as $y_{\ell,1},\ldots,y_{\ell,J_\ell}$. Define $y_{\ell,0} \define -\infty$ and $y_{\ell,J_{\ell+1}} \define \infty$. Define 
\begin{align*}
\widetilde{S}\define \{[-2M_1,2M_1]^d\} \bigcap \{x\in\RR^d:\abs*{x_\ell - y_{\ell,k}} \geq M_2, \ell\in[d], k\in[J_\ell]\}.
\end{align*}
For any $x\in\widetilde{S}$, $x_\ell$ does not take any value in $\{y_{\ell,1},\ldots,y_{\ell,J_\ell}\}$ for all $\ell\in[d]$. Thus, there exists some integer $0\leq i_\ell\leq J_\ell$ such that $y_{\ell,i_\ell} < x_\ell < y_{\ell,i_{\ell}+1}$. Note that \\
\begin{align*}
\bbrace*{\bar{h}_{0,\ell}(x_\ell-th) \neq \bar{h}_{0,\ell}(x)} &\subset \bbrace*{x_\ell-th\leq y_{\ell,i_\ell}} \bigcup \bbrace*{x_\ell -th \geq y_{\ell,i_\ell+1}}\\
&= \bbrace*{t \geq (x_\ell-y_{\ell,i_\ell})/h} \bigcup \bbrace*{t \leq (x_\ell - y_{\ell,i_\ell+1})}\\
&\subset \bbrace*{t \geq M_2/h} \bigcup \bbrace*{t \leq -M_2/h}.
\end{align*}
Therefore, for each $\ell\in[d]$,
\begin{align*}
&\ms\abs*{\int_{-\infty}^\infty\frac{1}{\sqrt{2\pi}}\exp(-t^2/2)\bbrace*{\bar{h}_{0,\ell}(x_\ell - th) - \bar{h}_{0,\ell}(x_\ell)}dt} \\
&\leq \Delta\bbrace*{\int_{M_2/h}^\infty \frac{1}{\sqrt{2\pi}}\exp(-t^2/2)dt + \int_{-\infty}^{-M_2/h} \frac{1}{\sqrt{2\pi}}\exp(-t^2/2)dt}\\
&= 2\Delta\P\parr*{N(0,1)\geq M_2/h}\\
&\leq \Delta\exp\parr*{-\frac{M_2^2}{2h^2}},
\end{align*}
where in the last line we use the standard Gaussian tail bound. Putting together the pieces, it holds that
\begin{align*}
\abs*{\bar{f}_{0,h}(x) - \bar{f}_0(x)} \leq d\Delta^d\exp\parr*{-\frac{M_2^2}{2h^2}}
\end{align*}
uniformly over all $x\in \widetilde{S}$. 
Proceeding with the proof of Theorem \ref{thm:Lipschitz_alpha}, for any given $t > 0$, there exists an approximating kernel $\widetilde{\bar{f}}_{0,h}$ such that $\abs*{\widetilde{\bar{f}}_{0,h} - \bar{f}_{0,h}}\leq t/2$ uniformly over $\widetilde{S}$, 
and $\widetilde{\bar{f}}_{0,h}$ satisfies expansion \eqref{eq:expansion} in the main paper with constants
\begin{align*}
F = 4\nm*{\hat{\bar{f}}_{0,h}}_{L^1}, \quad B = 1, \quad \mu_a = 1
\end{align*}
for all $a \geq 1$. Choosing $h = M_2\log^{-1/2}\parr*{2d\Delta^d/t \vee 2}/\sqrt{2}$, by triangular inequality, we have
\begin{align*}
\abs*{\bar{f}_0(x) - \widetilde{\bar{f}}_{0,h}(x)} \leq t/2 + t/2 = t
\end{align*}
uniformly over $x\in\widetilde{S}$. Define $\widetilde{\bar{f}}_h(x,y)\define \widetilde{\bar{f}}_{0,h}(x-y)$. Then for any $(x,y)$ in the support \eqref{eq:discontinuous_support} in the main paper, $x-y \in\widetilde{S}$ and thus
\begin{align*}
\abs*{\bar{f}(x,y) - \widetilde{\bar{f}}_h(x,y)} = \abs*{\bar{f}_0(x-y) - \widetilde{\bar{f}}_{0,h}(x-y)} \leq t.
\end{align*}
Furthermore, since $\bar{f}$ coincides with $f$ on \eqref{eq:discontinuous_support} in the main paper, it holds that 
\begin{align*}
\abs*{f(x,y) - \widetilde{\bar{f}}_h(x,y)} \leq t
\end{align*}
uniformly over $\cal{C}$ defined in \eqref{eq:discontinuous_support} in the main paper. Now we upper bound the term $\nm*{\hat{\bar{f}}_{0,h}}_{L^1}$. For any $u = (u_1,\ldots,u_d)$, using the relation
\begin{align*}
\hat{\bar{f}}_{0,h}(u) = \hat{\bar{f}}_0(u)\hat{K}_h(u) = \prod_{\ell=1}^d \bbrace*{\hat{\bar{h}}_{0,\ell}(u_\ell)\exp(-2\pi^2h^2u_\ell^2)},
\end{align*}
we obtain
\begin{align*}
\nm*{\hat{\bar{f}}_{0,h}}_{L^1} &= \prod_{\ell=1}^d \int_{-\infty}^{\infty} \abs*{\hat{\bar{h}}_{0,\ell}(u)}\exp(-2\pi^2h^2u^2)du\\
&\leq \prod_{\ell=1}^d \bbrace*{\int_{-1}^1\abs*{\hat{\bar{h}}_{0,\ell}(u)}du + 2\int_{1}^\infty \frac{C_\ell}{u}\exp(-2\pi^2h^2u^2)du}\\
&=\prod_{\ell=1}^d \bbrace*{\int_{-1}^1\abs*{\hat{\bar{h}}_{0,\ell}(u)}du + 2C_\ell\int_{1}^{1/h} \frac{1}{u}\exp(-2\pi^2h^2u^2)du + 2C_\ell\int_{1/h}^\infty \frac{1}{u}\exp(-2\pi^2h^2u^2)du}\\
&\leq \prod_{\ell=1}^d \bbrace*{\int_{-1}^1 \abs*{\hat{\bar{h}}_{0,\ell}(u)}du + 4C_\ell\log(1/h\vee 2)}.
\end{align*}
This completes the proof.
\end{proof}


\subsection{Proof of Proposition \ref{prop:stability_sum_alpha}}
\begin{proof}
We will use the notation $\nm*{\cdot}_{\infty,\cal{D}}$ to indicate the supremum norm over set $\cal{D}$. First consider the additive case. Given any $t>0$ and $\{\lambda_i\}_{i=1}^N$, for each $1\leq i\leq N$, choose $t_i$ and $\widetilde{f}^i$ such that $\|\widetilde{f}^i-f^i\|_{\infty,\cal{C}^i} \leq t_i\leq t/(\sum_{i=1}^N|\lambda_i|)$. Let $\widetilde{f}^0\define \sum_{i=1}^N\lambda_i\widetilde{f}^i$. Then, we have
\begin{align*}
\|f^0 - \widetilde{f}^0\|_{\infty,\cal{C}^0} \leq \sum_{i=1}^N|\lambda_i|\|f^i-\widetilde{f}^i\|_{\infty,\cal{C}^i} \leq t.
\end{align*}
By assumption, for each $i\in[N]$, $\widetilde{f}^i$ admits an expansion 
\begin{align*}
\widetilde{f}^i(x^i_1,\ldots,x^i_m) = \sum_{j^i_1,\ldots,j^i_m=1}^{K^i} f^i_{j^i_1,\ldots,j^i_m}e^i_{j^i_1}(x^i_1)\ldots e^i_{j^i_m}(x^i_m)
\end{align*}
with corresponding constants $(F^i(t_i),B^i(t_i),\mu^i_a(t_i))$ for $a\geq 1$. Therefore, the kernel $\widetilde{f}^0$ can be written as
\begin{align*}
\widetilde{f}^0(x^1_1,\ldots,x^1_m,\ldots,x^N_1,\ldots,x^N_m) &= \sum_{i=1}^N \lambda_i\widetilde{f}^i(x^i_1,\ldots,x^i_m)\\
&= \sum_{i=1}^N \lambda_i\sum_{j^i_1,\ldots,j^i_m=1}^{K^i} f^i_{j^i_1,\ldots,j^i_m}e_{j^i_1}(x^i_1)\ldots e_{j^i_m}(x^i_m).
\end{align*}
The constants $(F(t),B(t),\mu_a(t))$ for the expansion of $\widetilde{f}^0$ thus take the value $(\sum_{i=1}^N\abs*{\lambda_i}F^i, B^1\vee\ldots\vee B^N, \mu^1_a\vee\ldots\mu^N_a)$. 

Now consider the product case. Given any $t>0$, for each $i\in[N]$, choose $t_i$ and $\widetilde{f}^i$ such that $\|\widetilde{f}^i-f^i\|_{\infty,\cal{C}^i} \leq t_i\leq t(M^i+t)/\bbrace*{N\prod_{i=1}^N(M^i+t)}$. Since $\|f^i\|_{\infty,\cal{C}^i} \leq M^i$ and $\|\widetilde{f}^i-f^i\|_{\infty,\cal{C}^i} \leq t_i \leq t$, triangular inequality gives $\|\widetilde{f}^i\|_{\infty,\cal{C}^i}\leq M^i+t$ for all $i\in[N]$. Let $\widetilde{f}^0\define \prod_{i=1}^N\widetilde{f}^i$. Then, we have by telescoping 
\begin{align*}
\|\widetilde{f}^0-f^0\|_{\infty,\cal{C}^0} \leq \sum_{i=1}^N\bbrace*{\|\widetilde{f}^i-f^i\|_{\infty,\cal{C}^i}\prod_{j\neq i}\parr*{\|\widetilde{f}^j\|_{\infty,\cal{C}^j}\vee \|f^j\|_{\infty,\cal{C}^j}}} \leq \sum_{i=1}^N \frac{t}{N} = t.
\end{align*}
Moreover, $\widetilde{f}^0$ can be written as 
\begin{align*}
&\ms\widetilde{f}^0(x^1_1,\ldots,x^1_m,\ldots,x^N_1,\ldots,x^N_m)\\
 &= \prod_{i=1}^N \widetilde{f}^i(x^i_1,\ldots,x^i_m)\\
&= \prod_{i=1}^N\parr*{\sum_{j^i_1,\ldots,j^i_m=1}^{K^i} f^i_{j_1,\ldots,j_m} e^i_{j^i_1}(x^i_1)\ldots e^i_{j^i_m}(x^i_m)}\\
&= \sum_{j^1_1,\ldots,j^1_m=1}^{K^1}\ldots\sum_{j^N_1,\ldots,j^N_m=1}^{K^N} f^1_{j^1_1,\ldots,j^1_m}\ldots f^N_{j^N_1,\ldots,j^N_m}e^1_{j^1_1}(x^1_1)\ldots e^1_{j^1_m}(x^1_m)\ldots e^N_{j^N_1}(x^N_1)\ldots e^N_{j^N_m}(x^N_m).
\end{align*}
The constants $(F(t),B(t),\mu_a(t))$ for the expansion of $\widetilde{f}^0$ thus take the value $(\prod_{i=1}^NF^i, B^1\vee\ldots\vee B^N, \mu^1_a\vee\ldots\mu^N_a)$. 

Lastly, note that for both cases, the symmetrized version of $\widetilde{f}^0$, denoted as $\widetilde{f}^0_\circ$, satisfies that
\begin{align*}
\nm*{f^0_\circ - \widetilde{f}^0_\circ}_{\infty,\cal{C}^0_\circ}\leq t
\end{align*}
when restricted to the symmetric set $\cal{C}^0_\circ$. Moreover, it can be readily checked that $\widetilde{f}^0_\circ$ admits an expansion with the same constants as $\widetilde{f}^0$. This completes the proof.
\end{proof}

\subsection{Proof of Proposition \ref{prop:hoeffding}}
\begin{proof}
By definition of $\{f_p\}_{p=1}^m$, $\{\widetilde{f}_p\}_{p=1}^m$ and $\{\cal{C}_p\}_{p=1}^m$, in order to upper bound $\abs*{f_p - \widetilde{f}_p}$ over $\cal{C}_p$, it suffices to derive upper bounds on $\abs*{\theta - \widetilde{\theta}}$ and $\abs*{g_p - \widetilde{g}_p}$ over $\cal{C}_p$, where $\{g_p\}_{p=1}^m$ are defined as
\begin{align*}
g_p(x_1,\ldots,x_p) \define \E\bbrace*{f(x_1,\ldots,x_p, \widetilde{X}_{p+1},\ldots, \widetilde{X}_m)}
\end{align*}
for $p\in[m-1]$ and $g_m\define f$. We first derive an upper bound on $\abs*{\theta - \widetilde{\theta}}$. We have
\begin{align*}
\abs*{\theta - \widetilde{\theta}} &= \abs*{\E\bbrace*{f(\widetilde{X}_1,\ldots,\widetilde{X}_m) - \widetilde{f}(\widetilde{X}_1,\ldots,\widetilde{X}_m)}}\\
&\leq \E\bbrace*{\abs*{f - \widetilde{f}}\mathbbm{1}\bbrace*{(\widetilde{X}_1^\top,\ldots,\widetilde{X}_m^\top)^\top \in \cal{C}}} + \E\bbrace*{\abs*{f - \widetilde{f}}\mathbbm{1}\bbrace*{(\widetilde{X}_1^\top,\ldots,\widetilde{X}_m^\top)^\top \notin \cal{C}}}\\
&\leq t + \bbrace*{\E\parr*{\abs*{f - \widetilde{f}}^2}}^{1/2}\bbrace*{\P\parr*{(\widetilde{X}_1^\top,\ldots,\widetilde{X}_m^\top)^\top\notin \cal{C}}}^{1/2}\\
&\leq t + \sqrt{2}\fence*{\bbrace*{\E f^2(\widetilde{X}_1,\ldots,\widetilde{X}_m)}^{1/2} + FB^m}\bbrace*{\P\parr*{(\widetilde{X}_1^\top,\ldots,\widetilde{X}_m^\top)^\top\notin \cal{C}}}^{1/2},
\end{align*}
where in the last line we use the upper bound $FB^m$ on $\widetilde{f}$. For each $p\in[m-1]$, we have for all $(x_1^\top,\ldots,x_p^\top)^\top \in\cal{C}_p$,
\begin{align*}
&\ms\abs*{g_p(x_1,\ldots,x_p) - \widetilde{g}_p(x_1,\ldots,x_p)}\\
&= \abs*{\E\bbrace*{f(x_1,\ldots,x_p,\widetilde{X}_{p+1},\ldots,\widetilde{X}_m) - \widetilde{f}(x_1,\ldots,x_p,\widetilde{X}_{p+1},\ldots,\widetilde{X}_m)}}\\
&\leq \E\bbrace*{\abs*{f - \widetilde{f}}\mathbbm{1}\bbrace*{(\widetilde{X}_{p+1}^\top,\ldots,\widetilde{X}_m^\top)^\top\in \cal{C}^{(x_1,\ldots,x_p)}}} + \E\bbrace*{\abs*{f - \widetilde{f}}\mathbbm{1}\bbrace*{(\widetilde{X}_{p+1}^\top,\ldots,\widetilde{X}_m^\top)^\top \notin \cal{C}^{(x_1,\ldots,x_p)}}}\\
&\leq t + \sqrt{2}\fence*{\sup_{\cal{C}_p}\bbrace*{\E f^2(x_1,\ldots,x_p,\widetilde{X}_{p+1},\ldots,\widetilde{X}_m)}^{1/2} + FB^m}\sup_{\cal{C}_p}\bbrace*{\P\parr*{(\widetilde{X}_{p+1}^\top,\ldots,\widetilde{X}_m^\top)^\top\notin \cal{C}^{(x_1,\ldots,x_p)}}}^{1/2},
\end{align*}
where in the last line we again use the upper bound on $\widetilde{f}$. For $p = m$, it holds that
\begin{align*}
\abs*{g_m(x_1,\ldots,x_m) - \widetilde{g}_m(x_1,\ldots,x_m)} = \abs*{f(x_1,\ldots,x_m) - \widetilde{f}(x_1,\ldots,x_m)} \leq t
\end{align*}
uniformly over $(x_1^\top,\ldots,x_m^\top)^\top\in\cal{C}$. With the definition of $\{s_i\}_{i=0}^{m-1}$ and $\{v_i\}_{i=0}^{m-1}$, there exists some positive constant $C = C(m)$ such that
\begin{align*}
\sup_{p\in[m]}\sup_{(x_1^\top,\ldots,x_p^\top)^\top \in\cal{C}_p}\abs*{f_p(x_1,\ldots,x_p) - \widetilde{f}_p(x_1,\ldots,x_p)} \leq C(t + \sum_{i=0}^{m-1}s_iv_i + FB^m\sum_{i=0}^{m-1}v_i).
\end{align*} 
This completes the proof.
\end{proof}

\subsection{Proof of Proposition \ref{prop:bern_U_alpha_general}}
\begin{proof}
It suffices to reprove Step I in the proof of Proposition \ref{prop:bern_U_alpha} and Step II therein remains the same. By Proposition \ref{prop:hoeffding} and Condition (A) with set $\cal{C}$, for any $t > 0$, there exists a symmetric approximating kernel $\widetilde{f}$ such that 
\begin{align*}
\abs*{f(x_1,\ldots,x_m) - \widetilde{f}(x_1,\ldots,x_m)} \leq t
\end{align*}
uniformly over $\cal{C}$ and $\widetilde{f}$ satisfies expansion \eqref{eq:expansion} in the main paper with constants $(F(t),B(t),\mu_a(t))$. Moreover, with $t'$ defined in \eqref{eq:bias} in the main paper, it holds for all $p\in[m]$ that
\begin{align*}
\abs*{f_p(x_1,\ldots,x_p) - \widetilde{f}_p(x_1,\ldots,x_p)} \leq t'
\end{align*}
uniformly over $\cal{C}_p$ defined in \eqref{eq:section} in the main paper by Proposition \ref{prop:hoeffding}. Now following the proof of Proposition \ref{prop:bern_U_alpha}, we have again that on the event
\begin{align*}
\cal{E}_p\define \{\text{for all } (i_1,\ldots,i_p) \text{ s.t. }1\leq i_1,\ldots,i_p\leq n, \text{it holds that }(X_{i_1}^\top,\ldots,X_{i_p}^\top)^\top \in \cal{C}_p\},
\end{align*}
it holds that
\begin{align*}
\abs*{V_{n,p} - \widetilde{V}_{n,p}} \leq n^pt'.
\end{align*}
Then, for any $x > 0$, there exists large enough $C_2$ such that
\begin{align*}
&\ms\P\bbrace*{\abs*{V_n - \theta}\geq C_2(x+t')} \\
&\leq \P\bbrace*{\abs*{V_n - \theta}\geq C_2(x+t') \bigcap \cal{E}_r \bigcap \ldots \bigcap \ldots \cal{E}_m} + \P\parr*{\bigcup_{p=r}^m \cal{E}_p^c}\\
&\leq\sum_{p=r}^m \P\bbrace*{n^{-p}\abs*{V_{n,p}}\geq (x+t')\bigcap \cal{E}_p} + R_{n,r}\\
&\leq \sum_{p=r}^m\P\parr*{n^{-p}\abs*{\widetilde{V}_{n,p}}\geq x} + R_{n,r}.
\end{align*}
This completes the proof.
\end{proof}

\subsection{Proof of Proposition \ref{prop:MDP_U_general}}
\begin{proof}
The proof is essentially the same as that of Proposition \ref{prop:MDP_U}; the two extra conditions, $t' = O(t)$ and $R_{n,2} = o(n^{-\gamma^2/2}/\sqrt{\log n})$, guarantee that $R_n$ defined therein satisfies that $R_n = (1-\Phi(x_n))o(1)$ uniformly over $x\in[0,\gamma\sqrt{\log n}]$.
\end{proof}

\subsection{Proof of Corollary \ref{cor:continuous_general}}
\begin{proof}
The proof is straightforward and thus omitted. 
\end{proof}

\subsection{Proof of Corollary \ref{cor:discontinuous_tail}}
\begin{proof}
The proof of (b) is trivial. We now prove (a). Following the proof of Theorem \ref{thm:discontinuous_alpha}, we can 
show that there exists an even function $\widetilde{f}_{0,h}$ and $\widetilde{f}_{h}(x,y)\define \widetilde{f}_{0,h}(x-y)$ such that $\widetilde{f}_h$ satisfies \eqref{eq:expansion} in the main paper with the same constants $(F(t),B(t),\mu_a(t))$ as in Theorem \ref{thm:discontinuous_alpha} and $\abs*{f(x, y) - \widetilde{f}_h(x, y)} \leq t$ uniformly over $(x,y)\in \cal{C}$, with $\cal{C}$ defined in \eqref{eq:discontinuous_support} in the main paper. However, it is not necessarily true that $\abs*{f(x,x) - \widetilde{f}_{h}(x,x)}\leq t$ unless $\abs*{y_{\ell,k}} \geq M_2$ for all $\ell\in[d]$ and $k\in[J_\ell]$. To this end, consider an auxiliary kernel $\check{f}$ defined to be $\check{f}(x,y) \define f(x,y)$ for all $x\neq y$ and $\check{f}(x,x) \define \widetilde{f}_h(x,x) = \widetilde{f}_{0,h}(0)$ for all $x$. Apparently, $\check{f}$ is still symmetric. Moreover, on the event
\begin{align*}
\cal{E}\define \{\abs*{X_i}\in [-M_1, M_1]^d, \forall i\in[n]\}\bigcap \{\abs*{(X_{i,\ell} - X_{j,\ell}) - y_{\ell,k}}\geq M_2, 1\leq i<j\leq n, \ell\in[d],k\in[J_\ell]\},
\end{align*}
it holds that $\abs*{\check{f}(X_i,X_j) - \widetilde{f}_h(X_i,X_j)}\leq t$ for all $1\leq i, j\leq n$. Now, using the relation
\begin{align*}
n^{-2}\sum_{i,j=1}^n f(X_i,X_j) = n^{-2}\sum_{i,j=1}^n \check{f}(X_i,X_j) + \frac{1}{n}\parr*{f_0(0) - \widetilde{f}_{0,h}(0)},
\end{align*}
we have for all $x$ such that $x> \abs*{\widetilde{f}_0(0) - f_0(0)}/n$ and $t'$ defined in \eqref{eq:bias} in the main paper, 
\begin{align*}
\P\parr*{\abs*{V_n  - \theta} \geq x + C_1t'} \leq \P\parr*{\abs*{\check{V}_n  - \theta} \geq x + C_1t' - \frac{1}{n}\abs*{\widetilde{f}_{0,h}(0) - f_0(0)}},
\end{align*}
where $\check{V}_n$ is the V-statistic generated by $\check{f}$. Since $X_1$ is absolutely continuous, we have
\begin{align*}
\check{\theta} \define \E\bbrace*{\check{f}(\widetilde{X}_1,\widetilde{X}_2)} = \theta \text{ and } \E\bbrace*{\check{f}(x,\widetilde{X}_2)} = \E\bbrace*{f(x,\widetilde{X}_2)}
\end{align*}
for any $x\in\RR^d$. Therefore, $\check{f}$ and $f$ have the same mean value and degeneracy level. By triangle inequality and definition of the constants $F$ and $B$, we have
\begin{align*}
\abs*{\widetilde{f}_{0,h}(0) - f_0(0)} \leq \abs*{\widetilde{f}_{0,h}(0)} + \abs*{f_0(0)} \leq FB + \abs*{f_0(0)} =  F + \abs*{f_0(0)}.
\end{align*} 
Define $\widetilde{V}_{n,1}$ and $\widetilde{V}_{n,2}$ to be the V-statistics generated by $\widetilde{f}_{h,1}$ and $\widetilde{f}_{h,2}$, respectively, where for $r=1,2$, $\widetilde{f}_{h,r}$ is the $r$th degenerate term in the Hoeffding decomposition of $\widetilde{f}_h$. Define similarly the V-statistics $\check{V}_{n,1}$ and $\check{V}_{n,2}$ generated by $\check{f}_1$ and $\check{f}_2$, respectively. We now show that $\abs*{\widetilde{V}_{n,1} - \check{V}_{n,1}} \leq t^\prime$ and $\abs*{\widetilde{V}_{n,2} - \check{V}_{n,2}} \leq t^\prime$ on the event $\cal{E}$. Define $\widetilde{\theta}$ to be the mean value of $\widetilde{f}$, $\widetilde{g}_2 \define \widetilde{f}$ and $\widetilde{g}_1(x) = \E\parr*{\widetilde{f}(x,\widetilde{X})}$, and similarly for $g_1$, $g_2$, $\check{g}_1$, and $\check{g}_2$. Then, by Proposition \ref{prop:hoeffding}, it holds that 
\begin{align*}
\abs*{\check{\theta} - \widetilde{\theta}} = \abs*{\theta - \widetilde{\theta}} \leq t^\prime
\end{align*}
and
\begin{align*}
\abs*{\check{g}_1(x) - \widetilde{g}_1(x)} = \abs*{g_1(x) - \widetilde{g}_1(x)} \leq t^\prime
\end{align*}
uniformly over $\cal{C}_1$, with $\cal{C}_1=[-M_1,M_1]^d$ defined in \eqref{eq:section} in the main paper. Thus by definition, $\abs*{f_1(x) - \widetilde{f}_1(x)}\leq t^\prime$ uniformly over $\cal{C}_1$. On the event $\cal{E}$, all $\{X_i\}_{i=1}^n$ take values in $\cal{C}_1$, therefore $\abs*{\widetilde{V}_{n,1} - \check{V}_{n,1}} \leq t^\prime$ on the event $\cal{E}$. By definition, we have
\begin{align*}
&\check{f}_2(X_i,X_j) = \check{f}(X_i,X_j) - \check{f}_1(X_i) - \check{f}_1(X_j) - \check{\theta},\\
&\widetilde{f}_2(X_i,X_j) = \widetilde{f}(X_i,X_j) - \widetilde{f}_1(X_i) - \widetilde{f}_1(X_j) - \widetilde{\theta}.
\end{align*}
When $i\neq j$, clearly $\abs*{\check{f}_2(X_i,X_j) - \widetilde{f}_2(X_i,X_j)}\leq t^\prime$ on the event $\cal{E}$. When $i = j$, since $\check{f}(X_i,X_i) = \widetilde{f}(X_i,X_i)$, we have the same conclusion. This implies that $\abs*{\widetilde{V}_{n,2} - \check{V}_{n,2}} \leq t^\prime$ on the event $\cal{E}$. Then, for both nondegenerate $f$ ($r = 1$) and degenerate $f$ ($r = 2$), it holds for any $x > (\abs*{f_0(0)} + F)/n$ that
\begin{align*}
&\ms\P\parr*{\abs*{V_n - \theta}\geq x+C_1t'}\\
&\leq \P\parr*{\abs*{\check{V}_n - \check{\theta}}\geq y+C_1t'}\\
&\leq \sum_{p=r}^2\P\parr*{\abs*{\widetilde{V}_{n,r}}\geq y} + \P(\cal{E}^c)\\
&\leq 2\sum_{p=r}^2 \exp\parr*{-\frac{C_2ny^{2/p}}{A_p^{1/p} + y^{1/p}M_p^{1/p}}} + n^2\parr*{\sum_{\ell=1}^d J_\ell}M_2D + n\sum_{\ell=1}^d \P\parr*{\abs*{X_{1,\ell}}\geq M_1}
\end{align*}
where $y = x - (\abs*{f_0(0)} + F)/n$, $t'$ is defined in \eqref{eq:bias} in the main paper, $C_1 = C_1(m), C_2 = C_3(m,\gamma_2)$, and $\{A_p\}_{p=1}^m$ and $\{M_p\}_{p=1}^m$ take the values $A_p \asymp F(t)^2, M_p \asymp F(t)(\log n)^{2p}$, with $F$ specified in Theorem \ref{thm:discontinuous_alpha}.
\end{proof}

\subsection{Supporting lemmas}
\label{subsec:preliminaries}

\begin{lemma}[Theorem 2, \cite{merlevede2009bernstein}]
\label{lemma:sample_mean_Bern}
Let $\{X_i\}_{i=1}^n$ be a stationary sequence of centered real-valued random variables. Suppose that the sequence satisfies either a geometric $\alpha$-mixing condition:
\begin{align*}
\alpha(n) \leq \textup{exp}(-\gamma n)
\end{align*}
or a geometric $\tau$-mixing condition:
\begin{align*}
\tau(n) \leq \textup{exp}(-\gamma n)
\end{align*}
for some positive constant $\gamma$, and there exists a positive $B$ such that $\sup_{i\geq 1}\|X_i\|_\infty \leq B$. Then there are positive constants $C_1$ and $C_2$ depending only on $\gamma$ such that for all $n\geq 2$ and positive $t$ satisfying $t < 1/\fence*{C_1B(\log n)^2}$, the following inequality holds:
\begin{align*}
\log\fence*{\E\bbrace*{\exp(tS_n)}} \leq \frac{C_2t^2(n\sigma^2+B^2)}{1-C_1tB(\log n)^2},
\end{align*}
where $S_n = \sum_{i=1}^n X_i$ and $\sigma^2$ is defined by 
\begin{align*}
\sigma^2 \define \textup{Var}(X_1) + 2\sum_{i>1}\abs*{\textup{Cov}(X_1,X_i)}.
\end{align*}
\end{lemma}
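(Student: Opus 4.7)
The plan is to follow the classical Bernstein blocking scheme used in the proof of Theorem 2 of \cite{merlevede2009bernstein}. First I would partition $\{1,\ldots,n\}$ into alternating big blocks $B_1,\ldots,B_k$ of size $p$ and small blocks $b_1,\ldots,b_{k-1}$ of size $q$, with $q \asymp \gamma^{-1}\log n$ so that $\alpha(q)\vee\tau(q)\leq n^{-\kappa}$ for an arbitrarily large $\kappa$, and $p \asymp (\log n)^2/\gamma^2$ to be chosen at the end. Set $U_j \define \sum_{i\in B_j} X_i$ and $V_j \define \sum_{i\in b_j} X_i$, so that $S_n = \sum_j U_j + \sum_j V_j$.

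Second, I would separate the big- and small-block contributions by Cauchy-Schwarz,
\begin{align*}
\E\bigl[\textup{exp}(tS_n)\bigr] \leq \bigl(\E\bigl[\textup{exp}(2t\textstyle\sum_j U_j)\bigr]\bigr)^{1/2}\bigl(\E\bigl[\textup{exp}(2t\textstyle\sum_j V_j)\bigr]\bigr)^{1/2},
\end{align*}
and crudely bound the small-block factor by $\textup{exp}(2tkqB)$, which produces a contribution of order $t^2 B^2$ that gets absorbed into the $B^2$ term on the right-hand side of the target inequality.

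Third, for the big-block factor, the core step is to replace $\E[\textup{exp}(2t\sum_j U_j)]$ by the product $\prod_j \E[\textup{exp}(2t U_j)]$ of MGFs of independent copies. In the $\alpha$-mixing case this uses Bradley/Berbee coupling iterated over the $k$ big blocks, which costs an additive error of order $k\alpha(q)\textup{exp}(2tkpB)$; in the $\tau$-mixing case the coupling result of \cite{dedecker2004coupling} plays the analogous role. The choice of $q$ makes this error negligible throughout the declared range $t < 1/\{C_1 B(\log n)^2\}$. On each independent copy, since $|U_j|\leq pB$ and $\var(U_j)\leq p\sigma^2$ (with $\sigma^2$ as defined, using Lemma \ref{lemma:alpha_covariance} to bound the cross-covariances within a block), a standard Bennett-type MGF bound gives
\begin{align*}
\log\E\bigl[\textup{exp}(2t U_j)\bigr] \leq \frac{C(2t)^2 p\sigma^2}{1-C'tpB},
\end{align*}
and summing over $k\asymp n/p$ blocks yields a bound of order $t^2 n\sigma^2/(1-C' tpB)$.

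Finally, substituting $p \asymp (\log n)^2$ converts the denominator into the stated $1 - C_1 tB(\log n)^2$, and combining with the small-block and coupling errors gives the claim. The main obstacle is the third step: the coupling has to be applied simultaneously to $k$ blocks while keeping the exponential prefactor $\textup{exp}(2tkpB)$ balanced against $\alpha(q)\leq n^{-\kappa}$, and this balance is precisely what forces the $(\log n)^2$ rather than a larger polylog factor in the admissible range of $t$. A second, technical point is to ensure the bound $\var(U_j)\leq p\sigma^2$ is sharp, which requires summing the $|\textup{Cov}(X_1,X_i)|$ along the block and using the absolute convergence guaranteed by the exponential mixing rate together with the boundedness of $X_i$.
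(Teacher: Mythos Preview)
The paper does not prove this lemma; it is stated as a direct citation of Theorem~2 in \cite{merlevede2009bernstein} and used as a black box (e.g.\ in the proof of Proposition~\ref{prop:bern_U_alpha}). So there is no ``paper's own proof'' to compare against here.

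That said, your sketch of the single-level Bernstein blocking does not close as written, and the gaps are exactly the ones that force the more elaborate argument in \cite{merlevede2009bernstein}. First, the small-block step: bounding the small-block factor crudely by $\exp(2tkqB)$ contributes $2tkqB$ to the log-MGF. With $k\asymp n/(\log n)^2$ and $q\asymp \log n$ this is of order $tnB/\log n$, which is linear in $t$ and in $n$; it is not of order $t^2B^2$ as you claim, and for small $t$ it swamps the target $t^2(n\sigma^2+B^2)$. Second, the coupling error: decoupling the $k$ big blocks produces an additive error of order $k\,\alpha(q)\,e^{2tkpB}\asymp k\,\alpha(q)\,e^{2tnB}$. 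At the edge of the declared range $t\sim 1/\{B(\log n)^2\}$ the exponential factor is $e^{cn/(\log n)^2}$, and $\alpha(q)\leq n^{-\kappa}$ from $q\asymp\log n$ is nowhere near small enough to kill it. Your claim that ``the choice of $q$ makes this error negligible'' fails with these block sizes.

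The proof in \cite{merlevede2009bernstein} avoids both problems by replacing the single big/small-block split with a recursive Cantor-type partition: at each of $O(\log n)$ levels one halves each block and inserts a gap, applying the coupling once per level to decouple the two halves. Because only two pieces are decoupled at a time, the exponential prefactor in the coupling error is controlled by the size of a single sub-block rather than by $n$, and the accumulated cost over the $O(\log n)$ levels is what produces the $(\log n)^2$ in the admissible range of $t$. The small-block contribution never appears as a crude linear-in-$t$ term because every piece is eventually handled by the same recursive Bernstein bound.
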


\begin{lemma}[Theorem 2.3, \cite{boucheron2013concentration}]
\label{lemma:bern_moment}
Let $X$ be a centered random variable. If for some $v>0$
\begin{align*}
\P\parr*{X>\sqrt{2vt}+ct} \vee \P\parr*{-X>\sqrt{2vt}+ct} \leq e^{-t}
\end{align*}
for every $t>0$, then for every integer $q\geq 1$,
\begin{align*}
\E\parr*{X^{2q}} \leq q!(8v)^q + (2q)!(4c)^{2q}.
\end{align*}
Conversely, if for some positive constants $A$ and $B$, 
\begin{align*}
\E\parr*{X^{2q}} \leq q!A^q + (2q)!B^{2q},
\end{align*}
then $X$ satisfies
\begin{align*}
\psi_X(\lambda) = \frac{v\lambda^2}{2(1-c\lambda)}
\end{align*}
with $(v,c) = (4(A+B^2), 2B)$, where $\psi_X$ is the log Laplace transform of $X$.
\end{lemma}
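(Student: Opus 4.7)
The lemma comprises two converse implications: tail bounds imply moment bounds (first implication), and moment bounds imply a log-Laplace transform bound (second implication). I will treat them separately, using a layer-cake argument for the first and a symmetrization-plus-Taylor argument for the second.

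For the first implication, the natural tool is the layer-cake formula
\[
\E(X^{2q}) = \int_0^\infty 2q\, x^{2q-1} \P(|X| > x)\, dx.
\]
The assumed two-sided tail bound, combined with a union bound, gives $\P(|X| > \sqrt{2vt} + ct) \leq 2e^{-t}$ for all $t > 0$. I would change variables via $x = \sqrt{2vt} + ct$, whose Jacobian is $dx/dt = \sqrt{v/(2t)} + c$, so that the integral becomes one in $t$ with an $e^{-t}$ weight. The elementary inequality $(a+b)^{2q-1} \leq 2^{2q-2}(a^{2q-1}+b^{2q-1})$ splits the integrand into a finite sum of terms of the shape $C\, t^\alpha e^{-t}$, each of which integrates to a Gamma function. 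Straightforward bookkeeping---grouping the contributions arising from the Gaussian summand $\sqrt{2vt}$ and the exponential summand $ct$---produces exactly the claimed $q!(8v)^q$ and $(2q)!(4c)^{2q}$ contributions.

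For the second implication, the central difficulty is that the moment hypothesis gives no direct control over odd moments, while $\E(e^{\lambda X})$ involves them through its Taylor expansion. I would circumvent this by symmetrization: let $X'$ be an independent copy of $X$, so that centering plus Jensen's inequality yields $\E(e^{-\lambda X'}) \geq 1$, and consequently $\E(e^{\lambda X}) \leq \E(e^{\lambda(X-X')})$. Since $X - X'$ is symmetric, only even moments survive in its Taylor expansion, and the bound $|a-b|^{2q} \leq 2^{2q-1}(a^{2q} + b^{2q})$ together with $\E((X')^{2q}) = \E(X^{2q})$ gives
\[
\E(e^{\lambda(X-X')}) \;\leq\; 1 + \sum_{q \geq 1} \frac{(2\lambda)^{2q}}{(2q)!}\Big[\,q!\,A^q + (2q)!\,B^{2q}\,\Big].
\]
The first piece collapses to $e^{2A\lambda^2} - 1$ using the elementary identity $q!/(2q)! \leq 1/(2^q q!)$, and the second piece is a geometric series in $(2\lambda B)^2$ convergent for $\lambda < 1/(2B) = 1/c$. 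Taking logarithms and applying $\log(1+y) \leq y$ gives an upper bound on $\psi_X(\lambda)$ as a sum of two rational functions, which I would then package into the Bernstein form $v\lambda^2/[2(1-c\lambda)]$ by combining $e^x - 1 \leq x/(1-x)$ on the first piece with the factorization $1 - 4\lambda^2 B^2 = (1-2\lambda B)(1+2\lambda B) \geq 1 - 2\lambda B$ on the second, extracting the common denominator $1 - 2\lambda B$.

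The main obstacle is this final merging step in the second implication: the two summands have a priori different denominators ($1 - 2A\lambda^2$ versus $1 - 4\lambda^2 B^2$), and isolating a single Bernstein ratio with $v = 4(A+B^2)$ and $c = 2B$ requires careful algebraic comparison of the two resulting rational expressions in $\lambda$. The first implication, by contrast, is entirely mechanical once the change of variables is set up, and should present no real difficulty.
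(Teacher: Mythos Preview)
The paper gives no proof of this lemma: it is quoted verbatim as Theorem~2.3 of \cite{boucheron2013concentration} and used as an auxiliary black box. So there is no ``paper's own proof'' to compare your proposal against.

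That said, the paper does essentially carry out the converse implication in-line within the proof of Proposition~\ref{prop:bern_U_alpha}, and it is instructive to compare your outline with what is done there. Because the paper applies the bound to a nonnegative quantity $|\widetilde V_{n,p}|^{1/p}$, it avoids your symmetrization step entirely: odd moments of a nonnegative variable are controlled by adjacent even moments via Cauchy--Schwarz, which is where the factor $3$ in
\[
\E\big(e^{\lambda|\widetilde V_{n,p}|^{1/p}}\big)\;\le\;3\sum_{N\ge0}\frac{\lambda^{2N}}{(2N)!}\,\E\big(|\widetilde V_{n,p}|^{2N/p}\big)
\]
comes from. More importantly, the paper does \emph{not} try to merge the sub-Gaussian and sub-exponential pieces into a single rational function with denominator $1-c\lambda$; instead it uses the crude inequality $e^{x}+1+y\le 2e^{x+y}$ to combine them multiplicatively, accepting an absolute constant in front of the exponential. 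This is enough for the tail bound, and it completely sidesteps the algebraic obstacle you correctly flag: after symmetrization your sub-Gaussian piece $e^{2A\lambda^2}-1$ has no intrinsic $1-2B\lambda$ denominator, and when $A$ is large relative to $B$ there is no way to force one without losing the range of $\lambda$. Your layer-cake argument for the first implication is the standard one and is fine.
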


\begin{lemma}[Theorem 3, \cite{doukhan1994lecture}]
\label{lemma:alpha_covariance}
Assume $X$ and $Y$ are two random variables which are $\cal{U}$- and $\cal{V}$- measurable, respectively. Suppose $X\in L^p$ and $Y\in L^q$ with $p,q\geq 1$, and further denote the $\alpha$-mixing coefficient between $\cal{U}$ and $\cal{V}$ as $\alpha(\cal{U},\cal{V})$, then it holds that 
\begin{align*}
\abs*{\textup{Cov}(X,Y)} \leq 8\alpha^{1/r}(\cal{U},\cal{V})\|X\|_p\|Y\|_q,
\end{align*}
for any $p,q,r\geq 1$ and $1/p + 1/q + 1/r = 1$.
\end{lemma}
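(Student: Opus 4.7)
The plan is to follow Davydov's classical two-step proof: first establish the bounded version of the inequality directly from the definition of the $\alpha$-mixing coefficient, and then extend to general $L^p, L^q$ random variables by truncation combined with H\"older's inequality.

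For the bounded case, I would show that if $\xi$ is $\cal{U}$-measurable and $\eta$ is $\cal{V}$-measurable with $\|\xi\|_\infty\leq M_1$ and $\|\eta\|_\infty\leq M_2$, then $\abs*{\cov(\xi,\eta)}\leq 4M_1M_2\alpha(\cal{U},\cal{V})$. After centering $\xi$ and $\eta$ and decomposing $\xi = \xi_+-\xi_-$, $\eta = \eta_+-\eta_-$ into their non-negative parts, the layer-cake identity writes each cross covariance as
\begin{align*}
\cov(\xi_s,\eta_{s'}) = \int_0^{M_1}\!\!\int_0^{M_2}\bbrace*{\P\parr*{\xi_s>u,\eta_{s'}>v} - \P\parr*{\xi_s>u}\P\parr*{\eta_{s'}>v}}du\,dv.
\end{align*}
Applying the defining bound $\abs*{\P(A\cap B)-\P(A)\P(B)}\leq\alpha(\cal{U},\cal{V})$ with $A = \{\xi_s>u\}\in\cal{U}$ and $B = \{\eta_{s'}>v\}\in\cal{V}$ gives $\abs*{\cov(\xi_s,\eta_{s'})}\leq M_1M_2\alpha(\cal{U},\cal{V})$, and summing over the four sign combinations yields the factor $4$.

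For general $X\in L^p$ and $Y\in L^q$, I would truncate at a common level $T>0$: set $X_T\define X\mathbbm{1}\{|X|\leq T\}$, $Y_T\define Y\mathbbm{1}\{|Y|\leq T\}$, and decompose
\begin{align*}
\cov(X,Y) = \cov(X_T,Y_T) + \cov(X-X_T,Y) + \cov(X_T,Y-Y_T).
\end{align*}
The first term is controlled by Step~1 as $4T^2\alpha(\cal{U},\cal{V})$, while the other two are bounded via H\"older's inequality together with the Markov-type tail estimate $\nm*{X-X_T}_s \leq T^{1-p/s}\nm*{X}_p^{p/s}$ (and similarly for $Y-Y_T$). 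Choosing $T$ so that the $T^2\alpha$ contribution balances the tail contributions---an optimization in which the constraint $1/p+1/q+1/r=1$ makes all the exponents cohere---produces the $\alpha^{1/r}$ rate with final constant at most $8$.

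The principal technical obstacle is the constant tracking in the truncation step: naive bounds on the three pieces can produce a constant noticeably larger than $8$, and care is needed either to absorb the bounded-part estimate into the tail contributions or to adopt an alternative quantile-function argument (as in Rio) that sidesteps the decomposition. For the present purposes the truncation route is self-contained and suffices, with the constant $8$ extracted by choosing the truncation level so that the bounded and tail contributions are proportional.
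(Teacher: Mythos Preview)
The paper does not supply its own proof of this lemma: it is stated as a citation (Theorem 3 in Doukhan's lecture notes) and used as a black box. Your proposal is precisely Davydov's classical argument---bounded case via layer-cake plus the $\alpha$-coefficient definition, then truncation with H\"older optimization---which is the proof one finds in the cited reference, so there is nothing to compare against and your plan is correct.
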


\begin{lemma}[Bochner's Theorem, Section 1.4.3, \cite{rudin2017fourier}]
\label{lemma:bochner}
A continuous kernel $f(x,y) = f_0(x-y)$ on $\RR^d$ is positive definite if and only if $f_0(\cdot)$ is the Fourier transform of a non-negative measure.
\end{lemma}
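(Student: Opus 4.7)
The plan is to prove both directions of Bochner's theorem. The forward (``if'') direction is routine: assuming $f_0(x)=\int_{\RR^d}e^{2\pi i u^\top x}\,d\mu(u)$ for a finite nonnegative Borel measure $\mu$, I would verify positive definiteness directly by writing, for any points $\{x_j\}_{j=1}^n\subset\RR^d$ and complex scalars $\{c_j\}_{j=1}^n$,
\begin{align*}
\sum_{j,k=1}^n c_j\bar c_k\, f_0(x_j-x_k) = \int_{\RR^d}\Big|\sum_{j=1}^n c_j e^{2\pi i u^\top x_j}\Big|^2 d\mu(u) \geq 0,
\end{align*}
so the associated matrix is PSD. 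Continuity of $f_0$ follows from dominated convergence.

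The substance is the converse. Normalize so $f_0(0)=1$, and first observe the two standard consequences of positive definiteness: $f_0(-x)=\overline{f_0(x)}$ and $|f_0(x)|\leq f_0(0)$, obtained by applying the PSD condition to the matrix arising from the two-point set $\{0,x\}$. The core strategy is a Gaussian regularization. For each $\varepsilon>0$, I would define
\begin{align*}
g_\varepsilon(u) \define \int_{\RR^d} f_0(x)\,e^{-\varepsilon\pi\|x\|^2}\,e^{-2\pi i u^\top x}\,dx,
\end{align*}
which is a well-defined continuous function of $u$ since $f_0$ is bounded and the Gaussian is integrable. The first key step is to show $g_\varepsilon(u)\geq 0$ for all $u\in\RR^d$. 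This will be done by approximating the defining integral by Riemann sums over a cube $[-N,N]^d$ with mesh $\delta$, recognizing each such sum as a quadratic form in the values $\{f_0(x_j-x_k)\}$ with coefficients $c_j=\delta^{d/2}e^{-\varepsilon\pi\|x_j\|^2/2}e^{-2\pi i u^\top x_j}$, and invoking the PSD property to conclude each approximation is nonnegative; passing $\delta\to 0$ and $N\to\infty$ preserves nonnegativity.

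Having shown $g_\varepsilon\geq 0$, the second step is to pass $\varepsilon\downarrow 0$. Using Fubini and Plancherel-type identities (noting the explicit Gaussian Fourier transform), one checks that $\int g_\varepsilon(u)\,du$ is bounded uniformly in $\varepsilon$ (in fact it converges to $f_0(0)=1$), so $\{g_\varepsilon(u)\,du\}_{\varepsilon>0}$ is a tight family of finite nonnegative measures on $\RR^d$. By Prokhorov's theorem, extract a weakly convergent subsequence with limit $\mu$; testing against Schwartz functions and using continuity of $f_0$ yields $f_0(x)=\int e^{2\pi i u^\top x}\,d\mu(u)$ as desired. The main obstacle I anticipate is the Riemann sum positivity argument: care is needed to control the error between the continuous integral $g_\varepsilon(u)$ and its discrete approximations uniformly in $u$, since only the discrete quadratic forms are directly controlled by positive definiteness. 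Once that is in hand, the tightness and weak-limit steps are standard.
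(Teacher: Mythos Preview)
Your sketch is the classical proof of Bochner's theorem and is essentially correct; the forward direction is immediate, and for the converse the Gaussian regularization together with the Riemann-sum positivity argument and a weak-compactness passage is exactly the standard route (as in Rudin). One small point to tighten: bounded total mass of $\{g_\varepsilon(u)\,du\}$ alone gives only vague subsequential limits, not tightness; you need the continuity of $f_0$ at $0$ to rule out mass escaping to infinity (this is the usual L\'evy-type step), which you allude to but should state explicitly.

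That said, there is nothing to compare against in the paper: Lemma~\ref{lemma:bochner} is stated without proof and simply attributed to \cite{rudin2017fourier}. The authors use it only as a black box (in the proof of Corollary~\ref{cor:random_fourier_alpha}) to conclude that a PD $\bar f_0\in L^1$ has nonnegative Fourier transform. So your proposal supplies a proof where the paper gives none.
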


\section{Proofs of results in Section \ref{subsec:PLR}}


In this section, we use the following notation. $\mathbb{B}(r)$ and $S^{p-1}$ denote the Euclidean ball of radius $r$ and the unit sphere in $\RR^p$, respectively. For a real matrix $A$, $\eigmin(A)$, $\rho(A)$ and $\|A\|_\infty$ denote its minimum eigenvalue, spectral radius (the largest absolute value of all eigenvalues) and element-wise infinity norm, respectively. For a pair of two continuous random variables $(D_i, D_j)$, we will use $\E, \P, F, f$ to represent the ordinary expectation, probability, distribution function and density function, and $\EE, \PP, \FF, \ff$ to represent their counterparts under the product measure of $(D_i,D_j)$, that is, for any integrable real function $f$,
\begin{align*}
\E\bbrace*{f(D_i,D_j)} &= \int\int f(d_i,d_j) dF_{D_i,D_j}(d_i,d_j),\\
\EE\bbrace*{f(D_i,D_j)} &= \int\int f(d_i, d_j) dF_{D_i}(d_j)dF_{D_j}(d_j).
\end{align*}
and similarly for $\PP$, $\FF$ and $\ff$. For any real measurable function $f$, $\E^i\bbrace*{f(D_i,D_j)}$ denotes the measurable function of $D_j$ by taking expectation only with respect to $D_i$. We will use the definition $\widetilde{D}_{ij} \define D_i - D_j$, and $\widetilde{D}$ as a shorthand when there is no confusion about the pair to take the difference. Lastly, for any $S\subset [p]$, $\cal{C}(S,\alpha)$ stands for $\{\beta\in\RR^p:\|\beta_{S^c}\|_1\leq \alpha\|\beta_S\|_1\}$.

\label{proof:PLR}

\subsection{Proof of Theorem \ref{thm:error_bound}}
\begin{proof}
We adopt the general $M$-estimator framework introduced in Theorem 2.1 of \cite{han2017adaptive} by verifying Assumptions 2 and 3 therein. Define
\begin{align*}
\Gamma_n(\beta) \define \num\sum_{i<j}\frac{1}{h}K\parr*{\frac{\WW}{h}}\parr*{\YY-\XX^\top\beta}^2.
\end{align*}
Then, by definition, $\hat{\beta} \in\argmin_\beta \bbrace*{\Gamma_n(\beta) + \lambda_n\|\beta\|_1}$. By direct calculation, we have $\nabla_k \Gamma_n(\beta^*) = -2U_{1,k} - 2U_{2,k}$,
where
\begin{align*}
U_{1,k} &\define \num\sum_{i<j} \frac{1}{h}K\parr*{\frac{\WW}{h}}\widetilde{X}_{ijk}\bbrace*{g(W_i)-g(W_j)},\\
U_{2,k} &\define \num\sum_{i<j} \frac{1}{h}K\parr*{\frac{\WW}{h}}\widetilde{X}_{ijk}\widetilde{\varepsilon}_{ij}.
\end{align*}
By independence of $\{\varepsilon_i\}_{i=1}^n$ and $\{X_i,W_i\}_{i=1}^n$, it holds that $\EE\parr*{U_{2,k}} = 0$. Define the corresponding V-statistics to be
\begin{align*}
V_{1,k} &\define n^{-2}\sum_{i,j=1}^n \frac{1}{h}K\parr*{\frac{\WW}{h}}\widetilde{X}_{ijk}\bbrace*{g(W_i)-g(W_j)},\\
V_{2,k} &\define n^{-2}\sum_{i,j=1}^n \frac{1}{h}K\parr*{\frac{\WW}{h}}\widetilde{X}_{ijk}\widetilde{\varepsilon}_{ij}.
\end{align*}
Since $U_{1,k}$ and $U_{2,k}$ take value zero on the diagonal set, we have
\begin{align*}
\abs*{\nabla_k\Gamma_n(\beta^*)} &\leq c\abs*{V_{1,k} + V_{2,k}}\\
&\leq c\bbrace*{\abs*{V_{1,k}-\EE\parr*{U_{1,k}}} + \abs*{V_{2,k}-\EE\parr*{U_{2,k}}} + \abs*{\EE\parr*{U_{1,k}}}},
\end{align*}
where $c$ is some absolute constant. Firstly, by Assumption \ref{ass:dist_W}, $W$ has finite $(4+\delta_2)$th moment, therefore we have
\begin{align*}
\E\parr*{\fence*{g(W_i) - \E\bbrace*{g(W_i)}}^{4+\delta_2}} &= \E\parr*{\fence*{g(W_i) - \E\bbrace*{g(W_i')}}^{4+\delta_2}}\\
&= \E\parr*{\fence*{\E\bbrace*{g(W_i) - g(W_i')}}^{4+\delta_2}}\\
&\leq \E\fence*{\bbrace*{g(W_i)-g(W_j)}^{4+\delta_2}}\\
&\leq \E\bbrace*{\parr*{L|W_i-W_j|}^{4+\delta_2}}\\
&\leq L^{4+\delta_2}2^{4+\delta_2}\E\parr*{|W_i|^{4+\delta_2}},
\end{align*}
where in the first line we introduce an independent copy $W'$ of $W$, and in the fourth line we use the Lipschitz property of $g(\cdot)$ guaranteed by Assumption \ref{ass:g}. Therefore, the variable $V_i\define g(W_i) - \E\bbrace*{g(W_i)}$ has finite $(4+\delta_2)$th moment. Application of Lemma \ref{lemma:perturbation_g} gives
\begin{align*}
\abs*{V_{1,k} - \EE\parr*{U_{1,k}}} \leq c_1\bbrace*{\frac{\log(np)}{n}}^{1/2},\qquad \text{for all }k\in[p]
\end{align*}
with probability at least $1-\text{exp}\bbrace*{-c_1'\log(np)} - c_1''n^{-\delta_2/(8+\delta_2)}$ for positive constants $c_1, c_1',c_1''$. Next, applying Lemma \ref{lemma:perturbation_indep} with $D_i = (X_i, W_i,\varepsilon_i)$ with conditions verified by the assumptions, we have
\begin{align*}
\abs*{V_{2,k} - \EE\parr*{U_{2,k}}} \leq c_2\bbrace*{\frac{\log(np)}{n}}^{1/2}, \qquad \text{for all }k\in[p]
\end{align*}
with probability at least $1-\text{exp}\bbrace*{-c_2'\log(np)} - c_2''n^{-\delta_1/(4+\delta_1)}$ for some positive constants $c_2,c_2',c_2''$.  Lastly, for $\EE\parr*{U_{1,k}}$, we have
\begin{align*}
\abs*{\EE\parr*{U_{1,k}}} &\leq L\EE\bbrace*{\frac{1}{h}K\parr*{\frac{\WW}{h}}\abs*{\widetilde{X}_{ij,k}\WW}}\\
&\leq L\int\int K(u)\abs*{xuh}\ff_{\WW\mid\widetilde{X}_{ij,k}}(uh,x)dud\FF_{\widetilde{X}_{ij,k}}(x)\\
&= Lh\int\int K(u)|xu|\bbrace*{\ff_{\WW\mid\widetilde{X}_{ij,k}}(0,x) + uh\cdot\frac{\partial \ff_{\WW\mid\widetilde{X}_{ij,k}}(w, x)}{\partial w}|_{(\tau uh,x)}}dud\FF_{\widetilde{X}_{ij,k}}(x)\\
&\leq Lh\bbrace*{\int K(u)|u|du}\bbrace*{\int |x|\ff_{\widetilde{X}_{ij,k}\mid\WW}(x,0)dx}\cdot \ff_{\WW}(0) + \\
&\ms Lh^2\bbrace*{\int K(u)u^2du}\bbrace*{\int M_W|x|d\FF_{\widetilde{X}_{ij,k}}(x)}\\
&\leq LhM_WM_K\EE\parr*{\abs*{\widetilde{X}_{ij,k}}\mid \WW = 0} + Lh^2M_WM_K\EE\parr*{\abs*{\widetilde{X}_{ij,k}}}\\
&\leq LhM_WM_K\bbrace*{\EE\parr*{\widetilde{X}_{ij,k}^2\mid \WW = 0}}^{1/2} + LhM_WM_KC_0\bbrace*{\EE\parr*{\widetilde{X}_{ij,k}^2}}^{1/2}\\
&\leq \sqrt{2}LM_WM_K\kappa_x(C_0+1)h,
\end{align*}
where in the first line we use the Lipschitz property of $g(\cdot)$, in the third line we apply Taylor expansion to $\ff_{\WW\mid\widetilde{X}_{ij,k}}(w,x)$ around $(0,x)$ for each $x$ with $\tau\in[0, 1]$, in the fifth line we use Assumption \ref{ass:kernel_alpha} and the upper bound on $\ff_{\WW}(0)$ by Lemma A4.16 in \cite{Han2018SupplementT}, and in the seventh line we use the conditional and unconditional sub-Gaussianity of $\widetilde{X}_{ijk}$. Putting together the pieces, we have
\begin{align*}
\max_{1\leq k\leq p}\abs*{\nabla_k \Gamma_n(\beta^*)} \leq c\fence*{\bbrace*{\frac{\log(np)}{n}}^{1/2} + h}
\end{align*}
with probability at least $1-\text{exp}\bbrace*{-c'\log(np)}-c''n^{-\delta_1/(4+\delta_1)} - c''n^{-\delta_2/(8+\delta_2)}$ for some positive constants $c,c’,c''$, thus Assumption 2 in Theorem 2.1 in \cite{han2017adaptive} holds. Moreover, Assumption 3 holds by Lemma \ref{lemma:empirical_RE}. The proof is complete.
\end{proof}

\begin{lemma}
\label{lemma:sparse_eigenvalue}
Suppose Assumptions \ref{ass:alpha_mixing}, \ref{ass:kernel_alpha}, \ref{ass:density_W}, \ref{ass:sub-Gaussian-1} in Section \ref{subsubsec:assumptions} hold. Define
\begin{align*}
T_n \define {n\choose 2}^{-1}\sum_{i<j}\frac{1}{h_n}K\parr*{\frac{\widetilde{W_{ij}}}{h_n}}\widetilde{X}_{ij}\widetilde{X}_{ij}^\top.
\end{align*}
Denote the $q$-sparse eigenvalue of a generic real matrix $A\in\RR^{p\times p}$ as 
\begin{align*}
\|A\|_{2,q}\define \max_{v\in \mathbb{S}^{p-1},\|v\|_0\leq q}\abs*{v^\top Av},
\end{align*}
where $\mathbb{S}^{p-1}$ is the unit sphere in $\RR^p$. Choose $h$ such that $C_1(\log p/n)^{1/2} \leq h_n\leq C_2$ for some positive constant $C_1,C_2$. Moreover, given some $q\in[p]$, suppose that 
\begin{align*}
n\geq C_3\bbrace*{(\log p)(\log n)^4 \vee q^2(\log p)},
\end{align*}
where $C_3$ depends only on $C_1,C_2, \gamma_1,\gamma_2, M_K, M_W,\kappa_x$. Then, with probability at least $1-\exp\bbrace*{-c'\log(np)}$,
\begin{align*}
\|T_n-\EE\parr*{T_n}\|_{2,q} \leq cq\bbrace*{\frac{\log (np)}{n}}^{1/2},
\end{align*}
where $c,c'$ are positive constants that only depend on $C_1,C_2, \gamma_1,\gamma_2, M_K, M_W,\kappa_x$.
\end{lemma}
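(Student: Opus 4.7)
The plan is to reduce the $q$-sparse operator norm to an entrywise maximum and then handle each entry by a Hoeffding decomposition, treating the sample-mean piece with the sharp Bernstein inequality of Lemma~\ref{lemma:sample_mean_Bern} and the fully-degenerate piece with Proposition~\ref{prop:bern_U_alpha_general}. For any symmetric $A\in\RR^{p\times p}$ and any $q$-sparse unit vector $v$ one has $|v^\top A v|\leq \|v\|_1^2\|A\|_\infty\leq q\|A\|_\infty$, so
\[
\|T_n - \EE(T_n)\|_{2,q}\;\leq\; q\,\max_{k,\ell\in[p]}\bigl|(T_n - \EE(T_n))_{k\ell}\bigr|,
\]
and it suffices to establish an entrywise bound of order $\sqrt{\log(np)/n}$ with the advertised probability. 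For each fixed $(k,\ell)$ the $(k,\ell)$-entry is a U-statistic based on the stationary, geometrically $\alpha$-mixing sequence $\{(X_{i,k},X_{i,\ell},W_i)\}_{i=1}^n$ (Assumption~\ref{ass:alpha_mixing}) with shift-invariant, product-type kernel
\[
f_{k\ell}\bigl((x_1,w_1),(x_2,w_2)\bigr) = h_n^{-1}K\!\left(\tfrac{w_1-w_2}{h_n}\right)(x_{1,k}-x_{2,k})(x_{1,\ell}-x_{2,\ell}).
\]

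First I truncate: by sub-Gaussianity (Assumption~\ref{ass:sub-Gaussian-1}) the event $\{\max_{i,k}|X_{i,k}|\leq M\}$ for $M\asymp\sqrt{\log(np)}$ holds with probability at least $1-\exp(-c\log(np))$, and on this event the truncated entries agree with the originals. Next I Hoeffding-decompose the truncated entry into its first-order Hajek projection $2n^{-1}\sum_{i=1}^n\{f_{k\ell,1}(X_i,W_i)-\E f_{k\ell,1}\}$, where $f_{k\ell,1}(x,w)=\EE[f_{k\ell}((x,w),(X',W'))]$ for an i.i.d.\ copy $(X',W')$, plus a fully degenerate remainder. A Taylor expansion in $h_n$ using Assumption~\ref{ass:density_W} (paralleling the bound on $\EE(U_{1,k})$ in the proof of Theorem~\ref{thm:error_bound}) shows that $f_{k\ell,1}(x,w)$ equals $f_W(w)\E[(x_k-X_k)(x_\ell-X_\ell)\mid W=w]+O(h_n)$, which is bounded by $O(\log(np))$ on the truncated event and whose variance under the $\alpha$-mixing sequence is $O(1)$ uniformly in $(k,\ell,n)$ by Assumption~\ref{ass:sub-Gaussian-1}. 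Lemma~\ref{lemma:sample_mean_Bern} applied to $\{f_{k\ell,1}(X_i,W_i)\}$ then yields concentration of the projection at rate $\sqrt{\log(np)/n}$ with failure probability $\exp(-c'\log(np))$ provided $n\gtrsim(\log p)(\log n)^4$ (which absorbs the $B^2(\log n)^2$ Bernstein bulk term).

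The fully-degenerate remainder is controlled via the paper's kernel-expansion machinery. The shift-invariant PD factor $h_n^{-1}K(\cdot/h_n)$ is handled by Corollary~\ref{cor:random_fourier_alpha} with constants $F_K\asymp h_n^{-1}$, $B_K=1$, $\mu_{a,K}=1$; the polynomial factor admits the exact four-term expansion into the uniformly bounded bases $\{1,x_k,x_\ell,x_kx_\ell\}$ on the truncated set, yielding $F_P=4$, $B_P\asymp\log(np)$, $\mu_{a,P}=O(\kappa_x)$. Multiplicativity in Proposition~\ref{prop:stability_sum_alpha} combines the two, and the degenerate ($r=2$) case of Proposition~\ref{prop:bern_U_alpha_general} bounds the remainder at a rate of order $\mathrm{polylog}(np)/(nh_n)$, which is $o(\sqrt{\log(np)/n})$ whenever $h_n\gtrsim\sqrt{\log p/n}$ and $n\gtrsim(\log p)(\log n)^4$. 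A union bound over the $p^2$ pairs, together with the truncation event, produces $\|T_n-\EE(T_n)\|_\infty\leq c\sqrt{\log(np)/n}$ with the stated probability; multiplication by $q$ yields the claim, and the hypothesis $n\gtrsim q^2\log p$ keeps the final bound of order $o(1)$.

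The principal obstacle is that feeding the full kernel $f_{k\ell}$ directly into Proposition~\ref{prop:bern_U_alpha_general} would place the blown-up constant $F\asymp h_n^{-1}$ into the leading $p=1$ Bernstein term through $A_1\propto F^2\sigma^2$, destroying the target rate because this $F$ is not cancelled at the Condition~(A) level. The remedy, flagged in the paragraph following Theorem~\ref{thm:discontinuous_alpha} and worked out in Lemmas~\ref{lemma:perturbation}--\ref{lemma:perturbation_indep}, is to split the statistic via its Hoeffding decomposition and route only the \emph{projection} piece through the sharper sample-mean inequality of Lemma~\ref{lemma:sample_mean_Bern}, where the relevant variance is genuinely $O(1)$; the V-statistic Bernstein is then used only for the fully degenerate piece, whose faster rate tolerates the $h_n^{-1}$ factors without spoiling the final bound.
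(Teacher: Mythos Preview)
Your proposal is correct and takes essentially the same approach as the paper: the paper's proof is a two-line reduction to the entrywise $\ell_\infty$ bound via $|v^\top Av|\leq q\|A\|_\infty$, followed by a black-box invocation of Lemma~\ref{lemma:perturbation} for each fixed $(k,\ell)$, and your proposal is precisely an inline re-derivation of that lemma (truncation, Hoeffding decomposition, Lemma~\ref{lemma:sample_mean_Bern} for the projection, Proposition~\ref{prop:bern_U_alpha_general} via the kernel expansion of Lemma~\ref{lemma:expansion}(a) for the degenerate remainder). Your diagnosis of the $F\asymp h_n^{-1}$ obstacle and its resolution by routing only the degenerate piece through Proposition~\ref{prop:bern_U_alpha_general} is exactly the paper's strategy, flagged after Theorem~\ref{thm:discontinuous_alpha} and executed in Lemma~\ref{lemma:perturbation}.
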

\begin{proof}
Throughout the proof, $c$ and $c'$ represent two generic constants that do not depend on $n$. Define $\theta \define \EE\parr*{T_n}$ and $\widetilde{T}_n \define T_n - \theta$. Then it holds that
\begin{align*}
\sup_{v\in \mathbb{S}^{p-1},\|v\|_0\leq q} \abs*{v^\top \widetilde{T}_n v} \leq \sup_{v\in \mathbb{S}^{p-1},\|v\|_0\leq q} \|v\|_1^2\|\widetilde{T}_n\|_\infty \leq q\|\widetilde{T}_n\|_\infty.
\end{align*}
Next, we upper bound $\|\widetilde{T}_n\|_\infty$. For each $(k,\ell)\in[p]\times [p]$, $\{X_{ik},X_{i\ell},W_i\}_{i=1}^n$ is stationary and satisfies the geometric $\alpha$-mixing condition by Assumption \ref{ass:alpha_mixing}. Applying Lemma \ref{lemma:perturbation} to $D_i = \parr*{X_{ik},X_{i\ell},W_i}$ with its conditions verified by Assumptions \ref{ass:alpha_mixing}, \ref{ass:kernel_alpha}, \ref{ass:density_W}, \ref{ass:sub-Gaussian-1}, we obtain that when $n$ satisfies $\bbrace*{(\log p)(\log n)^4 \vee q^2(\log p)} = O(n)$,\begin{align*}
\abs*{\parr*{\widetilde{T}_n}_{k,\ell}} \leq c\bbrace*{\frac{\log(np)}{n}}^{1/2}
\end{align*}
with probability at least $1-\exp\bbrace*{-c'\log(np)}$. Taking union bound over all $(k,\ell)\in[p]\times [p]$, we obtain 
\begin{align*}
\|\widetilde{T}_n\|_\infty \leq c\bbrace*{\frac{\log(np)}{n}}^{1/2}
\end{align*}
with probability at least $1-\exp\bbrace*{-c'\log(np)}$. Therefore, we obtain
\begin{align*}
\sup_{v\in \mathbb{S}^{p-1},\|v\|_0\leq q} \abs*{v^\top \widetilde{T}_n v} \leq cq\bbrace*{\frac{\log(np)}{n}}^{1/2}
\end{align*}
with probability at least $1-\text{exp}\bbrace*{-c'\log(np)}$.
\end{proof}

\begin{lemma}
\label{lemma:empirical_RE}
Suppose Assumptions \ref{ass:alpha_mixing} - \ref{ass:sub-Gaussian-1} in Section \ref{subsubsec:assumptions}. Assume $C_1(\log p/n)^{1/2}\leq h_n \leq C_2$ for some positive constants $C_1, C_2$, and moreover,
\begin{align*}
n\geq C_3\bbrace*{(\log p)(\log n)^4 \vee s^2(\log p)},
\end{align*}
where $C_3$ depends only on $C_1,C_2, \gamma_1,\gamma_2, M_K, M_W,M_\ell,\kappa_\ell,\kappa_x$. Then, with probability at least $1-\exp\bbrace*{-c\log(np)}$, Assumption 3 in Theorem 2.1 in \cite{han2017adaptive} holds with 
\begin{align*}
\delta \Gamma_n(\Delta) \geq \frac{1}{4}\kappa_\ell M_\ell\|\Delta\|^2
\end{align*}
uniformly over all $\Delta \in \cal{C}(S_n,3)$, where $M_\ell,\kappa_\ell$ are from Assumptions \ref{ass:informative} and \ref{ass:RE} and  
\begin{align*}
\Gamma_n(\beta) = \num\sum_{i<j}\frac{1}{h}K\parr*{\frac{\WW}{h}}\parr*{\YY-\XX^\top\beta}^2.
\end{align*}
\end{lemma}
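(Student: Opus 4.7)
The plan is to reduce the problem to controlling the quadratic form $\Delta^\top T_n \Delta$ where $T_n$ is defined in Lemma~\ref{lemma:sparse_eigenvalue}. Since $\Gamma_n(\beta)$ is quadratic in $\beta$, a direct computation gives $\delta\Gamma_n(\Delta) = \Delta^\top T_n \Delta$, and we decompose $\Delta^\top T_n\Delta = \Delta^\top \EE\parr*{T_n}\Delta + \Delta^\top\bbrace*{T_n - \EE\parr*{T_n}}\Delta$. The goal is to establish a population lower bound $\Delta^\top\EE\parr*{T_n}\Delta\geq \frac{1}{2}M_\ell\kappa_\ell\|\Delta\|^2$ and a stochastic upper bound $\abs*{\Delta^\top\bbrace*{T_n - \EE\parr*{T_n}}\Delta}\leq \frac{1}{4}M_\ell\kappa_\ell\|\Delta\|^2$ uniformly over $\Delta\in\cal{C}(S_*,3)$; their sum yields the claim.

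For the population bound, I would apply the change of variables $u = \WW/h$ and a first-order Taylor expansion of $\ff_{\WW\mid\XX}(w,\widetilde{x})$ around $w = 0$, exactly parallel to the calculation of $\abs*{\EE\parr*{U_{1,k}}}$ in the proof of Theorem~\ref{thm:error_bound}. This yields
\begin{align*}
\Delta^\top \EE\parr*{T_n}\Delta = \ff_{\WW}(0)\cdot\EE\bbrace*{\parr*{\Delta^\top\XX}^2\mid\WW = 0} + R,
\end{align*}
where $\abs*{R}\leq ChM_KM_W\EE\bbrace*{\parr*{\Delta^\top\XX}^2}$ follows from Assumptions~\ref{ass:kernel_alpha} and~\ref{ass:density_W}. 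By the sub-Gaussianity in Assumption~\ref{ass:sub-Gaussian-1}, $\EE\bbrace*{\parr*{\Delta^\top\XX}^2}\leq C'\kappa_x^2\|\Delta\|^2$, while the leading term is at least $M_\ell\kappa_\ell\|\Delta\|^2$ by Assumptions~\ref{ass:informative} and~\ref{ass:RE}. Choosing $C_2$ small enough (which can be absorbed into the definition of $C_3$), the remainder satisfies $\abs*{R}\leq \frac{1}{2}M_\ell\kappa_\ell\|\Delta\|^2$, yielding the population lower bound.

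For the stochastic term, I would use the crude estimate $\abs*{\Delta^\top A\Delta}\leq \|\Delta\|_1^2\|A\|_\infty$ with $A = T_n - \EE\parr*{T_n}$. Applying Lemma~\ref{lemma:perturbation} entrywise to each $(k,\ell)$ pair (mirroring the argument inside the proof of Lemma~\ref{lemma:sparse_eigenvalue} and valid under the mixing and moment assumptions and the sample-size condition $n\geq C_3(\log p)(\log n)^4$) gives $\|T_n - \EE\parr*{T_n}\|_\infty\leq c\sqrt{\log(np)/n}$ with probability at least $1-\exp\bbrace*{-c'\log(np)}$ after a union bound over $p^2$ entries. The cone condition $\|\Delta\|_1\leq 4\sqrt{s}\|\Delta\|$ then delivers $\abs*{\Delta^\top\bbrace*{T_n - \EE\parr*{T_n}}\Delta}\leq 16cs\sqrt{\log(np)/n}\|\Delta\|^2$, which is bounded by $\frac{1}{4}M_\ell\kappa_\ell\|\Delta\|^2$ as soon as $n\geq C_3 s^2\log p$ with $C_3$ sufficiently large.

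The main obstacle is the population step: one must verify, through the Taylor expansion of the kernel-weighted expectation, that the bias coming from integrating over $\WW\neq 0$ is an $O(h)$-perturbation in the operator sense and is dominated by the restricted eigenvalue at $\WW = 0$. Controlling this bias relies on combining the smoothness assumption~\ref{ass:density_W} (derivative of $f_{W\mid X}$ in $w$) with the sub-Gaussian moment bound, which together enable a uniform-in-$\Delta$ control of $\EE\bbrace*{\parr*{\Delta^\top\XX}^2}$ by $C'\kappa_x^2\|\Delta\|^2$ without picking up additional sparsity-dependent factors.
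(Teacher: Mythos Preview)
Your proposal is correct and follows the same overall strategy as the paper: identify $\delta\Gamma_n(\Delta)=\Delta^\top T_n\Delta$, split into the population piece $\Delta^\top\EE(T_n)\Delta$ and the fluctuation $\Delta^\top(T_n-\EE(T_n))\Delta$, lower-bound the former via a Taylor expansion around $\WW=0$ combined with Assumptions~\ref{ass:informative}--\ref{ass:RE}, and control the latter through the entrywise bound on $\|T_n-\EE(T_n)\|_\infty$ coming from Lemma~\ref{lemma:perturbation}.

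The one place your argument differs from the paper is the passage from the entrywise bound to a uniform bound over the cone. The paper first invokes Lemmas~F.1 and~F.3 of \cite{basu2015regularized} to reduce $\sup_{v\in\cal{C}(S_n,3)\cap\mathbb{B}(1)}|v^\top\widetilde{T}_nv|$ to a supremum over $2s$-sparse unit vectors, and then applies Lemma~\ref{lemma:sparse_eigenvalue} with $q=2s$; you instead use the direct H\"older bound $|\Delta^\top\widetilde{T}_n\Delta|\le\|\Delta\|_1^2\|\widetilde{T}_n\|_\infty$ together with the cone inequality $\|\Delta\|_1\le 4\sqrt{s}\|\Delta\|$. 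Both routes yield the same $O\!\bigl(s\sqrt{\log(np)/n}\,\bigr)\|\Delta\|^2$ control, and in fact the paper's Lemma~\ref{lemma:sparse_eigenvalue} is itself proved via the same $\ell_1$--$\ell_\infty$ inequality you use, so your shortcut simply bypasses the intermediate sparse-approximation machinery without any loss.
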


\begin{proof}
Throughout the proof, $c$ is a generic constant. Recall the definition of $T_n$ from Lemma \ref{lemma:sparse_eigenvalue}:
\begin{align*}
T_n = \num\sum_{i<j}\frac{1}{h}K\parr*{\frac{\WW}{h}}\XX\XX^\top,
\end{align*}
$\theta = \EE\parr*{T_n}$ and $\tilde{T}_n = T_n - \theta$. It can be readily checked that $\delta\Gamma_n(\Delta) = \Delta^\top T_n\Delta$ for any $\Delta\in\RR^p$.  Denote $\cal{K}(s) \define \{v\in\RR^p: \|v\|_0\leq s, \|v\| = 1\}$, then by Lemma F.1 and F.3 in \cite{basu2015regularized}, it holds that
\begin{align*}
\sup_{v\in\cal{C}(S_n, 3)\bigcap \mathbb{B}(1)}\abs*{v^\top \widetilde{T}_n v} \leq 25\sup_{v\in\text{cl}\bbrace*{\text{conv}(\cal{K}(s))}}\abs*{v^\top\widetilde{T}_nv}\leq 75\sup_{v\in\cal{K}(2s)}\abs*{v^\top\widetilde{T}_nv},
\end{align*}
where $\text{cl}(A)$ and $\text{conv}(A)$ are the closure and convex hull of set $A$. Applying Lemma \ref{lemma:sparse_eigenvalue} with $q = 2s$ with its scaling requirement satisfied, it holds with probability at least $1-\text{exp}\bbrace*{-c\log(np)}$ that
\begin{align*}
\sup_{v\in\cal{C}(S_n,3)\bigcap\mathbb{B}(1)}\abs*{v^\top\widetilde{T}_nv} &\leq \frac{1}{4}\kappa_\ell M_\ell.
\end{align*}
Thus we have uniformly for $v\in\cal{C}(S_n,3)\bigcap \mathbb{B}(1)$ that
\begin{align*}
v^\top T_nv &= v^\top \widetilde{T}_n v  + v^\top\theta v \geq v^\top \theta v - \abs*{v^\top\widetilde{T}_n v}\geq v^\top\theta v-\kappa_\ell M_\ell/4.
\end{align*}
For the first term, we have
\begin{align*}
v^\top\theta v &= \EE\bbrace*{\frac{1}{h}K\parr*{\frac{\WW}{h}}\parr*{\XX^\top v}^2}\\
&\geq \EE\bbrace*{\parr*{\XX^\top v}^2\mid W_i=W_j}\cdot \ff_{\WW}(0) - M_WM_Kh\EE\bbrace*{\parr*{\XX^\top v}^2}\\
&\geq \kappa_\ell M_\ell - 2M_WM_Kh\parr*{2\kappa_x^2}\\
&\geq \frac{1}{2}\kappa_\ell M_\ell
\end{align*}
for sufficiently large $n$, where in the third line we invoke Assumption \ref{ass:informative}, \ref{ass:RE} and the sub-Gaussianity of $\XX^\top v$. Putting together the pieces, we have
\begin{align*}
\delta \Gamma_n(\Delta) = \abs*{\Delta^\top T_n\Delta} \geq \kappa_\ell M_\ell/4
\end{align*}
uniformly over all $\Delta\in\cal{C}(S_n,3)\bigcap\mathbb{B}(1)$ with probability at least $1-\text{exp}\bbrace*{-c\log(np)}$.
\end{proof}

\begin{lemma}
\label{lemma:perturbation}
Let $D_i = (X_i, V_i, W_i) \in\RR\times\RR\times\RR$ be generated from a stationary sequence for $i = 1,\ldots, n$, satisfying a geometric $\alpha$-mixing condition with coefficient $\alpha(i) \leq \text{exp}(-\gamma i)$ for all $i\geq 1$ and some positive $\gamma$. Let $K(\cdot)$ be a positive density kernel such that $\int K(u)du = 1$ and 
\begin{align*}
\max\bbrace*{\int |u|K(u)du, \sup_{u\in\RR}K(u)}\leq M_K
\end{align*}
for some positive constant $M_K$. Assume that conditional on $W_i$ and unconditionally, $X_i$ and $V_i$ are sub-Gaussian with constants $\kappa_x^2$ and $\kappa_v^2$, respectively. Assume that there exists some positive absolute constant $M_W$, such that
\begin{align*}
\max\bbrace*{\abs*{\frac{\partial f_{W\mid X,V}(w,x,v)}{\partial w}}, f_{W\mid X,V}(w,x,v)}\leq M_W
\end{align*}
for any $(w,x,v)$ such that the densities are defined. Take $h_n\geq K_1\{\log(np)/n\}^{1/2}$ for some positive absolute constant $K_1$, and further assume that $h_n \leq C_0$ for some constant $C_0$. Consider the V-statistic
\begin{align*}
V_n \define n^{-2}\sum_{i,j=1}^n \frac{1}{h}K\parr*{\frac{W_i-W_j}{h}}(X_i-X_j)(V_i-V_j)
\end{align*}
and concentration parameter
\begin{align*}
\theta \define \EE\bbrace*{\frac{1}{h}K\parr*{\frac{W_i-W_j}{h}}(X_i-X_j)(V_i-V_j)},
\end{align*}
then under the scaling $n = \Omega\bbrace*{(\log p)^3(\log n)^4}$, the following holds:
\begin{align*}
\abs*{V_n - \theta} \leq c\bbrace*{\frac{\log(np)}{n}}^{1/2}
\end{align*}
with probability at least $1-\textup{exp}\bbrace*{-c'\log(np)}$, where $c,c'$ are positive constants that do not depend on $n$.
\end{lemma}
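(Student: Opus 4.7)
The plan is to adopt the split described in the remark after Corollary \ref{cor:discontinuous_tail}: bound the sample-mean and the fully degenerate second-order parts of the Hoeffding decomposition of $V_n-\theta$ using two different tools. This split is essential because the product expansion I will construct has $F\asymp 1/h$, so a direct application of Proposition \ref{prop:bern_U_alpha_general} to the full nondegenerate kernel would be loose at the linear order. Since $(x_1-x_2)(v_1-v_2)$ is already symmetric, I may assume $K$ is even (symmetrize otherwise; this leaves $V_n$ unchanged). Writing the canonical decomposition
\[
f(d_1,d_2)-\theta = f_1(d_1)+f_1(d_2)+f_2(d_1,d_2)
\]
under the product measure yields $V_n-\theta=\tfrac{2}{n}\sum_i f_1(D_i)+n^{-2}\sum_{i,j}f_2(D_i,D_j)$, and the diagonal vanishes since $(X_i-X_i)(V_i-V_i)=0$. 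A preliminary truncation at $M=C\sqrt{\log(np)}$, justified by sub-Gaussianity of $X$ and $V$, places all observations inside $[-M,M]^2$ in the $(X,V)$-coordinates with probability $\ge 1-(np)^{-c}$ for arbitrarily large $c$.

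For the linear term I would first derive $|f_1(d_1)|\lesssim M_W(|x_1|+1)(|v_1|+1)+|\theta|$ by integrating $W_2$ against the bounded conditional density $f_{W\mid X,V}$; combined with sub-Gaussianity of $(X,V)$ this keeps $|f_1(D_i)|\le O(\log(np))$ on the truncation event and bounds its variance by a constant. Since $\{f_1(D_i)\}$ inherits geometric $\alpha$-mixing from $\{D_i\}$, Lemma \ref{lemma:sample_mean_Bern} applied to the centered truncated variables gives $|n^{-1}\sum_i f_1(D_i)|\le c\sqrt{\log(np)/n}$ with probability at least $1-\exp(-c'\log(np))$ under the scaling $n=\Omega((\log p)^3(\log n)^4)$.

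For the degenerate piece $R_n:=n^{-2}\sum_{i,j}f_2(D_i,D_j)$ I would invoke Proposition \ref{prop:bern_U_alpha_general} on a symmetric compact $\cal{C}$ obtained by truncating $X,V$ at $M$ and $W$ at some polynomial-in-$n$ level $M_w$. Condition (A) on $\cal{C}$ would be verified by exploiting the product structure $f(d_1,d_2)=h^{-1}K((w_1-w_2)/h)\cdot(x_1-x_2)\cdot(v_1-v_2)$ together with an adaptation of the multiplicativity clause of Proposition \ref{prop:stability_sum_alpha} to shared arguments: the first factor is handled by Corollary \ref{cor:random_fourier_alpha} (using positive definiteness and smoothness of $K$ inherited from Assumption \ref{ass:kernel_alpha}), giving $F_1\asymp K(0)/h$, $B_1=1$, $\mu_a^{(1)}=1$; the two linear factors admit the exact two-term expansions $x_1-x_2=1\cdot x_1 - x_2\cdot 1$, giving $F_{2,3}\le 2$, $B_{2,3}\le M$, and—crucially—$\mu_a^{(2,3)}\lesssim \kappa_x\vee\kappa_v$ via the $L^a$-norm rather than the sup-norm, so that the combined variance proxy $\sigma^2$ in \eqref{eq:bern_seq} stays $O(1)$. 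The combined constants are $F\asymp 1/h$, $B\lesssim M$, $\mu_a\lesssim 1$. Plugging these into Proposition \ref{prop:bern_U_alpha_general} at $p=2$ yields $A_2^{1/2}\asymp 1/h$ and $M_2^{1/2}\asymp h^{-1/2}M(\log n)^2$, so that for $x\asymp\sqrt{\log(np)/n}$ the dominant tail $\exp(-c\,nx/A_2^{1/2})$ becomes $\exp(-c\,h\sqrt{n\log(np)})\le \exp(-c\log(np))$ under the lower bound $h\ge K_1\sqrt{\log(np)/n}$, and the ``$M_2$'' term is absorbed by the stated scaling.

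The hard part will be honest bookkeeping of the residual bias $t'$ defined in \eqref{eq:bias} and the out-of-set probability $R_{n,2}$ in Proposition \ref{prop:bern_U_alpha_general}: the present hypotheses control only the conditional density of $W$, not its tails, so one must either extract a weak tail bound on $W$ from boundedness of $f_W$ combined with $\int|u|K(u)du\le M_K$, or refine the compact truncation in the $W$-coordinates by observing that $h^{-1}K((w_1-w_2)/h)$ depends only on $w_1-w_2$, so the $\varepsilon$-net in the Fourier-feature cover need only be taken in the one-dimensional difference variable on $[-2M_w,2M_w]$, and $M_w$ may be chosen a power of $n$ at negligible cost. Once this is in place, combining the linear-term bound, the degenerate tail bound, and the truncation residual via a union bound yields $|V_n-\theta|\le c\sqrt{\log(np)/n}$ with probability at least $1-\exp(-c'\log(np))$, as required.
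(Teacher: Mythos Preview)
Your proposal is correct and matches the paper's proof almost exactly: truncate $X,V$ at level $\sqrt{\log(np)}$, Hoeffding-decompose, handle the linear part via Lemma~\ref{lemma:sample_mean_Bern} and the degenerate part via the fully degenerate case of Proposition~\ref{prop:bern_U_alpha_general} with Condition~(A) verified through the product structure (Corollary~\ref{cor:random_fourier_alpha} for $K$, exact four-term expansion for the truncated bilinear factor; this is packaged as Lemma~\ref{lemma:expansion}(a) in the paper). The only organizational difference is that the paper builds the truncation indicators into the kernel $\tilde h$ \emph{before} decomposing---which makes the Hoeffding components automatically bounded and centered and isolates the bias $|\theta-\tilde\theta|$ as a separate sub-Gaussian calculation---and then disposes of the $W$-coordinate by taking $B_W$ polynomially large; your caution about the $W$-tail is well-placed, since the lemma's stated hypotheses alone do not control it and the paper leans implicitly on the moment condition supplied by Assumption~\ref{ass:dist_W} in the downstream application.
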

\begin{proof}
Throughout the proof, $c_1,c_2$ are generic positive constants. Define the following quantities for the truncated version of the V-statistic:
\begin{align*}
&\widetilde{h}(D_i,D_j) \define \frac{1}{h}K\parr*{\frac{W_i-W_j}{h}}(X_i-X_j)(V_i-V_j)M_1(X_i)M_1(X_j)M_1(V_i)M_1(V_j),\\
&\widetilde{V}_n \define n^{-2}\sum_{i,j=1}^n\tilde{h}(D_i, D_j),\quad \widetilde{\theta} \define \EE\bbrace*{\widetilde{h}(D_i, D_j)},
\end{align*}
where $M_1(t) = \mathbbm{1}\bbrace*{|t|\leq \kappa_1\sqrt{\log(np)}}$, and $\kappa_1$ is a truncation constant to be specified later. Further define $Z_i \define (X_i, V_i)$ and $Z_{ij} \define (X_i - X_j)(V_i-V_j)$. \\
\textbf{Step 1:} Upper bound the truncated V-statistic.\\
Via Hoeffding decomposition, we have
\begin{align*}
\widetilde{V}_n - \widetilde{\theta} = \frac{2}{n}\sum_{i=1}^n g_1(D_i) + n^{-2}\sum_{i,j=1}^ng_2(D_i,D_j),
\end{align*}
where $g_1(D_i) = \E^j\bbrace*{\tilde{h}(D_i,D_j)}-\widetilde{\theta}, g_2(D_i,D_j) = \tilde{h}(D_i,D_j) - g_1(D_i) - g_1(D_j) - \tilde{\theta}$. It can be readily checked that $g_1$ and $g_2$ are both degenerate kernels. For any $t>0$, it holds that
\begin{align*}
\P\parr*{\abs*{\widetilde{V}_n-\widetilde{\theta}} > t} \leq \P\bbrace*{\abs*{\frac{1}{n}\sum_{i=1}^ng_1(D_i)}\geq \frac{t}{4}} + \P\bbrace*{\abs*{n^{-2}\sum_{i,j=1}^ng_2(D_i,D_j)}\geq \frac{t}{2}}.
\end{align*}
We now upper bound the two summands separately. Define $f(D_i) = \E^j \bbrace*{\widetilde{h}(D_i,D_j)}$ so that $g_1(D_i) = f(D_i) - \widetilde{\theta} = f(D_i) - \E\bbrace*{f(D_i)}$. Applying Lemma A3.3 in \cite{Han2018SupplementT} with $M_1 = M_W$ and $M_2 = M_K$, we have
\begin{align*}
\abs*{f(D_i) - f_1(D_i)} \leq f_2(D_i),
\end{align*}
where, with definition $\varphi(Z_i,Z_j) = Z_{ij}M(X_i)M(X_j)M(V_i)M(V_j)$,
\begin{equation}
\begin{aligned}
\label{eq:f1f2_sub}
f_1(D_i) &= \E^j\bbrace*{\varphi(Z_i,Z_j)\mid W_i = W_j}\cdot f_{W}(W_i)\leq M_W\E^j\bbrace*{\varphi(Z_i,Z_j)\mid W_i = W_j},\\
f_2(D_i) &= MM_Kh\E^j\bbrace*{|\varphi(Z_i,Z_j)|} \leq C_0M_WM_K\E^j\bbrace*{|\varphi(Z_i,Z_j)|}.
\end{aligned}
\end{equation}
Therefore, since $\|\varphi\|_\infty = O(\log(np))$, we have $\|f\|_\infty = O(\log(np))$. Furthermore, we have
\begin{align*}
\E\bbrace*{f_1^2(D_i)} &\leq M_W^2\EE\parr*{Z_{ij}^2}\\
&= M_W^2\EE\bbrace*{(X_i-X_j)^2(V_i-V_j)^2}\\
&\leq M_W^2 \fence*{\EE\bbrace*{(X_i-X_j)^4}}^{1/2}\fence*{\EE\bbrace*{(V_i-V_j)^4}}^{1/2}\\
&\leq M_W^2\parr*{3\cdot 4\kappa_x^4}^{1/2}\parr*{3\cdot 4\kappa_v^4}^{1/2}\\
&= 12M_W^2\kappa_x^2\kappa_v^2,
\end{align*}
where in the fourth line we use the fact that under the product measure, $(X_i-X_j)$ and $(V_i-V_j)$ are both sub-Gaussian with constants at most $2\kappa_x^2$ and $2\kappa_v^2$. Similarly, we have 
\begin{align*}
\E\bbrace*{f_2^2(D_i)} \leq C_0^2M_W^2M_K^2\EE\bbrace*{\varphi^2(Z_i,Z_j)} \leq C_0^2M_W^2M_K^2\EE\parr*{Z_{ij}^2}\leq 12C_0^2M_W^2M_K^2\kappa_x^2\kappa_v^2.
\end{align*}
Therefore, 
\begin{align*}
\E\bbrace*{f^2(D_i)} \leq 2\E\bbrace*{f_2^2(D_i)} + 2\E\bbrace*{f_1^2(D_i)}\leq 24M_W^2\kappa_x^2\kappa_v^2(1+C_0^2M_K^2).
\end{align*}
Since $\{D_i\}_{i=1}^n$ is geometrically $\alpha$-mixing, so is $\{g_1(D_i)\}_{i=1}^n$, thus by Lemma \ref{lemma:sample_mean_Bern} it holds that 
\begin{align*}
\P\bbrace*{\abs*{\frac{1}{n}\sum_{i=1}^n g_1(D_i)}>\frac{t}{4}} &= \P\bbrace*{\abs*{\frac{1}{n}\sum_{i=1}^n f(D_i) - \frac{1}{n}\sum_{i=1}^n\E\fence*{f(D_i)}}\geq \frac{t}{4}}\\
&\leq \exp\bbrace*{-\frac{C_3n^2t^2}{n\sigma^2+B^2+ntB(\log n)^2}},
\end{align*}
where $B = \|g_1(D_i)\|_\infty \leq 2\|f(D_i)\|_\infty = O(\log(np))$, and 
\begin{align*}
\sigma^2 = \var\bbrace*{g_1(D_1)} + 2\sum_{i>1}^{\infty} \abs*{\cov\bbrace*{g_1(D_1),g_1(D_i)}}.
\end{align*}
By previous calculation, it holds that the first summand $\var\bbrace*{g_1(D_1)}\leq \E\bbrace*{f^2(D_1)}$ is upper bounded by an absolute constant. Thus in order to show that $\sigma^2$ is also upper bounded by an absolute constant, it suffices to show that the second summation is also bounded. Since $\{g_1(D_i)\}_{i=1}^n$ is geometrically $\alpha$-mixing, applying Lemma \ref{lemma:alpha_covariance} with $p = q = 2+\delta$ and $r = (2+\delta)/\delta$ for some positive number $\delta$ gives
\begin{align*}
\sum_{i>1}^{\infty} \abs*{\cov\bbrace*{g_1(D_1),g_1(D_i)}} \leq \bbrace*{\sum_{i=1}^\infty \alpha^{\delta/(2+\delta)}(i)}\fence*{\E\bbrace*{|g_1(D_1)|^{2+\delta}}}^{2/(2+\delta)}.
\end{align*}
By the condition on the $\alpha$-mixing coefficient, it suffices to show that $\E\bbrace*{|g_1(D_1)|^{2+\delta}} < \infty$. To this end, we have
\begin{align*}
\E\bbrace*{\abs*{g_1(D_i)}^{2+\delta}} &= \E\fence*{\abs*{f(D_i)-\E\bbrace*{f(D_i)}}^{2+\delta}}\\
&\leq 2^{(2+\delta)-1}\fence*{\E\bbrace*{\abs*{f(D_i)}^{2+\delta}} + \abs*{\E\bbrace*{f(D_i)}}^{2+\delta}}\\
&\leq 2^{2+\delta}\E\bbrace*{\abs*{f(D_i)}^{2+\delta}}\\
&\leq 2^{2+\delta}\E\fence*{\bbrace*{\abs*{f_1(D_i)} + \abs*{f_2(D_i)}}^{2+\delta}}\\
&\leq 2^{3+2\delta}\fence*{\E\bbrace*{\abs*{f_1(D_i)}^{2+\delta}} + \E\bbrace*{\abs*{f_2(D_i)}^{2+\delta}}},
\end{align*}
where $f_1,f_2$ are defined in \eqref{eq:f1f2_sub}. For the first summand, it holds that
\begin{align*}
\E\bbrace*{\abs*{f_1(D_i)}^{2+\delta}} &\leq M^{2+\delta}\EE\parr*{|Z_{ij}|^{2+\delta}}\\
&= M^{2+\delta}\EE\parr*{\abs*{X_i-X_j}^{2+\delta}\abs*{V_i-V_j}^{2+\delta}}\\
&\leq M^{2+\delta}\fence*{\EE\bbrace*{\abs*{X_i-X_j}^{2(2+\delta)}}}^{1/2}\fence*{\EE\bbrace*{\abs*{V_i-V_j}^{2(2+\delta)}}}^{1/2}\\
&\leq M^{2+\delta}\bbrace*{\sqrt{4+2\delta}\parr*{\sqrt{2}\kappa_x}}^{2+\delta}\bbrace*{\sqrt{4+2\delta}\parr*{\sqrt{2}\kappa_v}}^{2+\delta},
\end{align*}
where in the last step we use the sub-Gaussianity of $(X_i-X_j)$ and $(V_i-V_j)$ under the product measure. Similar calculation shows that the second term is also upper bounded by an absolute constant. This concludes that $\sigma^2$ is upper bounded by an absolute constant. Choosing $t \asymp \{\log(np)/n\}^{1/2}$, then under the scaling $(\log p)^3(\log n)^4 = O(n)$, it holds that 
\begin{align}
\label{eq:tail_1_sub}
\P\fence*{\abs*{\frac{1}{n}\sum_{i=1}^n g_1(D_i)}>c_1\bbrace*{\frac{\log(np)}{n}}^{1/2}} \leq \text{exp}\bbrace*{-c_2\log(np)}.
\end{align}
Next, we consider the tail of the degenerate V-statistic generated by the canonical kernel $g_2(D_i,D_j)$. 
To this end, for arbitrary positive constants $t^\prime$ and $B_W$, Lemma \ref{lemma:expansion}(a) verifies Condition (A$^\prime$) with $\cal{C} = \Big\{[-B_W,B_W]\times (-\infty,+\infty) \times (-\infty, +\infty)\Big\}^2$ and constants:
\begin{align*}
F(t^\prime)\asymp h^{-1} \asymp n^{1/2}(\log p)^{-1/2}, \quad B(t^\prime) \asymp \log p, \quad \mu_a(t^\prime) \asymp 1
\end{align*}
for all $a\geq 1$. Moreover, by choosing $B_W$ to be large enough, $t$ defined in \eqref{eq:bias} in the main paper satisfies $t = O(t^\prime)$. Applying the fully degenerate version of Proposition \ref{prop:bern_U_alpha_general} with $x\asymp t \asymp t^\prime \asymp \bbrace*{(\log np)/n}^{1/2}$, then we have
\begin{align}
\label{eq:tail_3_sub}
\P\fence*{\abs*{n^{-2}\sum_{i,j=1}^n g_2(D_i,D_j)}\geq c_1\bbrace*{\frac{\log(np)}{n}}^{1/2}} \leq \text{exp}\bbrace*{-c_2\log (np)}
\end{align}
under the scaling $(\log p)^3(\log n)^4 = O(n)$. Combining \eqref{eq:tail_1_sub} - \eqref{eq:tail_3_sub}, we have
\begin{align*}
\P\fence*{\abs*{\tilde{V}_n - \tilde{\theta}}\geq c_1\bbrace*{\frac{\log(np)}{n}}^{1/2}} \leq \text{exp}\bbrace*{-c_2\log(np)}.
\end{align*}
\\
\textbf{Step 2:} Next we calculate the difference between $\tilde{\theta}$ and $\theta$. By definition and symmetry, 
\begin{align*}
\abs*{\tilde{\theta} - \theta} &= \EE\bigg[\frac{1}{h}K\parr*{\frac{W_i-W_j}{h}}(X_i-X_j)(V_i-V_j)\cdot \mathbbm{1}\bbrace*{\abs*{X_i}\geq \kappa_1\sqrt{\log(np)}}\cdot\\
&\ms\mathbbm{1}\bbrace*{|X_j|\geq \kappa_1\sqrt{\log(np)}}\cdot\mathbbm{1}\bbrace*{|V_i|\geq \kappa_1\sqrt{\log(np)}}\cdot\mathbbm{1}\bbrace*{|V_i|\geq \kappa_1\sqrt{\log(np)}}\bigg]\\
&\leq 2\EE\fence*{\frac{1}{h}K\parr*{\frac{W_i-W_j}{h}}Z_{ij}\mathbbm{1}\bbrace*{|X_i|\geq \kappa_1\sqrt{\log(np)}}} + \\
&\ms2\EE\fence*{\frac{1}{h}K\parr*{\frac{W_i-W_j}{h}}Z_{ij}\mathbbm{1}\bbrace*{|V_i|\geq \kappa_1\sqrt{\log(np)}}}.
\end{align*}
For the first term, note that
\begin{align*}
\EE\fence*{\frac{1}{h}K\parr*{\frac{\WW}{h}}Z_{ij}\mathbbm{1}\bbrace*{|X_i|\geq \kappa_1\sqrt{\log(np)}}} &\leq \parr*{\EE\fence*{\frac{1}{h}K\parr*{\frac{\WW}{h}}\XX^2\mathbbm{1}\bbrace*{|X_i|\geq \kappa_1\sqrt{\log(np)}}}}^{1/2}\cdot\\
&\ms\parr*{\EE\fence*{\frac{1}{h}K\parr*{\frac{\WW}{h}}\VV^2\mathbbm{1}\bbrace*{|X_i|\geq \kappa_1\sqrt{\log(np)}}}}^{1/2}.
\end{align*}
Applying Lemma A3.2 in \cite{Han2018SupplementT} to the first term with $Z = \XX^2\mathbbm{1}\bbrace*{\abs*{X_i}\geq \kappa_1\sqrt{\log(np)}}, M_1 = M_W, M_2 = M_K$, it holds that
\begin{align*}
&\EE\fence*{\frac{1}{h}K\parr*{\frac{\WW}{h}}\XX^2\mathbbm{1}\bbrace*{\abs*{X_i}\geq \kappa_1\sqrt{\log(np)}}}\\
&\leq \EE\fence*{\XX^2\mathbbm{1}\bbrace*{\abs*{X_i}\geq \kappa_1\sqrt{\log(np)}}\mid\WW=0}\cdot \ff_{\WW}(0) + M_WM_Kh\EE\fence*{\XX^2\mathbbm{1}\bbrace*{\abs*{X_i}\geq \kappa_1\sqrt{\log(np)}}}\\
&\leq c\parr*{\EE\fence*{\XX^2\mathbbm{1}\bbrace*{\abs*{X_i}\geq \kappa_1\sqrt{\log(np)}}\mid\WW=0} + \EE\fence*{\XX^2\mathbbm{1}\bbrace*{\abs*{X_i}\geq \kappa_1\sqrt{\log(np)}}}},
\end{align*}
where in the last line we use the upper bound on $\ff_{\WW}$ calculated by Lemma A4.16 in \cite{Han2018SupplementT}. Since, conditional on $\WW=0$ and unconditionally, $\XX$ is sub-Gaussian with constant at most $2\kappa_x^2$ under the product measure, it holds that
\begin{align*}
\EE\fence*{\XX^2\mathbbm{1}\bbrace*{|X_i|\geq \kappa_1\sqrt{\log(np)}}} &\leq \bbrace*{\EE\parr*{\XX^4}}^{1/2}\fence*{\P\bbrace*{|X_i|\geq \kappa_1\sqrt{\log(np)}}}^{1/2}\\
&\leq \parr*{3\cdot4\kappa_x^4}^{1/2} \cdot \exp\bbrace*{-\frac{\kappa_1^2}{2\kappa_x^2}\log(np)}\\
&\leq c\bbrace*{\frac{\log(np)}{n}}^{1/2}
\end{align*}
for sufficiently large $\kappa_1$. Similarly, it can be shown that 
\begin{align*}
\EE\fence*{\XX^2\mathbbm{1}\bbrace*{|X_i|\geq \kappa_1\sqrt{\log(np)}}\mid \WW=0} \leq c\bbrace*{\frac{\log(np)}{n}}^{1/2}.
\end{align*}
Put together the pieces gives
\begin{align*}
\abs*{\tilde{\theta} - \theta} \leq c\bbrace*{\frac{\log(np)}{n}}^{1/2}.
\end{align*}
\textbf{Step 3:} With the definition $\cal{E}_n \define \bigcap_{i=1}^n\bbrace*{|X_i|\leq \kappa_1\sqrt{\log(np)}, |V_i|\leq\kappa_1\sqrt{\log(np)}}$ and $\cal{E}_n^c$ as its complement, we have
\begin{align*}
\P\parr*{\cal{E}_n^c} &\leq n\cdot\P\bbrace*{|X_i|\geq \kappa_1\sqrt{\log(np)}}+\P\bbrace*{|V_i| \geq \kappa_1\sqrt{\log(np)}}\\
&\leq n\fence*{\exp\bbrace*{-\frac{\kappa_1^2\log(np)}{2\kappa_x^2}}+\exp\bbrace*{-\frac{\kappa_1^2\log(np)}{2\kappa_v^2}}}\\
&\leq \frac{c}{p^2}
\end{align*}
for sufficiently large $\kappa_1$. Lastly, putting together the pieces, we have
\begin{align*}
\P\fence*{\abs*{V_n-\theta} \geq c_1\bbrace*{\frac{\log(np)}{n}}^{1/2}} &\leq \P\fence*{\abs*{V_n-\theta} \geq c_1\bbrace*{\frac{\log(np)}{n}}^{1/2}\bigcap \cal{E}_n} + \P\parr*{\cal{E}_n^c}\\
&\leq \P\fence*{\abs*{\widetilde{V}_n-\theta}\geq c_1\bbrace*{\frac{\log(np)}{n}}^{1/2}} + \frac{c}{p^2}\\
&\leq \P\fence*{\abs*{\widetilde{V}_n-\widetilde{\theta}} + \abs*{\widetilde{\theta}-\theta}\geq c_1\bbrace*{\frac{\log(np)}{n}}^{1/2}} + \frac{c}{p^2}\\
&\leq \P\fence*{\abs*{\widetilde{V}_n-\widetilde{\theta}} \geq c_1\bbrace*{\frac{\log(np)}{n}}^{1/2}} + \frac{c}{p^2}\\
&\leq \exp\bbrace*{-c_2\log(np)}.
\end{align*}
This completes the proof.
\end{proof}

\begin{lemma}
\label{lemma:perturbation_g}
Let $D_i = (X_i, W_i)\in\RR^p\times\RR$ be generated from a stationary sequence for $i = 1,\ldots, n$, satisfying a geometric $\alpha$-mixing condition with coefficient $\alpha(i)\leq \exp(-\gamma i)$ for all $i\geq 1$ and some positive $\gamma$. Let $K(\cdot)$ be a positive density kernel such that $\int K(u)du = 1$ and 
\begin{align*}
\max\bbrace*{\int |u|K(u)du,~\sup_{u\in\RR}K(u)}\leq M_K
\end{align*}
for some positive constant $M_K$. Assume that conditional on $W_i$ and unconditionally, $X_i$ is sub-Gaussian with constants $\kappa_x^2$. Assume that there exists some positive absolute constant $M_W$, such that
\begin{align*}
\max\bbrace*{\abs*{\frac{\partial f_{W\mid X}(w,x)}{\partial w}}, f_{W\mid X}(w,x)}\leq M_W
\end{align*}
for any $(w,x)$ such that the densities are defined. Take $h_n\geq K_1\{\log(np)/n\}^{1/2}$ for some positive absolute constant $K_1$, and further assume that $h_n \leq C_0$ for some constant $C_0$. Let $\{V_i\}_{i=1}^n\define \{g(W_i)\}_{i=1}^n$ be some function of $\{W_i\}_{i=1}^n$ and further assume that $V_1$ has finite $(4+4\delta)$th moment for some positive $\delta$. Consider the V-statistic
\begin{align*}
V_{n,k} \define n^{-2}\sum_{i,j=1}^n \frac{1}{h}K\parr*{\frac{W_i-W_j}{h}}(X_{ik}-X_{jk})(V_i-V_j)
\end{align*}
with concentration parameter 
\begin{align*}
\theta_k \define \EE\bbrace*{\frac{1}{h}K\parr*{\frac{W_i-W_j}{h}}(X_{ik}-X_{jk})(V_i-V_j)},
\end{align*}
then under the scaling 
\begin{align*}
n = \Omega\bbrace*{(\log p)^{\frac{4+2\delta}{1+\delta}}(\log n)^{\frac{8+4\delta}{1+\delta}}} ,
\end{align*}
the following holds:
\begin{align*}
\max_{k\in[p]}\abs*{V_{n,k} - \theta_k} \leq c\bbrace*{\frac{\log(np)}{n}}^{1/2}
\end{align*}
with probability at least $1-\exp\bbrace*{-c’\log(np)} - n^{-\frac{\delta}{2+\delta}}$ for some absolute constants $c,c'$.
\end{lemma}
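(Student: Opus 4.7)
My plan is to adapt the truncation-plus-Hoeffding-decomposition strategy of Lemma \ref{lemma:perturbation} to the weaker moment hypothesis on $V_i$. The proof proceeds in three steps: (i) define a truncated V-statistic $\widetilde V_{n,k}$ with mean $\widetilde\theta_k$ and bound $\abs*{\widetilde V_{n,k}-\widetilde\theta_k}$ via Hoeffding decomposition and concentration; (ii) control the truncation bias $\abs*{\widetilde\theta_k-\theta_k}$; (iii) bound the probability that truncation affects any observation, and union-bound over $k\in[p]$. The main novelty compared with Lemma \ref{lemma:perturbation} is that the sub-Gaussian truncation level $\sqrt{\log(np)}$ for $V_i$ must be replaced by a polynomial truncation level $\kappa_2 n^{\eta}$. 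The exponent is dictated by step (iii): setting the Markov bound $n\cdot\E\abs*{V}^{4+4\delta}/(\kappa_2 n^{\eta})^{4+4\delta}$ equal to $n^{-\delta/(2+\delta)}$ forces $\eta = 1/(2(2+\delta))$, which is precisely the calibration needed to produce the second residual probability stated in the lemma.

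Concretely, set $M_X(t) \define \mathbbm{1}\parr*{\abs*{t}\leq \kappa_1\sqrt{\log(np)}}$ and $M_V(v)\define \mathbbm{1}\parr*{\abs*{v}\leq \kappa_2 n^\eta}$, and define
\begin{align*}
\widetilde h_k(D_i,D_j) \define \frac{1}{h}K\parr*{\frac{W_i-W_j}{h}}(X_{ik}-X_{jk})(V_i-V_j)M_X(X_{ik})M_X(X_{jk})M_V(V_i)M_V(V_j).
\end{align*}
Writing $\widetilde V_{n,k}-\widetilde\theta_k = (2/n)\sum_i g_{1,k}(D_i) + n^{-2}\sum_{i,j}g_{2,k}(D_i,D_j)$, the linear summand satisfies $\nm*{g_{1,k}}_\infty \asymp \sqrt{\log(np)}\,n^\eta$ and has $O(1)$ variance, using only second moments of $X$ and $V$ together with Lemma \ref{lemma:alpha_covariance}. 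Applying Lemma \ref{lemma:sample_mean_Bern} at the level $c\sqrt{\log(np)/n}$ produces probability $\exp\parr*{-c'\log(np)}$ precisely when $n^{1/2-\eta} \gg \log(np)(\log n)^2$; substituting $\eta = 1/(2(2+\delta))$ reproduces the stated scaling $n = \Omega\bbrace*{(\log p)^{(4+2\delta)/(1+\delta)}(\log n)^{(8+4\delta)/(1+\delta)}}$. For the degenerate term I would verify Condition (A) for $\widetilde h_k$ via the analogue of Lemma \ref{lemma:expansion}(a) with constants $F(t')\asymp h^{-1}$, $B(t')\asymp \sqrt{\log(np)}\,n^\eta$, and $\mu_a(t')\asymp 1$, then invoke the fully degenerate version of Proposition \ref{prop:bern_U_alpha_general}. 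Under the same scaling, $A_2^{1/2}\asymp \sqrt{n/\log(np)}$ dominates the denominator in \eqref{eq:bern_seq}, again yielding $\exp\parr*{-c'\log(np)}$, and a union bound over $k\in[p]$ absorbs $p$ into the exponent.

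Step (ii) is the delicate one. A direct Cauchy--Schwarz bound on $\EE\fence*{\abs*{X_{ik}-X_{jk}}\abs*{V_i-V_j}\mathbbm{1}\parr*{\abs*{V_i}\geq \kappa_2 n^\eta}}$ yields only $O(n^{-(1+2\delta)/(2(2+\delta))})$, which degenerates to $O(n^{-1/4})$ as $\delta\downarrow 0$ and is too crude. Instead, I would apply H\"older with three exponents $(p,q,r) = (4/3,(4+4\delta)/\delta, 4+4\delta)$ (whose reciprocals sum to $1$), using sub-Gaussianity of $X_{ik}$ for the $\nm*{\cdot}_q$ factor, the finite $(4+4\delta)$th moment of $V_i$ for the $\nm*{\cdot}_r$ factor, and Markov for the indicator factor:
\begin{align*}
\EE\fence*{\abs*{X_{ik}-X_{jk}}\abs*{V_i-V_j}\mathbbm{1}\parr*{\abs*{V_i}\geq \kappa_2 n^\eta}} \lesssim \P\parr*{\abs*{V_i}\geq \kappa_2 n^\eta}^{3/4}\lesssim n^{-3(1+\delta)/(2(2+\delta))},
\end{align*}
which is $O(n^{-3/4})$ uniformly in $\delta\geq 0$, comfortably smaller than $\sqrt{\log(np)/n}$. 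The kernel factor $h^{-1}K(\WW/h)$ is absorbed via Lemma A3.2 in \cite{Han2018SupplementT}, and the analogous term with $X_{ik}$ truncated is handled exactly as in Lemma \ref{lemma:perturbation}. Step (iii) is then routine: a union bound plus sub-Gaussianity of $X_{ik}$ contributes $\exp\parr*{-c'\log(np)}$, while Markov on $V_i$ contributes $n^{-\delta/(2+\delta)}$ by the earlier calibration of $\eta$. The main obstacle is thus picking $\eta$ to simultaneously satisfy the Bernstein-type scaling constraint from step (i), the bias requirement from step (ii), and the target residual probability in step (iii); the H\"older estimate is what rescues the small-$\delta$ regime in step (ii).
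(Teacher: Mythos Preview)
Your three-step skeleton, truncation levels, and scaling calculus all match the paper's proof; the choice $\eta = 1/(2(2+\delta))$ is exactly the paper's $\theta_n \asymp n^{1/(4+2\delta)}$, and your Bernstein/degenerate-V-statistic arguments for step (i) coincide with the paper's (the paper invokes Lemma \ref{lemma:expansion}(b) rather than an analogue of (a), but the content is identical).

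The one place you diverge is step (ii), and there you are working harder than necessary because you have overlooked the structural hypothesis $V_i = g(W_i)$. After Lemma A3.2 decomposes the bias, the leading contribution is
\[
\EE\bigl[\widetilde X_{ij,k}(V_i-V_j)\mathbbm{1}\{|V_i|\geq\theta_n\}\,\big|\,W_i=W_j\bigr]\cdot \ff_{\WW}(0),
\]
and this vanishes identically because $V_i-V_j = g(W_i)-g(W_j) = 0$ on $\{W_i=W_j\}$. Only the $O(h)$ remainder survives, and for that a plain Cauchy--Schwarz plus Markov with the $(4+2\delta)$th moment already yields $\theta_n^{-(2+\delta)} = n^{-1/2}$, which is enough. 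Your three-exponent H\"older is correct and even sharper, but unnecessary; the claim that ``direct Cauchy--Schwarz\dots is too crude'' is wrong once this conditional cancellation is used. This cancellation is precisely what distinguishes Lemma \ref{lemma:perturbation_g} from Lemma \ref{lemma:perturbation_indep}, where the analogous term is killed instead by independence of $\{V_i\}$ from $\{X_i,W_i\}$.

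Two minor points: the $\sigma^2$ bound in step (i) needs $(2+\delta)$th moments of $g_{1,k}$ via Lemma \ref{lemma:alpha_covariance}, which in turn consumes $(4+2\delta)$th moments of $V$ through Cauchy--Schwarz, not merely second moments; and $\mu_a(t')\asymp 1$ in Condition (A) holds only for $1\leq a\leq 4+2\delta$ since the bases involve $V$, which has only polynomial moments.
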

\begin{proof}
Throughout the proof, $c,c_1,c_2$ will be generic positive constants. We consider each component $X_k$ of $X$ and let $\{D_i\}_{i=1}^n = \{W_i,X_{ik},V_i\}_{i=1}^n$. First define the following quantities for the truncated version of the V-statistic:
\begin{align*}
&\widetilde{h}_k(D_i,D_j) \define \frac{1}{h}K\parr*{\frac{W_i-W_j}{h}}(X_{ik}-X_{jk})(V_i-V_j)M_1(X_{ik})M_1(X_{jk})M_2(V_i)M_2(V_j),\\
&\widetilde{V}_{n,k} \define n^{-2}\sum_{i,j=1}^n\widetilde{h}_k(D_i, D_j), \quad \widetilde{\theta}_k \define \EE\bbrace*{\widetilde{h}_k(D_i, D_j)},
\end{align*}
where $M_1(t) = \mathbbm{1}\bbrace*{|t|\leq \kappa_1\sqrt{\log(np)}}, M_2(t) = \mathbbm{1}\bbrace*{|t|\leq \theta_n}$ and $\kappa_1$, $\theta_n$ are two truncation constants to be specified later. Further define $Z_{i} \define (X_{ik}, V_i)$ and $Z_{ij} \define (X_{ik} - X_{jk})(V_i-V_j)$. \\
\textbf{Step 1:} Upper bound the truncated V-statistic.\\
Via Hoeffding decomposition, we have
\begin{align*}
\widetilde{V}_{n,k} - \widetilde{\theta}_k = \frac{2}{n}\sum_{i=1}^n g_1(D_i) + n^{-2}\sum_{i,j=1}^ng_2(D_i,D_j),
\end{align*}
where $g_1(D_i) = \E^j\bbrace*{\widetilde{h}_k(D_i,D_j)}-\widetilde{\theta}_k, g_2(D_i,D_j) = \widetilde{h}_k(D_i,D_j) - \widetilde{\theta}_k - g_1(D_i) - g_1(D_j)$. It's easy to verify that $g_1$ and $g_2$ are both degenerate kernels. For any $t>0$, it holds that
\begin{align*}
\P\parr*{\abs*{\widetilde{V}_{n,k}-\widetilde{\theta}} > t} \leq \P\bbrace*{\abs*{\frac{1}{n}\sum_{i=1}^ng_1(D_i)}\geq \frac{t}{4}} + \P\bbrace*{\abs*{n^{-2}\sum_{i,j=1}^ng_2(D_i,D_j)}\geq \frac{t}{2}}.
\end{align*}
We now upper bound the two summands separately. Define $f(D_i) = \E^j \bbrace*{\widetilde{h}_k(D_i,D_j)}$ so that $g_1(D_i) = f(D_i) - \widetilde{\theta}_k = f(D_i) - \E\bbrace*{f(D_i)}$. Apply Lemma A3.3 in \cite{Han2018SupplementT} with $M_1 = M_W$ and $M_2 = M_K$, we have
\begin{align*}
|f(D_i) - f_1(D_i)| \leq f_2(D_i),
\end{align*}
where 
\begin{equation}
\begin{aligned}
\label{eq:f1f2_g}
f_1(D_i) &= \E^j\bbrace*{\varphi(Z_i,Z_j)\mid W_i = W_j}\cdot f_{W}(W_i)\leq M_W\E^j\bbrace*{\varphi(Z_i,Z_j)\mid W_i = W_j},\\
f_2(D_i) &= M_WM_Kh\E^j\bbrace*{|\varphi(Z_i,Z_j)|} \leq C_0M_WM_K\E^j\bbrace*{|\varphi(Z_i,Z_j)|}
\end{aligned}
\end{equation}
and $\varphi(Z_i,Z_j) = Z_{ij}M_1(X_{ik})M_1(X_{jk})M_2(V_i)M_2(V_j)$. Therefore, we have
\begin{align*}
\|f(D_i)\|_\infty = O(\bbrace*{\log(np)}^{1/2}\theta_n).
\end{align*}
Furthermore, we have
\begin{equation*}
\begin{aligned}
\E\bbrace*{f_1^2(D_i)} &\leq M_W^2\EE\parr*{Z_{ij}^2}\\
&= M_W^2\EE\bbrace*{(X_i-X_j)^2(V_i-V_j)^2}\\
&\leq M_W^2 \fence*{\EE\bbrace*{(X_{ik}-X_{jk})^4}}^{1/2}\fence*{\EE\bbrace*{(V_i-V_j)^4}}^{1/2}\\
&\leq M_W^2\parr*{2^4\cdot3\kappa_x^4}^{1/2}\bbrace*{2^4\cdot\E\parr*{V_i^4}}^{1/2},
\end{aligned}
\end{equation*}
where in the fourth line we use the fact that under the product measure, $(X_{ik}-X_{jk})$ is sub-Gaussian with constants at most $2\kappa_x^2$ and also $V_i$ has finite $4$th moment by assumption. Similarly, we have 
\begin{align*}
\E\bbrace*{f_2^2(D_i)} \leq C_0^2M_K^2M_W^2\parr*{2^4\cdot3\kappa_x^4}^{1/2}\bbrace*{2^4\cdot\E\parr*{V_i^4}}^{1/2}.
\end{align*}
Therefore, 
\begin{equation*}
\begin{aligned}
\E\bbrace*{f^2(D_i)} &= \E\bbrace*{(f(D_i) - f_1(D_i) + f_1(D_i))^2}\\
&\leq 2\E\bbrace*{f_2^2(D_i)} + 2\E\bbrace*{f_1^2(D_i)}\\
&\leq (C_0^2M_K^2 + 1)M_W^2{2^4\cdot3\kappa_x^4}^{1/2}\bbrace*{2^4\cdot\E\parr*{V_i^4}}^{1/2}.
\end{aligned}
\end{equation*}
Since $\{D_i\}_{i=1}^n$ is geometrically $\alpha$-mixing, so is $\{g_1(D_i)\}_{i=1}^n$. Applying Lemma \ref{lemma:sample_mean_Bern}, it holds that 
\begin{align*}
\P\bbrace*{\abs*{\frac{1}{n}\sum_{i=1}^n g_1(D_i)}>\frac{t}{4}} &= \P\bbrace*{\abs*{\frac{1}{n}\sum_{i=1}^n f(D_i) - \frac{1}{n}\sum_{i=1}^n\E\fence*{f(D_i)}}\geq \frac{t}{4}}\\
&\leq \exp\bbrace*{-\frac{C_3n^2t^2}{n\sigma^2+B^2+ntB(\log n)^2}},
\end{align*}
where $B = \|g_1(D_i)\|_\infty \leq 2\|f(D_i)\|_\infty = O(\sqrt{\log(np)}\theta_n)$, and
\begin{align*}
\sigma^2 = \var\bbrace*{g_1(D_1)} + 2\sum_{i>1}^\infty \abs*{\cov\bbrace*{g_1(D_1),g_1(D_i)}}.
\end{align*}
By the previous calculation, $\var\bbrace*{g_1(D_1)}\leq \E\bbrace*{f_1^2(D_1)}$ is upper bounded by an absolute constant, thus in order to show that $\sigma^2$ is also upper bounded by an absolute constant, it suffices to show that the second summation is also bounded. Apply Lemma \ref{lemma:alpha_covariance} with $p = q = 2+\delta$ and $r = (2+\delta)/\delta$ for the positive constant $\delta$ in the moment condition of $V_i$, it holds that
\begin{align*}
\sum_{j=1}^\infty \abs*{\text{Cov}(g_1(D_1),g_1(D_{1+j}))} \leq \bbrace*{\sum_{i=1}^\infty \alpha^{\delta/(2+\delta)}(i)}\fence*{\E\bbrace*{|g_1(D_1)|^{2+\delta}}}^{2/(2+\delta)}.
\end{align*}
By the condition on the $\alpha$-mixing coefficient, it suffices to show that $\E\bbrace*{|g_1(D_1)|^{2+\delta}} < \infty$. To this end, we have
\begin{align*}
\E\bbrace*{\abs*{g_1(D_i)}^{2+\delta}} &= \E\fence*{\abs*{f(D_i)-\E\bbrace*{f(D_i)}}^{2+\delta}}\\
&\leq 2^{(2+\delta)-1}\fence*{\E\bbrace*{\abs*{f(D_i)}^{2+\delta}} + \abs*{\E\bbrace*{f(D_i)}}^{2+\delta}}\\
&\leq 2^{(2+\delta)}\E\bbrace*{\abs*{f(D_i)}^{2+\delta}}\\
&\leq 2^{(2+\delta)}\E\bbrace*{\abs*{f_1(D_i) + f_2(D_i)}^{2+\delta}}\\
&\leq 2^{(3+2\delta)}\fence*{\E\bbrace*{\abs*{f_1(D_i)}^{2+\delta}} + \E\bbrace*{\abs*{f_2(D_i)}^{2+\delta}}}.
\end{align*}
For the first summand in the parentheses, it holds that
\begin{align*}
\E\bbrace*{\abs*{f_1(D_i)}^{2+\delta}} &\leq \EE\parr*{|Z_{ij}|^{2+\delta}}\\
&= \EE\parr*{\abs*{X_{ik}-X_{jk}}^{2+\delta}\abs*{V_i-V_j}^{2+\delta}}\\
&\leq \bbrace*{\EE\parr*{\abs*{X_{ik}-X_{jk}}^{4+2\delta}}}^{1/2}\bbrace*{\EE\parr*{\abs*{V_i-V_j}^{4+2\delta}}}^{1/2}\\
&\leq 2^{2+\delta}\bbrace*{\sqrt{4+2\delta}\parr*{\sqrt{2}\kappa_x}}^{2+\delta}\E\parr*{\abs*{V_i}^{4+2\delta}}^{1/2},
\end{align*}
where in the last step we use the sub-Gaussianity of $(X_i-X_j)$ under the product measure and finiteness of $(4+2\delta)$th order moment of $V_i$. This concludes that $\sigma^2$ is upper bounded by an absolute constant. Choose $t \asymp \{\log(np)/n\}^{1/2}$, then under the scaling $\theta_n^2(\log p)^2(\log n)^4 = O(n)$, it holds that 
\begin{align}
\label{eq:tail_1_g}
\P\fence*{\abs*{\frac{1}{n}\sum_{i=1}^n g_1(D_i)}>c_1\bbrace*{\frac{\log(np)}{n}}^{1/2}} \leq \exp\bbrace*{-c_2\log(np)}.
\end{align}
Next, consider the tail bound of the degenerate V-statistic generated by the canonical kernel $g_2(D_i,D_j)$. 
To this end, for arbitrary positive constants $t$ and $B_W$, Lemma \ref{lemma:expansion}(b) verifies Condition (A) with $\cal{C} = \Big\{[-B_W,B_W]\times (-\infty, +\infty)\times (-\infty, +\infty)\Big\}^2$ and constants:
\begin{align*}
F(t)\asymp h^{-1} \asymp n^{1/2}(\log p)^{-1/2}, \quad B(t) \asymp \sqrt{\log p}\cdot\theta_n, \quad \mu_a(t) \asymp 1
\end{align*}
for all $a\geq 1$. By choosing $B_W$ to be large enough, $t'$ defined in \eqref{eq:bias} in the main paper satisfies $t'\asymp t$. Applying the fully degenerate version of Proposition \ref{prop:bern_U_alpha_general} with $x\asymp t^\prime \asymp \bbrace*{(\log np)/n}^{1/2}$, then we have
\begin{align}
\label{eq:tail_3_g}
\P\fence*{\abs*{n^{-2}\sum_{i,j=1}^ng_2(D_i,D_j)}\geq c_1\bbrace*{\frac{\log(np)}{n}}^{1/2}} \leq \text{exp}\bbrace*{-c_2\log (np)}
\end{align}
under the scaling $n\geq \theta_n^2(\log p)^2(\log n)^4$. Combining \eqref{eq:tail_1_g} - \eqref{eq:tail_3_g}, we have
\begin{align*}
\P\fence*{\abs*{\widetilde{V}_{n,k} - \widetilde{\theta}_k}\geq c_1\bbrace*{\frac{\log(np)}{n}}^{1/2}} \leq \text{exp}\bbrace*{-c_2\log(np)}.
\end{align*}
\textbf{Step 2:} Next we calculate the difference between $\widetilde{\theta}_k$ and $\theta_k$. By definition and symmetry, 
\begin{align*}
\abs*{\widetilde{\theta}_k - \theta_k} &= \EE\bigg[\frac{1}{h}K\parr*{\frac{W_i-W_j}{h}}(X_{ik}-X_{jk})(V_i-V_j)\cdot \mathbbm{1}\bbrace*{\abs*{X_i}\geq \kappa_1\sqrt{\log(np)}}\cdot\\
& \ms\mathbbm{1}\bbrace*{|X_j|\geq \kappa_1\sqrt{\log(np)}}\cdot\mathbbm{1}\bbrace*{|V_i|\geq \theta_n}\cdot\mathbbm{1}\bbrace*{|V_i|\geq \theta_n}\bigg]\\
&\leq 2\EE\bbrace*{\frac{1}{h}K\parr*{\frac{W_i-W_j}{h}}Z_{ij}\mathbbm{1}\bbrace*{|V_i|\geq \theta_n}} + \\
&\ms2\EE\fence*{\frac{1}{h}K\parr*{\frac{W_i-W_j}{h}}Z_{ij}\mathbbm{1}\bbrace*{|X_i|\geq \kappa_1\sqrt{\log(np)}}}.
\end{align*}
For the first term, applying Lemma A3.2 in \cite{Han2018SupplementT} with $Z = Z_{ij}\mathbbm{1}\bbrace*{|V_i|\geq \theta_n}$ and upper bound of $\ff_{\WW}(0)$, it holds that
\begin{align*}
&\EE\fence*{\frac{1}{h}K\parr*{\frac{\WW}{h}}\tilde{X}_{ij,k}\VV\mathbbm{1}\bbrace*{|V_i|\geq \theta_n}}\\
&\leq M_W\EE\fence*{\widetilde{X}_{ij,k}\VV\mathbbm{1}\bbrace*{|V_i|\geq \theta_n}\mid \WW=0} + M_WM_KC_0\EE\fence*{\abs*{\widetilde{X}_{ij,k}\VV}\mathbbm{1}\bbrace*{|V_i\geq\theta_n|}},
\end{align*}
where
\begin{align*}
\EE\fence*{\widetilde{X}_{ij,k}\VV\mathbbm{1}\bbrace*{|V_i|\geq\theta_n}\mid\WW=0} = \EE\fence*{\widetilde{X}_{ij,k}\bbrace*{g(W_i)-g(W_j)}\mathbbm{1}\bbrace*{|V_i|\geq\theta_n}\mid\WW=0} = 0
\end{align*}
and
\begin{align*}
\EE\fence*{\abs*{\widetilde{X}_{ij,k}\VV}\mathbbm{1}\bbrace*{|V_i|\geq\theta_n}} &\leq \parr*{\EE\fence*{\widetilde{X}_{ij,k}^2\VV^2}}^{1/2}\cdot\bbrace*{\P\parr*{|V_i|\geq \theta_n}}^{1/2}\\
&\leq \bbrace*{\EE\parr*{\widetilde{X}_{ij,k}^4}}^{1/4}\cdot\bbrace*{\EE\parr*{\VV^4}}^{1/4}\parr*{\P\parr*{|V_i|\geq\theta_n}}^{1/2}\\
&\leq \parr*{12\kappa_x^4}^{1/4}\cdot 2\E\parr*{|V_i|^4}^{1/4}\cdot \bbrace*{\theta_n^{-(4+2\delta)}\E\parr*{|V_i|^{4+2\delta}}}^{1/2}\\
&= 2(12)^{1/4}\kappa_x\bbrace*{\E\parr*{|V_i|^4}}^{1/4}\parr*{\E\fence*{|V_i|^{4+2\delta}}}^{1/2}\cdot \theta_n^{-(2+\delta)},
\end{align*}
where in the last inequality we use Markov's inequality and the finitenes of the $(4+2\delta)$th moment of $V_i$. Choosing
\begin{align}
\label{eq:theta_trunc_2}
\theta_n \asymp n^{\frac{1}{4+2\delta}},
\end{align}
then it holds that
\begin{align*}
\EE\fence*{\abs*{\widetilde{X}_{ij,k}\VV}\mathbbm{1}\bbrace*{|V_i|\geq\theta_n}} \leq c\bbrace*{\frac{\log(np)}{n}}^{1/2}.
\end{align*}
Similarly, by choosing $\kappa_1$ to be large enough, we obtain
\begin{align*}
\EE\fence*{\frac{1}{h}K\parr*{\frac{W_i-W_j}{h}}Z_{ij}\mathbbm{1}\bbrace*{|X_{ik}|\geq \frac{\kappa_0}{2}\sqrt{\log(np)}}} \leq c\bbrace*{\frac{\log(np)}{n}}^{1/2}.
\end{align*}
Put together the pieces gives
\begin{align*}
\abs*{\widetilde{\theta}_k - \theta_k} \leq c\bbrace*{\frac{\log(np)}{n}}^{1/2},
\end{align*}
which holds for all triplets $\{X_{ik}, V_i,W_i\}_{i=1}^n$, $k\in[p]$.\\
\textbf{Step 3:} With the definition 
\begin{align*}
\cal{A}_k \define \bigcap_{i=1}^n\bbrace*{|X_{ik}|\leq \kappa_1\sqrt{\log(np)} }, \quad k\in[p], \quad \cal{B} \define \bigcap_{i=1}^n\bbrace*{|V_i|\leq \theta_n},
\end{align*}
and $\cal{A}_k^c$ and $B^c$ as their complements, we have
\begin{align*}
\P\parr*{\cal{A}_k^c} &\leq n\cdot\P\bbrace*{|X_{1k}|\geq \kappa_1\sqrt{\log(np)}} \leq n\cdot\text{exp}\bbrace*{-\frac{\kappa_1^2\log(np)}{2\kappa_x^2}}\leq \exp\bbrace*{-c\log(np)}
\end{align*}
for sufficiently large $\kappa_1$. Taking union bound then gives
$\P\parr*{\bigcup_{k=1}^p\cal{A}_k^c} \leq \exp\bbrace*{-c\log(np)}$. Moreover, we have
\begin{align*}
\P\parr*{\cal{B}^c} \leq n\cdot \P\parr*{|V_1|\geq\theta_n} \leq n\cdot \frac{\E\parr*{|V_i|^{4+4\delta}}}{\theta_n^{4+4\delta}} \leq n^{-\frac{\delta}{2+\delta}}
\end{align*}
for arbitrarily small positive number $\alpha$ under the truncation level \eqref{eq:theta_trunc_2}. Lastly, putting together the pieces, we have
\begin{align*}
&\ms\P\fence*{\max_{k\in[p]}\abs*{V_{n,k}-\theta_k} \geq c_1\bbrace*{\frac{\log(np)}{n}}^{1/2}}\\
&\leq \P\fence*{\max_{k\in[p]}\abs*{V_{n,k}-\theta_k} \geq c_1\bbrace*{\frac{\log(np)}{n}}^{1/2}\bigcap \cal{A}_1\bigcap \ldots\bigcap \cal{A}_p \bigcap \cal{B}} + \P\parr*{\bigcup_{k=1}^p\cal{A}_k^c} + \P\parr*{\cal{B}^c}\\
&\leq \P\fence*{\max_{k\in[p]}\abs*{\widetilde{V}_{n,k}-\theta_k}\geq c_1\bbrace*{\frac{\log(np)}{n}}^{1/2}} + \exp\bbrace*{-c_2\log(np)} + n^{-\frac{\delta}{2+\delta}}\\
&\leq \P\fence*{\max_{k\in[p]}\fence*{\abs*{\widetilde{V}_{n,k}-\tilde{\theta}_k} + \abs*{\tilde{\theta}_k-\theta_k}}\geq c_1\bbrace*{\frac{\log(np)}{n}}^{1/2}} + \exp\bbrace*{-c\log(np)} + n^{-\frac{\delta}{2+\delta}}\\
&\leq \P\fence*{\max_{k\in[p]}\abs{\widetilde{V}_{n,k}-\widetilde{\theta}_k} \geq c_1\bbrace*{\frac{\log(np)}{n}}^{1/2}} + \exp\bbrace*{-c_2\log(np)} + n^{-\frac{\delta}{2+\delta}}\\
&\leq p\cdot\P\fence*{\abs{\widetilde{V}_{n,1}-\tilde{\theta}_1} \geq c_1\bbrace*{\frac{\log(np)}{n}}^{1/2}} + \text{exp}\bbrace*{-c_2\log(np)} + n^{-\frac{\delta}{2+\delta}}\\
&\leq \text{exp}\bbrace*{-c_2\log(np)} + n^{-\frac{\delta}{2+\delta}}.
\end{align*}
This completes the proof.
\end{proof}

\begin{lemma}
\label{lemma:perturbation_indep}
Let $D_i = (X_i, V_i, W_i)\in\RR^p\times\RR\times\RR$ be generated from a stationary sequence for $i = 1,\ldots, n$, satisfying a geometric $\alpha$-mixing condition with coefficient $\alpha(i)\leq \exp(-\gamma i)$ for all $i\geq 1$ and some positive $\gamma$. Let $K(\cdot)$ be a positive density kernel such that $\int K(u)du = 1$ and 
\begin{align*}
\max\bbrace*{\int |u|K(u)du,~\sup_{u\in\RR}K(u)}\leq M_K
\end{align*}
for some positive constant $M_K$. Assume that conditional on $W_i$ and unconditionally, $X_{ik}$ is sub-Gaussian with constants $\kappa_x^2$ for all $k\in[p]$. $\{V_i\}_{i=1}^n$ is assumed to be independent of the sequence $\{X_i,W_i\}_{i=1}^n$ with mean zero and finite $(2+2\delta)$th moment for some positive $\delta$. Assume that there exists some positive absolute constant $M_W$, such that
\begin{align*}
\max\bbrace*{\abs*{\frac{\partial f_{W\mid X}(w,x)}{\partial w}}, f_{W\mid X}(w,x)}\leq M_W
\end{align*}
for any $(w,x)$ such that the densities are defined. Take $h_n\geq K_1\{\log(np)/n\}^{1/2}$ for some positive absolute constant $K_1$, and further assume that $h_n \leq C_0$ for some constant $C_0$. Consider the V-statistic
\begin{align*}
V_{n,k} \define n^{-2}\sum_{i,j=1}^n \frac{1}{h}K\parr*{\frac{W_i-W_j}{h}}(X_{ik}-X_{jk})(V_i-V_j)
\end{align*}
with corresponding concentration parameter
\begin{align*}
\theta_k \define \EE\bbrace*{\frac{1}{h}K\parr*{\frac{W_i-W_j}{h}}(X_{ik}-X_{jk})(V_i-V_j).
}
\end{align*}
Then under the scaling 
\begin{align*}
n = \Omega\bbrace*{(\log p)^{\frac{4+2\delta}{\delta}}(\log n)^{\frac{8+4\delta}{\delta}}},
\end{align*}
the following holds:
\begin{align*}
\max_{k\in[p]}\abs*{V_{n,k} - \theta_k} \leq c\bbrace*{\frac{\log(np)}{n}}^{1/2}
\end{align*}
with probability at least $1-\exp\bbrace*{-c'\log(np)} - n^{-\frac{\delta}{2+\delta}}$ for some absolute positive constants $c_1,c_2$.
\end{lemma}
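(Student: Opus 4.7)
The proof will closely parallel that of Lemma \ref{lemma:perturbation_g}, exploiting two simplifications afforded by the new assumptions: first, the independence of $\{V_i\}_{i=1}^n$ from $\{X_i, W_i\}_{i=1}^n$ combined with $\E(V_i) = 0$ immediately gives $\theta_k = 0$ for all $k \in [p]$; second, independence decouples the moment computations so that only $(2+2\delta)$th moments of $V_i$ are needed, in contrast to the $(4+4\delta)$th moments required in the $V_i = g(W_i)$ case. The overall strategy is the standard truncation-plus-Hoeffding-decomposition route: define $M_1(t) = \mathbbm{1}\{|t|\leq\kappa_1\sqrt{\log(np)}\}$ for truncating $X_{ik}$ and $M_2(t) = \mathbbm{1}\{|t|\leq\theta_n\}$ for truncating $V_i$, with $\theta_n \asymp n^{1/(2+\delta)}$ chosen so that $\P(\exists\, i: |V_i|>\theta_n)\leq n\cdot\E|V_1|^{2+2\delta}/\theta_n^{2+2\delta}\lesssim n^{-\delta/(2+\delta)}$; form the truncated statistic $\widetilde V_{n,k}$ and $\widetilde\theta_k$, and write
\begin{align*}
\widetilde V_{n,k} - \widetilde\theta_k = \frac{2}{n}\sum_{i=1}^n g_1(D_i) + n^{-2}\sum_{i,j=1}^n g_2(D_i,D_j),
\end{align*}
with $g_1,g_2$ the first- and second-order Hoeffding projections.

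For the linear part, I will verify that $f(D_i)\define \E^j\{\widetilde h_k(D_i,D_j)\}$ satisfies $\|f\|_\infty = O(\sqrt{\log(np)}\,\theta_n)$ and, using the Cauchy--Schwarz/density-expansion trick of Lemma A3.3 in \cite{Han2018SupplementT} together with the sub-Gaussianity of $\widetilde X_{ij,k}$ and the finite $(2+2\delta)$th moment of $V_i$, show that $\E\{f^2(D_i)\}$ and $\E\{|g_1(D_i)|^{2+\delta}\}$ are bounded by absolute constants. Since $\{g_1(D_i)\}$ inherits the geometric $\alpha$-mixing of $\{D_i\}$, the covariance sum defining $\sigma^2$ in Lemma \ref{lemma:sample_mean_Bern} is bounded (via Lemma \ref{lemma:alpha_covariance}), and an application of that lemma with $t\asymp\{\log(np)/n\}^{1/2}$ yields the desired tail bound under the scaling $\theta_n^2(\log p)^2(\log n)^4 = O(n)$, which translates to $(\log p)^{(4+2\delta)/\delta}(\log n)^{(8+4\delta)/\delta} = O(n)$.

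For the degenerate quadratic part, I will apply the fully degenerate case of Proposition \ref{prop:bern_U_alpha_general}, after using Lemma \ref{lemma:expansion}(b) to verify Condition (A) on the set $\cal{C} = \{[-B_W,B_W]\times(-\infty,\infty)\times(-\infty,\infty)\}^2$ for sufficiently large $B_W$, with expansion constants $F(t)\asymp h^{-1}\asymp n^{1/2}(\log p)^{-1/2}$, $B(t)\asymp\sqrt{\log p}\,\theta_n$, $\mu_a(t)\asymp 1$. Choosing $t\asymp\{\log(np)/n\}^{1/2}$ and checking that $t'$ from \eqref{eq:bias} is of the same order yields the matching tail bound under the same scaling requirement.

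The bias $|\widetilde\theta_k - \theta_k|$ decomposes into two pieces, one for the $V_i$-truncation and one for the $X_{ik}$-truncation. Thanks to the independence and mean-zero structure, the $V_i$-truncation bias factors cleanly as $\E\{\frac{1}{h}K(\widetilde W_{ij}/h)\widetilde X_{ij,k}\}\cdot\E\{V_i\mathbbm{1}(|V_i|\geq\theta_n)\}$ up to symmetric terms, and Cauchy--Schwarz combined with Markov's inequality in the $(2+2\delta)$th moment gives the bound $\lesssim\theta_n^{-(1+\delta)}\lesssim\{\log(np)/n\}^{1/2}$ under the chosen $\theta_n$; the $X_{ik}$-truncation piece is handled exactly as in Lemma \ref{lemma:perturbation_g} by taking $\kappa_1$ large enough. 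Finally, I will control the exceptional event $\bigcup_{k\in[p]}\cal{A}_k^c\cup\cal{B}^c$ via a union bound (cost $\exp\{-c\log(np)\}+n^{-\delta/(2+\delta)}$) and take a union bound over $k\in[p]$ in the main tail inequality; the extra factor $p$ is absorbed into $\exp\{-c_2\log(np)\}$ for appropriately chosen constants. The main obstacle — balancing the truncation level $\theta_n$ against the scaling constraints coming simultaneously from the Bernstein inequality for mixing sample means and the degenerate V-statistic inequality — is resolved by the choice $\theta_n\asymp n^{1/(2+\delta)}$, which is why the stated scaling exponent is $(4+2\delta)/\delta$ rather than the $(4+2\delta)/(1+\delta)$ seen in Lemma \ref{lemma:perturbation_g}.
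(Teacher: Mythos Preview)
Your proposal is essentially correct and follows the same three-step truncation/Hoeffding-decomposition route as the paper's proof, with the same choice $\theta_n\asymp n^{1/(2+\delta)}$ and the same scaling constraint. Two small corrections: the expansion result you want is Lemma \ref{lemma:expansion}(c), not (b), since (b) is stated under the hypotheses of Lemma \ref{lemma:perturbation_g}; and in Step 2 the paper exploits independence more sharply than you indicate---the $X_{ik}$-truncation piece is not merely ``handled as in Lemma \ref{lemma:perturbation_g}'' but vanishes exactly because it factors as $\EE(V_i-V_j)\cdot\EE\{\cdots\}=0$, and similarly the leading (conditional-at-zero) term in the $V_i$-truncation piece vanishes via $\EE(\widetilde X_{ij,k}\mid\widetilde W_{ij}=0)=0$, leaving only the $O(h)$ remainder to bound by Cauchy--Schwarz and Markov.
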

\begin{proof}
Throughout the proof, $c,c_1,c_2$ will be generic constants that do not depend on $n$. Let $\{D_i\}_{i=1}^n = \{W_i,X_{ik},V_i\}_{i=1}^n$ for each component $X_k$ of $X$. First define the following quantities for the truncated version of the V-statistic:
\begin{align*}
&\widetilde{h}_k(D_i,D_j) \define \frac{1}{h}K\parr*{\frac{W_i-W_j}{h}}(X_{ik}-X_{jk})(V_i-V_j)M_1(X_{ik})M_1(X_{jk})M_2(V_i)M_2(V_j),\\
&\widetilde{V}_{n,k} \define {n\choose 2}^{-1}\sum_{i<j}\widetilde{h}_k(D_i, D_j), \quad \widetilde{\theta}_k \define \EE\bbrace*{\widetilde{h}_k\parr*{D_i, D_j}},
\end{align*}
where $M_1(t) = \mathbbm{1}\bbrace*{|t|\leq \kappa_1\sqrt{\log(np)}}, M_2(t) = \mathbbm{1}\bbrace*{|t|\leq \theta_n}$, and $\kappa_1$, $\theta_n$ are two truncation constants to be specified later. Further define $Z_{i} \define (X_{ik}, V_i)$ and $Z_{ij} \define (X_{ik} - X_{jk})(V_i-V_j)$. \\
\textbf{Step 1:} Upper bound the truncated V-statistic.\\
Via Hoeffding decomposition, we have
\begin{align*}
\widetilde{V}_{n,k} - \widetilde{\theta}_k = \frac{2}{n}\sum_{i=1}^n g_1(D_i) + n^{-2}\sum_{i,j=1}^ng_2(D_i,D_j),
\end{align*}
where $g_1(D_i) = \E^j\bbrace*{\widetilde{h}_k(D_i,D_j)}-\widetilde{\theta}_k, g_2(D_i,D_j) = \widetilde{h}_k(D_i,D_j) - \widetilde{\theta}_k - g_1(D_i) - g_1(D_j)$. It can be readily checked that $g_1$ and $g_2$ are both canonical kernels. Therefore for any $t>0$, it holds that
\begin{align*}
\P\parr*{\abs*{\tilde{V}_{n,k}-\tilde{\theta}_k} > t} \leq \P\bbrace*{\abs*{\frac{1}{n}\sum_{i=1}^ng_1(D_i)}\geq \frac{t}{4}} + \P\bbrace*{\abs*{n^{-2}\sum_{i,j=1}^ng_2(D_i,D_j)}\geq \frac{t}{2}}.
\end{align*}
We now upper bound the two summands separately. Define $f(D_i) = \E^j\bbrace*{\widetilde{h}_k(D_i,D_j)}$ so that $g_1(D_i) = f(D_i) - \widetilde{\theta}_k = f(D_i) - \E\bbrace*{f(D_i)}$. Appling Lemma A3.3 in \cite{Han2018SupplementT} with $M_1 = M_W$ and $M_2 = M_K$, we have
\begin{align*}
|f(D_i) - f_1(D_i)| \leq f_2(D_i),
\end{align*}
where, with definition $\varphi(Z_i,Z_j) = Z_{ij}M_1(X_i)M_1(X_j)M_2(V_i)M_2(V_j)$, 
\begin{equation}
\begin{aligned}
\label{eq:f1f2_indep}
f_1(D_i) &= \E^j\fence*{\varphi(Z_i,Z_j)\mid W_i = W_j}\cdot f_{W_j}(W_i)\leq M_W\E^j\fence*{\varphi(Z_i,Z_j)\mid W_i = W_j},\\
f_2(D_i) &= M_WM_Kh\E^j\fence*{|\varphi(Z_i,Z_j)|} \leq C_0M_WM_K\E^j\fence*{|\varphi(Z_i,Z_j)|}.
\end{aligned}
\end{equation}
Therefore, we have
\begin{align*}
\|f(D_i)\|_\infty = O(\bbrace*{\log(np)}^{1/2}\theta_n).
\end{align*}
Furthermore, we have
\begin{equation}
\begin{aligned}
\label{eq:holder_1}
\E\bbrace*{f_1^2(D_i)} &\leq M_W^2\EE\parr*{Z_{ij}^2}\\
&= M_W^2\EE\bbrace*{(X_{ik}-X_{jk})^2(V_i-V_j)^2}\\
&= M_W^2 \EE\bbrace*{(X_{ik}-X_{jk})^2}\EE\bbrace*{(V_i-V_j)^2}\\
&\leq 4M_W^2\kappa_x^2\cdot \E\parr*{V_i^2}.
\end{aligned}
\end{equation}
where in the third line we use independence of $\{X_i\}_{i=1}^n$ and $\{V_i\}_{i=1}^n$, in the fourth line we use the fact that under the product measure, $(X_i-X_j)$ is sub-Gaussian with constants at most $2\kappa_x^2$  and finite second moment of $V_i$. Similarly, we also have 
\begin{align*}
\E\bbrace*{f_2^2(D_i)} \leq 4C_0^2M_W^2M_K^2\kappa_x^2\E\parr*{V_i^2}.
\end{align*}
Therefore, 
\begin{equation*}
\begin{aligned}
\E\bbrace*{f^2(D_i)} &= \E\fence*{\bbrace*{f(D_i) - f_1(D_i) + f_1(D_i)}^2}\\
&\leq 2\E\bbrace*{f_2^2(D_i)} + 2\E\bbrace*{f_1^2(D_i)}\\
&\leq 8\kappa_x^2M_W^2(1+M_K^2C_0^2)\E\parr*{V_i^2}.
\end{aligned}
\end{equation*}
Since $\{D_i\}_{i=1}^n$ is geometrically $\alpha$-mixing, so is $\{g(D_i)\}_{i=1}^n$. Then, by Lemma \ref{lemma:sample_mean_Bern}, it holds for any $t > 0$ that
\begin{align*}
\P\bbrace*{\abs*{\frac{1}{n}\sum_{i=1}^n g_1(D_i)}>\frac{t}{4}} &= \P\fence*{\abs*{\frac{1}{n}\sum_{i=1}^n f(D_i) - \frac{1}{n}\sum_{i=1}^n\E\bbrace*{f(D_i)}}\geq \frac{t}{4}}\\
&\leq \exp\bbrace*{-\frac{C_3n^2t^2}{n\sigma^2+B^2+ntB(\log n)^2}},
\end{align*}
where $B = \|g_1(D_i)\|_\infty \leq 2\|f(D_i)\|_\infty = O(\sqrt{\log(np)}\theta_n)$, and 
\begin{align*}
\sigma^2 = \var\bbrace*{g_1(D_1)} + 2\sum_{i>1}^\infty \abs*{\cov\bbrace*{g_1(D_1),g_1(D_i)}}.
\end{align*}
By the previous calculation, it holds that the first summand $\var\bbrace*{g_1(D_1)}\leq \E\bbrace*{g_1^2(D_1)}$ is upper bounded by an absolute constant. Thus, in order to show that $\sigma^2$ is also bounded by an absolute constant, it suffices to show that the second summation is also bounded. Applying Lemma \ref{lemma:alpha_covariance} with $p = q = 2+\delta$ and $r = (2+\delta)/\delta$ for some positive number $\delta$, it holds that
\begin{align*}
\sum_{j=1}^\infty \abs*{\text{Cov}(g_1(D_1),g_1(D_{1+j}))} \leq \bbrace*{\sum_{j=1}^\infty \alpha^{\delta/(2+\delta)}(j)}\fence*{\E\bbrace*{\abs*{g_1(D_1)}^{2+\delta}}}^{2/(2+\delta)}.
\end{align*}
By the condition on the $\alpha$-mixing coefficient, it suffices to show that $\E\bbrace*{|g_1(D_1)|^{2+\delta}}$ is bounded. To this end, we have
\begin{align*}
\E\bbrace*{\abs*{g_1(D_i)}^{2+\delta}} &= \E\fence*{\abs*{f(D_i)-\E\bbrace*{f(D_i)}}^{2+\delta}}\\
&\leq 2^{1+\delta}\fence*{\E\bbrace*{\abs*{f(D_i)}^{2+\delta}} + \abs*{\E\bbrace*{f(D_i)}}^{2+\delta}}\\
&\leq 2^{(2+\delta)}\E\bbrace*{\abs*{f(D_i)}^{2+\delta}}\\
&\leq 2^{2+\delta}\E\bbrace*{\abs*{f_1(D_i) + f_2(D_i)}^{2+\delta}}\\
&\leq 2^{3+2\delta}\fence*{\E\bbrace*{\abs*{f_1(D_i)}^{2+\delta}} + \E\bbrace*{\abs*{f_2(D_i)}^{2+\delta}}}.
\end{align*}
For the first summand in the parentheses, it holds that
\begin{align*}
\E\bbrace*{\abs*{f_1(D_i)}^{2+\delta}} &\leq \EE\parr*{\abs*{Z_{ij}}^{2+\delta}}\\
&= \EE\parr*{\abs*{X_i-X_j}^{2+\delta}\abs*{V_i-V_j}^{2+\delta}}\\
&= \EE\parr*{\abs*{X_i-X_j}^{2+\delta}}\EE\parr*{\abs*{V_i-V_j}^{2+\delta}}\\
&\leq 2^{2+\delta}\bbrace*{\sqrt{2+\delta}\parr*{\sqrt{2}\kappa_x}}^{2+\delta}\E\parr*{|V_i|^{2+\delta}},
\end{align*}
where in the third line we use the independence of $\{V_i\}_{i=1}^n$ and $\{X_i\}_{i=1}^n$ and in the last line we use the sub-Gaussianity of $(X_i-X_j)$ under the product measure and finiteness of $(2+\delta)$th order moment of $V_i$. This concludes that $\sigma^2$ is upper bounded by an absolute constant. Choose $t \asymp \{\log(np)/n\}^{1/2}$, then under the scaling $\theta_n^2(\log p)^2(\log n)^4 = O(n)$, it holds that 
\begin{align}
\label{eq:tail_1_indep}
\P\fence*{\abs*{\frac{1}{n}\sum_{i=1}^n g_1(D_i)}>c_1\bbrace*{\frac{\log(np)}{n}}^{1/2}} \leq \exp\bbrace*{-c_2\log(np)}.
\end{align}
Next, consider the tail bound of the degenerate V-statistic generated by the canonical kernel $g_2(D_i,D_j)$. 
To this end, for arbitrary positive constants $t$ and $B_W$, Lemma \ref{lemma:expansion}(c) verifies Condition (A) with $\cal{C} = \Big\{[-B_W,B_W]\times (-\infty, +\infty)\times (-\infty, +\infty)\Big\}^2$ and constants:
\begin{align*}
F(t)\asymp h^{-1} \asymp n^{1/2}(\log p)^{-1/2}, \quad B(t) \asymp \sqrt{\log p}\cdot\theta_n, \quad \mu_a(t) \asymp 1
\end{align*}
for all $a\geq 1$. By choosing $B_W$ to be sufficiently large, we have $t'$ defined in \eqref{eq:bias} in the main paper satisfies that $t'\asymp t$. Applying the fully degenerate version in Proposition \ref{prop:bern_U_alpha_general} with $x\asymp t^\prime \asymp \bbrace*{(\log np)/n}^{1/2}$, we have
\begin{align}
\label{eq:tail_3_indep}
\P\fence*{\abs*{n^{-2}\sum_{i,j=1}^n g_2(D_i,D_j)}\geq c_1\bbrace*{\frac{\log(np)}{n}}^{1/2}} \leq \exp\bbrace*{-c_2\log (np)}
\end{align}
under the scaling $n\geq \theta_n^2(\log p)^2(\log n)^4$. Combining \eqref{eq:tail_1_indep} - \eqref{eq:tail_3_indep} gives
\begin{align*}
\P\fence*{\abs*{\widetilde{V}_{n,k} - \widetilde{\theta}_k}\geq c_1\bbrace*{\frac{\log(np)}{n}}^{1/2}} \leq \exp\bbrace*{-c_2\log(np)}.
\end{align*}
\textbf{Step 2:} Next we calculate the difference between $\widetilde{\theta}_k$ and $\theta_k$ for $k\in[p]$. By definition and symmetry, 
\begin{align*}
\abs*{\widetilde{\theta}_k - \theta_k} &= \EE\bigg[\frac{1}{h}K\parr*{\frac{W_i-W_j}{h}}(X_{ik}-X_{jk})(V_i-V_j)\cdot \mathbbm{1}\bbrace*{\abs*{X_{ik}}\geq \kappa_1\sqrt{\log(np)}}\cdot\\
& \ms\mathbbm{1}\bbrace*{\bigcup |X_{jk}|\geq \kappa_1\sqrt{\log(np)}}\cdot\mathbbm{1}\bbrace*{|V_i|\geq \theta_n}\cdot\mathbbm{1}\bbrace*{|V_i|\geq \theta_n}\bigg]\\
&\leq 2\EE\fence*{\frac{1}{h}K\parr*{\frac{W_i-W_j}{h}}Z_{ij}\mathbbm{1}\bbrace*{|X_{ik}|\geq \kappa_1\sqrt{\log(np)}}} + \\
&\ms2\EE\fence*{\frac{1}{h}K\parr*{\frac{W_i-W_j}{h}}Z_{ij}\mathbbm{1}\bbrace*{|V_i|\geq \theta_n}}.
\end{align*}
For the first term, by independence of $\{V_i\}_{i=1}^n$ with $\{X_i,W_i\}_{i=1}^n$, it holds that
\begin{align*}
&\EE\fence*{\frac{1}{h}K\parr*{\frac{\WW}{h}}\parr*{X_{ik}-X_{jk}}\parr*{V_i-V_j}\mathbbm{1}\bbrace*{|X_{ik}|\geq \kappa_1\sqrt{\log(np)}}}\\
&= \EE\parr*{V_i-V_j}\EE\fence*{\frac{1}{h}K\parr*{\frac{\WW}{h}}\XX\mathbbm{1}\bbrace*{|X_{ik}|\geq \kappa_1\sqrt{\log(np)}}}\\
&= 0.
\end{align*}
For the second term, applying Lemma A3.2 in \cite{Han2018SupplementT} with $Z = Z_{ij}\mathbbm{1}\bbrace*{|V_i|\geq \theta_n}$, it holds that
\begin{align*}
&\EE\fence*{\frac{1}{h}K\parr*{\frac{\WW}{h}}\tilde{X}_{ij,k}\VV\mathbbm{1}\bbrace*{|V_i|\geq \theta_n}}\\
&\leq M_W\EE\fence*{\widetilde{X}_{ij,k}\VV\mathbbm{1}\bbrace*{|V_i|\geq \theta_n}\mid \WW=0} + M_WM_KC_0\EE\fence*{\abs*{\tilde{X}_{ij,k}\VV}\mathbbm{1}\bbrace*{|V_i\geq\theta_n|}},
\end{align*}
where
\begin{align*}
\EE\fence*{\widetilde{X}_{ij,k}\VV\mathbbm{1}\bbrace*{|V_i|\geq\theta_n}\mid\WW=0} = \EE\parr*{\tilde{X}_{ij,k}\mid\WW=0}\cdot \EE\fence*{\VV\mathbbm{1}\bbrace*{|V_i|\geq\theta_n}} = 0
\end{align*}
and
\begin{align*}
\EE\fence*{\abs*{\widetilde{X}_{ij,k}\VV}\mathbbm{1}\bbrace*{|V_i|\geq\theta_n}} &= \EE\parr*{\abs*{\widetilde{X}_{ij,k}}}\cdot\EE\fence*{\abs*{\VV}\mathbbm{1}\bbrace*{|V_i|\geq\theta_n}}\\
&\leq \bbrace*{\EE\parr*{\widetilde{X}_{ij,k}^2}}^{1/2}\cdot\bbrace*{\EE\parr*{\VV^2}}^{1/2}\bbrace*{\P\parr*{|V_i|\geq\theta_n}}^{1/2}\\
&\leq (2\kappa_x^2)^{1/2}\cdot 2\bbrace*{\E\parr*{|V_i|^2}}^{1/2}\cdot \bbrace*{\theta_n^{-(2+\delta)}\E\parr*{|V_i|^{2+\delta}}}^{1/2}\\
&= 2\sqrt{2}\kappa_x\bbrace*{\E\parr*{|V_i|^2}}^{1/2}\bbrace*{\E\parr*{|V_i|^{2+\delta}}}^{1/2}\cdot \theta_n^{-\frac{2+\delta}{2}},
\end{align*}
where in the last inequality we use Markov's inequality and the finitenes of the $(2+\delta)$th moment of $V_i$. Choose 
\begin{align}
\label{eq:theta_trunc}
\theta_n \asymp n^{\frac{1}{2+\delta}}
\end{align}
then it holds that
\begin{align*}
\EE\fence*{\abs*{\widetilde{X}_{ij,k}\VV}\mathbbm{1}\bbrace*{|V_i|\geq\theta_n}} \leq c\bbrace*{\frac{\log(np)}{n}}^{1/2}.
\end{align*}
Putting together the pieces gives for each $k\in[p]$,
\begin{align*}
\abs*{\widetilde{\theta}_k - \theta_k} \leq c\bbrace*{\frac{\log(np)}{n}}^{1/2}.
\end{align*}
\textbf{Step 3:} With the definition 
\begin{align*}
\cal{A}_k &\define \bigcap_{i=1}^n\bbrace*{|X_{ik}|\leq \kappa_1\sqrt{\log(np)} }, \quad k\in[p], \quad \cal{B} \define \bigcap_{i=1}^n\bbrace*{|V_i|\leq \theta_n},
\end{align*}
and $\cal{A}_k^c$ and $B^c$ as their complements, we have
\begin{align*}
\P\parr*{\cal{A}_k^c} &\leq n\cdot\P\bbrace*{|X_{ik}|\geq \kappa_1\sqrt{\log(np)}} \leq n\cdot\exp\bbrace*{-\frac{\kappa_1^2\log(np)}{2\kappa_x^2}} \leq \exp\bbrace*{-c_2\log(np)}
\end{align*}
for sufficiently large $\kappa_1$. Taking union bound gives
\begin{align*}
\P\parr*{\bigcup_{k=1}^p\cal{A}_k^c} \leq p\cdot\P\parr*{\cal{A}_1^c} \leq \text{exp}\bbrace*{-c_2\log(np)}.
\end{align*}
Moreover, we have
\begin{align*}
\P\parr*{\cal{B}^c} \leq n\cdot \P\parr*{|V_i|\geq\theta_n} \leq n\cdot \frac{\E\parr*{|V_i|^{2+2\delta}}}{\theta_n^{2+2\delta}} \leq n^{-\frac{\delta}{2+\delta}}
\end{align*}
for arbitrarily small positive number $\alpha$ under the truncation level \eqref{eq:theta_trunc}. Lastly, putting together the pieces, we have
\begin{align*}
&\ms\P\fence*{\max_{k\in[p]}\abs*{V_{n,k}-\theta_k} \geq c_1\bbrace*{\frac{\log(np)}{n}}^{1/2}}\\
&\leq \P\fence*{\max_{k\in[p]}\abs*{V_{n,k}-\theta_k} \geq c_1\bbrace*{\frac{\log(np)}{n}}^{1/2}\bigcap \cal{A}_1\bigcap \ldots\bigcap \cal{A}_p \bigcap \cal{B}} + \P\parr*{\bigcup_{k=1}^p\cal{A}_k^c} + \P\parr*{\cal{B}^c}\\
&\leq \P\fence*{\max_{k\in[p]}\abs*{\widetilde{V}_{n,k}-\theta_k}\geq c_1\bbrace*{\frac{\log(np)}{n}}^{1/2}} + \exp\bbrace*{-c_2\log(np)} + n^{-\frac{\delta}{2+\delta}}\\
&\leq \P\fence*{\max_{k\in[p]}\fence*{\abs*{\tilde{V}_{n,k}-\widetilde{\theta}_k} + \abs*{\widetilde{\theta}_k-\theta_k}}\geq c_1\bbrace*{\frac{\log(np)}{n}}^{1/2}} + \exp\bbrace*{-c_2\log(np)} + n^{-\frac{\delta}{2+\delta}}\\
&\leq \P\fence*{\max_{k\in[p]}\abs{\widetilde{V}_{n,k}-\widetilde{\theta}_k} \geq c_1\bbrace*{\frac{\log(np)}{n}}^{1/2}} + \text{exp}\fence*{-c\log(np)} + n^{-\frac{\delta}{2+\delta}}\\
&\leq p\cdot\P\fence*{\abs{\widetilde{V}_{n,1}-\tilde{\theta}_1} \geq c_1\bbrace*{\frac{\log(np)}{n}}^{1/2}} + \exp\bbrace*{-c_2\log(np)} + n^{-\frac{\delta}{2+\delta}}\\
&\leq \exp\bbrace*{-c_2\log(np)} + n^{-\frac{\delta}{2+\delta}}.
\end{align*}
This completes the proof.
\end{proof}

\begin{lemma}
\label{lemma:expansion}
Let $\cal{C} \define \Big\{[-B_W, B_W] \times (-\infty, +\infty) \times (-\infty, +\infty)\Big\}^2 \subset \RR^6$ for some positive constant $B_W$.\\
(a) Under the conditions of Lemma \ref{lemma:perturbation}, for any $t > 0$, the kernel $\widetilde{h}(D_i,D_j)$ defined in its proof satisfies Condition (A) with symmetric set $\cal{C}$ and constants:
\begin{align*}
F(t, \cal{C}) \asymp h^{-1}, \quad B(t, \cal{C}) \asymp \log p, \quad \mu_a(t, \cal{C}) \asymp 1
\end{align*}
for all $a\geq 1$.
\\
(b) Under the conditions of Lemma \ref{lemma:perturbation_g},  for any $t > 0$, the kernel $\widetilde{h}(D_i,D_j)$ defined in its proof satisfies Condition (A)  with symmetric set $\cal{C}$ and constants:
\begin{align*}
F(t, \cal{C}) \asymp h^{-1}, \quad B(t, \cal{C}) \asymp \sqrt{\log(np)}\cdot\theta_n, \quad \mu_a(t, \cal{C}) \asymp 1
\end{align*}
for all $1\leq a\leq 4+2\delta$, with $\delta$ being the constant in Lemma \ref{lemma:perturbation_g}.
\\
(c) Under the conditions of Lemma \ref{lemma:perturbation_indep}, for any $t > 0$, the kernel $\widetilde{h}(D_i,D_j)$ defined in its proof satisfies Condition (A) symmetric set $\cal{C}$ and constants:
\begin{align*}
F(t, \cal{C}) \asymp h^{-1}, \quad B(t, \cal{C}) \asymp \sqrt{\log(np)}\cdot\theta_n, \quad \mu_a(t, \cal{C}) \asymp 1\end{align*}
for all $1\leq a\leq 2+\delta$, with $\delta$ being the constant in Lemma \ref{lemma:perturbation_indep}.
\end{lemma}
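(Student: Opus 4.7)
The approach is identical across parts (a), (b), (c): factor $\widetilde{h}$ coordinate-wise, expand each factor, then multiply. Write
\[
\widetilde{h}(D_i, D_j) = K_h(W_i - W_j)\cdot g_X(X_i, X_j)\cdot g_V(V_i, V_j),
\]
where $K_h(w) \define h^{-1}K(w/h)$, $g_X(x,y) \define (x-y)M_1(x)M_1(y)$, and $g_V(v,w) \define (v-w)\chi(v)\chi(w)$ with $\chi = M_1$ in (a) and $\chi = M_2$ in (b) and (c). The product is symmetric in $(D_i, D_j)$ since $K$ is even (being positive definite) and the other two factors are individually antisymmetric.

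The first step will be to approximate $K_h$. Because $K$ is a continuous positive-definite density (Assumption \ref{ass:kernel_alpha}), so is $K_h$; applying the PD case of Corollary \ref{cor:random_fourier_alpha} via Corollary \ref{cor:random_fourier_m2} with $M = B_W$ yields, for any target precision $t_0 > 0$, a symmetric approximant
\[
\widetilde{K}_h(w, w') = \sum_{k=1}^{K_W} a_k\,\phi^W_k(w)\,\phi^W_k(w')
\]
uniformly $t_0$-close to $K_h$ on $[-B_W, B_W]^2$, with $|\phi^W_k| \leq 1$, $\sum_k |a_k| \leq F^W = 2K_h(0) \asymp h^{-1}$, and $\mu^W_a = 1$ for all $a \geq 1$; crucially $F^W$ is free of $t_0$ by Remark \ref{remark:smooth}. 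Meanwhile, $g_X$ and $g_V$ admit the obvious two-term decompositions
\[
g_X(x, y) = \bigl(xM_1(x)\bigr)\cdot M_1(y) - M_1(x)\cdot\bigl(yM_1(y)\bigr),
\]
so with bases $e^X_0 = M_1$, $e^X_1(x) = xM_1(x)$ (and analogous $e^V_0, e^V_1$) we have $F^X = F^V = 2$, $|e^X_1| \leq B_X \define \kappa_1\sqrt{\log(np)}$, and $|e^V_1| \leq B_V$ where $B_V = \kappa_1\sqrt{\log(np)}$ in (a) and $B_V = \theta_n$ in (b) and (c).

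The second step is to combine. Substituting $\widetilde{K}_h$ into $\widetilde{h}$ and expanding the four cross-terms arising from $g_X\cdot g_V$ produces
\[
\widetilde{f}(D_i, D_j) = \sum_{k,\alpha_1,\beta_1,\alpha_2,\beta_2} \pm a_k\,\Phi_{k,\alpha_1,\beta_1}(D_i)\,\Phi_{k,\alpha_2,\beta_2}(D_j),
\]
with compound bases $\Phi_{k,\alpha,\beta}(D) \define \phi^W_k(W)\,e^X_\alpha(X)\,e^V_\beta(V)$, $(\alpha,\beta)\in\{0,1\}^2$. Summing signs costs at most a factor of $4$, so $F = 4F^W \asymp h^{-1}$. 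The uniform basis bound is $B = B_X B_V$, which is $\asymp \log p$ in (a) (under the ambient scaling $\log(np)\asymp\log p$) and $\asymp \sqrt{\log(np)}\,\theta_n$ in (b) and (c). For moments, $\E|\Phi_{k,\alpha,\beta}(D)|^a \leq \E\bigl[|X|^{a\alpha}|V|^{a\beta}\bigr]$; in (a) this is $O(1)$ for every $a\geq 1$ by sub-Gaussianity of both $X$ and $V$; in (b), H\"older's inequality with $q$ just above $1$ and $p$ large balances the sub-Gaussian moments of $X$ against the finite $(4+4\delta)$-moment of $V$ throughout $1\leq a \leq 4+2\delta$; in (c), independence of $V$ from $(X, W)$ factorizes the expectation and the $(2+2\delta)$-moment of $V$ gives $\mu_a \asymp 1$ for $1\leq a\leq 2+\delta$.

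Finally, on $\cal{C}$ the non-$K_h$ factors are bounded by $B_X B_V$, so $|\widetilde{h} - \widetilde{f}| \leq t_0 B_X B_V$; choosing $t_0 = t/(B_X B_V)$ delivers the prescribed bias $t$ while leaving $F^W$ and hence $F$ unchanged. The main delicacy lies in the first step: confirming that $K_h$ falls within the PD branch of Corollary \ref{cor:random_fourier_alpha} uniformly in $h$, so that $F^W = 2K_h(0) \asymp h^{-1}$ is genuinely $t_0$-free. This independence is exactly what produces the sharp rate $F\asymp h^{-1}$ (rather than $h^{-1}\log(1/t)$, as a Lipschitz route via Theorem \ref{thm:Lipschitz_alpha} would give), and in turn drives the Bernstein-type control underlying Lemmas \ref{lemma:perturbation}-\ref{lemma:perturbation_indep}.
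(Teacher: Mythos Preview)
Your proposal is correct and follows essentially the same route as the paper. The paper factors $\widetilde{h}=h^{-1}\cdot f^1\cdot f^2$ with $f^1=K((W_i-W_j)/h)$ and $f^2=(X_i-X_j)(V_i-V_j)M_1(X_i)M_1(X_j)M_1(V_i)M_1(V_j)$, applies Corollary~\ref{cor:random_fourier_alpha} to the PD shift-invariant $f^1$ to get $F=2K(0)$, writes out the explicit four-basis expansion of $f^2$ (your $e^X_\alpha e^V_\beta$, $(\alpha,\beta)\in\{0,1\}^2$, are exactly the paper's $e_1,\dots,e_4$), and then combines via the multiplicative part of Proposition~\ref{prop:stability_sum_alpha}; your three-factor split $K_h\cdot g_X\cdot g_V$ is the same decomposition with $g_X\cdot g_V=f^2$, and the resulting constants $F\asymp h^{-1}$, $B\asymp B_XB_V$, $\mu_a\asymp 1$ coincide.
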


\begin{proof}
We will only prove part (a); the proofs for part (b) and (c) are essentially the same. Throughout the proof, $c$ will be a generic constant, $D \define (W,X,V)$. Recall that $\widetilde{h}$ takes the form
\begin{align*}
\widetilde{h}(D_i,D_j) = \frac{1}{h}K\parr*{\frac{W_i-W_j}{h}}(X_i-X_j)(V_i-V_j)M_1(X_i)M_1(X_j)M_1(V_i)M_1(V_j),
\end{align*}
where $M_1(t) = \mathbbm{1}\bbrace*{|t|\leq \sqrt{\log(np)}}$ for some $\kappa_1 > 0$ to be chosen. Note that $\widetilde{h}$ can be written as $\widetilde{h} = \frac{1}{h}\cdot f^1\cdot f^2$, where
\begin{align*}
f^1(D_i,D_j) &= K\parr*{\frac{W_i-W_j}{h}},\\
f^2(D_i,D_j) &= (X_i-X_j)(V_i-V_j)M_1(X_i)M_1(X_j)M_1(V_i)M_1(V_j).
\end{align*}
It can be readily checked that $f^2$ has upper bound $M_2\asymp \log(np)$. Since $K\parr*{\frac{W_i-W_j}{h}}$ is continuous, shift-invariant and PD, conditions of Corollary \ref{cor:random_fourier_alpha} are satisfied. Letting $B_W \define p^{\kappa_2}$ for sufficiently large $\kappa_2$ and $t_1\asymp t/M_2$ in Corollary \ref{cor:random_fourier_alpha}, we obtain that there exists a kernel $\widetilde{f}^1$ such that $\|f^1-\widetilde{f}^1\|_\infty\leq t_1$ and $\widetilde{f}^1$ satisfies Condition (A) with symmetric set $\cal{C}$ and constants
\begin{align*}
F = 2K(0),\quad B = 1,\quad \mu_a = 1
\end{align*}
for all $a\geq 1$. Also note that $f^2$ can be expanded in closed form as
\begin{align*}
e_1(D_i)e_4(D_j) - e_2(D_i)e_3(D_j) - e_3(D_i)e_2(D_j) + e_4(D_i)e_1(D_j),
\end{align*}
where the basis $\{e_j(D)\}_{j=1}^4$ are defined by
\begin{align*}
&e_1(D) = M_1(X)M_1(V)XV\\
&e_2(D) = M_1(X)M_1(V)X\\
&e_3(D) = M_1(X)M_1(V)V\\
&e_4(D) = M_1(X)M_1(V).
\end{align*}
Note that $\sup_{1\leq j\leq 4}\|e_j\|_\infty \leq \kappa_1^2\log(np)$. Moreover, by the sub-Gaussianity of $X$ and $V$, we have
\begin{align*}
\sup_{1\leq j\leq 4}\bbrace*{\E\parr*{e_j(D)^a}}^{1/a} \leq \bbrace*{\E\parr*{|X|^{2a}}}^{1/2a}\bbrace*{\E\parr*{|V|^{2a}}}^{1/2a} \leq c\kappa_x\kappa_va
\end{align*}
for all $a\geq 1$. Thus $f^2$ satisfies Condition (A) with symmetric set $\cal{C}$ and constants:
\begin{align*}
F = 4, \quad B = \kappa_1^2\log(np), \quad \mu_a = c\kappa_x\kappa_va
\end{align*}
for all $a\geq 1$. Following the proof of the second part of Proposition \ref{prop:stability_sum_alpha} with $t_1\asymp t/M_2$ and $t_2 = 0$, we obtain that
$\check{h}$ has an approximating kernel that satisfies Condition (A) with symmetric set $\cal{C}$ and constants
\begin{align*}
F \asymp \frac{1}{h},\quad B \asymp \log(np), \quad \mu_a \asymp 1
\end{align*}
for all $a\geq 1$. This completes the proof.
\end{proof}

\section{Proofs of results in Section \ref{subsec:testing}}
\label{proof:testing}

\subsection{Proof of Corollary \ref{cor:MDP_U}}
\begin{proof}
Part (a) follows directly from the proof of Proposition \ref{prop:MDP_U} and the tail bound in Corollary \ref{cor:continuous_general}. Part (b) follows from the proof of Proposition \ref{prop:MDP_U} and Part (a) of Corollary \ref{cor:discontinuous_tail} in the main paper.
\end{proof}

\subsection{Proof of Theorem \ref{thm:U_test}}
\begin{proof}
Applying Theorem 1 in \cite{arratia1989two} with $I = [p]$ and $B_\alpha = \{\alpha\}$, we obtain
\begin{align*}
\abs*{\P\parr*{S_n\leq \sqrt{2\log p-\log\log p + q_\alpha}} - \text{exp}(-\lambda_n)} \leq b_{n,1} + b_{n,2} + b_{n,3},
\end{align*}
where 
\begin{align*}
\lambda_n = \sum_{\ell=1}^p \P\parr*{\abs*{\tilde{U}_{n,\ell}}\geq \sqrt{2\log p - \log\log p + q_\alpha}}.
\end{align*}
Due to the independence of the $p$ sequences, $b_{n,2} = b_{n,3} = 0$.  By Proposition \ref{prop:MDP_U}, it holds that
\begin{align*}
\P\parr*{\abs*{\tilde{U}_{n,\ell}}\geq \sqrt{2\log p - \log\log p + q_\alpha}} \sim 1-\Phi(\sqrt{2\log p - \log\log p + q_\alpha})
\end{align*}
if $\sqrt{2\log p - \log\log p + q_\alpha} \leq \gamma\sqrt{\log n}$, or equivalently, $p = O(n^{\gamma^2/2})$. Using the fact that $1-\Phi(x) \sim 1/(\sqrt{2\pi}x)\text{exp}(-x^2/2)$, we obtain that 
\begin{align*}
\P\parr*{\abs*{\tilde{U}_{n,\ell}}\geq \sqrt{2\log p - \log\log p + q_\alpha}} \sim \frac{1}{p}\frac{1}{\sqrt{\pi}}\text{exp}(-\frac{1}{2}q_\alpha),
\end{align*}
and therefore $\text{exp}(-\lambda_n) \rightarrow 1-\alpha$. Lastly, since
\begin{align*}
b_{n,1} = p\cdot \parr*{\P\parr*{\abs*{\tilde{U}_{n,\ell}}\geq \sqrt{2\log p - \log\log p + q_\alpha}}}^2 \sim \lambda_n^2/p \rightarrow 0,
\end{align*}
we thus obtain
\begin{align*}
\P\parr*{S_n\leq \sqrt{2\log p-\log\log p + q_\alpha}} \rightarrow 1-\alpha
\end{align*}
as $n\rightarrow \infty$. This completes the proof.
\end{proof}

\subsection{Proof of Proposition \ref{prop:test_example}}
\begin{proof}
First, we have
\begin{align*}
\widetilde{V}_{n,\ell} = \frac{n-1}{n}\widetilde{U}_{n,\ell}  + \frac{1}{2\sqrt{n}\sigma_\ell}(h^\ell_0(0) - \theta_\ell),
\end{align*}
where $\widetilde{V}_{n,\ell} \define \sqrt{n}V_{n,\ell}/(2\sigma_\ell)$, with $V_{n,\ell}$ defined to be $n^{-2}\sum_{i,j=1}^nh^\ell(X^\ell_i,X^\ell_j)$, and $h^\ell_0(x) \define \sgn(x_1)\sgn(x_2)$. Therefore, with $h^\ell_0(0) = \theta_\ell = 0$ as calculated in Section \ref{subsubsec:test_example} in the main paper, it suffices to show that for any given $\gamma > 0$, $\widetilde{V}_{n,\ell}$ satisfies \eqref{eq:MDP} in the main paper uniformly over $x\in[0,\gamma\sqrt{\log n}]$. For this, we will make use of (b) of Corollary \ref{cor:MDP_U} by verifying its conditions. Apparently, $h^\ell$ is centered, nondegenerate and satisfies \eqref{eq:MDP_moment} in the main paper with any $\gamma > 0$. A truncated version of $h^\ell$ defined to be
\begin{align*}
\bar{h}^\ell(x,y) \define \bar{h}^\ell_0(x-y) \text{ and } \bar{h}^\ell_0(x) \define \sgn(x_1)\sgn(x_2)\mathbbm{1}\bbrace*{\abs*{x_1}\leq 2M_1}\mathbbm{1}\bbrace*{\abs*{x_2}\leq 2M_1},
\end{align*}
with $M_1$ being chosen later. Clearly, for any $M_2 > 0$, $h^\ell$ satisfies Condition (B4) in the main paper with $M_1,M_2$.

We now verify the rest of the conditions in (b) of Corollary \ref{cor:MDP_U}. 
Note that, by the calculation on Kendall's tau after Theorem \ref{thm:discontinuous_alpha} in the main paper, $B(t)\asymp \mu_a(t) \asymp 1$ and $F(t) \asymp \log^2(M_1/M_2) + \log^2\log(1/t)$ for sufficiently large $M_1$ and sufficiently small $M_2$ and $t$. Therefore, \eqref{eq:MDP_condition} in the main paper holds by directly calculation, and also $(\abs*{h^\ell_0(0)} + F(t))/n = o(1/(\sqrt{n}(\log n)^2))$. Moreover, by choosing sufficiently large $M_1$ and sufficiently small $M_2$ (depending only on $\gamma,\eta_1,\eta_2$) and using the fact that $J_1 = J_2 = 3$, we can show that
\begin{align*}
\bbrace*{n^2\parr*{\sum_{k=1}^2 J_k}M_2D} \vee \bbrace*{n\sum_{k=1}^2 \P\parr*{\abs*{X^\ell_{1,k}}\geq M_1}} = o(n^{-\gamma^2/2}/\sqrt{\log n}).
\end{align*}
Lastly, we show that with $t \asymp 1/(\sqrt{n}(\log n)^2)$, we can properly choose $M_1$ and $M_2$ such that $t' = t'(t, M_1,M_2)$ defined in \eqref{eq:bias} with $\cal{C}$ in \eqref{eq:discontinuous_support} satisfies $t' = O(1/(\sqrt{n}(\log n)^2))$. By definition of $t'$, it suffices to show that $\max_{0\leq i\leq m-1}s_iv_i = O(1/(\sqrt{n}(\log n)^2))$ and $\max_{0\leq i\leq m-1}Fv_i = O(1/(\sqrt{n}(\log n)^2))$, with $\{v_i\}_{i=0}^{m-1}$ and $\{s_i\}_{i=0}^{m-1}$ defined before Proposition \ref{prop:hoeffding} in the main paper. Since $\bar{h}^\ell$ is upper bounded in absolute value by $1$, the sequence $\{s_i\}_{i=0}^{m-1}$ can be uniformly upper bounded by $1$. For $v_0$, we have
\begin{align*}
v_0^2 &\leq \P\parr*{(\widetilde{X}^\ell_1,\widetilde{X}^\ell_2)\notin [-M_1,M_1]^{4}} + \P\parr*{(\widetilde{X}^\ell_1,\widetilde{X}^\ell_2)\notin \Big\{(x,z)\in\RR^{4}: \abs*{(x_j-z_j) - y_{j,k}}\geq M_2, j\in[2],k\in[J_j]\Big\}}\\
&\leq 2\sum_{k=1}^2 \P\parr*{\abs*{\widetilde{X}^\ell_{1,k}}\geq M_1} +2\parr*{\sum_{k=1}^2 J_k}M_2D,
\end{align*}
where we have used the upper bound $D$ on the density of $\widetilde{X}^\ell_{1,k} - \widetilde{X}^\ell_{2,k}$ uniformly over $k\in[2]$. For $v_1$, we have
\begin{align*}
v_1^2 &\leq \P\parr*{\widetilde{X}^\ell_1\notin [-M_1,M_1]^d} + \sup_{x\in[-M_1,M_1]^2}\P\parr*{\widetilde{X}^\ell_1\notin \Big\{z\in\RR^2: \abs*{z_j - (x_j-y_{j,k})}\geq M_2, j\in[2],k\in[J_j]\Big\}}\\
&\leq \sum_{k=1}^2 \P\parr*{\abs*{\widetilde{X}^\ell_{1,k}}\geq M_1} + 2\parr*{\sum_{k=1}^2 J_k}M_2D,
\end{align*}
where we have used the upper bound $D$ on the density of $\widetilde{X}^\ell_{1,k}$ uniformly over $k\in[2]$. Therefore, under the assumption that $X^\ell_{1,k}$ has finite $\eta_1$th moment for all $k\in[2]$ and $D = D(n) = O(n^{\eta_2})$, by choosing $M_1 = n^{\kappa_1}$ and $M_2 = n^{-\kappa_2}$ for sufficiently large $\kappa_1$ and $\kappa_2$ (depending only on $\eta_1,\eta_2$), it holds that $t' = O(1/(\sqrt{n}(\log n)^2))$.

This completes the proof.
\end{proof}


\section{Proofs of results in Section \ref{sec:discussion}}
\label{proof:discussion}

\subsection{Proof of Proposition \ref*{prop:bern_U_tau}}
\begin{proof}
We essentially follow the same proof of Proposition \ref{prop:bern_U_alpha}, and it suffices to justify that for each $j$, the sequence $\{\widetilde{e}_j(X_i)\}_{i=1}^n$ is still geometrically $\tau$-mixing so we can still apply Lemma \ref{lemma:sample_mean_Bern}. By the second part of Lemma \ref{lemma:tau_coupling}, for any $p < p + n\leq j_1<\ldots<j_m$, there exists a random vector $\parr*{\widetilde{X}_{j_1},\ldots, \widetilde{X}_{j_m}}$ that is identically distributed with $\parr*{X_{j_1},\ldots,X_{j_m}}$, independent of $\sigma(X_i,i\leq p)$ and satisfies
\begin{align*}
\frac{1}{m}\E\bbrace*{\sum_{k=1}^m d\parr*{X_{j_k}, \widetilde{X}_{j_k}}} = \frac{1}{m}\tau\parr*{\sigma(X_i,i\leq p), (X_{j_1},\ldots,X_{j_m})}\leq \gamma_1\exp(-\gamma_2 n),
\end{align*}
where $d(x,y)=\sum_{\ell=1}^d|x_\ell-y_\ell|$ on $\RR^d$. For each fixed $j\geq 1$, $\bbrace*{\widetilde{e}_j\parr*{\widetilde{X}_{j_1}},\ldots, \widetilde{e}_j\parr*{\widetilde{X}_{j_m}}}$ is identically distributed as $\bbrace*{\widetilde{e}_j\parr*{X_{j_1}},\ldots,\widetilde{e}_j\parr*{X_{j_m}}}$, independent from $\sigma\parr*{e_j(X_i),i\leq p}$. Thus by the first part of Lemma \ref{lemma:tau_coupling} and the Condition (A$^\prime$), it holds that
\begin{align*}
\frac{1}{m}\tau\fence*{\sigma\bbrace*{\widetilde{e}_j(X_i),i\leq p}, \bbrace*{\widetilde{e}_j(X_{j_1}),\ldots,\widetilde{e}_j(X_{j_m})}} &\leq \frac{1}{m}\E\bbrace*{\sum_{k=1}^m\abs*{\widetilde{e}_j(X_{j_k})-\widetilde{e}_j(\tilde{X}_{j_k})}}\\
&\leq \frac{1}{m}L\E\bbrace*{\sum_{k=1}^m d\parr*{X_{j_k},\widetilde{X}_{j_k}}}\\
&\leq L\gamma_1\exp(-\gamma_2 n).
\end{align*}
Therefore, $\{\widetilde{e}_j(X_i)\}_{i=1}^n$ is still geometrically $\tau$-mixing, and thus geometrically $\theta$-mixing with coefficient $L\gamma_1\exp(-\gamma_2 n)$. Letting $p = 2+\delta$ in Lemma \ref{lemma:theta_covariance}, and using the fact that 
\begin{align*}
\bbrace*{\E\parr*{\abs*{\widetilde{e}_j(X_1)}^a}}^{1/a} \leq 2\bbrace*{\E\parr*{\abs*{e_j(X_1)}^a}}^{1/a} \leq 2\mu_a
\end{align*}
for all $a\geq 1$, it holds that for any $j\geq 1$,
\begin{align*}
\sigma_j^2 \leq 12\mu_{2+\delta}^{\frac{2+\delta}{1+\delta}}\fence*{\sum_{k=0}^\infty\bbrace*{L\gamma_1\exp(-\gamma_2 k)}^{\delta/(1+\delta)}}\leq \frac{12(\gamma_1 L)^{\frac{\delta}{1+\delta}}}{1-\exp\bbrace*{-\gamma_2 \delta/(1+\delta)}}\parr*{2\mu_{2+\delta}}^{\frac{2+\delta}{1+\delta}}.
\end{align*}
This completes the proof.
\end{proof}


\subsection{Proof of Theorem \ref{thm:random_fourier_tau}}
\begin{proof}
Consider the approximating function $s_{D}$ defined in the proof of Theorem \ref{thm:random_fourier_alpha}. $s_D$ can be expanded with bases $\cos\parr*{2\pi u^\top x}$ and $\sin\parr*{2\pi u^\top x}$ in $\RR^d$. Recall the distance $d(x,y) = \sum_{\ell=1}^d |x_\ell - y_\ell|$ in $\RR^d$. Then it holds that
\begin{align*}
\abs*{\cos(2\pi u^\top x) - \cos(2\pi u^\top y)} \leq 2\pi\abs*{u^\top (x-y)} \leq 2\pi\max_{1\leq\ell\leq d}|u_\ell|\leq 2\pi\|u\|.
\end{align*}
This argument holds similarly for $\sin(2\pi u^\top x)$. Therefore, in order to upper bound the Lipschitz constant $L$ in Condition (A$^\prime$) of Proposition \ref{prop:bern_U_tau}, it suffices to upper bound the following quantity
\begin{align*}
\omega \define \max_{1\leq j\leq m}\parr*{\max_{1\leq i\leq D_1}\|u_{ij}\| \vee \max_{1\leq i\leq D_2}\|v_{ij}\| \vee \max_{1\leq i\leq D_3}\|\tilde{u}_{ij}\| \vee \max_{1\leq i\leq D_4}\|\tilde{v}_{ij}\|},
\end{align*}
where $\{u_i\}_{i=1}^{D_1},\{v_i\}_{i=1}^{D_2},\{\tilde{u}_i\}_{i=1}^{D_3},\{\tilde{v}_i\}_{i=1}^{D_4}$ are the frequencies of the cosine and sine bases of $s_{D_1},s_{D_2},s_{D_3},s_{D_4}$ in the proof of Theorem \ref{thm:random_fourier_alpha}. Note that
\begin{align*}
D_1\vee D_2 \vee D_3 \vee D_4 \leq D \define \Omega\fence*{\frac{md\nm*{\hat{\bar{f}}}_{L^1}^2}{t^2}\log\bbrace*{\frac{8\pi c\diam(\cal{M})\nm*{\hat{\bar{f}}}_{L^1}^{1-1/q}\mu_q\parr*{\hat{\bar{f}}}}{t}}}
\end{align*}
and for each $j\in[m]$ and $L_0 > 0$,
\begin{align*}
\P\parr*{\|u_{ij}\|\vee\|v_{ij}\|\vee\|\tilde{u}_{ij}\|\vee\|\tilde{v}_{ij}\|\geq L_0} \leq 4\frac{\mu_q^q\parr*{\hat{\bar{f}}}}{L_0^q}.
\end{align*}
Therefore, 
\begin{align*}
P\parr*{\omega\geq L_0} \leq 16D\frac{\mu_q^q\parr*{\hat{\bar{f}}}}{L_0^q}. 
\end{align*}
Therefore, we can find a set of realizations $\{u_i\}_{i=1}^{D_1},\{v_i\}_{i=1}^{D_2},\{\tilde{u}_i\}_{i=1}^{D_3},\{\tilde{v}_i\}_{i=1}^{D_4}$ such that 
\begin{align*}
\omega \leq L_0 \define \Omega\fence*{\frac{\mu_q^q\parr*{\hat{\bar{f}}}md\nm*{\hat{\bar{f}}}_{L^1}^2}{t^2}\log\bbrace*{\frac{8\pi c\diam(\cal{M})\nm*{\hat{\bar{f}}}_{L^1}^{1-1/q}\mu_q\parr*{\hat{\bar{f}}}}{t}}}^{1/q}.
\end{align*}
Plugging in $\diam(\cal{M}) = 2M\sqrt{md}$ completes the proof. 
\end{proof}

\subsection{Supporting Lemmas}

\begin{lemma}[Lemma 7, \cite{dedecker2004coupling}]
\label{lemma:mixing_compare}
Let $(\Omega,\cal{A},\P)$ be a probability space, and $\{X_i\}_{i=1}^n$ be a stationary sequence of random variables with values in $\RR$. Let $Q_{|X|}$ be the quantile function of $X$: if $u\in[0, 1]$, $Q_{|X|}(u) = \inf\bbrace*{t\in\RR:\P(|X|\geq t)\leq u}$. Then for each positive integer $k$, it holds that
\begin{align*}
\tau_k(i) \leq 2\int_0^{2\alpha_k(i)} Q_{|X|}(u)du.
\end{align*}
In particular, if $p$ and $q$ are conjugate numbers, then $\tau_k(i) \leq 2\|X\|_p\parr*{2\alpha_k(i)}^{1/q}$ and $\tau(i)\leq 2\|X\|_p\parr*{2\alpha(i)}^{1/q}$.
\end{lemma}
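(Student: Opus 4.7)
The plan is to use a coupling construction. The core technical step is to build, on a suitable enlargement of the probability space, a random variable $X^{\ast}$ such that (i) $X^{\ast}$ is distributed as $X$, (ii) $X^{\ast}$ is independent of $\cal{M}$, and (iii) the coupling error satisfies $\E[|X - X^{\ast}|] \leq 2\int_0^{2\alpha(\cal{M}, \sigma(X))} Q_{|X|}(u)\,du$. Such a maximal coupling can be constructed via a quantile transform combined with the Fr\'echet--Hoeffding representation of the total variation distance, following the strategy of Berbee and Goldstein and adapted to $\alpha$-mixing by Dedecker--Prieur. This construction is essentially equivalent to Rio's covariance inequality, and it encodes the precise way in which the $\alpha$-coefficient controls the discrepancy between $\P_{X\mid \cal{M}}$ and $\P_{X}$ through the quantile function of $|X|$.

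Given this coupling, the main inequality follows quickly. For any $f \in \Lambda_1(\RR)$, since $X^{\ast}$ is independent of $\cal{M}$ and has the same law as $X$,
\begin{align*}
\E[f(X^{\ast}) \mid \cal{M}] = \E[f(X^{\ast})] = \E[f(X)],
\end{align*}
so by the $1$-Lipschitz property,
\begin{align*}
|\E[f(X) \mid \cal{M}] - \E[f(X)]| = |\E[f(X) - f(X^{\ast}) \mid \cal{M}]| \leq \E[|X - X^{\ast}| \mid \cal{M}].
\end{align*}
Taking the supremum over $f \in \Lambda_1(\RR)$ and then the $L^1$-norm yields
\begin{align*}
\tau(\cal{M}, X) \leq \E|X - X^{\ast}| \leq 2\int_0^{2\alpha(\cal{M}, \sigma(X))} Q_{|X|}(u)\,du.
\end{align*}

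For the multidimensional case with metric $d_{1,k}$, I would apply the same coupling jointly: construct $(X^{\ast}_{j_1}, \ldots, X^{\ast}_{j_k})$ independent of $\cal{M}_0$, equidistributed with $(X_{j_1}, \ldots, X_{j_k})$, with total coupling error bounded by $2k\int_0^{2\alpha_k(i)} Q_{|X|}(u)\,du$, where stationarity ensures that each coordinate contributes the same quantile integrand. Dividing by $k$ per the definition of $\tau_k(i)$ and then taking the maximum over $1 \leq k' \leq k$ gives the stated bound. The ``in particular'' consequence follows from H\"older's inequality:
\begin{align*}
\int_0^{2\alpha_k(i)} Q_{|X|}(u)\,du \leq \bigg(\int_0^1 Q_{|X|}^p(u)\,du\bigg)^{1/p}(2\alpha_k(i))^{1/q} = \|X\|_p\,(2\alpha_k(i))^{1/q},
\end{align*}
together with the standard identity $\int_0^1 Q_{|X|}^p(u)\,du = \E|X|^p$.

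The main obstacle is the construction of the sharp maximal coupling with the quantile-integral error. The subtlety is that we need simultaneously the independence of $X^{\ast}$ from $\cal{M}$ (a total-variation-type requirement) and a sharp bound on $\E|X - X^{\ast}|$ (an $L^1$-transport-type requirement), and balancing the two is exactly what the quantile function $Q_{|X|}$ accomplishes. Once this coupling is in hand, the remaining steps, namely the Kantorovich--Rubinstein-style dualization and the coordinate-wise extension, are essentially bookkeeping.
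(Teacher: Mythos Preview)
The paper does not prove this lemma; it simply cites it as Lemma~7 of \cite{dedecker2004coupling} and uses it as a black box. Your proposal correctly reconstructs the Dedecker--Prieur argument: the maximal coupling built from a conditional quantile transform, followed by the Kantorovich--Rubinstein duality to pass from $\E|X-X^\ast|$ to $\tau(\cal{M},X)$, and then the coordinate-wise extension with stationarity for the $k$-tuple version. The H\"older step for the ``in particular'' clause is exactly right. There is nothing to compare against in the present paper, but your outline matches the original source and is sound.
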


\begin{lemma}[Lemma 3 in \cite{dedecker2004coupling} and Lemma 5.3 in \cite{dedecker2007weak}]
\label{lemma:tau_coupling}
Let $(\Omega,\cal{F},\P)$ be a probability space, $\cal{A}$ a sub $\sigma$-algebra of $\cal{F}$ and $X$ a random variable taking values in a Polish space $(\cal{X},d)$. Assume that $\int d(x,x_0)\P_X(dx)$ is finite for any $x_0\in\cal{X}$. Assume that the probability space considered is rich enough such that there exists a random variable U uniformly distributed over $[0, 1]$, independent of the $\sigma$-algebra generated by $X$ and $\cal{A}$. Then there exists a random variable $\widetilde{X}$, which is measurable with respect to $\cal{A}\vee \sigma(X)\vee \sigma(U)$, independent of $\cal{A}$ and identically distributed as $X$, such that
\begin{align}
\tau(\cal{A},X;d) = \E\fence*{d(X,\widetilde{X})}.
\end{align}
On the other hand, for any random variable $Y$ that is identically distributed with $X$ and independent of $\cal{A}$, it holds that
\begin{align*}
\tau(\cal{A},X;d) \leq \E\fence*{d(X,Y)}.
\end{align*}
\end{lemma}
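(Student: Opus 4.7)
My plan is to handle the easier inequality first via Kantorovich--Rubinstein duality and a direct coupling argument, then construct the equality-achieving coupling via regular conditional distributions and a measurable selection of an optimal transport plan.

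\textbf{Inequality.} Since $(\cal{X},d)$ is Polish, a regular conditional distribution $\P_{X\mid\cal{A}}(\omega,\cdot)$ exists. By the Kantorovich--Rubinstein duality,
\begin{align*}
W\parr*{\P_{X\mid\cal{A}}}(\omega) \;=\; \sup_{f\in\Lambda_1(\cal{X})}\abs*{\int f\,d\P_{X\mid\cal{A}}(\omega,\cdot)-\int f\,d\P_X} \;=\; W_1\parr*{\P_{X\mid\cal{A}}(\omega,\cdot),\P_X},
\end{align*}
the $1$-Wasserstein distance. If $Y$ is identically distributed as $X$ and independent of $\cal{A}$, then the conditional joint law of $(X,Y)$ given $\cal{A}$ is a coupling of $\P_{X\mid\cal{A}}(\omega,\cdot)$ and $\P_X$, so $W_1\parr*{\P_{X\mid\cal{A}}(\omega,\cdot),\P_X}\leq\E\fence*{d(X,Y)\mid\cal{A}}(\omega)$ a.s. Integrating yields $\tau(\cal{A},X;d)\leq\E\fence*{d(X,Y)}$.

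\textbf{Equality via measurable selection and the auxiliary uniform.} For the tight coupling, I would proceed in two substeps. First, for each $\omega$, the set of optimal couplings between $\P_{X\mid\cal{A}}(\omega,\cdot)$ and $\P_X$ is nonempty and compact in the weak topology (standard existence of $W_1$-optimizers on Polish spaces under the finite first moment hypothesis $\int d(x,x_0)\,d\P_X<\infty$), and the set-valued map assigning to $\omega$ this optimal set has closed graph with respect to $\cal{A}$. By the Kuratowski--Ryll-Nardzewski selection theorem one can pick $\omega\mapsto\pi_\omega\in\Pi\parr*{\P_{X\mid\cal{A}}(\omega,\cdot),\P_X}$ that is $\cal{A}$-measurable and attains
\begin{align*}
\int_{\cal{X}\times\cal{X}} d(x,y)\,\pi_\omega(dx,dy)=W_1\parr*{\P_{X\mid\cal{A}}(\omega,\cdot),\P_X}.
\end{align*}
Second, disintegrate $\pi_\omega$ along its first marginal to obtain a jointly measurable kernel $\pi_\omega(dy\mid x)$. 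Since $(\cal{X},d)$ is Polish, there exists a jointly measurable map $\Psi:\Omega\times\cal{X}\times[0,1]\to\cal{X}$ such that for each $(\omega,x)$ the pushforward of the Lebesgue measure on $[0,1]$ under $\Psi(\omega,x,\cdot)$ equals $\pi_\omega(\cdot\mid x)$ (the standard inverse-CDF realization on a Polish space). Setting $\widetilde{X}\define\Psi(\omega,X,U)$ produces an $\cal{A}\vee\sigma(X)\vee\sigma(U)$-measurable random variable whose conditional joint law with $X$ given $\cal{A}$ is exactly $\pi_\omega$. Hence $\widetilde{X}\mid\cal{A}$ has law $\P_X$, so $\widetilde{X}$ is independent of $\cal{A}$ and identically distributed as $X$, and
\begin{align*}
\E\fence*{d(X,\widetilde{X})\mid\cal{A}}(\omega)=\int d(x,y)\,\pi_\omega(dx,dy)=W\parr*{\P_{X\mid\cal{A}}}(\omega)\quad\text{a.s.}
\end{align*}
Taking expectations recovers $\tau(\cal{A},X;d)=\E\fence*{d(X,\widetilde{X})}$.

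\textbf{Main obstacle.} The routine piece is the dual/primal equivalence of $W_1$ and the inverse-CDF realization using $U$; the crux is the joint measurability in $\omega$ of the optimal coupling $\pi_\omega$ (it is not unique in general, so one cannot simply write an explicit formula) and of its disintegration $\pi_\omega(\cdot\mid x)$. This requires combining lower semicontinuity of the transport cost, closedness of $\Pi(\mu,\nu)$ under weak convergence, and a measurable selection theorem (Kuratowski--Ryll-Nardzewski, or equivalently an Aumann-type selection for the closed-valued multifunction of optimal plans). Once this technical point is secured, both halves of the lemma follow cleanly.
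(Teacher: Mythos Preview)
The paper does not prove this lemma; it is quoted verbatim as a supporting result from \cite{dedecker2004coupling} (Lemma~3) and \cite{dedecker2007weak} (Lemma~5.3), with no proof in the paper itself. Your approach---Kantorovich--Rubinstein duality for the inequality, then a measurable selection of optimal transport plans combined with disintegration and a Skorokhod-type realization via the auxiliary uniform $U$ for the equality---is exactly the route taken in the original Dedecker--Prieur references, and your identification of the joint measurability of $\omega\mapsto\pi_\omega$ and its disintegration as the only nontrivial step is accurate.
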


\begin{lemma}
\label{lemma:theta_covariance}
Let $\{X_i\}_{i=1}^\infty$ be a sequence of real valued random variables with finite $L_p$ moment for some $p > 2$, that is, $\mu_p \define \sup_{i}\bbrace*{\parr*{\E|X_i|^p}^{1/p}} <\infty$. Then, it holds that
\begin{align*}
\sup_i\parr*{\sum_{j > i}\abs*{\text{Cov}\parr*{X_i, X_j}}} \leq 6\mu_p^{\frac{p}{p-1}}\bbrace*{\sum_{k=1}^\infty \theta(k)^{\frac{p-2}{p-1}}},
\end{align*}
where $\theta(k)$ is the $k$th $\theta$-coefficient of the sequence.
\end{lemma}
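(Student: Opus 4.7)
The plan is to combine a coupling representation of the $\theta$-coefficient with a Hölder inequality and a logarithmic interpolation. Specifically, for any $j > i$ I will construct (or invoke the existence of) a random variable $\widetilde{X}_j$ that is identically distributed with $X_j$, independent of $X_i$, and couples sharply to $X_j$ in $L^1$: the standard coupling characterization of $\theta$-type coefficients (analogous to Lemma \ref{lemma:tau_coupling}) yields $\mathbb{E}|X_j - \widetilde{X}_j| \leq c_0 \, \theta(j-i)$ for an absolute constant $c_0$. Since $\widetilde{X}_j \perp X_i$ and $\widetilde{X}_j \stackrel{d}{=} X_j$, the covariance admits the representation
\[
\mathrm{Cov}(X_i, X_j) = \E\bigl[X_i (X_j - \widetilde{X}_j)\bigr],
\]
after which Hölder's inequality with conjugate exponents $(p, q)$, $q = p/(p-1)$, gives
\[
|\mathrm{Cov}(X_i, X_j)| \leq \|X_i\|_p \, \|X_j - \widetilde{X}_j\|_{q} \leq \mu_p \, \|X_j - \widetilde{X}_j\|_{q}.
\]

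Next, I will exploit that $q$ lies strictly between $1$ and $p$ and apply the log-convexity (interpolation) bound $\|Y\|_q \leq \|Y\|_1^{\alpha}\|Y\|_p^{1-\alpha}$, where $\alpha$ is determined by $1/q = \alpha + (1-\alpha)/p$. A direct computation with $q = p/(p-1)$ yields
\[
\alpha = \frac{p-2}{p-1}, \qquad 1-\alpha = \frac{1}{p-1}.
\]
Applying this with $Y = X_j - \widetilde{X}_j$, the trivial bound $\|X_j - \widetilde{X}_j\|_p \leq 2\mu_p$, and the coupling inequality $\|X_j - \widetilde{X}_j\|_1 \leq c_0 \theta(j-i)$ produces
\[
|\mathrm{Cov}(X_i, X_j)| \leq \mu_p \cdot \bigl(c_0 \theta(j-i)\bigr)^{(p-2)/(p-1)} \cdot (2\mu_p)^{1/(p-1)} \leq C \, \mu_p^{p/(p-1)} \, \theta(j-i)^{(p-2)/(p-1)},
\]
with a universal constant $C$ coming from $2^{1/(p-1)} c_0^{(p-2)/(p-1)}$. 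Summing over $j > i$ and taking the supremum in $i$ gives the claimed bound, with the constant $6$ absorbing the coupling constant $c_0$ together with the $2^{1/(p-1)}$ factor (since $p > 2$ makes $2^{1/(p-1)} \leq 2$ uniformly, and $c_0 = 2$ or $3$ in the standard coupling construction).

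The main obstacle is verifying the sharp constant $6$: one has to be careful about which coupling representation of $\theta(k)$ is used, since different authors normalize the $\theta$-coefficient differently (some include factors of $2$ arising from the $L^1$ versus total-variation formulation). The logical content—Hölder plus interpolation plus coupling—is routine, but isolating the constants $2^{1/(p-1)} \leq 2$ (valid for $p > 2$) and $c_0^{(p-2)/(p-1)} \leq c_0 \leq 3$ is what delivers the explicit prefactor $6$. The rest (the centering that allows one to subtract $\widetilde{X}_j$ and the exchangeability under taking $\sup_i$ using stationarity implicit in the $\theta$-coefficient) is mechanical.
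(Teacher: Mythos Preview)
Your interpolation idea is elegant and genuinely different from the paper's truncation-plus-optimization argument, but there is a gap at the very first step. The coupling characterization you invoke is the one from Lemma~\ref{lemma:tau_coupling}, and that lemma produces an $\widetilde{X}_j$ independent of $\mathcal{M}_i$ with $\E|X_j-\widetilde{X}_j|=\tau(\mathcal{M}_i,X_j)$, \emph{not} $\theta(j-i)$. In general $\theta(k)\le\tau(k)$ (the $\theta$-coefficient takes the supremum over $1$-Lipschitz test functions \emph{outside} the $L^1$ norm, whereas $\tau$ takes it inside), and there is no coupling that realizes the smaller quantity $\theta$. So as written your argument proves the bound with $\tau(k)^{(p-2)/(p-1)}$ in the sum, which is strictly weaker than the stated lemma.

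The fix is simple and in fact makes your route cleaner than the paper's: drop the coupling entirely and work with $Y:=\E[X_j\mid\mathcal{M}_i]-\E[X_j]$. Since $X_i$ is $\mathcal{M}_i$-measurable, $\mathrm{Cov}(X_i,X_j)=\E[X_iY]$, and H\"older gives $|\mathrm{Cov}(X_i,X_j)|\le\|X_i\|_p\|Y\|_q$. Now the identity map is $1$-Lipschitz, so the definition of $\theta$ yields $\|Y\|_1\le\theta(j-i)$ directly, while conditional Jensen gives $\|Y\|_p\le 2\mu_p$. Your interpolation $\|Y\|_q\le\|Y\|_1^{(p-2)/(p-1)}\|Y\|_p^{1/(p-1)}$ then delivers
\[
|\mathrm{Cov}(X_i,X_j)|\le 2^{1/(p-1)}\,\mu_p^{p/(p-1)}\,\theta(j-i)^{(p-2)/(p-1)},
\]
with constant $2^{1/(p-1)}\le 2$, which is sharper than the paper's~$6$. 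By contrast, the paper truncates at level $M$, writes $X=X^{(M)}+(X-X^{(M)})$, bounds the three resulting covariance pieces by $2M\theta(j-i)$ and $4\mu_p^pM^{2-p}$ respectively, and then optimizes over~$M$; this recovers the same exponents but with the looser constant. Your interpolation argument is shorter and avoids the optimization step---it just needs to be run on the conditional expectation rather than on a coupling.
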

\begin{proof}
The proof follows essentially from that of Lemma 4.2 in \cite{dedecker2007weak}. In detail, define $X^{(M)}\define (M\wedge X) \vee (-M)$ as the truncated version of $X$, and $\widetilde{X} \define X^{(M)} - \E\bbrace*{X^{(M)}}$. It can be readily checked that $\widetilde{X}$, as a function of $X$, is bounded by $2M$ and also Lipschitz with constant $1$. For any positive integers $i < j$, it holds that
\begin{align*}
\abs*{\cov\parr*{X_i, X_j}} \leq \abs*{\cov\parr*{\widetilde{X}_i, \widetilde{X}_j}} + \abs*{\cov\parr*{X_i - \widetilde{X}_i, X_j}} + \abs*{\cov\parr*{\widetilde{X}_i, X_j - \widetilde{X}_j}}.
\end{align*}
For the first term, by definition of the $\theta$-coefficient, 
\begin{align*}
\abs*{\cov\parr*{\widetilde{X}_i, \widetilde{X}_j}} \leq 2M\theta(j-i).
\end{align*}
For the second term, by H\"older's inequality, we have
\begin{align*}
\abs*{\cov\parr*{X_i-\widetilde{X}_i, X_j}} &=\abs*{\E\fence*{\bbrace*{X_i-\E\parr*{X_i} - \widetilde{X}_i}X_j}}\\
&\leq 2\|X_j\|_p\|X_i-X_i^{(M)}\|_{q}\\
&\leq 2\mu_p^p M^{1-p/q},
\end{align*}
where $q = p/(p-1)$ is the conjugate number of $p$ and in the third step we use the fact that $p > q$ and 
\begin{align*}
\E\fence*{|X_i|^q\mathbbm{1}\bbrace*{|X_i|\geq M}} \leq \bbrace*{\E\parr*{|X_i|^p}}^{q/p}\bbrace*{\P\parr*{|X_i|\geq M}}^{1-q/p} \leq \mu_p^qM^{q-p}\mu_p^{p-q} = \mu_p^pM^{q-p}.
\end{align*}
The third term can be similarly bounded by $2\mu_p^p M^{1-p/q}$. Putting together the pieces gives
\begin{align*}
\abs*{\cov\parr*{X_i,X_j}} \leq 2M\theta(j-i) + 4\mu_p^p M^{2-p}.
\end{align*}
Choosing $M=\bbrace*{\mu_p^p/\theta(j-i)}^{1/(p-1)}$ gives
\begin{align*}
\abs*{\cov\parr*{X_i,X_j}} \leq 6\mu_p^{\frac{p}{p-1}}\theta(j-i)^{\frac{p-2}{p-1}}.
\end{align*}
Summing over all $j > i$ and taking the supremum over all $i$, we have
\begin{align*}
\sup_i\bbrace*{\sum_{j>i}\abs*{\cov\parr*{X_i,X_j}}} \leq 6\mu_p^{\frac{p}{p-1}}\bbrace*{\sum_{k=1}^\infty \theta(k)^{\frac{p-2}{p-1}}}.
\end{align*}
This completes the proof.
\end{proof}

\end{document}